\newcommand\restr[2]{{
  \left.\kern-\nulldelimiterspace 
  #1 
  \right|_{#2} 
}}
\newtheorem{defn}{Definition}[section]
\theoremstyle{plain}
\newtheorem{theorem}{Theorem}[section]
\newtheorem{prop}[theorem]{Proposition}
\newtheorem{lemma}[theorem]{Lemma}
\newtheorem{corollary}[theorem]{Corollary}
\newtheorem*{exam}{Example}
\begin{document}
	
\title{$\phantom{a}$\\[-1.7in]
\\[0.2in]
{\bf Doctoral Thesis, Draft 04} \\[0.1in]
\bf\textit{Categories $\bggO$ for Dynkin Borel Subalgebras} \\[0.1in]
\bf\textit{of Root-Reductive Lie Algebras}\\[0.1in]
{\rm \normalsize by}\\[0.1in]}

\author{ {\bf Aleksandr Fadeev}  \\[0.1in]
Department of Mathematics and Logistics \\[0.1in]
Focus Area Mobility \\[0.1in]
Jacobs University Bremen\\[0.1in]
{\small \href{mailto:t.nampaisarn@jacobs-university.de}{t.nampaisarn@jacobs-university.de}}\\[0.1in]
{\normalsize A Thesis submitted in partial fulfilment}\\[0.1in]
{\normalsize of the requirements for the degree of}\\[0.1in]
{\bf Doctor of Philosophy}\\[0.1in]
{\bf in Mathematics}
}
\date{\today}

\clearpage

\begin{titlepage}
    
    \begin{center}
        \vspace{0.2in}
        {\textbf{\textit{\Large Classification of  primitive ideals of }}} \\[0.1in]
{\textbf{\textit{\Large $U(\mathfrak{o}(\infty))$ and $U(\mathfrak{sp}(\infty))$}}}\\[0.2in]
{ \normalsize by}\\[0.2in]
        {\textbf{Aleksandr Fadeev}}  \\[0.1in]

        \vspace{0.2in}
        {\normalsize A Thesis Submitted in Partial Fulfilment}\\[0.1in]
{\normalsize of the Requirements for the Degree of}\\[0.2in]
{\textbf{Doctor of Philosophy}}\\[0.1in]
{\textbf{ in Mathematics}}\\[0.3in]
    \end{center}
    
    \begin{flushright}
   {\textbf{ Approved, Thesis Committee}}\\[0.2in]
    
   {Prof. Dr. Ivan Penkov}\\[-0.1in]
    
    \noindent\rule{4,5in}{0.4pt}
    
    Chair, Jacobs University Bremen\\[0.2in]
    
  {Prof. Dr. Alan Huckleberry}\\[-0.1in]
    
    \noindent\rule{4,5in}{0.4pt}
    
    Jacobs University Bremen / Ruhr-Universit\"at Bochum\\[0.2in]
  {Dr. Alexey Petukhov}\\[-0.1in]
    
    \noindent\rule{4,5in}{0.4pt}
    
 Institute for Information Transmission Problems in Moscow\\[0.3in]
    
    \end{flushright}
    \vfill
    \begin{flushleft}
    	\underline{Date of Defense: {\textit{Wednesday, December 04, 2019}\textup{      \:\;\;\;\;\;\;\;\;  \:\;\;\;\;\;\;\;\;  \:\;\;\;\;\;\;\;\;  \:\;\;\;\;\;\;\;\;            \:\;\;\; }}}\\

Department of Mathematics and Logistics \\[0.1in]
    \end{flushleft}

\end{titlepage}
\thispagestyle{empty}

\pagebreak

\newpage

\begin{abstract}\thispagestyle{plain}
 The purpose of  this Ph.D. thesis is to study and classify primitive ideals of the enveloping algebras $U(\mathfrak{o}(\infty))$ and $U(\mathfrak{sp}(\infty))$. Let $\mathfrak{g}(\infty)$ denote any of the Lie algebras  $\mathfrak{o}(\infty)$ or $\mathfrak{sp}(\infty)$. Then\break $\mathfrak{g}(\infty)=\bigcup_{n\geq 2} \mathfrak{g}(2n)$ for  $\mathfrak{g}(2n)=\mathfrak{o}(2n)$ or  $\mathfrak{g}(2n)=\mathfrak{sp}(2n)$, respectively. 
 We show that each primitive ideal $I$ of  $U(\mathfrak{g}(\infty))$ is weakly bounded, i.e.,   $I\cap U(\mathfrak{g}(2n))$ equals the intersection of  annihilators of bounded weight  $\mathfrak{g}(2n)$-modules. To every primitive ideal $I$ of $\mathfrak{g}(\infty)$ we attach  a unique irreducible coherent local system of bounded ideals, which is an analog of a coherent local system of finite-dimensional modules, as introduced earlier by A. Zhilinskii. As a result, primitive ideals of $U(\mathfrak{g}(\infty))$ are parametrized by triples $(x,y,Z)$ where $x$ is a nonnegative integer, $y$ is a nonnegative integer or half-integer, and $Z$ is a Young diagram. In the case of $\mathfrak{o}(\infty)$,  each primitive ideal is integrable, and our classification reduces to a classification of integrable ideals going back to A. Zhilinskii, A. Penkov and I. Petukhov. In the case of $\mathfrak{sp}(\infty)$, only 'half' of the primitive ideals are integrable, and nonintegrable primitive ideals correspond to triples $(x,y,Z)$ where $y$ is  a half-integer.  
\end{abstract}

\mbox{}
\thispagestyle{empty}
\newpage

\begin{center}{\textbf{Acknowledgment}}\end{center}

This Ph.D. research has been carried out under the supervision of Prof. Ivan Penkov, Department of
Mathematics and Logistics, Jacobs University Bremen.
I want to thank Ivan Penkov for introducing me to the topic and for	   great help in editing the text.

I  want to thank Aleksey Petukhov for sharing his ideas with me during several discussions in the course of my work.

I am grateful to Mikhail Ignatiev for  big help in editing the text.

I want to thank Dimitar Grantcharov for suggesting several helpful references.

I wish to acknowledge the DFG (Deutsche Forschungsgemeninschaft) for financial
support during my three-year doctoral study. Without this support, my work would not have been
possible. I would also like to thank the entire Department of Mathematics and Logistics at
Jacobs University Bremen for the opportunity to have nice discussions on various topics in
mathematics.

Finally, I would like acknowledge my wife for her constant support throughout my studies.

\pagebreak

\begin{center}
{\textbf{Declaration}}

\end{center}

I hereby declare that the thesis submitted was created and written solely by myself without 
any external support. Any sources, direct or indirect, are marked as such. I am aware of the 
fact  that  the contents of  the  thesis in digital  form may be  revised with  regard  to  usage of 
unauthorized aid as well as whether the whole or parts of it may be identified as plagiarism. I 
do agree my work to be entered into a database for it to be compared with existing sources, 
where it will remain in order to enable further comparisons with future theses. This does not 
grant any rights of reproduction and usage, however.    
The Thesis has been written independently and has not been submitted at any other university 
for the conferral of a PhD degree; neither has the thesis been previously published in full. 
\\[0.5in]

\noindent Signature: \rule{1.5in}{0.4pt}$\phantom{aaa}$Name: Aleksandr Fadeev$\phantom{aaa}$\\Date: \today

\pagebreak

\newpage

		\newpage
	\tableofcontents

\newpage
	\section{Introduction}
In this  work, we classify the primitive ideals of the universal enveloping algebras of the infinite-dimensional  Lie algebras $\mathfrak{o}(\infty)$ and $\mathfrak{sp}(\infty)$.

\vspace{1em}
 Let $\mathfrak{g}(\infty)$  denote one of the Lie algebras   $\mathfrak{sl}(\infty)$,       $\mathfrak{o}(\infty)$ and $\mathfrak{sp}(\infty)$,         and let              $\mathfrak{g}(n)$ denote the respective finite-dimensional Lie algebra   $\mathfrak{sl}(n)$, $\mathfrak{o}(2n)$,  $\mathfrak{o}(2n+1)$ or $\mathfrak{sp}(2n)$.
Then the Lie algebra $\mathfrak{g}(\infty)$ is isomorphic to the direct limit $\varinjlim\mathfrak{g}(n)$ for  certain natural embeddings $\mathfrak{g}(n)\hookrightarrow \mathfrak{g}(n+1)$.

\vspace{1em}
Classifying the primitive ideals of the universal enveloping algebra\break $U(\mathfrak{g}(\infty))$ is an important structural problem
  for $U(\mathfrak{g}(\infty))$ as an associative algebra, and is also a fundamental problem     in the representation  theory of $\mathfrak{g}(\infty)$. 
Indeed, for any Lie algebra $\mathfrak{g}$, the annihilator in $U(\mathfrak{g})$ of a simple $\mathfrak{g}$-module is a primitive ideal, but experience shows that
even for a simple finite-dimensional Lie algebra $\mathfrak{g}$ the problem of classifying primitive ideals  in $U(\mathfrak{g})$   is  tractable, while the problem  of classifying simple $\mathfrak{g}$-modules when rank of $\mathfrak{g}$ the large enough,  is untractable or 'wild'.   
The simple objects of several important categories of $\mathfrak{g}$-modules have been classified. This concerns category $\mathcal{O}$,  the category of Harish--Chandra modules, the category of weight modules of finite type, and some other categories, but there is no known approach to  a classification of arbitrary simple $\mathfrak{g}$-modules. 
On the other hand, the classification of primitive ideals for a simple Lie algebra $\mathfrak{g}$ is now one of the cornerstones of the representation theory of simple, or semisimple, finite-dimensional Lie algebras.

\vspace{1em}
 Let me discuss this classification in more detail. The starting point  is  Duflo's Theorem, see \cite{D}, which claims that, given a simple $\mathfrak{g}$-module, there exists  a simple highest weight $\mathfrak{g}$-module with the same annihilator.
 Hence, for the classification of primitive ideals, it is enough to  classify the  annihilators of all simple highest weight  $\mathfrak{g}$-modules. After this, it remains to understand when two different highest weight modules  have the same annihilators. A sufficient condition for  this  was given by A. Joseph \cite{J1}, and for $\mathfrak{g}=\mathfrak{sl}(n)$  Joseph was able to solve the problem completely. 
It turned out, that for integral weights $\lambda$ and $\mu$ lying in one Weyl group orbit, the corresponding primitive ideals coincide precisely when the insertion tableaux in the outputs of the Robinson--Schensted algorithm applied to $\lambda$ and $\mu$ coincide. We recall that this is a combinatorial algorithm  which attaches to each permutation two Young tableaux: the insertion tableau and the recording tableau.\break
 For example, let 
$$\delta=\biggl(\begin{array}{ccccccccc}
 1& 2& 3& 4& 5& 6& 7 &8 & 9 \\
6& 5& 1& 2& 8& 3& 7 &4 & 9
\end{array}\biggr)\in S_9$$  Then the output of the Robinson--Schensted algorithm applied to $\delta$ is 
$$Y=
\begin{array}{ccc}
\hline\multicolumn{1}{|c|}{9 }&\multicolumn{1}{|c|}{7 }&\multicolumn{1}{|c|}{4 }\\
\hline\multicolumn{1}{|c|}{8}&\multicolumn{1}{|c|}{5}&\multicolumn{1}{|c|}{2}\\
\hhline{---}\multicolumn{1}{|c|}{6}&&\\
\hhline{-~~}\multicolumn{1}{|c|}{2}&&\\
\hhline{-~~}\multicolumn{1}{|c|}{1}&&\\
\hhline{-~~}
\end{array}, \:
Y'=
\begin{array}{ccc}
\hline\multicolumn{1}{|c|}{9 }&\multicolumn{1}{|c|}{8 }&\multicolumn{1}{|c|}{7 }\\
\hline\multicolumn{1}{|c|}{6}&\multicolumn{1}{|c|}{4}&\multicolumn{1}{|c|}{2}\\
\hhline{---}\multicolumn{1}{|c|}{5}&&\\
\hhline{-~~}\multicolumn{1}{|c|}{3}&&\\
\hhline{-~~}\multicolumn{1}{|c|}{1}&&\\
\hhline{-~~}
\end{array}$$
where $Y$ is called the insertion tableau and $Y'$ is called the recording tableau.
For a detailed description of this algorithm and more examples,  see Subsection \ref{sub:RS}.

\vspace{1em}

Another important result of Joseph, closely related to the classification of primitive ideals, is that the associated variety of a primitive ideal coincides with the closure of a nilpotent coadjoint orbit.
\vspace{1em}

  The classification of primitive ideals of $U(\mathfrak{g})$ for $\mathfrak{g}(n)$ equal to $\mathfrak{o}(2n)$, $\mathfrak{o}(2n+1)$ or  $\mathfrak{sp}(2n)$ was completed by
D. Barbash and D. Vogan in their work \cite{BV}. As we pointed out above, two annihilators of simple $\mathfrak{sl}(n)$-modules $L(\lambda)$ and $L(\mu)$, with  respective highest weights $\lambda$ and $\mu$, coincide if and only if the insertion tableaux in the outputs of Robinson--Schensted algorithm applied $\lambda$ and $\mu$ coincide. This is no longer true for  $\mathfrak{o}(2n)$, $\mathfrak{o}(2n+1)$ or  $\mathfrak{sp}(2n)$. In this case, consider a primitive ideal  $I=\operatorname{Ann}(L(\lambda))$ for a simple module $L(\lambda)$ with highest weight $\lambda$. Then  Barbash and  Vogan find a weight $\gamma$ such that $I=\operatorname{Ann}(L(\gamma))$ and the insertion tableau $Y$ in the output of Robinson--Schensted algorithm applied to  $\gamma$ 
has the property that the lengths of the  rows of $Y$ are equal to the sizes of  Jordan cells for a nilpotent matrix from the  associated variety of $I$. In this way, primitive ideals of $U(\mathfrak{o}(2n))$, $U(\mathfrak{o}(2n+1))$ and $U(\mathfrak{sp}(2n))$ can also be parameterized by (certain)  Young tableaux.

\vspace{1em} Now, let us turn to the infinite-dimensional setting.
 First, we will briefly recall the classification of primitive ideals of the universal enveloping algebra for the infinite-dimensional Lie algebra $\mathfrak{sl}(\infty)$, due  to   I. Penkov  and A. Petukhov. In the series of works \cite{PP1}, \cite{PP2}, \cite{PP3}, \cite{PP4},  they provided such a classification, and in the paper \cite{PP5}  this classification was related to an infinite-dimensional  analogue of the Robinson--Schensted algorithm.
\vspace{1em}
 
We should note that in the case of  $\mathfrak{sl}(\infty)$ there are some important differences to the finite-dimensional case. First, it easy to prove that the center  of the universal enveloping algebra of  the Lie algebra  $\mathfrak{sl}(\infty)$ consists of constants only. Second, a 'generic' simple  $\mathfrak{sl}(\infty)$-module has zero annihilator, and in \cite{PP2} a criterion for a simple highest weight $\mathfrak{sl}(\infty)$-module to have nonzero annihilator is established.
Next,  every primitive ideal for $\mathfrak{sl}(\infty)$ is integrable, which is wrong in the finite-dimensional case. Recall that an integrable  ideal $I\subset U(\mathfrak{g}(\infty))$ is by definition the annihilator of an integrable $\mathfrak{g}(\infty)$-module $M$, i.e., of a $\mathfrak{g}(\infty)$-module $M$ which, when restricted to $\mathfrak{g}(n)$ for any $n$, is  a sum of finite-dimensional $\mathfrak{g}(n)$-modules.
Finally, the integrability of a primitive ideal $I\subset U(\mathfrak{sl}(\infty))$ does not imply  that any simple $\mathfrak{sl}(\infty)$-module $M$, whose annihilator equals $I$, is integrable.
\vspace{1em}

  The integrability of a primitive ideal $I\subset U(\mathfrak{sl}(\infty))$ was proved by
 Penkov and  Petukhov in the work \cite{PP4}. This  reduced  the classification of primitive ideals  to a classification of  annihilators of simple integrable $\mathfrak{sl}(\infty)$-modules. This latter classification had already been carried out in the papers \cite{PP1}, \cite{PP2}, \cite{PP3} (without classifying simple integrable $\mathfrak{sl}(\infty)$-modules!) and relies essentially on work of A. Zhilinskii.

\vspace{1em}
In a series of works \cite{Zh1}, \cite{Zh2}, \cite{Zh3}, Zhilinskii introduces and classifies  certain  combinatorial data  which he called coherent local systems, c.l.s..  In the work \cite{PP3},    Penkov and Petukhov establish a bijection between some irreducible c.l.s.  and  integrable primitive ideals. In the cases of $\mathfrak{o}(\infty)$ and $\mathfrak{sp}(\infty)$,  integrable primitive ideals of $U(\mathfrak{sl}(\infty))$ are in one-to-one correspondence with all irreducible c.l.s.  Next, in the paper \cite{PP4}   Penkov and Petukhov introduce the notion  of a precoherent local system (p.l.s.) and  prove that every primitive ideal of $U(\mathfrak{sl}(\infty))$ is  integrable.
\vspace{1em}

 More precisely, let $V(n)$ be the natural $\mathfrak{sl}(n)$-module, and let $V=\varinjlim V(n)$ be the natural $\mathfrak{sl}(\infty)$-module.  We denote by $S^{\bullet}(V)$ and $\Lambda^{\bullet}(V)$ the symmetric algebra and  the exterior algebra of $V$ respectively. 
It turns out that every primitive ideal of $U(\mathfrak{sl}(\infty))$  has the form $$I(x,y,Y_l,Y_r):=\operatorname{Ann}_{U(\mathfrak{sl}(\infty))}(V_{Y_l}\otimes (S^{\bullet}(V))^{\otimes x}\otimes  (\Lambda^{\bullet}(V))^{\otimes y} \otimes (V_{Y_r})_*),$$
where $x,y\in\mathbb{Z}_{\geq 0}$, $Y_l$ and $Y_r$ are  Young diagrams, and the modules $V_{Y_l}$ and $(V_{Y_r})_*$ are constructed as follows.
Let $Y$ be a Young diagram with row lengths 
$$l_1\geq l_2 \geq \dots \geq l_s >0.$$ 
Then for $n\geq s$ we denote by $V_Y(n)$ the $\mathfrak{sl}(n)$-module  with highest weight
$$\underbrace{(l_1,~l_2,~\ldots,~l_s,~0,~0,~\ldots,~0)}_{n\text{ numbers}},$$ and note that the modules $V_Y(n)$ are nested:
$V_Y(n)\hookrightarrow V_Y(n+1)$.
This allows us to define the $\mathfrak{sl}(\infty)$-module $V_Y$ as the direct limit $\varinjlim V_Y(n)$.  Finally, we put $$(V_Y)_*=\varinjlim (V_Y(n))^*,$$ where $(V_Y(n))^*$ is the module   dual to $V_Y(n)$.
\vspace{1em}

As stated above, the  goal in the present work is to classify the primitive ideals of the universal enveloping algebras  of  $\mathfrak{o}(\infty)$ and $\mathfrak{sp}(\infty)$. For $\mathfrak{o}(\infty)$,  Penkov and  Petukhov conjectured   that every primitive ideal is  integrable. Proving this conjecture, reduces the classification of primitive ideals of $U(\mathfrak{o}(\infty))$  to the known classification \cite{PP3} of integrable primitive ideals of $U(\mathfrak{o}(\infty))$.
On the other hand, in the case of $\mathfrak{sp}(\infty)$ not every primitive ideal is integrable. Indeed, consider the simple $\mathfrak{sp}(2n)$-modules $SW^+(2n)$ and $SW^-(2n)$ (the Shale--Weil modules) with respective highest weights $(-\frac{1}{2}, -\frac{1}{2},\dots,-\frac{1}{2}, -\frac{1}{2})$ and $(-\frac{1}{2}, -\frac{1}{2},\dots,-\frac{1}{2}, -\frac{3}{2})$. One can check that the direct limits $\varinjlim SW^+(2n)$ and  $\varinjlim SW^-(2n)$ are well-defined  $\mathfrak{sp}(\infty)$-modules. In the work \cite{PP3}  Penkov and  Petukhov prove that the annihilators in $U(\mathfrak{sp}(\infty))$ of these modules coincide, and constitute  a nonintegrable primitive ideal. 
\vspace{1em}


For the $\mathfrak{sp}(\infty)$-case  Penkov and  Petukhov provided me with a conjectural  construction of  all primitive ideal of $U(\mathfrak{sp}(\infty))$  by using a generalization of the notion of c.l.s. In this work,  I prove  this conjecture, as well as the conjecture that all primitive ideals of $U(\mathfrak{o}(\infty))$ are integrable.

\vspace{1em}
 In what follows, I describe the contents of  this dissertation.
\vspace{1em}

In  Section \ref{sec:pre} we give most necessary definitions, as well as known statements which  are used later in this work. Section \ref{3} is devoted to the proof of the fact  that every
primitive ideal of $U(\mathfrak{o}(\infty))$ or $U(\mathfrak{sp}(\infty))$ is weakly bounded. In addition, it turns out that all primitive ideals of   $U(\mathfrak{o}(\infty))$ and some primitive ideals  of $U(\mathfrak{sp}(\infty))$  are locally integrable. In Section \ref{4} we prove that every locally integrable ideal of $U(\mathfrak{o}(\infty))$ and $U(\mathfrak{sp}(\infty))$ is integrable.

\vspace{1em}

 In Section \ref{5} we introduce  the notions of a coherent local system of bounded ideals (c.l.s.b.) and a precoherent local system of bounded ideals  (p.l.s.b.), which generalize the notions of a coherent local system and a precoherent local system of finite-dimensional representations, respectively. 
\vspace{1em}

A new combinatorial tool appearing in the case of $\mathfrak{sp}(\infty)$ are the Kazhdan--Lusztig polynomials. Each Kazhdan--Lusztig polynomial is defined by two elements of the Weyl group of $\mathfrak{sp}(2n)$ for some $n$ (in general, of a Coxeter group); for a more precise definition, see \cite{H} or Subsection \ref{sub:KL}.  
 It is a remarkable fact that the Kazhdan--Lusztig polynomials corresponding to a bounded simple highest weight  $\mathfrak{sp}(2n)$-module $L(\lambda)$  are equal to the respective Kazhdan--Lusztig polynomials corresponding to the simple finite-dimensional   $\mathfrak{o}(2n)$-module $L(\lambda')$ where $\lambda'=\lambda+\sum^n_{i=1}\varepsilon_i$.  Using this,    we establish a one-to-one correspondence between  the set of c.l.s.b. for $\mathfrak{sp}(\infty)$ and the set of c.l.s.b for $\mathfrak{o}(\infty)$, the latter set being equal to the set of c.l.s. for $\mathfrak{o}(\infty)$.
\vspace{1em}

Finally, in  Section \ref{6} it is proved that each nonzero primitive ideal \break$I\subsetneq \operatorname{U}(\mathfrak{sp}(\infty))$ is the annihilator of a unique $\frak{sp}(\infty)$-module of the form
$$(\operatorname{S}^\bullet(V))^{\otimes x}\otimes ({ \Lambda}^\bullet(V))^{\otimes y}\otimes V_{Z} \text{
or }
(\operatorname{ S}^\bullet(V))^{\otimes x}\otimes({ \Lambda}^\bullet(V))^{\otimes y}\otimes V_{Z}\otimes R$$
where $x, y\in\mathbb Z_{\ge0}$, $V$ is the natural $\mathfrak{sp}(\infty)$-module, $V_Z$ is the $\mathfrak{sp}(\infty)$-module defined analogously to the $\mathfrak{sl}(\infty)$ case
(for $Z$ is arbitrary Young diagram), and $R$ is the Shale--Weil $\mathfrak{sp}(\infty)$-module which is equal to the direct limit $\varinjlim SW^+(2n)$.

\vspace{1em}
One may note that the notion of a bounded primitive ideal of $U(\mathfrak{o}(\infty))$ and $U(\mathfrak{sp}(\infty))$ is well-defined. This can be deduced for instance
 from work of  I. Penkov and  V. Serganova \cite{PS}. Furthermore,   the work \cite{GP} of \break D.~Grantcharov and I.~Penkov shows that the only nonintegrable bounded ideal of  $ U(\mathfrak{sp}(\infty))$ is the annihilator of module $\varinjlim SW^+(2n)$. By analogy with the finite-dimensional case, this ideal should be called Joseph ideal. In this way, all nonintegrable primitive ideals of $U(\mathfrak{sp}(\infty))$ are weakly bounded but all of them, with one exception, are not bounded. 

	\newpage
	\section{Preliminaries}\label{sec:pre}

	All Lie algebras and vector spaces are defined over  $\mathbb{C}$. Here we outline some of the preliminaries needed in the sequel.  Let $\mathfrak{g}$ be a finite- or countable-dimensional Lie algebra, and $g,g_0\subset\mathfrak{g}$. Then
$$\operatorname{ad}_g\colon\mathfrak{g}\to\mathfrak{g}, \quad \operatorname{ad}_g(g_0)=[g,g_0]$$
is the adjoint representation of $\mathfrak{g}$.
\emph{The universal enveloping algebra} $U(\mathfrak{g})$ of a Lie algebra $\mathfrak{g}$ is the quotient algebra
$$U(\mathfrak{g}):=T(\mathfrak{g})/I,$$
where $$T(\mathfrak{g})=\mathbb{C} \oplus \mathfrak{g}\oplus(\mathfrak{g}\otimes\mathfrak{g})\oplus(\mathfrak{g}\otimes\mathfrak{g}\otimes\mathfrak{g})\oplus\dots $$
is the tensor algebra of $\mathfrak{g}$, and $I$ is the two-sided ideal of $T(\mathfrak{g})$ generated by all elements of the form
$$a\otimes b-b\otimes a -[a,b]$$ for $a,b\in \mathfrak{g}$.

For a vector space $V$, we define the \emph{dual space}
$$V^*:=\operatorname{Hom}(V,\mathbb{C}).$$ 
Let $V$ be a vector space and $S\subset V$ be a subset of $V$. Then \emph{linear span $\operatorname{span}S$ of} $S$ in $V$ defined as
$$\operatorname{span}S=\bigcap V',$$
where the intersection is taken over all subspaces $V'\subset V$ such that $V'\supset S$.
Let $F$ be a finite set. Then $\# F$ denotes the number of elements of $F$.

Next, we introduce the associative algebra  $\operatorname{Mat}_n=\operatorname{Mat}_n(\mathbb{C})$  of all $(n\times n)$-matrices over  $\mathbb{C}$.
Also we fix the special basis   of the vector space $\operatorname{Mat}_n$, where $e_{ij}$ is the elementary matrix

\[{e}_{ij} = \left(\begin{array}{ccccc}
	0& \dots& 0&\dots & 0 \\
	\vdots &\ddots& \vdots&\iddots&\vdots\\
	0 &\dots& 1_{ij}&\dots&0\\
	\vdots &\iddots&\vdots&\ddots&\vdots\\
	0 &\dots&0&\dots&0
	\end{array}\right). \]
 The  $(i,j)$-th entry of $e_{i,j}$ equals $1$, while all other entries are zero.


Let $X=\{x_{ij}\}$ be a $(n\times n)$-matrix. Then we put
$$\operatorname{tr}X=\sum^n_{i=1}x_{ii} \text{ and } X^t=\{a_{ij}\mid a_{ij}=x_{ji}\}.$$

There is a symmetric bilinear form $(\cdot,\cdot)$ on the Lie algebra $\mathfrak{g}$, defined by
$$(x,y)=\operatorname{tr}(\operatorname{ad}_x \operatorname{ad}_y)$$
for $x,y\in \mathfrak{g}$. This symmetric bilinear form is called the \emph{Killing form on} $\mathfrak{g}$.

	\subsection{Lie algebras with root decomposition }

Here we introduce some basic definitions and facts from the structure theory of the Lie algebras.

	Let $\mathfrak{g}$ be a finite- or countable-dimensional Lie algebra, and let $U(\mathfrak{g})$ denote the universal enveloping algebra of $\mathfrak{g}$.
	\begin{defn}\label{2.0}
		A Lie subalgebra $\mathfrak{h}\subset\mathfrak{g}$ is called a \textup{toral subalgebra of} $\mathfrak{g}$ if, for every nonzero element $h \in \mathfrak{h}$, the linear operator $\operatorname{ad}_h\colon\mathfrak{g}\to\mathfrak{g}$ is diagonalizable.
	\end{defn} 

\begin{lemma}\label{2.1}
	Each toral subalgebra $\mathfrak{h}$ of a finite-dimensional Lie algebra $\mathfrak{g}$ is abelian.
\end{lemma}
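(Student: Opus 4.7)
The plan is to fix $h \in \mathfrak{h}$ and show that the restriction of $\operatorname{ad}_h$ to $\mathfrak{h}$ is zero, which immediately gives the claim. Since $\operatorname{ad}_h\colon\mathfrak{g}\to\mathfrak{g}$ is diagonalizable and $\mathfrak{h}$ is $\operatorname{ad}_h$-stable (because $\mathfrak{h}$ is a subalgebra), the restriction $\operatorname{ad}_h|_\mathfrak{h}$ is also diagonalizable. Hence it suffices to rule out the existence of a nonzero eigenvalue for $\operatorname{ad}_h|_\mathfrak{h}$.

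Suppose for contradiction that there exist $h'\in\mathfrak{h}$ and a scalar $\lambda\neq 0$ with $[h,h']=\lambda h'$, equivalently $[h',h]=-\lambda h'$. Now exploit the toral condition applied to $h'$: the operator $\operatorname{ad}_{h'}\colon\mathfrak{g}\to\mathfrak{g}$ is diagonalizable, so we can decompose
$$h=\sum_\alpha h_\alpha,\qquad \operatorname{ad}_{h'}(h_\alpha)=\alpha\,h_\alpha,$$
where the sum runs over the eigenvalues $\alpha$ of $\operatorname{ad}_{h'}$ and the $h_\alpha$ lie in the corresponding eigenspaces of $\mathfrak{g}$.

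Applying $\operatorname{ad}_{h'}$ to this decomposition gives $[h',h]=\sum_\alpha \alpha\, h_\alpha$. On the other hand, $[h',h]=-\lambda h'$, and since $[h',h']=0$, the vector $h'$ lies in the $0$-eigenspace of $\operatorname{ad}_{h'}$. Comparing eigenspace components on both sides of the equality $\sum_\alpha \alpha\, h_\alpha=-\lambda h'$ — which is valid because the eigenspace decomposition is a direct sum — one sees that the nonzero-eigenvalue components on the left must vanish and the $0$-eigenspace component forces $-\lambda h'=0$. Since $\lambda\neq 0$, we get $h'=0$, contradicting the assumption that $h'$ is a nonzero eigenvector.

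The only conceptual step is the simultaneous use of diagonalizability of both $\operatorname{ad}_h$ and $\operatorname{ad}_{h'}$ and the unique decomposition into eigenspaces; finite-dimensionality of $\mathfrak{g}$ enters only implicitly to guarantee that $\operatorname{ad}_{h'}$ admits an honest eigenspace decomposition of $\mathfrak{g}$ (not a completed one). No real obstacle arises; the argument is elementary once the toral hypothesis is applied to \emph{both} $h$ and $h'$.
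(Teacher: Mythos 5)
Your proof is correct, and it takes a genuinely different (and more economical) route than the paper's. You argue in the classical style: assuming $[h,h']=\lambda h'$ with $\lambda\neq 0$ and $h'\neq 0$, you decompose $h$ into $\operatorname{ad}_{h'}$-eigenvectors inside all of $\mathfrak{g}$ and note that $[h',h]=\sum_\alpha \alpha h_\alpha$ has zero component in the $0$-eigenspace of $\operatorname{ad}_{h'}$, while $-\lambda h'$ lies entirely in that eigenspace; comparing components kills $h'$. The paper instead decomposes $h_2$ into $\operatorname{ad}_{h_1}$-eigenvectors, first proves by an inductive Vandermonde-type argument that each eigencomponent $h^s_2$ itself belongs to $\mathfrak{h}$ (this is needed there so that the toral hypothesis can be applied to each $h^s_2$), and then concludes $[h^s_2,h_1]=0$ from $\ker(\operatorname{ad}_{h^s_2})=\ker((\operatorname{ad}_{h^s_2})^2)$ for the diagonalizable operator $\operatorname{ad}_{h^s_2}$. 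Your version applies torality only to the two elements $h$ and $h'$ already known to lie in $\mathfrak{h}$, and decomposes in $\mathfrak{g}$ rather than in $\mathfrak{h}$, so it bypasses the entire first half of the paper's proof. The paper's approach has the side benefit of exhibiting that $\mathfrak{h}$ is spanned by the eigencomponents of its elements, but that is not needed for the lemma itself; as a proof of the stated claim, yours is complete and somewhat cleaner.
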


\begin{proof}
	Consider two nonzero elements $h_1, h_2 \in \mathfrak{h}$ and let $h_2 = \sum^n_{s=1} h^s_2$ be the decomposition of $h_2$ as a linear combination of $\operatorname{ad}_{h_1}$-eigenvectors with distinct eigenvalues $\lambda_s$. Then, $\operatorname{ad}_{h_1}(h_2) = \sum^n_{s=1} \lambda_s h^s_2$.  As $\mathfrak{h}$ is a subspace of $\mathfrak{g}$, every vector of the form
	$h'_i = \sum^n_{s=1}(\lambda_s-\lambda_i)h^s_2$ belongs to $\mathfrak{h}$.
	The vector $h'_i$ decomposes as a sum of $n-1$ $\operatorname{ad}_{h_1}$-eigenvectors with distinct eigenvalues, and induction on $n$ shows that in fact every vector $h^s_2$ belongs to $\mathfrak{h}$. Next, note that $[h^s_2,[h^s_2,h_1]] = [h^s_2,-\lambda_s h^s_2] = 0$. Since $\operatorname{ad}_{h^s_2}$  is diagonalizable for each $s$ and $\operatorname{ker}_{\operatorname{ad}_{h^s_2}}$ = $\operatorname{ker}_{(\operatorname{ad}_{h^s_2})^2}$, we conclude that   $[h^s_2,h_1]=0$ for all $s$. Hence,  $[h_2,h_1]=0$. 
\end{proof}

We note that, as a corollary of Lemma	\ref{2.1} and Definition \ref{2.0}, every toral subalgebra of a finite-dimensional Lie algebra is diagonalizable, i.e. all operators in it are simultaneously diagonalizable.

\begin{defn}
	Let $\mathfrak{h}$ be a toral subalgebra of $\mathfrak{g}$. Then $\mathfrak{h}$ is a \textup{ maximal toral subalgebra} if there is no  proper inclusion  $\mathfrak{h} \subset \mathfrak{h}'$ for a toral subalgebra $\mathfrak{h}'\in\mathfrak{g}$.
\end{defn}

	
%

	\begin{defn}
		A maximal toral subalgebra $\mathfrak{h}\subset\mathfrak{g}$ is  \textup {a splitting Cartan subalgebra} if 
		$\mathfrak{g}$ has the following \textup {$\mathfrak{h}$-module  decomposition}:

		\begin{equation}
		\label{decomp}
		\mathfrak{g}= \bigoplus_{\alpha \in \mathfrak{h}^*}\mathfrak{g}^\alpha 
		\end{equation}
		where $\mathfrak{g}^\alpha$ is the eigenspace  $\{x \in \mathfrak{g}\mid [ h,x
		]=\alpha(h)x: \: \: \forall h \in \mathfrak{h}\}$  and $ \mathfrak{g}^0 = \mathfrak{h}$.

	\end{defn}
	
	For a splitting Cartan subalgebra, the set of nonzero elements $\alpha$ appearing in the decomposition (\ref{decomp}) is called the \textit{root system} of $\mathfrak{g}$, or simply \textit{the set of roots of} $\mathfrak{g}$ with  respect to $\mathfrak{h}$. We denote the set of roots by $\Delta$.

	Let now $\Delta^+$, $\Delta^- \subset \Delta$ be two subsets satisfying the conditions

	$$ \Delta = \Delta^+ \sqcup \Delta^-, \quad -\Delta^+ = \Delta^-,\quad \alpha,  \beta\in \Delta^+, \alpha+\beta\in \Delta\Rightarrow \alpha+\beta\in\Delta^+.$$
	Given such subsets, we call $\Delta^+$  the set of \emph{positive  roots}, and $\Delta^-$  the set of \emph{negative  roots}. Then the $\mathbb{Z}$-submodule $\Lambda_{\Delta}$ of $\mathfrak{h}^*$ generated by $\Delta$ is called the \emph{root lattice} of the root system $\Delta$.

If the Lie algebra $\mathfrak{g}$ is finite dimensional, we introduce  the notation
$$\rho_{\mathfrak{g}}:=\sum_{\alpha\in \Delta^+}\alpha/2.$$

	\begin{defn}
		Let $\mathfrak{g}$ be a Lie algebra. A Lie subalgebra $\mathfrak{b}$ of
		$\mathfrak{g}$ is a \textup {splitting Borel subalgebra} if 
		
			$$ \mathfrak{b}= 
		\mathfrak{h}\oplus\bigoplus_{\alpha \in \Delta^+}\mathfrak{g}^\alpha$$
		for some splitting Cartan subalgebra $\mathfrak{h}$, and some subset $\Delta^+$ of positive roots.
	\end{defn}


	\begin{defn}
Let $\mathfrak{b}$  be a Borel subalgebra defining $\Delta^+$.	Then	an element $\alpha \in \Delta^+$ is said to be a \emph{simple $\mathfrak{b}$-positive root}, or a \emph{simple root} with respect to $\mathfrak{b}$, if it cannot be decomposed as a (finite) sum of two or more $\mathfrak{b}$-positive roots.  We usually use the symbol $\Sigma^+$ or $\Sigma$ for the set of all simple $\mathfrak{b}$-positive roots.
		Similarly, we say that $\alpha \in \Delta^-$ is a \emph{simple negative root} with respect to $\mathfrak{b}$ if it cannot be decomposed as a sum of two or more $\mathfrak{b}$-negative roots.  The symbol $\Sigma^-$ denotes the set of simple $\mathfrak{b}$-negative roots.
	\end{defn}
\newpage
\subsection{Some finite-dimensional Lie algebras}
\label{2.3}
Here we review some important examples of finite-dimensional  Lie algebras. 

\begin{enumerate}
	\item The Lie algebra $\mathfrak{g}=\mathfrak{gl}(n)=\mathfrak{gl}(n,\mathbb{C})$ is the Lie algebra of the algebra $\operatorname{Mat}_n$, where $[X,Y]=XY-YX$ for $X,Y \in \operatorname{Mat}_{n}$.

The \emph{ general linear Lie algebra} is the Lie algebra obtained from the associative algebra $\operatorname{Mat_{n}}$.
	
We can choose a splitting Cartan subalgebra $\mathfrak{h}_{\mathfrak{gl}(n)}$ as the algebra of all diagonal matrices in $\operatorname{Mat}_{n}$. Indeed, $\mathfrak{h}_{\mathfrak{gl}(n)}$ is clearly a maximal toral subalgebra of $\mathfrak{g}$. We have the following $\mathfrak{h}_{\mathfrak{gl}(n)}$-root decomposition:
	
	\begin{equation*}
	\label{gl_decomp}
	\mathfrak{g}= \mathfrak{h}_{\mathfrak{gl}({n})}\oplus\bigoplus_{1\leq i,j\leq n, i\neq j}\operatorname{span}\{e_{ij}\}.
	\end{equation*}
	
Consider the basis $b=\{e_{11},e_{22},\dots,e_{n\:n}\}$ of the Lie algebra $\mathfrak{h}_{\mathfrak{gl}(n)}$. The dual basis
 $$b^*=\{\varepsilon_{1},\varepsilon_{2},\dots,\varepsilon_{n}\}$$ 
of $\mathfrak{h}^*_{\mathfrak{gl}(n)}$ satisfies
$$\varepsilon_i(e_{jj})=\delta^i_j$$
for $1\leq i,j\leq n$. Here $\delta^i_j$ is  Kronecker's delta.

Thus the root system $\Delta_{\mathfrak{gl}(n)}$ of $\mathfrak{gl}(n)$ with respect to $\mathfrak{h}_{\mathfrak{gl}(n)}$ is
$$\Delta_{\mathfrak{gl}(n)}=\{\varepsilon_{i}-\varepsilon_{j}\mid\:i,j\in\mathbb{Z}_{>0},1\leq i\neq j\leq n\}.$$

We can choose 
\begin{equation}\label{new}
\Delta_{\mathfrak{gl}(n)}^+=\{\varepsilon_{i}-\varepsilon_{j}\mid\:i,j\in\mathbb{Z}_{>0}, 1\leq i<j\leq n \}
\end{equation}

as the set  of positive roots, with simple roots
$$\{\varepsilon_{1}-\varepsilon_{2},\varepsilon_{2}-\varepsilon_{3},\dots,\varepsilon_{n-1}-\varepsilon_{n}\}.$$

Then the splitting Borel subalgebra $\mathfrak{b}_{\mathfrak{gl}(n)}$ of $\mathfrak{gl}(n)$ containing $\mathfrak{h}_{\mathfrak{gl}(n)}$ and corresponding to $\Delta_{\mathfrak{gl}(n)}^+$ consists of all upper-triangular matrices.
 
The Lie algebra $\mathfrak{gl}(n)$ is not a simple Lie algebra, and we can split it as
$\mathfrak{gl}({n})=\mathfrak{sl}(n)\oplus\{\textup{scalar matrices}\}.$

Let's describe the Lie algebra $\mathfrak{sl}(n)$.

\item
 The Lie algebra $\mathfrak{g}=\mathfrak{sl}(n)=\mathfrak{sl}(n,\mathbb{C})$ is the Lie subalgebra of  $\mathfrak{gl}(n)$ consisting of all $g$ such that  $\operatorname{tr}{g}=0$.

The Lie algebra $\mathfrak{sl}({n})$ is called \emph{the special linear Lie algebra}.
	
We can choose as a Cartan subalgebra the algebra  
$$\mathfrak{h}_{\mathfrak{sl}(n)}=\operatorname{span}\{e_{ii}-e_{i+1\; i+1}\mid\: 1\leq i\leq n\}$$
of all diagonal matrices of $\mathfrak{sl}({n})$. 
Then we have the root decomposition

	\begin{equation*}
	\label{gl_decomp}
	\mathfrak{g}= \mathfrak{h}_{\mathfrak{sl}(n)}\oplus\bigoplus_{1\leq i,j\leq n, i\neq j}\operatorname{span}\{e_{ij}\}.
	\end{equation*}

Next, we define $\varepsilon_{i}$ as in the previous case. Note that $$\{\varepsilon_{i}-\varepsilon_{i+1}\mid 1\leq i \leq n\}$$  is the basis of $\mathfrak{h}^*_{\mathfrak{sl}(n)}$ dual to the basis $$\{\frac{e_{ii}}{2}-\frac{e_{i+1\: i+1}}{2}\mid1\leq i\leq n\}.$$
The root system $\Delta_{\mathfrak{sl}(n)}$ of $\mathfrak{sl}(n)$ with respect to $\mathfrak{h}_{\mathfrak{sl}(n)}$ is equal to the root system of $\mathfrak{gl}({n})$ with respect to $\mathfrak{h}_{\mathfrak{gl}(n)}$.

Let the positive and simple roots be as in the previous example.
The Weyl group $W_{\mathfrak{sl}(n)}$ is isomorphic to the permutation group in $n$ letters  $W_{\mathfrak{sl}(n)}\simeq S_n$. It acts on a weight $\lambda=\sum^n_{i=1}\lambda_i\varepsilon_i$ by permuting its coordinates:
$$w(\lambda)=\sum^n_{i=1}\lambda_{i}\varepsilon_{w(i)} \text{, }w\in W_{\mathfrak{sl}(n)}.$$

 The Lie subalgebra of upper triangular matrices in $\mathfrak{sl}(n)$ is a splitting Borel subalgebra  $\mathfrak{b}_{\mathfrak{sl}(n)}$, and $\Delta^+_{\mathfrak{sl}(n)}$ is given by the right-hand side of the  formula (\ref{new}).

\item

The Lie algebra $\mathfrak{o}(2n)=\mathfrak{o}(2n,\mathbb{C})$ is a Lie subalgebra of the Lie algebra $\mathfrak{gl}({2n})$. Fix the matrix
\[  F=\left( \begin{array}{ccccc}
	0&0&\dots&0&1 \\
	0&0&\dots&1&0\\
	\vdots&&\iddots&&\vdots\\
	0&1&\dots&0&0 \\
	1&0&\dots&0&0
	\end{array}\right). \]
Then the Lie algebra $\mathfrak{o}(2n)$ is
$$\mathfrak{o}(2n)=\{X\in \mathfrak{gl}(2n)\mid XF+FX^t=0\}.$$ It is a subalgebra because 
$$(X+Y)F+F(X+Y)^t=(XF+FX^t)+(YF+FY^t)=0,$$
and 
$$[X,Y]F+F[X,Y]^t=XYF-YXF+F(XY)^t-F(YX)^t=$$
$$=-XFY^t+YFX^t+FY^tX^t-FX^tY^t=$$
$$=(YF+FY^t)X^t-(XF+FX^t)Y^t=0,$$
where $X,Y\in \mathfrak{o}(2n)$.

Throughout the rest of this work, we index the columns and rows of matrices in the ambient Lie algebra $\mathfrak{gl}(2n)$ by    $$(-n,-n+1,\dots,-1,1,\dots, n-1,n).$$  The subalgebra 
$$\mathfrak{h}_{\mathfrak{o}{(2n})}=\operatorname{span}\{ v_i=e_{ii}-e_{-i-i}\mid\: 1\leq i\leq n\}$$ 
 of all diagonal matrices in $\mathfrak{o}(2n)$ is a splitting Cartan subalgebra of $\mathfrak{o}(2n)$. 

Let  $\{\varepsilon_{i}\}$ be  the  basis of $\mathfrak{h}_{\mathfrak{o}({2n})}^*$ dual to the basis $\{v_i\}$.
The root system $\Delta_{\mathfrak{o}({2n})}$ of $\mathfrak{o}(2n)$ with respect to $\mathfrak{h}_{\mathfrak{o}({2n})}$ is
$$\Delta_{\mathfrak{o}({2n})}=\{\pm(\varepsilon_{i}\pm \varepsilon_{j})\mid\: i,j\in\mathbb{Z}_{>0}, 1\leq i\neq j\leq n\}.$$

Let 
$$\Delta_{\mathfrak{o}({2n})}^+=\{\varepsilon_{i}\pm\varepsilon_{j}\mid\:i,j\in\mathbb{Z}_{>0}, 1\leq i<j\leq n \}$$
be the set of positive roots,
with the set of simple roots
\begin{equation}\label{sigmao}
\{\varepsilon_{1}-\varepsilon_{2},\varepsilon_{2}-\varepsilon_{3},\dots,\varepsilon_{n-1}-\varepsilon_{n},\varepsilon_{n-1}+\varepsilon_{n}\}.
\end{equation}

The splitting Borel subalgebra $\mathfrak{b}_{\mathfrak{o}({2n})}$ of $\mathfrak{o}(2n)$ with these positive roots  is the subalgebra of all upper-triangular matrices in $\mathfrak{o}(2n)$.


In the sequel 
we fix an inclusion
$\mathfrak{h}^*_{\mathfrak{o}({2n})}\hookrightarrow\mathfrak{h}^*_{\mathfrak{sl}({2n})}$
 defined by $\varepsilon_i\mapsto \bar\varepsilon_i -\bar\varepsilon_{-i}$, where $ \{\bar\varepsilon_1,\bar\varepsilon_2\dots, \bar\varepsilon_n,\bar\varepsilon_{-n},\dots ,\bar\varepsilon_{-2},\bar\varepsilon_{-1}\}$  is the basis of $\mathfrak{h}^*_{\mathfrak{gl}(2n)}$ dual to the basis $\{2e_{ii}\}$.

\newpage

Under this inclusion, a weight  $\lambda=\sum^n_{i=1}\lambda_i\varepsilon_i$ is mapped to
$$\sum^n_{i=1}\lambda_i\bar\varepsilon_i +\sum^{-n}_{j=-1}\lambda_{j}\bar\varepsilon_j$$
where $\lambda_j=-\lambda_{-j}$ for $j<0$.

Let $S_{2n}$ denote the symmetric group on the $2n$ letters $$-n,\ldots,-1,1,\ldots,n.$$ The Weyl group $W_{\mathfrak{o}(2n)}$ of the Lie algebra $\mathfrak{o}(2n)$ is isomorphic to the subgroup of $S_{2n}$ consisting of permutations ${w\in S_{2n}}$ such that $w(-i)=-w(i)$, $1\leq i\leq n$, for which the number $\#\{i>0\mid w(i)<0\}$ is even.
00

We will identify $W_{\mathfrak{o}(2n)}$ with this subgroup, and will use the usual two-line notation

$$g=\biggl(\begin{array}{ccccccccc}
-n & -n+1 & \dots & -1 &|& 1& \dots &n-1 & n \\
g(-n) & g(-n+1) & \dots & g(-1) &|& g(1)& \dots & g(n-1) & g(n)
\end{array}\biggr)$$
for an element $g\in W$. Note that,  if we identify a weight $\lambda=\sum^n_{i=1}\lambda_i\varepsilon_i$ with the sequence of integers $$(\lambda_1,\lambda_2,\dots,\lambda_n,\lambda_{-n},\dots,\lambda_{-2},\lambda_{-1} ),$$ then $g$ sends this sequence to $$(\lambda_{g^{-1}(1)}, \lambda_{g^{-1}(2)},\dots, \lambda_{g_{-1}(n)}, \lambda_{g_{-1}(-n)},\dots,    \lambda_{g^{-1}(-2)}, \lambda_{g^{-1}(-1)}           )    .$$

\item

The Lie algebra $\mathfrak{sp}(2n)=\mathfrak{sp}(2n,\mathbb{C})$ is a Lie subalgebra of  $\mathfrak{gl}({2n})$. Fix the matrix
\[  F=\left( \begin{array}{ccccc}
	0&0&\dots&0&1 \\
	0&0&\dots&1&0\\
	\vdots&&\iddots&&\vdots\\
	0&-1&\dots&0&0 \\
	-1&0&\dots&0&0
	\end{array}\right) \]
with $n$ $1$s and $(-1)$s on the antidiagonal.
Then the Lie algebra $\mathfrak{sp}({2n})$ is
$$\mathfrak{sp}({2n})=\{X\in \mathfrak{gl}({2n})\mid XF+FX^t=0\}.$$ One can  check that it is a Lie subalgebra indeed.

 As in the case of $\mathfrak{o}(2n)$, we use  the set of indices   $\{-n,-n+1,\dots,-1,1,\dots, n-1,n\}$. The subalgebra 
$$\mathfrak{h}_{\mathfrak{sp}(2n)}=\operatorname{span}\{e_{ii}-e_{-i-i}\mid  1\leq i\leq n\},$$ 
of all diagonal matrices in $\mathfrak{sp}(2n)$ is a splitting Cartan sublagebra\break of $\mathfrak{sp}({2n})$.


We denote by $\varepsilon_{i}$ the  same weights as for $\mathfrak{o}(2n)$.
Then the root system $\Delta_{\mathfrak{sp}(2n)}$ of $\mathfrak{sp}(2n)$ with respect to $\mathfrak{h}_{\mathfrak{sp}(2n)}$ is
$$\Delta_{\mathfrak{sp}(2n)}=\{\pm(\varepsilon_{i}\pm \varepsilon_{j}), \pm2\varepsilon_{i}\mid i,j\in\mathbb{Z}_{>0}, 1\leq i\neq j\leq n\}.$$

Let
$$\Delta_{\mathfrak{sp}(2n)}^+=\{\varepsilon_{i}\pm\varepsilon_{j},2\varepsilon_{i}\mid i,j\in\mathbb{Z}_{>0}, 1\leq i<j\leq n\}$$
be the set of positive roots with the set of simple roots
\begin{equation}\label{sigmasp}
\{\varepsilon_{1}-\varepsilon_{2},\varepsilon_{2}-\varepsilon_{3},\dots,\varepsilon_{n-1}-\varepsilon_{n},2\varepsilon_{n}\}.
\end{equation}

Then the splitting Borel subalgebra $\mathfrak{b}_{\mathfrak{sp}(2n)}$ of $\mathfrak{sp}(2n)$ with positive roots $\Delta_{\mathfrak{sp}(2n)}^+$ is the subalgebra of all upper-triangular matrices in $\mathfrak{sp}(2n)$.

As in the case of $\mathfrak{o}(2n)$, we can rewrite any weight $\lambda=\sum^n_{i=1}\lambda_i\varepsilon_i$ as
$$\lambda=\sum^n_{i=1}\lambda_i\bar\varepsilon_i +\sum^{-n}_{j=-1}\lambda_{j}\bar\varepsilon_j$$
where $\lambda_j=-\lambda_{-j}$ for $j<0$.  The Weyl group $W_{\mathfrak{sp}(2n)}$ of $\mathfrak{sp}(2n)$ is isomorphic to the subgroup of $S_{2n}$ consisting of permutations ${w\in S_{2n}}$ such that \break $w(-i)=-w(i)$, $1\leq i\leq n$. 
As for $\mathfrak{o}(2n)$, we will identify $W_{\mathfrak{sp}(2n)}$ with this subgroup and will  use usual two-line notation.


In the sequel, we will refer to the Cartan and Borel subalgebras of    $\mathfrak{sl}(n)$,  $\mathfrak{o}(2n)$ or  $\mathfrak{sp}(2n)$ introduced above, as fixed Cartan and Borel sibalgebras.

\end{enumerate}

\subsection{Some countable-dimensional Lie algebras}\label{sub:infdimLA}

Here we review some important examples of infinite-dimensional  Lie algebras. 
\begin{enumerate}
	\item The Lie algebra $\mathfrak{sl}(\infty)=\mathfrak{sl}(\infty,\mathbb{C})$ is a countable-dimensional Lie algebra.
Consider the embeddings
$$\mathfrak{sl}(i) \to \mathfrak{sl}({i+1}),$$
	\[\textbf{X} \longmapsto \left(\begin{array}{cc}
	\textbf{X} & 0_{i\times1} \\
	0_{1\times i} & 0 
	\end{array}\right). \]

 We  set
$$\mathfrak{sl}(\infty) := \varinjlim\mathfrak{sl}(i).$$

Next, we choose a splitting Cartan subalgebra $\mathfrak{h}_{\mathfrak{sl}(\infty)}$ as the direct limit
$$\mathfrak{h}_{\mathfrak{sl}(\infty)} := \varinjlim\mathfrak{h}_i,$$
where $\mathfrak{h}_i$ is the diagonal splitting Cartan subalgebra of $\mathfrak{sl}(i)$. 
	The Lie algebra $\mathfrak{h}_{\mathfrak{sl}(\infty)}$ is obviously a toral subalgebra. It is also maximal toral,
as any larger subalgebra contains an elementary nondiagonal matrix, and the latter is a nilpotent element of $\mathfrak{sl}(\infty)$.

The root decomposition of the Lie algebra $\mathfrak{sl}(\infty)$ is 
	\begin{equation*}
	\label{gl_decomp}
	\mathfrak{sl}(\infty)= \mathfrak{h}_{\mathfrak{sl}(\infty)}\oplus\bigoplus_{i,j \in \mathbb{Z}_{>0}}\operatorname{span}\{e_{ij}\}.
	\end{equation*}


The root system $\Delta$ of  $\mathfrak{sl}(\infty)$ with respect to $\mathfrak{h}_{\mathfrak{sl}(\infty)}$ is
$$\Delta=\{\varepsilon_{i}-\varepsilon_{j}\mid\:i\neq j, \:i,j\in\mathbb{Z}_{>0}\},$$
where
$$\varepsilon_i(e_{jj})=\delta^i_j.$$
Let
$$\Delta^+=\{\varepsilon_{i}-\varepsilon_{j}\mid\: i<j, \:i,j\in\mathbb{Z}_{>0}\}$$  be the set of positive roots with the simple roots
$$\{\varepsilon_{1}-\varepsilon_{2},\varepsilon_{2}-\varepsilon_{3},\varepsilon_{3}-\varepsilon_{4}\dots\}.$$

Then the splitting Borel subalgebra $\mathfrak{b}_{\mathfrak{sl}(\infty)}\supset \mathfrak{h}_{\mathfrak{sl}(\infty)}$  with positive roots $\Delta^+$  is the direct limit of $\varinjlim\mathfrak{b}_{\mathfrak{sl}(n)}$.
 
\item The Lie algebra $\mathfrak{o}(\infty)=\mathfrak{o}(\infty,\mathbb{C})$ is a countable-dimensional Lie algebra.
Consider the embeddings
$$\phi_{2i}\colon\mathfrak{o}(2i) \to \mathfrak{o}({2i+2}),$$
	\[\textbf{X} \longmapsto \left(\begin{array}{ccc}
	0&0_{1\times i}&0\\
	0_{i\times1}&\textbf{X} & 0_{i\times1} \\
	0&0_{1\times i} & 0 
	\end{array}\right). \]

We set
$$\mathfrak{o}(\infty) := \varinjlim\mathfrak{o}(2i).$$

We use  $\mathbb{Z}\backslash\{0\}$ as the set of indices for matrices of $\mathfrak{o}(\infty)$.
We can choose a splitting Cartan subalgebra $\mathfrak{h}_{\mathfrak{o}(\infty)}$ as the direct limit
$$\mathfrak{h}_{\mathfrak{o}(\infty)} := \varinjlim\mathfrak{h}_{\mathfrak{o}(2i)},$$
where
$$\mathfrak{h}_{\mathfrak{o}(\infty)}=\operatorname{span}\{v_i=e_{ii}-e_{-i\; -i}\mid\: 1\leq i \}.$$ 


 By
 $$\{\varepsilon_{1},\varepsilon_{2},\varepsilon_{3},\dots\}$$ we denote set dual to the basis $\{v_i\}$ of $\mathfrak{h}_{\mathfrak{o}(\infty)}$, i.e.
$$\varepsilon_i(v_j)=\delta^i_j$$
for $1\leq i,j$ and $i,j \in \mathbb{Z}_{>0}$.

The root system $\Delta$ of  $\mathfrak{o}(\infty)$ with respect to $\mathfrak{h}_{\mathfrak{o}}$ is
$$\Delta=\{\pm(\varepsilon_{i}\pm\varepsilon_{j})\mid\:i\neq j, \:i,j\in\mathbb{Z}_{>0}\}.$$

Let
$$\Delta^+=\{-\varepsilon_{i}\pm\varepsilon_{j}\mid\: i>j, \:i,j\in\mathbb{Z}_{>0}\}$$ be the set of positive roots; the corresponding simple roots are
$$\{-\varepsilon_{1}-\varepsilon_{2},\varepsilon_{1}-\varepsilon_{2},\varepsilon_{2}-\varepsilon_{3},\varepsilon_{3}-\varepsilon_{4}\dots\}.$$

\item \label{sp} The Lie algebra $\mathfrak{sp}(\infty)=\mathfrak{sp}(\infty,\mathbb{C})$ is a countable-dimensional Lie algebra.
We consider the embedding
$$\psi_{2i}\colon\mathfrak{sp}(2i) \to \mathfrak{sp}({2i+2}),$$
	\[\textbf{X} \longmapsto \left(\begin{array}{ccc}
	0&0_{1\times i}&0\\
	0_{i\times1}&\textbf{X} & 0_{i\times1} \\
	0&0_{1\times i} & 0 
	\end{array}\right), \]
and put
$$\mathfrak{sp}(\infty) := \varinjlim\mathfrak{sp}(2i).$$

We use the set $\mathbb{Z}\backslash\{0\}$ as a set of indexes for matrices of $\mathfrak{sp}(\infty)$. One can choose  a splitting Cartan subalgebra $\mathfrak{h}_{\mathfrak{sp}(\infty)}$ as the direct limit
$$\mathfrak{h}_{\mathfrak{sp}(\infty)} := \varinjlim\mathfrak{h}_{\mathfrak{sp}(2i)}.$$
	By construction,
\begin{equation}\label{formula:v_i}
\mathfrak{h}_{\mathfrak{sp}(\infty)}=\operatorname{span}\{v_i=e_{ii}-e_{-i\; -i}\mid\: 1\leq i \}.
\end{equation}

The vectors $\varepsilon_{i}$ are defined in the same way as for $\mathfrak{o}(\infty)$.

The root system $\Delta$ of  $\mathfrak{sp}(\infty)$ with respect to $\mathfrak{h}_{\mathfrak{sp}(\infty)}$ is
$$\Delta=\{\pm(\varepsilon_{i}\pm\varepsilon_{j}), \pm2\varepsilon_{i}\mid\:i\neq j, \:i,j\in\mathbb{Z}_{>0}\}.$$

We set 
$$\Delta^+=\{-\varepsilon_{i}\pm\varepsilon_{j}, -2\varepsilon_{i}\mid\: i>j, \:i,j\in\mathbb{Z}_{>0}\}$$ as the  set of positive roots; the corresponding simple roots are
$$\{-2\varepsilon_{1},\varepsilon_{1}-\varepsilon_{2},\varepsilon_{2}-\varepsilon_{3},\varepsilon_{3}-\varepsilon_{4}\dots\}.$$

\end{enumerate}

\medskip
	\subsection{Highest weight modules} \label{sub:pre}

In this subsection we introduce the notion of highest weight module, and give some related definitions.

\begin{defn} A Lie algebra $\mathfrak{g}$ is \emph{simple} if every ideal $I\subset \mathfrak{g}$ is equal to zero or to $\mathfrak{g}$.
\end{defn}


For the purposes of this paper, we call a Lie algebra semisimple if it is isomorphic to a direct of a simple Lie algebras. 
	Let $\mathfrak{g}$ be a semisimple Lie algebra, $\mathfrak{h}\subset\mathfrak{g}$ be a splitting Cartan subalgebra,  $\mathfrak{b}\subset\mathfrak{g}$ be a splitting Borel subalgebra with $\mathfrak{b}\supset\mathfrak{h}$, and let
	$\mathfrak{n}=\bigoplus_{\alpha \in \Delta^+}\mathfrak{g}^\alpha$. Fix a  $\mathfrak{g}$-module  $M$.
	For each $\lambda \in \mathfrak{h}^*$, define $M^\lambda$ to be the subspace
	$$\{v \in M \mid h\cdot{v}=\lambda(h)v: \: \: \forall h \in \mathfrak{h}\} \subset M.$$
	\vspace{\baselineskip}
	\begin{defn}
		If 

 $$M= \bigoplus_{\lambda \in \mathfrak{h}^*}M^\lambda $$ then $M$ is  a \textup
		{weight module} over $\mathfrak{g}$. If $M^\lambda \neq 0$, then $\lambda\neq 0$ is said to
		be a \textup {weight} of $M$, $M^\lambda$ is called a \textup {weight subspace} of
		$M$, and the elements of $M^\lambda$ are called \textup {weight vectors of weight} $\lambda$. 
		
	\end{defn}

	\begin{defn}
 The set $\operatorname{supp}M$ of weights $\lambda$ for which $\operatorname{dim}M_{\lambda}>0$ is  called  \textup{the support}  of the weight module $M$.
	\end{defn}
	One can prove that submodules, quotients and direct sums of weight\break $\mathfrak{g}$-modules are weight
	modules as well.
		
	\begin{defn}
		A  $\mathfrak{g}$-module $M$ is said to be a \textup {cyclic} over
		$\mathfrak{g}$ if $M$ is generated as an $U(\mathfrak{g})$-module by a single
		nonzero vector. 
	\end{defn}

	\begin{defn}
		A  $\mathfrak{g}$-module $M$  is called a \textup
		{highest weight module} with respect to a splitting Borel subalgebra $\mathfrak{b}$ if it is generated by a vector $v\neq0$ satisfying
		$$\mathfrak{n}\cdot v=0,$$
		and there exists a weight $\lambda$ (which we will call \textup {the highest weight}
		of $M$) such that
		$$h\cdot v=\lambda(h)v$$
		for each $h \in \mathfrak{g}$. The vector $v$ is called a \textup
		{highest weight vector} of $M$.
	\end{defn}
	Each highest weight  $\mathfrak{g}$-module is a weight $\mathfrak{g}$-module since $\mathfrak{g}$ is an $\operatorname{ad}_{\mathfrak{h}}$-weight module.
	\begin{defn}
		
		For a fixed finite-dimensional semisimple Lie algebra $\mathfrak{g}$, a splitting Borel sublagebra $\mathfrak{b}\subset\mathfrak{g}$, a splitting Cartan subalgebra $\mathfrak{h}\subset\mathfrak{b}$,   and for every
		$\lambda \in \mathfrak{h}^*$, we define the highest weight module
		$M(\lambda)=M(\lambda;\mathfrak{g},\mathfrak{b},\mathfrak{h})$  by setting
		
		$$M(\lambda;\mathfrak{g},\mathfrak{b},\mathfrak{h}) := U(\mathfrak{g})/I , $$
		where $I$ is the left ideal in $U(\mathfrak{g})$ generated by $ \mathfrak{n}$ and by  $h-\lambda(h)1_{U(\mathfrak{g})}$ for all $h\in\mathfrak{h}$.
		The modules  $M(\lambda;\mathfrak{g},\mathfrak{b},\mathfrak{h})$ are known as \textup {Verma modules}.
	\end{defn}
	
	Each Verma module $M(\lambda)$ has a unique maximal proper
	$U(\mathfrak{g})$-submodule $N$, which is the sum of all proper submodules of
	$M(\lambda)$, \cite{DP}. Accordingly, 
	
	$$L(\lambda):=M(\lambda)/N$$
	is the unique simple quotient of $M(\lambda)$.
	
	We denote by $v_{\lambda}$ the image of $1_{U(\mathfrak{g})} $ under the canonical
	projection \break$U(\mathfrak{g})\twoheadrightarrow U(\mathfrak{g})/I$. Clearly,
	$v_{\lambda}$ is a highest weight vector of $M(\lambda)$ of weight $\lambda$. Moreover, for any highest weight module $M$ over $\mathfrak{g}$ with highest
	weight $\lambda$, there is a unique surjective homomorphism $\phi\colon M(\lambda)
	\twoheadrightarrow M$. Hence,  up to isomorpsim, $L(\lambda)$ is the unique simple highest weight
	module over $\mathfrak{g}$ with highest weight $\lambda$.
	
\medskip
\medskip

\subsection{BGG category $\mathcal{O}$}
Here we recall some basics concerning the BGG category $\mathcal{O}$, which was introduced in the early 1970s by Joseph Bernstein, Israel Gelfand, and Sergei Gelfand. 

\begin{defn} The \emph{BGG category} $\mathcal{O}$ is defined to be  the full subcategory of $U(\mathfrak{g})$-modules whose objects  are the modules satisfying the following three conditions:
\begin{enumerate}
\item $M$ is a finitely generated $U(\mathfrak{g})$-module.
\item $M$ is $\mathfrak{h}$-semisimple, that is, $M$ is a weight module.
\item $M$ is a locally $\mathfrak{n}$-finite: for each $v\in M$, the subspace $U(\mathfrak{n})\cdot v$ of $M$ is finite dimensional.
\end{enumerate}
\end{defn}

We will call a weight $\lambda$ \emph{dominant} if $\frac{2(\alpha,\lambda)}{(\alpha,\alpha)}$ is not a negative integer for any $\alpha \in \Delta^+$. A weight $\lambda$  is \emph{regular} if $\frac{2(\alpha,\lambda)}{(\alpha,\alpha)}\neq0$ for any $\alpha \in \Delta^+$.

\begin{defn} The \emph{dot action} of  $W$ on  $\mathfrak{h}^*$ is defined by letting $$w\cdot\lambda=w(\lambda+\rho_{\mathfrak{g}})-\rho_{\mathfrak{g}}.$$ \end{defn}

\begin{defn}
\emph{ The reflection group corresponding to a linear function $\lambda$}  is the subgroup $W_{[\lambda]}$ of  $W$ which consists of the elements

$$w\in W\text{ such that }w\cdot\lambda-\lambda\in\Lambda_{\Delta},$$
where $\Lambda_{\Delta}$ is the root lattice of $\mathfrak{g}$.
\end{defn}

By definition, two weights $\gamma$ and $\kappa$ are \emph{linked by} $W_{[\lambda]}$ whenever $\gamma=w\cdot \kappa$ for $w\in W_{[\lambda]}$.

\emph{The blocks} of  the category $\mathcal{O}$ are precisely the subcategories 	consisting of modules whose all composition factors  have highest weights     linked by $W_{[\lambda]}$     to a weight $\lambda$ such that $-\lambda$  is dominant. Thus the blocks are in natural bijection with the dominant weights.

	\subsection{Duflo's Theorem}
	
In this subsection we recall the important Duflo Theorem, which allows us to reduce the set of primitive ideals to the set of annihilators of simple highest weight modules.

	Let $A$ be an associative algebra with identity, and $I$ be an  ideal of $A$.

	\begin{defn}
		
		The ideal $I\subset A$ is \textup{primitive} if $I$  is the annihilator of a simple left
		$A$-module.
	\end{defn}
	
	This following statement is well known  as Duflo's Theorem.
	\begin{theorem}
		\textup{\cite{D}}
		Let $\mathfrak{g}$ be a finite-dimensional reductive Lie algebra, $I$ be a  primitive ideal of $ U(\mathfrak{g})$, and $\mathfrak{b}$ be  a Borel subalgebra  of $\mathfrak{g}$. Then there exists  an irreducible $\mathfrak{b}$-highest weight  $\mathfrak{g}$-module  whose annihilator is $I$.
		
	\end{theorem}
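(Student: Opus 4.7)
The plan is to reduce the classification of primitive ideals to the subset of primitive ideals with a fixed central character, and then show that on each such fiber every primitive ideal appears as the annihilator of a simple highest weight module.

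First, I would start with a primitive ideal $I = \operatorname{Ann}_{U(\mathfrak{g})}(M)$ for some simple $\mathfrak{g}$-module $M$. The enveloping algebra $U(\mathfrak{g})$ is countably generated, so Dixmier's version of Schur's lemma applies to give that $\operatorname{End}_{U(\mathfrak{g})}(M) = \mathbb{C}$. In particular, the center $Z(\mathfrak{g})$ acts on $M$ by a character $\chi \colon Z(\mathfrak{g}) \to \mathbb{C}$, and so $\ker\chi \subset I$. By the Harish--Chandra isomorphism (applied with respect to the chosen Cartan $\mathfrak{h} \subset \mathfrak{b}$), $\chi$ is of the form $\chi_\lambda$ for some $\lambda \in \mathfrak{h}^*$, with $\chi_\mu = \chi_\lambda$ if and only if $\mu \in W \cdot \lambda$ under the dot action. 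Note that the Verma module $M(\lambda)$ and all of its composition factors $L(\mu)$ with $\mu \in W \cdot \lambda$ share this central character.

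The main step would be to prove that every primitive ideal $J$ of $U(\mathfrak{g})$ with central character $\chi_\lambda$ is contained in (in fact equals) some $\operatorname{Ann}_{U(\mathfrak{g})}(L(w \cdot \lambda))$. The strategy is to consider the ideal
\[
K_\lambda := \bigcap_{w \in W} \operatorname{Ann}_{U(\mathfrak{g})}\bigl(L(w \cdot \lambda)\bigr),
\]
which is a two-sided ideal of $U(\mathfrak{g})$ containing $\ker\chi_\lambda$, and to argue that $K_\lambda \subset J$ for every such primitive $J$. Because a primitive ideal is prime and $\prod_{w} \operatorname{Ann}(L(w \cdot \lambda)) \subset K_\lambda \subset J$, it would then follow from the standard prime-ideal argument that $J$ contains, hence equals, $\operatorname{Ann}(L(w \cdot \lambda))$ for some $w$, furnishing the required highest weight module. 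To relate $J$ to $K_\lambda$, I would employ the translation principle: starting from $\lambda$ one can pass, via projective translation functors acting on category $\mathcal{O}$, to a dominant regular integral weight $\lambda_0$ where primitive ideals are well understood, then transport back. Translation functors preserve annihilators up to central character twist, so the analysis at the regular weight $\lambda_0$ suffices.

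The hard part, and the step where Duflo's original argument is deepest, is precisely the inclusion $K_\lambda \subset J$, equivalently the claim that in the quotient $U(\mathfrak{g})/\ker\chi_\lambda$ every primitive ideal is annihilator of a highest weight module. To handle this one must control the Jacobson radical behavior of $U(\mathfrak{g})/\ker\chi_\lambda$ and show that the family $\{L(w \cdot \lambda)\}_{w \in W}$ is ``enough'' to separate it. I would carry this out by combining the Beilinson--Bernstein localization (so that $U(\mathfrak{g})/\ker\chi_\lambda$-modules correspond to $\mathcal{D}$-modules on the flag variety, where simple holonomic objects admit a global sections description in terms of highest weight modules) with the observation that any simple $\mathfrak{g}$-module with central character $\chi_\lambda$ has the same annihilator as a suitable subquotient of $M(\lambda)$.

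Finally, once we know $J = \operatorname{Ann}(L(w \cdot \lambda))$ for a specific $w$, applied to our original $I$ this produces a simple $\mathfrak{b}$-highest weight module, namely $L(w \cdot \lambda)$, whose annihilator equals $I$, completing the proof. The statement for a reductive rather than semisimple $\mathfrak{g}$ is then an immediate consequence, since the center of $\mathfrak{g}$ lies in $U(\mathfrak{g})$ and acts by a scalar on any simple module, reducing the problem to the semisimple part.
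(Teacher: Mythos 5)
First, a caveat: the paper does not prove this statement at all --- it is quoted from Duflo's paper \cite{D} and used as a black box --- so there is no in-paper argument to compare yours against. Judged on its own terms, your sketch has a genuine gap at its pivotal step. From $\prod_{w}\operatorname{Ann}(L(w\cdot\lambda))\subset K_\lambda\subset J$ and primality of $J$ you may indeed conclude that $\operatorname{Ann}(L(w\cdot\lambda))\subseteq J$ for some $w$, but this inclusion points the wrong way, and ``contains, hence equals'' is a non sequitur: a primitive ideal with central character $\chi_\lambda$ typically contains many annihilators of highest weight modules (the augmentation ideal contains $\operatorname{Ann}(L(w\cdot 0))$ for every $w$), and the entire content of Duflo's theorem is the reverse inclusion $J\subseteq\operatorname{Ann}(L(w\cdot\lambda))$ for a suitable $w$. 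Note also that the inclusion $K_\lambda\subset J$, which you single out as the deep step, is in fact the accessible one: choosing the representative $\lambda$ of the dot-orbit for which $M(\lambda)$ is simple, one gets $K_\lambda\subseteq\operatorname{Ann}(M(\lambda))=U(\mathfrak{g})\ker\chi_\lambda\subseteq J$, the middle equality being Duflo's theorem on annihilators of Verma modules.

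What is missing is a construction producing a highest weight module whose annihilator \emph{equals}, not merely is contained in, $J$. The standard route is to take $\lambda$ dominant, observe $J\supseteq\operatorname{Ann}(M(\lambda))=U(\mathfrak{g})\ker\chi_\lambda$, form the highest weight module $N=M(\lambda)/JM(\lambda)$ (so that $J\subseteq\operatorname{Ann}(N)$ trivially), and then prove the hard equality $\operatorname{Ann}(N)=J$; this is where Duflo's original argument, or the Bernstein--Gelfand equivalence between a block of Harish--Chandra bimodules and a subcategory of $\mathcal{O}$ sending $U/J$ to $(U/J)\otimes_U M(\lambda)$, is indispensable. Once $J=\operatorname{Ann}(N)$ is known, finite length of $N$ and primality of $J$ give $J=\operatorname{Ann}(L(w\cdot\lambda))$ exactly as in your closing step. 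Your appeal to localization cannot substitute for this as stated: a simple $U(\mathfrak{g})/U(\mathfrak{g})\ker\chi_\lambda$-module corresponds to a simple twisted $\mathcal{D}$-module on the flag variety that need not be holonomic, let alone supported in a way that relates it to highest weight modules, so the claimed ``global sections description in terms of highest weight modules'' does not apply to the modules you need it for.
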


	\subsection{Associated variety}
	
	Let $\mathfrak{g}$ be a finite-dimensional  Lie algebra.  The following theorem is known as the   Poincaré--Birkhoff--Witt Theorem, and plays an important role in this thesis.
	
	\begin{theorem}
		Let $\phi\colon\mathfrak{g}\rightarrow U(\mathfrak{g})$ be the canonical map.
		Denote by\break $\{g_1,g_2,g_3,\ldots,g_n\}$  a basis of $\mathfrak{g}$. Then  the monomials
		${g_1^{v_1}g_2^{v_2}g_3^{v_3}\ldots g_n^{v_n}}$, where $v_1,v_2,v_3, \ldots, v_n
		\in \mathbb{Z}_{\geq0}$, constitute a basis of $U(\mathfrak{g}) $.
		
	\end{theorem}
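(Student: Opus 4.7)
The plan is the classical two-step argument: show the ordered monomials span $U(\mathfrak{g})$, then show they are linearly independent. Throughout, write $g^{\mathbf v} := g_1^{v_1} g_2^{v_2} \cdots g_n^{v_n}$ for a multi-index $\mathbf v \in \mathbb{Z}_{\geq 0}^n$ and let $|\mathbf v| = v_1 + \cdots + v_n$.

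For spanning, I would work with $U(\mathfrak{g}) = T(\mathfrak{g})/I$ and reduce a general monomial $g_{i_1} g_{i_2} \cdots g_{i_k}$ to ordered form by repeated use of the relation
\[
g_i g_j = g_j g_i + [g_i, g_j] \pmod{I}.
\]
Each such swap of an adjacent out-of-order pair replaces the monomial by an ordered one plus a term of strictly lower total degree (namely $k-1$). A double induction, first on the degree $k$ and then on the number of inversions in $(i_1, \ldots, i_k)$, shows that every element of $U(\mathfrak{g})$ is a $\mathbb{C}$-linear combination of the $g^{\mathbf v}$. This part uses only the Lie algebra relations, not the Jacobi identity in any deep way.

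For linear independence, the standard route is to construct a faithful-enough action and read off independence from it. Explicitly, I would define a $\mathbb{C}$-linear action of $\mathfrak{g}$ on the polynomial ring $P = \mathbb{C}[x_1, \ldots, x_n]$ by prescribing
\[
g_i \cdot x^{\mathbf v} = x_i x^{\mathbf v} + (\text{correction terms of lower lexicographic type}),
\]
with the correction chosen so that the prescribed operators satisfy $g_i g_j - g_j g_i = [g_i, g_j]$ as operators on $P$. This is the delicate part: the existence of such a compatible action is equivalent to the Jacobi identity, and verifying it requires a careful inductive construction of the action on each degree followed by a check that $g_i(g_j \cdot x^{\mathbf v}) - g_j(g_i \cdot x^{\mathbf v}) = [g_i, g_j]\cdot x^{\mathbf v}$ holds on a set of generators. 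By the universal property of $T(\mathfrak{g})$ and the fact that the defining relations of $U(\mathfrak{g})$ are respected, the action extends to a representation $\rho \colon U(\mathfrak{g}) \to \operatorname{End}(P)$. Then, by construction, $\rho(g^{\mathbf v})(1) = x^{\mathbf v} + (\text{lower terms})$, so the images $\rho(g^{\mathbf v})(1)$ are triangular with respect to the monomial basis of $P$ and thus linearly independent, forcing the $g^{\mathbf v}$ themselves to be linearly independent in $U(\mathfrak{g})$.

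The main obstacle is verifying that the prescribed operators on $P$ satisfy the Lie bracket relation: this is where the Jacobi identity for $\mathfrak{g}$ enters, and the induction on degree must be set up so that the compatibility at each level rests on the compatibility at the previous level together with the Jacobi identity. Once the action is built, the rest is bookkeeping. An alternative I would keep in reserve is to invoke Bergman's diamond lemma applied to the rewriting system $g_i g_j \leadsto g_j g_i + [g_i, g_j]$ for $i > j$: the single ambiguity to resolve is the triple $g_k g_j g_i$ with $k > j > i$, and confluence there is precisely the Jacobi identity. Either route delivers the theorem; I would present the explicit representation-theoretic proof since it is the more transparent and is the one most commonly cited in the Lie-theoretic literature used elsewhere in this thesis.
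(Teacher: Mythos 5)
The paper does not prove this statement at all: it records the Poincar\'e--Birkhoff--Witt theorem as a classical fact (with no citation beyond naming it) and moves on, so there is no in-paper argument to compare against. Your outline is the standard and correct proof. The spanning half is fine as written: rewriting $g_ig_j$ as $g_jg_i+[g_i,g_j]$ modulo $I$ and inducting on total degree and on the number of inversions does produce the ordered monomials as a spanning set, and you are right that this uses only antisymmetry of the bracket. The linear-independence half is where all the content lives, and you have correctly identified it: one must \emph{construct} an action of $\mathfrak{g}$ on $\mathbb{C}[x_1,\ldots,x_n]$ with $g_i\cdot x^{\mathbf v}=x_ix^{\mathbf v}+(\text{lower order})$, and the existence of a consistent such action is exactly where the Jacobi identity is consumed. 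Your proposal describes this verification rather than performing it; for a complete proof one would need to write out the double induction (on $|\mathbf v|$ and on the index $i$ relative to the smallest index occurring in $\mathbf v$) and check the bracket relation case by case, which is several pages of bookkeeping in Jacobson or Dixmier. Since the idea, the triangularity argument that concludes independence, and the role of Jacobi are all correctly placed, I would not call this a gap in the approach --- nothing you propose would fail --- but be aware that the compatibility check is the theorem, not a footnote to it. Your fallback via Bergman's diamond lemma is also correct: the only overlap ambiguity is $g_kg_jg_i$ with $k>j>i$, and its resolvability is equivalent to the Jacobi identity; that route is arguably shorter to make fully rigorous than the explicit representation.
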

	
	Define $U^i $ to be the vector subspace of $U(\mathfrak{g})$ spanned by all monomials
	${g_1^{v_1}g_2^{v_2}g_3^{v_3}\ldots g_n^{v_n}}$ with $\sum v_j \leq i $. The
	chain of subspaces $\{U^i \}_{i\in \mathbb{Z}_{\geq0}}$ is a \textit{filtration} of $U(\mathfrak{g})$.

	\begin{defn}
	Put $U^{-1}=\{0\}$.	Define \textup{ the associated graded algebra of $U(\mathfrak{g})$}:
		$$ gr U(\mathfrak{g}) := \bigoplus_{d \in \mathbb{Z}_{\geq0}}
		(U^{d}/U^{d-1})$$
		
	\end{defn}
	
	As a consequence of the Poincaré--Birkhoff--Witt Theorem, we conclude that this algebra is isomorphic to the symmetric algebra $S^{\bullet}(\mathfrak{g})$.

	\begin{defn}
		In the same way we define the \textup{associated graded ideal} of an ideal $I\subset U(\mathfrak{g})$:
		$$ gr I := \bigoplus_{d \in \mathbb{Z}_{\geq0} } (U^{d}\cap I) /(U^{d-1} \cap I) \subset gr U(\mathfrak{g})\simeq S^{\bullet}(\mathfrak{g}).$$
	\end{defn}
		
	\begin{defn} \label{sub:var}
		\textup We denote by $Var (I)$ the algebraic variety corresponding to $grI$; by definition this is the set
		of common  zeros of $gr I$ in $\mathfrak{g}^*$ (Here we identify $S^{\bullet}(\mathfrak{g})$ with the algebra 
of polynomial functions of $\mathfrak{g}^*$). By identifying $\mathfrak{g}$ and
		$\mathfrak{g}^*$ via the Killing form, we can  assume that $Var (I) \subset
		\mathfrak{g} $.
		
	\end{defn}

	\subsection{Weakly bounded ideals}\label{section:LBM}

In this subsection we give  some definitions which are needed to state the results of this work.  

Here $\mathfrak{g}(\infty)$  is one of the Lie algebras $\mathfrak{sl}(\infty)$, $\mathfrak{o}(\infty)$ or $\mathfrak{sp}(\infty)$. 

	\begin{defn}
		A $\mathfrak{g}(\infty)$-module $M$ is \textup{integrable} if, for any finitely
		generated 	subalgebra $U' \subset U(\mathfrak{g}(\infty))$ and any $m \in M$, we
		have $\operatorname{dim}(U'\cdot m) < \infty$.
	\end{defn}
	
	\begin{defn}
		A two-sided ideal $I \subset U(\mathfrak{g}(\infty))$ is \textup{integrable}, if
		$I$ is the annihilator of an integrable $\mathfrak{g}(\infty)$-module.
	\end{defn}

	\begin{defn}
	An ideal $I \subset U(\mathfrak{g}(\infty))$ is \textup{locally integrable} if,
		for any finitely generated subalgebra $A' \subset U(\mathfrak{g}(\infty))$, the
		ideal $I \cap A'$ is an integrable ideal of $A'$.
	\end{defn}
		One can check that an ideal $I \subset U(\mathfrak{g}(\infty))$ is locally
	integrable if and only if, for every $n \in \mathbb{Z}_{\geq0}$, $I\cap
	U(\mathfrak{g}(2n))$ is an intersection of ideals  of finite codimension in 
	$ U(\mathfrak{g}(2n))$.
	\vspace{\baselineskip}
\begin{defn}\label{def:deg}
 Let $\mathfrak{g}$ be a (possibly infinite-dimensional) Lie algebra. For every weight $\mathfrak{g}$-module $M$ we define \textup{the degree of module} $$\operatorname{deg}(M):=\operatorname{sup}_{\lambda\in \operatorname{supp}M}(\operatorname{dim}M_{\lambda}).$$
\end{defn}
	
\begin{defn}
		Let $\mathfrak{g}$ be a finite-dimensional  Lie algebra. Then a weight $\mathfrak{g}$-module $M$ is called \textup{bounded} if  $\operatorname{deg}(M)<\infty$.
	\end{defn}

\begin{defn}
Let $\mathfrak{g}$  be  a finite-dimensional Lie algebra with a splitting Cartan subalgebra $\mathfrak{h}$. An ideal $I\subset U(\mathfrak{g})$ is an $\mathfrak{h}$-\emph{bounded ideal} if $I=\operatorname{Ann}_{U(\mathfrak{g})}(M)$ where $M$ is a bounded $\mathfrak{h}$-weight $\mathfrak{g}$-module. 

\end{defn}

Further, we will simply say  $bounded$ instead of $\mathfrak{h}$-bounded since the subalgebra $\mathfrak{h}$ will be fixed.

 Let $I$ be a bounded ideal such that $I=\operatorname{Ann}_{U(\mathfrak{g})}(M)$ for some simple weight $\mathfrak{g}$-module $M$. Then $M$ is a bounded module (see \cite{PS}).

Note that there are no infinite-dimensional bounded $\mathfrak{o}(2n)$-modules \cite{F}. 
There is a classification of bounded infinite-dimensional highest weight simple $\mathfrak{g}$-modules, where $\mathfrak{g}$ is one of Lie algebras $\mathfrak{sl}(n)$, $\mathfrak{o}(2n)$, $\mathfrak{o}(2n+1)$ or $\mathfrak{sp}(2n)$ (actually, such modules exist only for $\mathfrak{sl}(n)$ and $\mathfrak{sp}(2n)$). In particular, for $\mathfrak{sp}(2n)$ we have

\begin{lemma} \textup{\cite{M}} \label{prop:sp.bounded}
Let $L(\lambda)$ be a simple infinite-dimensional  highest weight $\mathfrak{sp}(2n)$-module with highest weight $\lambda$. It is bounded if and only if 
\begin{enumerate}
\item $\lambda(v_i-v_{i+1})\in \mathbb{Z}_{>0}$ for any $0<i<n$,
\item $\lambda(v_n)\in 1/2+\mathbb{Z}$,
\item $\lambda(v_{n-1}+v_{n})\in\mathbb{Z}_{\geq-2}$,
\end{enumerate}
where $v_i$ is defined in Subsection \ref{2.3} in formula (\ref{formula:v_i}).
\end{lemma}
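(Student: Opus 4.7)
The plan is to deduce the lemma from Mathieu's classification of simple bounded weight modules over reductive Lie algebras, specialized to $\mathfrak{sp}(2n)$, and then to translate the abstract parametrization into the explicit conditions (1)--(3). The key geometric object is the Siegel parabolic $\mathfrak{p}=\mathfrak{l}\oplus\mathfrak{n}^+$ of $\mathfrak{sp}(2n)$ whose Levi $\mathfrak{l}$ is isomorphic to $\mathfrak{gl}(n)$ (generated by the Cartan and the root spaces for $\pm(\varepsilon_i-\varepsilon_j)$), and whose nilradical $\mathfrak{n}^+$ is spanned by the root vectors attached to $\varepsilon_i+\varepsilon_j$ for $i\leq j$ and $2\varepsilon_i$. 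The point is that infinite-dimensional bounded simple $\mathfrak{sp}(2n)$-modules arise, by Mathieu's theorem, as subquotients of coherent families extending cuspidal modules, and the only source of such cuspidal modules for $\mathfrak{sp}(2n)$ is the Shale--Weil (oscillator) representation on $\mathbb{C}[x_1,\dots,x_n]$.

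First I would set up the necessity. Assume $L(\lambda)$ is infinite-dimensional and bounded. Restricting to the Levi $\mathfrak{l}\cong\mathfrak{gl}(n)$ through the subalgebra of root vectors attached to $\varepsilon_i-\varepsilon_{i+1}$, the boundedness forces the $\mathfrak{l}$-isotypic components of $L(\lambda)$ to be finite-dimensional $\mathfrak{l}$-modules; this is what produces condition (1), since it is exactly the $\mathfrak{sl}(n)$-integrality and regularity of $\lambda$. Next, the Shale--Weil half-shift is detected on the short simple root: the pairing $\lambda(v_n)$ must coincide, modulo $\mathbb{Z}$, with the Shale--Weil highest weight $-\tfrac12$, which gives condition (2). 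Condition (3) is the boundary constraint coming from the analysis of when the parabolically induced module $\operatorname{Ind}_{\mathfrak{p}}^{\mathfrak{sp}(2n)}\!\bigl(V_\mu\otimes SW^{\pm}(2n)\bigr)$ (with $V_\mu$ a finite-dimensional $\mathfrak{l}$-module) has an irreducible bounded quotient and does not collapse; the constraint $\lambda(v_{n-1}+v_n)\geq -2$ is precisely the inequality governing when the long root $\varepsilon_{n-1}+\varepsilon_n$ does not introduce a Shapovalov zero that would force the maximal submodule to swallow the entire bounded part.

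For sufficiency I would proceed constructively. Given $\lambda$ satisfying (1)--(3), write $\lambda=\mu+\nu$ where $\mu=(\lambda_1+\tfrac12,\dots,\lambda_n+\tfrac12)\in\mathbb{Z}^n$ is an $\mathfrak{sl}(n)$-dominant shift (well-defined by (1) and (2)) and $\nu=-\tfrac12(1,\dots,1)$ is the Shale--Weil highest weight. Let $V_\mu$ be the corresponding finite-dimensional $\mathfrak{l}$-module, promote it to a $\mathfrak{p}$-module with trivial action of $\mathfrak{n}^+$, and consider the parabolically induced module tensored, in the coherent family sense, with $SW^{+}(2n)$. Condition (3) guarantees that the resulting simple quotient is nonzero and is realized as $L(\lambda)$; boundedness then follows since tensoring a finite-dimensional module with the Shale--Weil module preserves finiteness of weight multiplicities, and a simple subquotient of a bounded weight module is bounded.

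The main obstacle is the sharpness of condition (3). The naive argument only yields $\lambda(v_{n-1}+v_n)\geq -1$ or gives a weaker bound; to obtain the precise value $-2$ one must analyze the singular locus of Mathieu's twisted localization functor at the short root and track which composition factors of $\operatorname{Ind}_{\mathfrak{p}}^{\mathfrak{sp}(2n)}$ remain bounded after applying the Enright completion. This requires a careful Jantzen-style argument using the Shapovalov form restricted to the weight spaces near the highest weight, which is where I would invest most of the work; everything else is either formal or is an application of the general theory recalled in \cite{M}.
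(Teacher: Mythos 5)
First, note that the paper itself does not prove this lemma: it is imported wholesale from Mathieu's classification \cite{M}, so there is no internal argument to compare against. Your proposal is therefore an attempt to reconstruct the proof, but as written it has two genuine gaps. The first is in the necessity of condition (1): you claim that boundedness of $L(\lambda)$ forces its $\mathfrak{l}$-isotypic components to be finite-dimensional over the Siegel Levi $\mathfrak{l}\cong\mathfrak{gl}(n)$, and that this yields (1). That inference is not valid on its own, because $\mathfrak{gl}(n)$ itself admits infinite-dimensional \emph{bounded} highest weight modules, so a bounded $\mathfrak{sp}(2n)$-module could a priori restrict to an infinite-dimensional bounded $\mathfrak{l}$-module. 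What is actually needed here is Mathieu's theorem that a simple infinite-dimensional bounded weight module is integrable along all simple root $\mathfrak{sl}(2)$'s except exactly one, together with the type-$C$ fact that the exceptional direction must be the long simple root $2\varepsilon_n$; only then do (1) and (2) follow. (A careful version of this step would also have to confront the normalization: the Shale--Weil module itself has $\lambda(v_i-v_{i+1})=0$, so the strict positivity in (1) only makes sense with the appropriate $\rho$-shift, which your sketch does not address.) The second and more serious gap is condition (3): you explicitly state that your argument only yields $\lambda(v_{n-1}+v_n)\ge -1$ or weaker and that the sharp bound $-2$ would require a Jantzen/Shapovalov analysis ``where I would invest most of the work.'' That analysis is precisely the nontrivial content of one third of the statement, and deferring it means the lemma is not proved.

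If you want a self-contained argument rather than a citation, there is a cleaner route than twisted localization and Jantzen filtrations, and it is the one the paper itself exploits later: the Britten--Hooper--Lemire decomposition (quoted as Lemma 5.6 in the text) gives the complete, multiplicity-free decomposition of $SW^{\pm}(2n)\otimes L(\mu)$ for $L(\mu)$ finite-dimensional, with an explicit description of the highest weights $\nu_j+\kappa$ that occur. Sufficiency of (1)--(3) follows because every such $\lambda$ appears in this list and the tensor product has uniformly bounded weight multiplicities; the sharpness of (3) follows because no weight violating it appears, and necessity of all three conditions then reduces to the ``unique non-integrable direction'' theorem cited above. This replaces your Shapovalov-form analysis by a finite combinatorial check against an explicit branching formula.
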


The following bounded $\mathfrak{sp}(2n)$-modules will play an important role in this  thesis.
\begin{exam}\label{SW}
Consider the ring  $\mathbb{C}[x_1,x_2,\dots,x_n]$ of polynomials in $n$ variables. The Lie algebra $\mathfrak{g}(2n)=\mathfrak{sp}(2n)$ can be realized as follows: $$\mathfrak{g}(2n)=\bigoplus_{1\leq i,j\leq n}\operatorname{span}\{\frac{\partial^2}{\partial x_i\partial x_j}\} \oplus \bigoplus_{1\leq k,l\leq n}\operatorname{span}\{x_k \frac{\partial}{\partial x_l}+\frac{\delta^k_l}{2}\}\oplus\bigoplus_{1 \leq m,n\leq n} \operatorname{span}\{x_m x_n\},$$
where $\delta_l^m$ is  the Kronecker symbol.   The space   $\bigoplus_{i}\operatorname{span}\{x_i \frac{\partial}{\partial x_i} +\frac{1}{2}\}$ is a splitting Cartan sublagebra $\mathfrak{h}\subset\mathfrak{sp}(2n)$, with simple coroots $h_i=-x_i\frac{\partial}{\partial x_i}+x_{i+1}\frac{\partial}{\partial x_{i+1}}$ for $i\leq i \leq n-1$ and $h_n=-x_n\frac{\partial}{\partial x_n}-\frac{1}{2}$. As an $\mathfrak{sp}(2n)$-module,  $\mathbb{C}[x_1,x_2,\dots,x_n]$ equals $SW^+(2n) \oplus SW^-(2n)$, where $SW^+(2n) $ is the subspace of homogeneous polynomials of even degree, and $SW^-(2n)$ is the subspace of homogeneous polynomials of odd degree. These two subspaces are simple bounded highest weight  modules with  respective highest weights $-\frac{1}{2}\sum^n_{i=1}\varepsilon_i$ and $-\frac{1}{2}\sum^{n-1}_{i=1}\varepsilon_i-\frac{3}{2}\varepsilon_{n}$. They are known as \emph{Shale--Weil} (or \emph{oscillator) representations}.

\end{exam}
\begin{defn}
		An ideal $I \subset U(\mathfrak{g}(\infty))$ is \textup{weakly bounded} if
		 $I\cap U_n$ is an intersection of  annihilators of bounded weight  modules of $U_n$ 
for every $n\geq 2$, where $U_n=U(\mathfrak{sl}(n))$ for $\mathfrak{g}(\infty)=\mathfrak{sl}(\infty)$, $U_n=U(\mathfrak{o}(2n))$ \break for $\mathfrak{g}(\infty)=\mathfrak{o}(\infty)$, and $U_n=U(\mathfrak{sp}(2n))$ for $\mathfrak{g}(\infty)=\mathfrak{sp}(\infty)$. 
	\end{defn}
	\vspace{\baselineskip}

The next theorem is one of the main results of the  present work.

\begin{theorem}\label{fir}
Every primitive ideal of $U(\mathfrak{o}(\infty))$ or  $U(\mathfrak{sp}(\infty))$ is weakly bounded. Moreover, each primitive ideal of $U(\mathfrak{o}(\infty))$ is  integrable.
\end{theorem}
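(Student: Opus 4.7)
The plan is to prove weak boundedness first for both Lie algebras, and then to deduce integrability in the $\mathfrak{o}(\infty)$ case from the result of Section \ref{4}. Since $I$ is primitive, I write $I=\operatorname{Ann}_{U(\mathfrak{g}(\infty))}(N)$ for some simple $\mathfrak{g}(\infty)$-module $N$, so for every $n\geq 2$ one has
$$I\cap U(\mathfrak{g}(2n))=\operatorname{Ann}_{U(\mathfrak{g}(2n))}(N).$$
The goal is then to realize this $\mathfrak{g}(2n)$-annihilator as an intersection of annihilators of bounded weight $\mathfrak{g}(2n)$-modules.

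To do so, I would combine Duflo's theorem at the finite level with the Barbasch--Vogan classification. Any primitive ideal of $U(\mathfrak{g}(2n))$ that contains $I\cap U(\mathfrak{g}(2n))$ is the annihilator of some simple highest weight $\mathfrak{g}(2n)$-module $L(\lambda)$, encoded by a Young tableau capturing the Jordan shape on the associated variety. The key leverage is that $N$ carries a compatible $\mathfrak{g}(2m)$-action for every $m>n$, and the stability of the Barbasch--Vogan tableau data along the chain $\mathfrak{g}(2n)\hookrightarrow\mathfrak{g}(2n+2)\hookrightarrow\cdots$ should constrain the possible highest weights $\lambda$ strongly enough to force them into the bounded regime described by Lemma \ref{prop:sp.bounded} in the $\mathfrak{sp}$-case, or into the finite-dimensional regime in the $\mathfrak{o}$-case (where, by the remark in Section \ref{section:LBM}, bounded equals finite-dimensional). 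Once each such primitive ideal $\operatorname{Ann}_{U(\mathfrak{g}(2n))}(L(\lambda))$ is identified with the annihilator of a bounded module, weak boundedness follows by taking the intersection over a sufficient family of simple $\mathfrak{g}(2n)$-subquotients of $N$ and, if necessary, augmenting this family so that the intersection recovers $\operatorname{Ann}_{U(\mathfrak{g}(2n))}(N)$ exactly rather than a strictly larger ideal.

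The integrability assertion for $\mathfrak{o}(\infty)$ is then an immediate corollary. Since bounded weight $\mathfrak{o}(2n)$-modules are finite-dimensional, weak boundedness of $I$ means that $I\cap U(\mathfrak{o}(2n))$ is an intersection of cofinite ideals for every $n$, i.e.\ that $I$ is locally integrable. Invoking the theorem promised in Section \ref{4}, to the effect that every locally integrable ideal of $U(\mathfrak{o}(\infty))$ is integrable, concludes the proof.

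The hard part will be the finite-level boundedness step, especially for $\mathfrak{sp}(\infty)$. Because the Shale--Weil modules $SW^{\pm}(2n)$ are infinite-dimensional bounded $\mathfrak{sp}(2n)$-modules and give rise to a genuinely nonintegrable primitive ideal of $U(\mathfrak{sp}(\infty))$, one cannot hope that the relevant $\mathfrak{g}(2n)$-subquotients of $N$ are finite-dimensional; the argument must genuinely separate bounded-but-infinite-dimensional behaviour, controlled by Lemma \ref{prop:sp.bounded}, from honest unboundedness. The essential input will be that an $\mathfrak{h}_{\mathfrak{g}(\infty)}$-weight vector of $N$ is acted on by $\mathfrak{g}(2m)$ for every large $m$, and uniformly bounding the $\mathfrak{g}(2n)$-weight multiplicities across all such $m$ forces precisely the shape conditions (integrality of $\lambda(v_i-v_{i+1})$, $\lambda(v_n)\in 1/2+\mathbb{Z}$, etc.) of Lemma \ref{prop:sp.bounded} on the relevant highest weights.
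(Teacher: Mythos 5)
Your high-level plan (reduce to the finite level, invoke Duflo and Barbasch--Vogan, then get local integrability for $\mathfrak{o}(\infty)$ and apply Theorem \ref{asd1}) matches the paper's architecture, and your treatment of the $\mathfrak{o}(\infty)$ integrability step is correct. But the mechanism you propose for the hard step --- finite-level boundedness --- has a genuine gap. You want to extract boundedness by ``uniformly bounding the $\mathfrak{g}(2n)$-weight multiplicities'' of weight vectors of the simple module $N$. This is not available: a simple $\mathfrak{g}(\infty)$-module with annihilator $I$ need not be an $\mathfrak{h}$-weight module at all, and even when it is, its restriction to $\mathfrak{g}(2n)$ can have infinite weight multiplicities; weak boundedness is a statement about the \emph{ideal} $I_{2n}$ being an intersection of annihilators of bounded modules, which are in general unrelated to subquotients of $N$. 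Likewise, your suggestion to recover $\operatorname{Ann}_{U(\mathfrak{g}(2n))}(N)$ as an intersection over ``simple $\mathfrak{g}(2n)$-subquotients of $N$'' has no justification; the paper instead works with $\sqrt{I_{2n}}$, the intersection of \emph{all} primitive ideals of $U_{2n}$ containing $I_{2n}$, and uses Lemma \ref{Lalg} to show $(\sqrt I)_{2n}=\bigcap_{n'\ge n}\sqrt{I_{2n'}}$, so that primitivity ($I=\sqrt I$) does the descent.

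The missing key idea is the quantitative input that makes ``stability of the tableau data'' actually bite: Lemma \ref{Lrk17} shows that for a nonzero ideal $I$ there is a uniform $r$ with $\operatorname{Var}(I_{2n})\subset\mathfrak{g}(2n)^{\le r}$ for $n\gg0$. Via Joseph's theorem this forces, for every primitive $J(2n)\supset I_{2n}$, the Robinson--Schensted tableau of the corresponding Weyl group element to have a row of length at least $2n-r$. Lemma \ref{qwe} then uses the $\tau$-invariant moves of Proposition \ref{prop} and Corollary \ref{cor} to replace the weight by an equivalent one which, after truncation, is a half-integral dominant regular weight of the much smaller subalgebra $\mathfrak{g}(2f(2n))$ with $f(2n)=[\frac{n-3r/2}{r+1}]-r/2$; boundedness on that subalgebra then follows from dominance (for $\mathfrak{o}$ and integral $\mathfrak{sp}$ weights) or from Lemma \ref{prop:sp.bounded} (half-integral $\mathfrak{sp}$ weights). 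Note in particular that boundedness is only obtained after passing from level $n$ to level $f(2n)$, a loss your proposal does not account for, and which is exactly what the $\sqrt{I}$ argument in Theorem \ref{asd} compensates for by choosing $n'=(r+1)(2n+r)+3r$ so that $f(2n')=n$.
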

 Note that  this theorem is an analogue of the  result for $\mathfrak{sl}(\infty)$ proved by I. Penkov and A. Petukhov   in \cite{PP4}.

	\subsection{Integrable ideals and coherent local systems}

\label{2.9}
	As before,   $\mathfrak{g}(\infty)=\mathfrak{sl}(\infty)$, $\mathfrak{o}(\infty)$ or $\mathfrak{sp}(\infty)$. Also, in this subsection and the next subsection, $\mathfrak{g}(n)$ denotes  one of the Lie algebras $\mathfrak{sl}(n)$, $\mathfrak{o}(2n)$ or $\mathfrak{sp}(2n)$. Since we  express $\mathfrak{o}(\infty)$ as $\varinjlim\mathfrak{o}(2n)$, we do not need the \break $\mathfrak{o}(2n+1)$-series of Lie algebras.
	Let $Irr_n$ denote the set of isomorphism classes of simple finite-dimensional
	$\mathfrak{g}(n)$-modules.
	\begin{defn}
		A \textup{coherent local system of modules} (further c.l.s.)  for
		$\mathfrak{g}(\infty)$ is a collection of sets
		
		$$\{Q_n\}_{n \in \mathbb{Z}_{\geq 1}} \subset \Pi_{n \in \mathbb{Z}_{\geq 1} }
		Irr_n $$
		such that $Q_m = \langle Q_n \rangle_m$ for any $n>m$, where	$\langle Q_n
		\rangle_m$ denotes the set of isomorphism classes of all simple $\mathfrak{g}({m})$-constituents of the
		$\mathfrak{g}({n})$-modules from $Q_n$.
		
	\end{defn} 
	\vspace{\baselineskip}
	\begin{defn}
		A c.l.s. $Q$ is \textup{irreducible} if $Q \neq Q' \cup Q''$ with
		$Q'\not\subset Q''$ and $Q' \not\supset Q''$, where $Q'$ and $Q''$ are nonempty coherent local systems of modules for $\mathfrak{gl}(\infty)$.
		
	\end{defn}

	Each c.l.s. $Q$ can be represented uniquely as a finite union $\cup_i Q(i)$ \cite{Zh1} of some maximal (by
	inclusion within $Q$) irreducible c.l.s. $Q(i)$; we call $Q(i)$ \emph{the irreducible components} of $Q$.

	Each integrable $\mathfrak{g}(\infty)$-module $M$ determines a c.l.s.
	$Q:=\{Q_n\}_{n \in \mathbb{Z}_{\geq 1} }$, where
	
	$$ Q_n:=\{z \in  Irr_n\mid \operatorname{Hom}_{\mathfrak{g}(n)} (z,M) \neq \{0\}\}$$
	We denote this c.l.s. by $Q(M)$.
	\begin{defn}
		We say that a c.l.s. Q is  of \textup{finite type} if the set $Q_n $ is finite for all
		$n \geq 1$.
	\end{defn}

	\begin{defn}
		An integrable $\mathfrak{g}(\infty)$-module $M$  is called \textup{locally simple} if
		$M=\varinjlim M_n$ for a chain $$M_3 \subset M_n \subset M_{n+1} \subset
		\ldots$$ of simple finite-dimensional $\mathfrak{g}(n)$-submodules $M_n$ of $M$.
	\end{defn}
	
	For every c.l.s. $Q=\{Q_n\}_{n \in \mathbb{Z}_{\geq 1}}$ we can define the following ideal  
	$$I(Q):=\cup_m(\cap_{z\in Q_m}
	\operatorname{Ann}_{U(\mathfrak{g}(m))}z)\subset U(\mathfrak{g(\infty)}).$$
	  We say that $I(Q)$ is the \emph{annihilator} of $Q$.
	
	\begin{prop}\textup{\cite[Lemma 1.1.2]{Zh1}}\label{prop:annih}
		If $Q$ is  an irreducible  c.l.s., then $I(Q)$ is the annihilator of some
		locally simple integrable $\mathfrak{g(\infty)}$-module. In particular, the ideal $I(Q)$ is
		primitive.
	\end{prop}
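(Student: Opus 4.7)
The plan is to construct a locally simple integrable $\mathfrak{g}(\infty)$-module $M$ whose annihilator in $U(\mathfrak{g}(\infty))$ coincides with $I(Q)$; the primitivity assertion then follows since any locally simple integrable module is automatically simple over $\mathfrak{g}(\infty)$. For the automatic simplicity: if $M = \varinjlim M_n$ with each $M_n$ simple over $\mathfrak{g}(n)$ and $M_n \subset M_{n+1}$, then a nonzero $\mathfrak{g}(\infty)$-submodule $N \subset M$ meets some $M_{n_0}$ nontrivially and hence contains $M_{n_0}$ by simplicity; for each $n \geq n_0$, the intersection $N \cap M_{n+1}$ is a nonzero $\mathfrak{g}(n+1)$-submodule of the simple $\mathfrak{g}(n+1)$-module $M_{n+1}$, so induction gives $M_n \subset N$ for all $n \geq n_0$, forcing $N = M$.

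To build candidate modules, let $T$ be the set of compatible chains $\bar z = (z_n)_{n \geq 1}$ with $z_n \in Q_n$ and $z_n$ occurring as a $\mathfrak{g}(n)$-constituent (equivalently, by semisimplicity of finite-dimensional modules over reductive Lie algebras, as a submodule) of $z_{n+1}|_{\mathfrak{g}(n)}$. Fixing embeddings $z_n \hookrightarrow z_{n+1}$ yields a locally simple integrable module $M_{\bar z} := \varinjlim z_n$. The coherence relation $Q_m = \langle Q_n \rangle_m$ for $n > m$ guarantees $T \neq \emptyset$ and, more generally, that every $z \in Q_m$ lies on some chain in $T$. Setting $Q(M_{\bar z})_m := \bigcup_{n \geq m} \langle z_n \rangle_m$, one checks (using transitivity of restriction) that $Q(M_{\bar z})$ is a sub-c.l.s.\ of $Q$, and a direct calculation (using that $u \in U(\mathfrak{g}(m))$ annihilates $M_{\bar z}$ iff it annihilates every constituent of every $z_n|_{\mathfrak{g}(m)}$ with $n \geq m$) yields
\[
\operatorname{Ann}(M_{\bar z}) \cap U(\mathfrak{g}(m)) = \bigcap_{w \in Q(M_{\bar z})_m} \operatorname{Ann}_{U(\mathfrak{g}(m))}(w),
\]
whence $\operatorname{Ann}(M_{\bar z}) = I(Q(M_{\bar z}))$. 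Letting $\bar z$ range over $T$ gives $Q = \bigcup_{\bar z \in T} Q(M_{\bar z})$.

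The crux is to extract a single $\bar z^* \in T$ with $Q(M_{\bar z^*}) = Q$; then $M := M_{\bar z^*}$ will satisfy $\operatorname{Ann}(M) = I(Q)$ as required. I would produce $\bar z^*$ by a diagonal construction, enumerating $\bigcup_m Q_m$ as $(w_1, w_2, \ldots)$ with $w_k \in Q_{m_k}$ and, at stage $k$, extending the already-fixed initial segment so that $w_k$ becomes a $\mathfrak{g}(m_k)$-constituent of some $z_n$ with $n$ past the current length. The existence of such an extension at each stage is where irreducibility is essential: if no extension of the fixed initial segment could ever reach $w_k$, then the sub-c.l.s.\ swept out by extensions avoiding $w_k$ and the sub-c.l.s.\ of chains realizing $w_k$ together yield a nontrivial binary decomposition of $Q$, contradicting irreducibility. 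The main obstacle is precisely this extraction, because $Q_n$ is not assumed finite and König's lemma does not apply directly; the clean way to implement the selection is via the combinatorial lemma of Zhilinskii \cite{Zh1} being invoked here, which converts the binary-union form of irreducibility into single-chain realizability by a careful stagewise (or Zorn-type) selection over sub-c.l.s.\ of $Q$.
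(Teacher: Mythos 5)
The paper does not actually prove this statement: it is quoted directly from Zhilinskii \cite[Lemma 1.1.2]{Zh1} with no argument given, so there is nothing in the text to compare your proof against. Your reconstruction follows what is essentially Zhilinskii's original line, and its main components are sound: the automatic simplicity of a locally simple module, the identity $\operatorname{Ann}(M_{\bar z})=I(Q(M_{\bar z}))$ (using semisimplicity of finite-dimensional $\mathfrak{g}(n)$-modules), and the reduction of the whole problem to producing a single chain $\bar z^*$ with $Q(M_{\bar z^*})=Q$ via a diagonal enumeration of $\bigcup_m Q_m$ (which is indeed countable, since each $Irr_n$ is).

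The one step that does not close as written is the extraction of the extension property from irreducibility. The two sub-c.l.s.\ you name --- ``extensions avoiding $w_k$'' and ``chains realizing $w_k$'' --- are generated by elements lying above $z_N$, respectively above $w_k$, and their union need not be all of $Q$: an element of $Q_j$ may occur only as a constituent of elements dominating neither $z_N$ nor $w_k$, so you do not obtain the decomposition $Q=Q'\cup Q''$ needed to contradict irreducibility. The correct split is by domination of the current endpoint alone: for $n\ge N$ let $Q^{(1)}_j$ (resp.\ $Q^{(2)}_j$) consist of all $\mathfrak{g}(j)$-constituents of those $u\in Q_n$ with $z_N\in\langle u\rangle_N$ (resp.\ $z_N\notin\langle u\rangle_N$). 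Transitivity of branching makes both of these c.l.s., coherence gives $Q=Q^{(1)}\cup Q^{(2)}$, and $z_N\in Q^{(1)}\setminus Q^{(2)}$; if no element of $Q$ dominated both $z_N$ and $w_k$, then at every level some element above $w_k$ would fail to dominate $z_N$, so $w_k\in Q^{(2)}\setminus Q^{(1)}$, contradicting irreducibility. With this repair a common dominator $u$ exists, the chain is threaded through $u$ by choosing intermediate constituents level by level, and your diagonal argument then goes through.
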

	
	\begin{corollary}
		The ideal $I(Q)$ is integrable for each c.l.s. $Q$.
	\end{corollary}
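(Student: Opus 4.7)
The plan is to reduce the general case to the irreducible case handled in Proposition \ref{prop:annih}. Writing $Q=\bigcup_i Q(i)$ as the \emph{finite} union of its irreducible components, I would first note that at each finite level $Q_m=\bigcup_i Q(i)_m$, so
$$\bigcap_{z\in Q_m}\operatorname{Ann}_{U(\mathfrak{g}(m))}(z)\;=\;\bigcap_i\Bigl(\bigcap_{z\in Q(i)_m}\operatorname{Ann}_{U(\mathfrak{g}(m))}(z)\Bigr).$$
This immediately lets me rewrite $I(Q)$ as a nested union-of-intersections.

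The key formal step is swapping the finite intersection over $i$ with the increasing union over $m$. For each fixed $i$, the ideals $A^i_m:=\bigcap_{z\in Q(i)_m}\operatorname{Ann}_{U(\mathfrak{g}(m))}(z)\subset U(\mathfrak{g}(\infty))$ form an increasing chain in $m$ (this is precisely the point of the coherence condition $Q(i)_m=\langle Q(i)_{m+1}\rangle_m$, which forces any element annihilating all of $Q(i)_{m+1}$ to annihilate, upon restriction, all of $Q(i)_m$). Since the intersection is over the \emph{finite} index set of irreducible components, a standard diagonal argument (take $m$ to be the maximum of the required levels $m_i$) gives
$$I(Q)=\bigcup_m\bigcap_i A^i_m=\bigcap_i\bigcup_m A^i_m=\bigcap_i I(Q(i)).$$

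By Proposition \ref{prop:annih}, each $I(Q(i))$ is the annihilator of a locally simple integrable $\mathfrak{g}(\infty)$-module $M_i$. Therefore $I(Q)=\bigcap_i\operatorname{Ann}_{U(\mathfrak{g}(\infty))}(M_i)=\operatorname{Ann}_{U(\mathfrak{g}(\infty))}\bigl(\bigoplus_i M_i\bigr)$. A finite (in fact even arbitrary) direct sum of integrable modules is integrable, since for $m=\sum m_i$ and any finitely generated subalgebra $U'\subset U(\mathfrak{g}(\infty))$ the submodule $U'\cdot m$ is contained in the finite-dimensional space $\bigoplus_i U'\cdot m_i$. Hence $I(Q)$ is integrable.

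The only substantive point is the interchange of limit and intersection, and this is harmless precisely because the number of irreducible components is finite by the cited result of Zhilinskii. If one did not have finiteness of the decomposition, the argument would fail, and in that sense the corollary genuinely relies on the structural fact quoted from \cite{Zh1} just above the statement.
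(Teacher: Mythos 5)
Your proof is correct and follows essentially the same route as the paper: decompose $Q$ into its finitely many irreducible components, apply Proposition \ref{prop:annih} to each, and realize $I(Q)$ as the annihilator of the direct sum of the resulting integrable modules. The only difference is that you spell out the interchange $\bigcup_m\bigcap_i A^i_m=\bigcap_i\bigcup_m A^i_m$ (valid because the chains are increasing and the component set is finite), a step the paper leaves implicit.
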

	\begin{proof}
		Let $Q=\cup_i Q(i)$, where $Q(i)$ are the irreducible  components  of $Q$. By
		Proposition \ref{prop:annih},  the ideal $I(Q(i))$ is the annihilator of a
		simple integrable $\mathfrak{g(\infty)}$-module $M(i)$. Therefore $I(Q)$ is the
		annihilator of the integrable \break $\mathfrak{g(\infty)}$-module $\oplus_i M_i$.
	\end{proof}

	\vspace{\baselineskip}

	Every isomorphism class $z \in Irr_n$ of simple $\mathfrak{g}(n)$-modules 
	corresponds to an integral dominant weight $\lambda$ of $\mathfrak{g}(n)$. Let $z_1, z_2$ be
	isomorphism classes of simple $\mathfrak{g}(n)$-modules with respective highest
	weights $\lambda_1, \lambda_2$. We denote by $z_1z_2$ the isomorphism class of a
	simple module with highest weight $\lambda_1+\lambda_2$. If $S_1, S_2 \subset
	Irr_n$ we set 
	
	$$S_1S_2:=\{z\in Irr_n \mid z=z_1z_2 \:\: \textup{for  some}\:\: z_1 \in S_1 \:\:
	\textup{and} \:\: z_2 \in  S_2\}. $$
	Let $Q'$ and $Q''$ be c.l.s. We denote  by $Q'Q''$ the smallest c.l.s. such
	that $(Q')_n(Q'')_n \subset (Q'Q'')_n$. By definition, $Q'Q''$ is the \emph{product} of
	$Q'$ and $Q''$. The definition implies that the  operation of product is associative and commutative.
	
	\subsection{Zhilinskii's classification of c.l.s.}
	
	We set  $V = \varinjlim V(n)$, where $V(n)$ is the natural
	$\mathfrak{g}(n)$-module, and we set $(V)_* = \varinjlim V(n)^*$ where
	$V(n)^*$ is the conatural $\mathfrak{g}(n)$-module. We denote by $S^{\bullet}(K)$ and $\Lambda^{\bullet}(K)$ the symmetric algebra and  the exterior algebra of a module $K$ respectively. Also $S^p(K)$ and $\Lambda^p(K)$ denote respectively the  $p$th symmetric power and the $p$th exterior power of a module $K$.
A simple highest weight $\mathfrak{o}(2n)$-module  with highest weight $(\frac{1}{2}\sum^{n-1}_1\varepsilon_i)\pm \frac{1}{2}\varepsilon_n$ is called a \emph{spinor} module.

If $K$ is a $\mathfrak{g}(\infty)$-module, we define the c.l.s. $Q(K)$ for which $Q(K)_n$ is precisely the set of isomorphism classes of all simple constituents of  $K$ considered as a $\mathfrak{g}(n)$-module.

	For simplicity we will use the following notations:
	
	$$E:=Q(\Lambda^\bullet V),\: L_p := Q(\Lambda^p V),\: L^\infty_p :=
	Q(S^\bullet(V \otimes \mathbb{C}^p)),$$  $$R_p := Q(\Lambda^p (V)_*), 
 \: R^\infty_p := Q(S^\bullet((V)_*)
	\otimes \mathbb{C}^p), R := \textup{\{spinor  modules\}},$$
$$ \: E^\infty := \textup{\{all  irreducible  modules, which highest weight consists integral entries \}}$$
	
	
	
	
	
\noindent	where $p,q \in \mathbb{Z}_{\geq 1} $. Moreover,  the following table  defines the  \emph{basic c.l.s.} for the  Lie algebras $\mathfrak{sl}(\infty)$, $\mathfrak{o}(\infty)$ and $\mathfrak{sp}(\infty)$.

\begin{table}[h]
\begin{center}
\begin{tabular}{|c|c|}
\hline
Lie algebra & Basic c.l.s. \\
\hline
$\mathfrak{sl}(\infty)$ & $E$, $ L_p$ , $L^\infty_p$, $ R_p, R^\infty_p$, $ E^\infty$\\
\hline
$\mathfrak{o}(\infty)$ & $E$, $ L_p$ , $L^\infty_p$, $ E^\infty$, $R$\\
\hline
$\mathfrak{sp}(\infty)$ & $E$, $ L_p$ , $L^\infty_p$, $ E^\infty$\\
\hline
\end{tabular}
\end{center}
\end{table}

By definition the \emph{trivial c.l.s.} is the $c.l.s.$ $Q$ such that $Q_n=\{\mathbb{C}\}$, where $g\cdot \mathbb{C}=0$ for any $g\in \mathfrak{g}(n)$.
	
	\begin{prop}\textup{\cite{Zh1}} \label{2.6}
		Any irreducible c.l.s. can be uniquely expressed  as a product of basic c.l.s. as follows:
		
		$$(L^\infty_v L_{v+1}^{x_{v+1}} L_{v+2}^{x_{v+2}} \dots L_{v+r}^{x_{v+r}}) E^m
		(R^\infty_w R_{w+1}^{z_{w+1}} R_{w+2}^{z_{w+2}} \dots R_{w+t}^{z_{w+t}})  \: \:\:\:\:\:
		\:\:\:\:\:\:\:\:\:\:\:\:\:\:\:\:\: $$  {for} $\mathfrak{g}(\infty) =
		\mathfrak{sl}(\infty)$,
		$$(L^\infty_v L_{v+1}^{x_{v+1}} L_{v+2}^{x_{v+2}} \dots L_{v+r}^{x_{v+r}}) E^m \:
		\textup{or} \:(L^\infty_v L_{v+1}^{x_{v+1}} L_{v+2}^{x_{v+2}} \dots L_{v+r}^{x_{v+r}}) E^m
		R \:\:\:\:\: \:\:\:$$
{for} $\mathfrak{g}(\infty) = \mathfrak{o}(\infty),$
		$$(L^\infty_v L_{v+1}^{x_{v+1}} L_{v+2}^{x_{v+2}} \dots L_{v+r}^{x_{v+r}}) E^m \:
		\:\:\:\:\:
		\:\:\:\:\:\:\:\:\:\:\:\:\:\:\:\:\:\:\:\:\:\:\:\:\:\:\:\:\:\:\:\:\:\:\:\:\:\:\:\:\:\:\:\:\:\:\:\:\:\:\:\:\:\:\:\:\:\:\:\:\:$$ 
{for} $\mathfrak{g}(\infty) = \mathfrak{sp}(\infty)$,
		where
		$$m,r,v,w \in \mathbb{Z}_{\geq0},$$
		$$ x_i,z_j \in \mathbb{Z}_{\geq 0} \: \textup{for} \: v+1 \leq i \leq n \:
		\textup{and} \: w+1 \leq j \leq t.$$
		Here, for $v=0$, $L^\infty_v$ is assumed to be trivial c.l.s., and, similarly,
		$R^\infty_w$ is assumed to be trivial c.l.s. for $w=0$.
	\end{prop}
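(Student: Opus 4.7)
The plan is to reduce the problem to an analysis of the Young diagrams parametrizing the modules in $Q_n$ for $n$ large, then to extract the parameters $v, w, m, r, t, x_i, z_j$ from the asymptotic shape of these diagrams. First I would parametrize each isomorphism class $z\in Irr_n$ by its highest weight, which for $\mathfrak{o}(2n)$ or $\mathfrak{sp}(2n)$ is a Young diagram $\lambda$ with at most $n$ rows (plus, in the $\mathfrak{o}(\infty)$ case, a possible spinor twist by $\tfrac{1}{2}\sum_{i=1}^n \varepsilon_i$), and for $\mathfrak{sl}(n)$ is a pair $(\lambda_l, \lambda_r)$ of Young diagrams encoding the positive and negative parts of the highest weight. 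The coherence condition $Q_m = \langle Q_n\rangle_m$ then translates, via the standard branching rules, into combinatorial conditions on how the diagrams in $Q_n$ restrict to those in $Q_m$.

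Second, I would study the set of diagrams $\lambda^{(n)}$ appearing in $Q_n$ as $n\to\infty$. The key observation is that for any fixed row index $i$, the sequence $\lambda^{(n)}_i$ is either bounded (giving a finite row of eventually constant length, which corresponds to a $L_{v+i}^{x_{v+i}}$ factor when $v+i$ is the height of a stable column) or unbounded (forcing the appearance of $L_v^\infty$ for the leftmost block of unbounded rows). Similarly the number of columns of height exceeding any fixed bound gives the exponent $m$ of the factor $E$, since $E = Q(\Lambda^\bullet V)$ precisely accounts for columns of arbitrary finite height that are not pinned at any particular length. For $\mathfrak{sl}(\infty)$ an analogous analysis applied to $\lambda_r$ gives the conatural factors $R^\infty_w$ and $R_{w+j}^{z_{w+j}}$; for $\mathfrak{o}(\infty)$ the presence or absence of the spinor twist in $Q_n$ is a discrete invariant that produces the optional factor $R$.

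Third, to establish irreducibility I would verify that a product of basic c.l.s.\ of the stated form cannot be written nontrivially as $Q'\cup Q''$ with neither component contained in the other: this follows because each $Q_n$ is a single orbit under the natural action of addition of fixed highest weights, and an honest decomposition would split the diagram shape on at least one row in a way that violates coherence at level $n-1$. For existence, one constructs from an arbitrary irreducible $Q$ the claimed product by reading off the stabilized column heights, and then checks that the resulting product c.l.s.\ agrees with $Q$ at every $n$; agreement at one sufficiently large $n$ suffices by the coherence condition and the observation that $\langle\cdot\rangle_m$ intertwines with the product of c.l.s. Uniqueness of the expression follows because the parameters $v, w, m, r, t, x_i, z_j$ are invariants of $Q$ that can be recovered as explicit numerical functions of the sets $Q_n$ for large $n$.

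The main obstacle is the combinatorial step of ruling out more exotic irreducible c.l.s., i.e.\ showing that no further "patterns" of stabilization beyond those encoded in the basic c.l.s.\ can occur. Concretely, one must show that once two distinct stabilization behaviors coexist, irreducibility fails: they can be separated into two coherent subsystems. This requires an induction on the number of rows together with a careful case analysis of the branching rules, in particular handling the interaction between the unbounded "$L^\infty$/$R^\infty$" part, the finite "$L$/$R$" pins and the freely growing "$E$" part. The argument for $\mathfrak{o}(\infty)$ is slightly more delicate because of the extra parity constraint coming from the even orthogonal branching rule and the need to track the spinor component, while for $\mathfrak{sp}(\infty)$ the analysis simplifies because only the "left" series of factors is available.
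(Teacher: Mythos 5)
The paper does not prove this proposition at all: it is imported verbatim from Zhilinskii \cite{Zh1}, so there is no internal argument to compare yours against. Judged as an independent proof, your text is an outline rather than a proof, and the outline defers exactly the step that constitutes the content of Zhilinskii's theorem. You correctly identify the invariants (the number $v$ of rows of unbounded length, the pinned column multiplicities $x_{v+i}$, the exponent $m$ counting columns of unconstrained height, the spinor/conatural components), but the assertion that \emph{every} irreducible c.l.s.\ exhibits one of these stabilization patterns --- i.e.\ that no other coherent, non-splittable family of highest weights exists --- is precisely what must be proved, and you explicitly set it aside as ``the main obstacle'' requiring ``an induction on the number of rows together with a careful case analysis of the branching rules'' without carrying out either the induction or the case analysis. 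Naming the invariants and describing what the answer looks like is not a proof that the list is exhaustive.

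Two specific steps as written would also fail. First, your argument for irreducibility of the products rests on the claim that ``each $Q_n$ is a single orbit under the natural action of addition of fixed highest weights''; this is unjustified and, for a factor such as $E^m$ or $L^\infty_v E^m$ (where $Q_n$ is a sum-set of two infinite families of diagrams, not a translate of anything), not true in any evident sense, so the claim that a decomposition $Q=Q'\cup Q''$ ``would split the diagram shape on at least one row'' does not follow. Second, the assertion that ``agreement at one sufficiently large $n$ suffices'' is false: the coherence condition $Q_m=\langle Q_n\rangle_m$ propagates information \emph{downward} only, so two distinct c.l.s.\ can coincide at a given level and diverge above it; you need agreement at all sufficiently large $n$ (equivalently, at infinitely many levels), which your intertwining remark does not supply. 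A correct write-up must verify, via the interlacing branching rules for $\mathfrak{o}(2n)\downarrow\mathfrak{o}(2n-2)$, $\mathfrak{sp}(2n)\downarrow\mathfrak{sp}(2n-2)$ and $\mathfrak{sl}(n)\downarrow\mathfrak{sl}(n-1)$, that the candidate product agrees with $Q$ at every level, and must supply the exhaustiveness argument; as it stands the proposal records the statement's shape but not its proof.
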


	\subsection{Tensor product of c.l.s.}\label{sub:tensor}
	Here we  reformulate the  expression of Proposition \ref{2.6} in terms of tensor products.
	\begin{defn}
		Let $S_1, S_2 \subset Irr_n$ then

		$$ S_1 \otimes S_2 := \{z \in Irr_n\mid \: \operatorname{Hom}_{\mathfrak{g}(n)}(z, z_1 \otimes z_2 ) \neq \{0\} \:
		\textup{for  some} \: z_1 \in S_1 \: \textup{and} \: z_2 \in S_2 \}.$$
		Given two c.l.s. $Q'$ and $Q''$, their \textup{tensor product }  is the c.l.s. defined by
		$(Q'\otimes Q'')_i = Q'_i \otimes Q''_i $.
	\end{defn}
	
	One can check that 
	$$ (L^\infty_v L_{v+1}^{x_{v+1}} L_{v+2}^{x_{v+2}} \dots L_{v+r}^{x_{v+r}}) E^m
	(R^\infty_w R_{w+1}^{z_{w+1}} R_{w+2}^{z_{w+2}} \dots R_{w+t}^{z_{w+t}}) = $$
	$$ (L^\infty_1)^{\otimes v} \otimes (R^\infty_1)^{\otimes w} \otimes
	((L_{1}^{x_{v+1}} L_{2}^{x_{v+2}} \dots L_{r}^{x_{v+r}}) E^m ( R_{1}^{z_{w+1}}
	R_{2}^{z_{w+2}} \dots R_{t}^{z_{w+t}}))$$
for $\mathfrak{g}(\infty) = \mathfrak{sl}(\infty)$, 
	
	$$(L^\infty_v L_{v+1}^{x_{v+1}} L_{v+2}^{x_v+2} \dots L_{v+r}^{x_{v+r}}) E^m =
	(L^\infty_1)^{\otimes v} \otimes (L_{1}^{x_{v+1}} L_{2}^{x_{v+2}} \dots
	L_{r}^{x_{v+r}}) E^m $$ 
	for $\mathfrak{g}(\infty) = \mathfrak{o}(\infty) ,\mathfrak{sp}(\infty)  $, and
	
	$$(L^\infty_v L_{v+1}^{x_{v+1}} L_{v+2}^{x_{v+2}} \dots L_{v+r}^{x_{v+r}}) E^m R =
	(L^\infty_1)^{\otimes v} \otimes (L_{1}^{x_{v+1}} L_{2}^{x_{v+2}} \dots
	L_{r}^{x_{v+r}}) E^m R$$
	for $\mathfrak{g}(\infty) = \mathfrak{o}(\infty)  $.
	
	We will call an irreducible c.l.s. $Q$ for $\mathfrak{sl}(\infty)$ a \textit{left
		irreducible c.l.s.} if 
	$$Q= (L^\infty_1)^{\otimes v}  \otimes ((L_{1}^{x_{v+1}} L_{2}^{x_{v+2}} \dots
	L_{r}^{x_{v+r}}) E^m ( R_{1}^{z_{w+1}} R_{2}^{z_{w+2}} \dots R_{t}^{z_{w+t}})).$$
		
	\begin{prop}\textup{\cite{PP1}}
		Let $\mathfrak{g}= \mathfrak{sl}(\infty), \mathfrak{o}(\infty)$,
		$\mathfrak{sp}(\infty)$. An integrable ideal of $U(\mathfrak{g}(\infty))$ is prime
		if and only if it is primitive.
	\end{prop}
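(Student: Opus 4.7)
The forward direction (primitive implies prime) is standard and holds in any associative algebra: if $I=\operatorname{Ann}(M)$ for a simple $U(\mathfrak{g}(\infty))$-module $M$, and $JK\subset I$ with two-sided ideals $J,K$ such that $K\not\subset I$, then $KM$ is a nonzero submodule of $M$, hence $KM=M$, so $JM=J(KM)=(JK)M=0$, giving $J\subset I$. I would dispatch this in a few lines at the start.

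For the nontrivial direction, suppose $I\subset U(\mathfrak{g}(\infty))$ is integrable, so $I=\operatorname{Ann}_{U(\mathfrak{g}(\infty))}(M)$ for an integrable $\mathfrak{g}(\infty)$-module $M$. The plan is to associate to $I$ a coherent local system and exploit Zhilinskii's decomposition. Form the c.l.s.\ $Q:=Q(M)$ from Subsection~\ref{2.9}. Since $M$ is integrable, the restriction of $M$ to $\mathfrak{g}(n)$ is a (possibly infinite) direct sum of finite-dimensional simple $\mathfrak{g}(n)$-modules, whose isomorphism classes are precisely $Q_n$. Therefore
\[
I\cap U(\mathfrak{g}(n))=\operatorname{Ann}_{U(\mathfrak{g}(n))}(M)=\bigcap_{z\in Q_n}\operatorname{Ann}_{U(\mathfrak{g}(n))}(z),
\]
and taking the union over $n$ yields $I=I(Q)$. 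Thus every integrable ideal is the annihilator of some c.l.s.

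Next, apply Zhilinskii's decomposition to write $Q=Q(1)\cup\cdots\cup Q(k)$ as a finite union of its irreducible components. Directly from the definition of $I(Q)$,
\[
I=I(Q)=\bigcap_{i=1}^{k}I\bigl(Q(i)\bigr),
\]
and by Proposition~\ref{prop:annih} each $I(Q(i))$ is a primitive (hence prime) ideal. Since the product of ideals is contained in their intersection, $I(Q(1))\cdot I(Q(2))\cdots I(Q(k))\subset I$, and primeness of $I$ (extended inductively to finite products) forces $I(Q(j))\subset I$ for some $j$. The reverse inclusion $I\subset I(Q(j))$ holds because $I$ is the intersection, so $I=I(Q(j))$, which is primitive. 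The main point where one must be careful is ensuring the decomposition of $Q$ is a \emph{finite} union (guaranteed by \cite{Zh1}), since only for finite unions does the argument via the product of ideals work; the rest is bookkeeping with the definitions of $I(Q)$ and integrability.
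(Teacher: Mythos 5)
The paper states this proposition as a quoted result from \cite{PP1} and supplies no proof of its own, so there is no internal argument to compare against. Your proof is correct and is essentially the standard derivation: the forward direction is the general fact that annihilators of simple modules are prime, and the backward direction is assembled exactly from the ingredients the paper records in Subsection \ref{2.9} --- the identity $I=I(Q(M))$ for an integrable module $M$, Zhilinskii's \emph{finite} decomposition of a c.l.s.\ into irreducible components, Proposition \ref{prop:annih}, and the product-versus-intersection trick for prime ideals.
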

	
	For any integrable ideal $I \subset  U(\mathfrak{g}(\infty))$, let

	$$ Q(I)_n:=\{z\in Irr_n \mid I \cap U(\mathfrak{g}(n)) \subset 
	\operatorname{Ann}_{U(\mathfrak{g}(n))}z \}.$$
The collection $\{Q(I)_n\}$ is a well-defined c.l.s., which we denote by $Q(I)$.

	For $\mathfrak{g}(\infty) = \mathfrak{sl}(\infty)$ we denote by $Q_l(I)$ the union of all irreducible components of $Q(I)$ which are left irreducible c.l.s.
	\vspace{\baselineskip}
	
	\begin{theorem}\textup{\cite{PP1}}
		
		\begin{enumerate}
			\item If $\mathfrak{g}(\infty)=\mathfrak{o}(\infty)$, $\mathfrak{sp}(\infty)$, then the
			maps 
			$$ I \longrightarrow Q(I),$$
			$$ Q \longrightarrow I(Q)$$
			
			are mutually inverse bijections (which reverse the inclusion relation) between the set  of  integrable
			ideals in $U(\mathfrak{g}(\infty))$  and the set of c.l.s. for $\mathfrak{g}(\infty)$.
			
			\item  In case $\mathfrak{g}(\infty)=\mathfrak{sl}(\infty)$ , then the maps
			$$ I \longrightarrow Q_l(I)$$
			$$ Q \longrightarrow I(Q)$$
				are mutually inverse bijection (which reverse the relation of inclusion) between the set of prime ideals in $U(\mathfrak{g}(\infty))$  and the set of left irreducible c.l.s. for $\mathfrak{g}(\infty)$.
			\item 
			
            Each integrable ideal  of $
			U(\mathfrak{sl}(\infty))$ has the form  $I(Q)$ for some left  c.l.s.~$Q$.
			
		\end{enumerate}
	\end{theorem}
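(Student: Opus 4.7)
The plan is to prove, for part (1) concerning $\mathfrak{o}(\infty)$ and $\mathfrak{sp}(\infty)$, the two equalities $I(Q(I))=I$ and $Q(I(Q))=Q$; the inclusion-reversal is immediate from the definitions. The key ingredient is the finite-level identity
$$I(Q)\cap U(\mathfrak{g}(n))=\bigcap_{z\in Q_n}\operatorname{Ann}_{U(\mathfrak{g}(n))}z,$$
which reduces the infinite-dimensional problem to a statement inside each $U(\mathfrak{g}(n))$, and combines with the fact that every ideal $I\subset U(\mathfrak{g}(\infty))$ is determined by its chain of intersections $I\cap U(\mathfrak{g}(n))$.

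First I would verify well-definedness. For a c.l.s.~$Q$, decomposing $Q=\bigcup_i Q(i)$ into irreducible components and applying Proposition~\ref{prop:annih} to each $Q(i)$ produces integrable modules $M(i)$ with $I(Q(i))=\operatorname{Ann}M(i)$; one checks $I(Q)=\operatorname{Ann}(\bigoplus_i M(i))$, so $I(Q)$ is indeed integrable. Conversely, if $I=\operatorname{Ann}M$ with $M$ integrable, then $M|_{\mathfrak{g}(n)}$ is a sum of finite-dimensional $\mathfrak{g}(n)$-modules and hence semisimple; thus $Q(I)_n$ is exactly the set of simple $\mathfrak{g}(n)$-constituents of $M$, and coherence $\langle Q(I)_n\rangle_m=Q(I)_m$ is automatic because simple $\mathfrak{g}(m)$-constituents of a simple $\mathfrak{g}(n)$-constituent of $M$ are themselves simple $\mathfrak{g}(m)$-constituents of $M$. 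For the finite-level identity, starting from $I(Q)=\bigcup_m\bigcap_{z\in Q_m}\operatorname{Ann}_{U(\mathfrak{g}(m))}z$ and using $\operatorname{Ann}_{U(\mathfrak{g}(m))}z\cap U(\mathfrak{g}(n))=\bigcap_{w\in\langle z\rangle_n}\operatorname{Ann}_{U(\mathfrak{g}(n))}w$, coherence $\langle Q_m\rangle_n=Q_n$ collapses the nested intersection (for each $m\geq n$) to the desired $\bigcap_{w\in Q_n}\operatorname{Ann}w$.

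With this identity, $I(Q(I))=I$ is immediate on each level $n$. For $Q(I(Q))=Q$, the inclusion $Q_n\subset Q(I(Q))_n$ is definitional; the reverse inclusion is the one nontrivial input and is the main obstacle I expect to confront. It follows from Jacobson density: the quotient $U(\mathfrak{g}(n))/\bigcap_{w\in Q_n}\operatorname{Ann}w$ embeds into the product $\prod_{w\in Q_n}U(\mathfrak{g}(n))/\operatorname{Ann}w$, and each factor is a full matrix algebra $\operatorname{End}(w)$ because $w$ is a finite-dimensional simple module over a finite-dimensional semisimple Lie algebra. Any simple module $z$ of the quotient must therefore be isomorphic to some $w\in Q_n$, so $z\in Q_n$.

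For parts (2) and (3) the same framework applies, but the extra difficulty is the left/right asymmetry of $\mathfrak{sl}(\infty)$-modules (the nonisomorphism of $V$ and $V_*$ manifests in Proposition~\ref{2.6} as the possibility of mixing the $L^\infty_v$ and $R^\infty_w$ factors). I would first establish (3) by showing, using the tensor-product presentation of Subsection~\ref{sub:tensor}, that every integrable ideal is an intersection of ideals of the form $I(Q(i))$ with $Q(i)$ left irreducible, equivalently of the form $I(Q)$ for $Q$ a left c.l.s. Part (2) then follows by observing that primeness of $I(Q)$ forces $Q$ to be irreducible, and the distinction between left and right irreducible components corresponds to the asymmetry between a module and its contragredient dual; the bijection between prime ideals and left irreducible c.l.s.\ then reduces to showing that distinct left irreducible c.l.s.\ yield distinct annihilators, which again comes from the finite-level identity together with the Jacobson density argument.
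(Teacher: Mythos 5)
First, note that the thesis does not prove this theorem at all: it is quoted verbatim from \cite{PP1} as a known input, so there is no internal proof to compare yours against. Judged on its own merits, your proposal gets the easy half right and stalls exactly where the real content of the theorem lies.

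What works: the finite-level identity $I(Q)\cap U(\mathfrak{g}(n))=\bigcap_{w\in Q_n}\operatorname{Ann}_{U(\mathfrak{g}(n))}w$ is correct (your collapsing of the nested intersection via $\langle Q_m\rangle_n=Q_n$ is fine), and with it $I(Q(I))=I$ follows level by level as you say. The genuine gap is the reverse inclusion $Q(I(Q))_n\subset Q_n$, i.e.\ the injectivity of $Q\mapsto I(Q)$. Your Jacobson-density/CRT argument is valid only when $Q_n$ is finite: for a finite set of pairwise nonisomorphic finite-dimensional simples the quotient by the intersection of annihilators really is a finite product of matrix algebras. But the c.l.s.\ that matter here are of infinite type ($E^\infty$, $L^\infty_p$, and every c.l.s.\ containing them), and for an infinite family a maximal two-sided ideal of finite codimension can contain the intersection $\bigcap_{w\in Q_n}\operatorname{Ann}(w)$ without equaling any $\operatorname{Ann}(w)$ --- compare $\mathbb{C}[x]$ with the maximal ideals $(x-k)$, $k\in\mathbb{Z}$, whose intersection is $0$ and hence lies in $(x-\tfrac12)$. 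A subalgebra of an infinite product of matrix algebras has many simple modules not isomorphic to any factor, so "must therefore be isomorphic to some $w\in Q_n$" does not follow. The same gap infects your claim that $Q(I)_n$ equals the set of $\mathfrak{g}(n)$-constituents of an integrable $M$ (and hence the coherence of $Q(I)$). In \cite{PP1} this injectivity is a main theorem, proved by running through Zhilinskii's explicit classification of irreducible c.l.s.\ (Proposition \ref{2.6}) and separating the corresponding ideals directly, not by abstract ring theory at a single level $n$.

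Parts (2) and (3) are sketched too thinly to assess as proofs, and they hide the second substantive difficulty: for $\mathfrak{sl}(\infty)$ the map $Q\mapsto I(Q)$ is \emph{not} injective on all c.l.s.\ (for instance $I(L_p)=I(R_p)$, reflecting $\operatorname{Ann}\Lambda^pV=\operatorname{Ann}\Lambda^pV_*$), which is precisely why the theorem is stated in terms of \emph{left} c.l.s.\ and the modified map $I\mapsto Q_l(I)$. Saying the left/right distinction "corresponds to the asymmetry between a module and its contragredient dual" names the phenomenon but does not show that the left components of $Q(I)$ are recoverable from $I$, nor that distinct left irreducible c.l.s.\ yield distinct prime ideals; that is again the hard part of \cite{PP1}.
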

	\subsection{Robinson--Schensted algorithm}\label{sub:RS}

A \emph{partition} $\lambda$ of an integer $n \in \mathbb{Z}_{\geq0}$ is a nonincreasing finite sequence $\lambda_1 \geq \lambda_2 \geq\dots$ of positive integers,  whose sum $|\lambda| =\Sigma \lambda_i$
 equals $n$. The terms $\lambda_i$ of this sequence are called \emph{the parts}
of the partition $\lambda$.  Let  $P_n$ be the (obviously finite) set
of all partitions of $n$, and  $P$ be the union of all $P_n$ for $n \in \mathbb{Z}_{\geq0}$.

To each $\lambda \in P_n$ one can associate a subset of $\mathbb{Z}_{>0} \times \mathbb{Z}_{>0}$, called  \emph{Young diagram} $Y(\lambda)$; it
is defined by $(i, j) \in Y(\lambda) \Longleftrightarrow j < \lambda_i$ (so that $\#Y (\lambda) = |\lambda|$). The elements of a Young diagram will be called  boxes, and we
may correspondingly depict the Young diagram as rows of boxes of respective lengths $\lambda_i$, aligned by their left ends, the row of length $\lambda_i$ lying higher then the row of length $\lambda_j$ for all $i<j$.

As an example we consider the partition
$\lambda = (7, 5, 3, 3, 1)$ in $P_{19}$, and draw its Young diagram

$$Y(\lambda)=
\begin{array}{ccccccc}
\hline\multicolumn{1}{|c|}{\: }&\multicolumn{1}{|c|}{\:}&\multicolumn{1}{|c|}{\:}&\multicolumn{1}{|c|}{\: }&\multicolumn{1}{|c|}{\:}&\multicolumn{1}{|c|}{\:}&\multicolumn{1}{|c|}{\:}\\
\hline \multicolumn{1}{|c|}{}&\multicolumn{1}{|c|}{}&\multicolumn{1}{|c|}{\:}&\multicolumn{1}{|c|}{\:}&\multicolumn{1}{|c|}{\:}& &\\
\hhline{-----~~}\multicolumn{1}{|c|}{}&\multicolumn{1}{|c|}{}&\multicolumn{1}{|c|}{\:}& & & &\\
\hhline{---~~~~}\multicolumn{1}{|c|}{}&\multicolumn{1}{|c|}{}&\multicolumn{1}{|c|}{\:}& & & &\\
\hhline{---~~~~}\multicolumn{1}{|c|}{}& & & & & &\\
\hhline{-~~~~~~}
\end{array}.$$

Clearly, a partition $\lambda \in P$ is  determined by $Y (\lambda)$. The principal reason for referring to the elements of a Young diagram $Y (\lambda)$ as boxes (rather than
as points), is that it allows one to represent maps $f\colon Y (\lambda) \to \mathbb{Z}$ by filling each box $s \in Y (\lambda)$ with
the number $f(s)$. We shall call such a filled Young diagram a \emph{standard Young tableau} (or simply a standard tableau) \emph{of
shape} $\lambda$ if it satisfies the following condition:  all numbers $f{(s)	}$ strictly  decrease along each row and weakly decrease along each column.

Let's describe the Robinson--Schensted (or Robinson--Schensted--Knuth) algorithm.  It starts from  an ordered set $d=\{d_i\}$  positive integer, where $1\leq i \leq n$, and produces as  output  two Young tableaux: the insertion tableau $Y$ and the recording tableau $Y'$. This algorithm is based on a procedure of  inserting a new positive integer into a Young tableau,
 displacing certain entries, and creating  a tableau with one more box  than the
original one.

The starting Young tableaux $Y_0=Y'_0:=\{\varnothing\}$ are empty. We set the counter of steps  $s$ to be equal to $1$.  
 From this moment, we will perform subsequent steps until we reach $s=n+1$.
The algorithm is as follows.


\begin{enumerate}

\item  If the current step is $s=n+1$, then we finish the algorithm. If $s<n$,  we name $e:=d_s$ \emph{the current number}. Furthermore, we name the first row  the \emph{current row} and assign $r:=1$ ($r$ is the  number of the current row).
\item Find the leftmost number $l$ which is less or equal than the current number in the current row. If such $l$ exists, then go to  step (3). If there is no such an element, then  add a box filled  by $e$ to the end of the current row of $Y_{s-1}$, denote this new Young tableau by $Y_{s}$ and add $n-s+1$ to  the end of the current row of $Y'_{s-1}$. Set $s:=s+1$.   Return to  step $(1)$.
\item Change $l$ in $Y_{s-1}$ by the current number $e$, assign $e:=l$, and change the current row to the next  row (even if it the latter empty) by putting $r:=r+1$. Return to  step (2).
\end{enumerate}

The Young tableaux $Y_{n}$ and $Y'_{n}$ obtained at the last step  constitute the output of the Robinson--Schensted algorithm.

Here is an example. Set \break $\{d_1=5, d_2=1, d_3=3, d_4=2, d_5=3, d_6=6 ,d_7=4\}$.  We start  with  current   step $s=0$,  current row $r=1$,  current number $e=d_1=5$, and $Y_0=Y'_0=\varnothing$.

Below we list all $Y_i$ and $Y'_i$, which we obtain in the course of the Robinson--Schensted algorithm.

$$Y_0=
\varnothing,\:
 Y'_0=
\varnothing,$$

$$Y_1=
\begin{array}{c}
\hline\multicolumn{1}{|c|}{5 }\\
\hhline{-}
\end{array}, \:
 Y'_1=
\begin{array}{c}
\hline\multicolumn{1}{|c|}{7}\\
\hhline{-}
\end{array},$$

\bigskip
$$Y_2=
\begin{array}{cc}
\hline\multicolumn{1}{|c|}{5 }&\multicolumn{1}{|c|}{1 }\\
\hhline{--}
\end{array}, \:
 Y'_2=
\begin{array}{cc}
\hline\multicolumn{1}{|c|}{7}& \multicolumn{1}{|c|}{6 }\\
\hhline{--}
\end{array},$$

\bigskip

$$Y_3=
\begin{array}{cc}
\hline\multicolumn{1}{|c|}{5 }&\multicolumn{1}{|c|}{3 }\\
\hhline{--}\multicolumn{1}{|c|}{1 }&\\
\hhline{-~}
\end{array}, \:
 Y'_3=
\begin{array}{cc}
\hline\multicolumn{1}{|c|}{7}& \multicolumn{1}{|c|}{6 }\\
\hline\multicolumn{1}{|c|}{5}&\\
\hhline{-~}
\end{array},$$

\bigskip

$$Y_4=
\begin{array}{ccc}
\hline\multicolumn{1}{|c|}{5 }&\multicolumn{1}{|c|}{3 }&\multicolumn{1}{|c|}{2 }\\
\hline\multicolumn{1}{|c|}{1}&&\\
\hhline{-~~}
\end{array}, \:
 Y'_4=
\begin{array}{ccc}
\hline\multicolumn{1}{|c|}{7}& \multicolumn{1}{|c|}{6 }&\multicolumn{1}{|c|}{4 }\\
\hline\multicolumn{1}{|c|}{5}&&\\
\hhline{-~~}
\end{array},$$

\bigskip

$$Y_5=
\begin{array}{ccc}
\hline\multicolumn{1}{|c|}{5 }&\multicolumn{1}{|c|}{3 }&\multicolumn{1}{|c|}{2 }\\
\hline\multicolumn{1}{|c|}{3}&&\\
\hhline{-~~}\multicolumn{1}{|c|}{1}&&\\
\hhline{-~~}
\end{array}, \:
 Y'_5=
\begin{array}{ccc}
\hline\multicolumn{1}{|c|}{7}& \multicolumn{1}{|c|}{6 }&\multicolumn{1}{|c|}{4 }\\
\hline\multicolumn{1}{|c|}{5}&&\\
\hhline{-~~}\multicolumn{1}{|c|}{3}&&\\
\hhline{-~~}
\end{array},$$

\bigskip

$$Y_6=
\begin{array}{ccc}
\hline\multicolumn{1}{|c|}{6 }&\multicolumn{1}{|c|}{3 }&\multicolumn{1}{|c|}{2 }\\
\hline\multicolumn{1}{|c|}{5}&&\\
\hhline{-~~}\multicolumn{1}{|c|}{3}&&\\
\hhline{-~~}\multicolumn{1}{|c|}{1}&&\\
\hhline{-~~}
\end{array}, \:
 Y'_6=
\begin{array}{ccc}
\hline\multicolumn{1}{|c|}{7}& \multicolumn{1}{|c|}{6 }&\multicolumn{1}{|c|}{4 }\\
\hline\multicolumn{1}{|c|}{5}&&\\
\hhline{-~~}\multicolumn{1}{|c|}{3}&&\\
\hhline{-~~}\multicolumn{1}{|c|}{2}&&\\
\hhline{-~~}
\end{array},$$

\bigskip

$$Y_7=
\begin{array}{ccc}
\hline\multicolumn{1}{|c|}{6 }&\multicolumn{1}{|c|}{4 }&\multicolumn{1}{|c|}{2 }\\
\hline\multicolumn{1}{|c|}{5}&\multicolumn{1}{|c|}{3}&\\
\hhline{--~}\multicolumn{1}{|c|}{3}&&\\
\hhline{-~~}\multicolumn{1}{|c|}{1}&&\\
\hhline{-~~}
\end{array}, \:
 Y'_7=
\begin{array}{ccc}
\hline\multicolumn{1}{|c|}{7}& \multicolumn{1}{|c|}{6 }&\multicolumn{1}{|c|}{4 }\\
\hline\multicolumn{1}{|c|}{5}&\multicolumn{1}{|c|}{1}&\\
\hhline{--~}\multicolumn{1}{|c|}{3}&&\\
\hhline{-~~}\multicolumn{1}{|c|}{2}&&\\
\hhline{-~~}
\end{array}.$$

\bigskip

We also can apply the Robinson--Schensted algorithm to the elements of the permutation group $S_n$. Let
$$\delta=\biggl(\begin{array}{cccc}
1 & 2 & \dots & n \\
\delta(1) & \delta(2) & \dots & \delta(n)
\end{array}\biggr) $$
be a permutation.
Then we apply the Robinson--Schensted algorithm to the sequence $$\{\delta(1), \delta(2), \dots, \delta(n)\}.$$ We denote the output  Young tableaux by $Y(\delta)$ (insertion tableau) and $Y'(\delta)$ (recording tableau).

\newpage	
\section{Primitive ideals of $U(\mathfrak{o}(\infty))$ and  $U(\mathfrak{sp}(\infty))$}\label{3}
	
Our main result in this section is that every primitive ideal of $U(\mathfrak{o}(\infty))$ or  $U(\mathfrak{sp}(\infty))$ is weakly bounded. This implies that, every primitive ideal  of $U(\mathfrak{o}(\infty))$ is a locally integrable.

Let $U$ stand for  $U(\mathfrak{o}(\infty))$ or  $U(\mathfrak{sp}(\infty))$, $\mathfrak{g}({2n})$ stand for $\mathfrak{o}({2n})$ or  $\mathfrak{sp}({2n})$, and $W(2n)$
stand for the Weyl group of $\mathfrak{g}(2n)$. From now on, we slightly change the notation: we will denote  $\mathfrak{o}({2n})$ and  $\mathfrak{sp}({2n})$  by $\mathfrak{g}(2n)$, while before we used the notation $\mathfrak{g}(n)$. This is needed in order to simplify some formulas.
\subsection{Symbols}

Define a \emph{symbol of type $C_n$} to be a collection of nonnegative integers

\[\Lambda= \left(\begin{array}{c}
	 \alpha \\
	\beta
	\end{array}\right)=\left(\begin{array}{ccc}
	 \alpha_1, & \dots, & \alpha_{m+1} \\
	\beta_1, & \dots, & \beta_{m}
	\end{array} \right), \]
such that $\alpha_i<\alpha_{i+1}$, $\beta_i<\beta_{i+1}$ and $\sum\limits_{i=1}^{m+1} \alpha_i + \sum\limits_{i=1}^m \beta_i = n+m^2$.
We consider the following equivalence relation on the set of symbols of type $C_n$
\[ \left(\begin{array}{ccc}
	 \alpha_1, & \dots, & \alpha_{m+1} \\
	\beta_1, & \dots, & \beta_m
	\end{array} \right)\sim\left(\begin{array}{cccc}
	0, & \alpha_1+1, & \dots, & \alpha_{m+1}+1 \\
	0, & \beta_1+1, & \dots, & \beta_m+1
	\end{array} \right).\]
If  $\alpha_i\leq\beta_i\leq\alpha_{i+1}$ for $1\leq i \leq m$, then the symbol  $\Lambda $ is \emph{special}. The set of  special symbols  in a natural one-to-one correspondence with the set of nilpotent orbits of $\mathfrak{sp}(2n)$ (see \cite{BV}). Take the set $\{2\alpha_i, 2\beta_j+1\}$ for $1\leq i\leq m+1$,  $1\leq j\leq m$, order its elements in increasing order, and denote it by $\{\nu_j\}^{2m+1}_{j=1}$. Then $\nu_C(\Lambda):=\{\nu_j-j+1\}$ is a partition of $2n$.

Define a \emph{symbol of type $D_n$} as a collection of nonnegative integers
	\[\Lambda= \left(\begin{array}{c}
	 \alpha \\
	\beta
	\end{array}\right)=\left(\begin{array}{ccc}
	 \alpha_1, & \dots, & \alpha_m \\
	\beta_1, & \dots, & \beta_m
	\end{array} \right), \]
such that $\alpha_i<\alpha_{i+1}$, $\beta_i<\beta_{i+1}$ and $\sum\limits_{i=1}^m \alpha_i + \sum\limits_{i=1}^m \beta_i = n+m(m-1)$. Introduce the  equivalence relation on the set of symbols of type $D_n$ : 
\[ \left(\begin{array}{c}
	 \alpha \\
	\beta
	\end{array}\right)\sim \left(\begin{array}{c}
	\beta \\
	\alpha
	\end{array} \right), \]
	\[ \left(\begin{array}{ccc}
	 \alpha_1, & \dots, & \alpha_m \\
	\beta_1, & \dots, & \beta_m
	\end{array} \right)\sim\left(\begin{array}{cccc}
	0, & \alpha_1+1, & \dots, & \alpha_m+1 \\
	0, & \beta_1+1, & \dots, & \beta_m+1
	\end{array} \right).\]
If $\beta_i\leq\alpha_i\leq\beta_{i+1}$ or $\alpha_i\leq\beta_i\leq\alpha_{i+1}$ for $1\leq i\leq m-1$, and respectively also $\beta_i\leq\alpha_i$ or $\alpha_i\leq\beta_i$, then we call the symbol  $\Lambda $ \emph{special}. The set of special symbols in one-to-one correspondence with the set of nilpotent orbits of $\mathfrak{o}(2n)$ \cite{BV}. Take the set $\{2\alpha_i+1, 2\beta_i\}$ for all $i$, order its elements in increasing order, and denote it by $\{\nu_j\}^{2m}_{j=1}$. Then $\nu_D(\Lambda):=\{\nu_j-j+1\}$ is a partition of $2n$.

\medskip
	
\subsection{Primitive ideals of $U(\mathfrak{o}(2n))$ and $U(\mathfrak{sp}(2n))$}

\label{prim id}


In this section we recall the classification of primitive ideals of $U(\mathfrak{o}(2n))$ and $U(\mathfrak{sp}(2n))$.

 Let $L(\lambda)$ be the unique irreducible $\mathfrak{g}(2n)$-module with highest weight $\lambda$.  By $W(2n)$ and $\Delta(2n)$ we denote  the Weyl group and the root system of $\mathfrak{g}(2n)$ respectively.  We fix  a set of positive roots $\Delta(2n)^+$ as in Subsection \ref{2.3}.   
 Then we denote $I(\lambda)=\operatorname{Ann}(L(\lambda-\rho_{\mathfrak{g}(2n)}))$.   Recall that our notation for the Killing form is $(\cdot, \cdot)$.

Let  $w$ be an element of $W(2n)$.     Recall       of the  Young tableau $Y(w)$ from Subsection \ref{sub:RS}. Let $q_1, q_2, \dots, q_s$ be the lengths of the rows of $Y(w)$. Note that the Young tableaux $Y(w)$ and $Y(w^{-1})$ have the same shape. We consider the set $\{q_i\}$  as a partition $p(w)$ of $2n$.

\begin{prop}\textup{\cite[Proposition 17]{BV}} 
Given $w\in W(2n)$, there exists a unique symbol $\Lambda=\Lambda(w)$ such that $p(w)=\nu_D(\Lambda)$ (respectively, $\nu_C(\Lambda)$) for $\mathfrak{g}(2n)=\mathfrak{o}(2n)$ (respectively, $\mathfrak{sp}(2n)$).

\end{prop}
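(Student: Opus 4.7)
The plan is to leverage the two bijections already set up in the preamble: special symbols are in bijection with nilpotent orbits of $\mathfrak{g}(2n)$, and the nilpotent orbits of $\mathfrak{sp}(2n)$ (respectively $\mathfrak{o}(2n)$) are in bijection with partitions of $2n$ in which odd (respectively even) parts occur with even multiplicity. Existence and uniqueness of $\Lambda(w)$ will then follow once I show that $p(w)$ lies in this prescribed class of partitions and that the map $\nu_C$ (respectively $\nu_D$) is injective on special symbols modulo the given equivalence.

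First I would verify the parity structure of $p(w)$. The subgroup $W(2n) \subset S_{2n}$ consists of signed permutations satisfying $w(-i) = -w(i)$ (with an extra sign condition in type D). This involutive symmetry on the index set $\{-n,\dots,-1,1,\dots,n\}$ forces a symmetry of the Robinson--Schensted insertion process, which in turn constrains the shape of $Y(w)$. The key combinatorial assertion is that $p(w)$ has odd parts of even multiplicity (type C) or even parts of even multiplicity (type D). I would derive this by tracking a Sch\"utzenberger-type involution through RS on signed permutations; alternatively, this is exactly what is established in Barbash--Vogan's work cited here, and can be invoked as a known fact.

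Second I would invert the map $\nu_C$ (respectively $\nu_D$) explicitly on such partitions. Given $\lambda = p(w)$, define $\nu_j := \lambda_j + (j-1)$ where $\lambda$ is read as a weakly decreasing sequence padded with zeros to the appropriate odd or even length. Because $\lambda$ has the correct parity, the multiset $\{\nu_j\}$ splits cleanly into even values, which form the sequence $\{2\alpha_i\}$, and odd values, which form the sequence $\{2\beta_j+1\}$. The resulting pair $(\alpha,\beta)$ is strictly increasing in each row by construction, and the interleaving specialness condition $\alpha_i \le \beta_i \le \alpha_{i+1}$ (or its type D variant) is forced by the ordering of $\{\nu_j\}$. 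The ambiguity of padding $\lambda$ with leading zeros is exactly the equivalence relation defined on symbols, so $\Lambda(w)$ is unique in the required sense.

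The main obstacle is the first step, namely the parity statement for $p(w)$. A clean proof really wants a combinatorial analysis of how the relation $w(-i) = -w(i)$ interacts with column insertion, tracking how paired entries $\pm i$ bump in conjugate columns; this is the nontrivial input, and everything else is a bookkeeping reduction to the existing orbit/symbol correspondence. The remaining verifications of injectivity and of the specialness condition are mechanical.
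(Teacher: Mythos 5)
The paper does not prove this proposition at all: it is taken verbatim from Barbasch--Vogan \cite[Proposition 17]{BV}, and what follows the statement in the text is exactly your second step, namely the explicit inversion of $\nu_C$ (resp.\ $\nu_D$) via $\mu_i = q_i + i - 1$ and the parity split into the $\bar\alpha$'s and $\bar\beta$'s. Your outline is therefore consistent with the paper's treatment, with the one genuinely nontrivial ingredient (the parity constraint on the multiplicities of the parts of $p(w)$ coming from the symmetry $w(-i)=-w(i)$) correctly isolated and correctly attributed to \cite{BV}. One caution: your claim that the interleaving condition $\alpha_i \le \beta_i \le \alpha_{i+1}$ is ``forced by the ordering of $\{\nu_j\}$'' is not right in general --- the symbol $\Lambda(w)$ produced by this construction need not be special, and the proposition does not assert that it is; that only some equivalent $w'\sim w$ has a special symbol is a separate, later statement (Theorem \ref{teor}(2)). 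Since specialness is not needed for existence or uniqueness, this does not damage your argument, but it should be dropped from the write-up.
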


Let us describe the construction of $\Lambda(w)$. 
For $\mathfrak{g}(2n)=\mathfrak{sp}(2n)$,  we consider the set $\{q_1,q_2,\dots, q_s\}$ for odd $s$,  and the set $\{0, q_1,q_2,\dots, q_s\}$ for even $s$. We put $\mu_i:=q_i+i-1$ and split the set $\{\mu_i\}$ into two subsets: the set  $\{\bar\alpha_j\}$ of even $\mu_i$'s and the set $\{\bar\beta_j\}$ of odd $\mu_i$'s. The symbol $\Lambda(w)$ is then defined as


\[\left(\begin{array}{ccc}
	 \bar\alpha_1/2 &\dots &\bar\alpha_{[s/2]+1}/2\\
	(\bar\beta_1-1)/2 & \dots & (\bar\beta_{[s/2]}-1)/2
	\end{array}\right) .\]

For $\mathfrak{g}(2n)=\mathfrak{o}(2n)$, we consider the set $\{q_1,q_2,\dots, q_s\}$ for even $s$,  and the set $\{0, q_1,q_2,\dots, q_s\}$ for odd $s$. We let $\mu_i:=q_i+i-1$ and split the set $\{\mu_i\}$ into two subsets: the set $\{\bar\alpha_j\}$ of even $\mu_i$'s  and the set $\{\bar\beta_j\}$ of odd $\mu_i$'s. The symbol $\Lambda(w)$ is then defined as 

\[\Lambda(w)=\left(\begin{array}{ccc}
	 (\bar\alpha_1-1)/2 &\dots &(\bar\alpha_{[s+1/2]}-1)/2\\
	\bar\beta_1/2 & \dots & \bar\beta_{[s+1/2]}/2
	\end{array}\right) .\]

In what follows we put $\nu_{C,D}(w):=\nu_{C,D}(\Lambda(w))$.

From now on, we fix $\lambda$ with the property that $-\lambda$ is dominant. Then we put
$$ \Delta_\lambda := \{\alpha\in \Delta(2n)\mid\: \frac{2(\alpha,\lambda)}{(\alpha,\alpha)}\in \mathbb{Z} \},$$
$$\Delta_\lambda^+:=\Delta(2n)^+\cap \Delta_\lambda,$$
$$W_\lambda:= W(\Delta_\lambda)\subseteq W(2n).$$

One can show  that $W_{\lambda}=W_{[\lambda]}$.

Let $\lambda=\lambda_{-n}\bar\varepsilon_{-n}+ \dots+\lambda_{-1}\bar\varepsilon_{-1}+\lambda_{1}\bar\varepsilon_{1}+\dots+\lambda_{n}\bar\varepsilon_{n}$ (where $\lambda_{-k}=-\lambda_k$) be a weight of $\mathfrak{g}(2n)$. 
 We now introduce an equivalence relation on the set of indices $$[\pm n]:=\{-n,-n+1,\dots,n-1,n\}.$$ By definition two indices $i$ and $j$ are equivalent  if $\lambda_i - \lambda_j \in \mathbb{Z}$. Denote by $E_1$the equivalence  class of indices of $\lambda_i \in \mathbb{Z}$, by $E_2$ the equivalence class of indices of $\lambda_i \in \mathbb{Z}+1/2$, and by $E_3, E_4, \dots $ all other equivalence classes. Note that  the classes $E_i$ are invariant under the action of $W_\lambda$. We can represent $W_\lambda$ as the direct product $W_1\times  W_2\times\dots\times W_s$, where $$W_i=\{w\in W_{\lambda}\mid\:  \restr{w}{[\pm n]\backslash E_i} =\operatorname{id} \}.$$ Let $\Delta_i$ be the root subsystem of $\Delta_\lambda$, which corresponds to the subgroup $W_i$ and $\Delta^+_i=\Delta^i\cap\Delta^+_{\lambda}$.
Then each element $ w\in W_\lambda$  can be uniquely expressed as $w=w_1 w_2   \dots   w_s$ where $w_i\in W_i$.

We define the \emph{symbol} $\Lambda^\lambda(w)$ of  an element  $w=w_1 w_2 \dots w_s \in W_{\lambda}$ to be the   pair of symbols $\Lambda^\lambda(w)=(\Lambda(w_1), \Lambda(w_2))$ 
 We  call $\Lambda^\lambda(w)$ \emph{special} if both symbols $\Lambda({w_1})$ and $\Lambda({w_2})$ are special.
If
\[\Lambda(w)=\left(\begin{array}{ccc}
	 \alpha_1, & \dots, & \alpha_s \\
	\beta_1, & \dots, & \beta_m
	\end{array} \right),\:
\Lambda(w')=\left(\begin{array}{ccc}
	 \alpha'_1, & \dots, & \alpha'_{s'} \\
	\beta'_1, & \dots, & \beta'_{m'}
	\end{array} \right)\]
are symbols (where $s=m+1$ or $s=m$ and $s'=m'+1$ or $s'=m'$),
then we say that $\Lambda(w')$ is a \emph{permutation} of $\Lambda(w)$ if the sets $\{ \alpha_1, \alpha_2, \dots \alpha_s, \beta_1, \beta_2,\dots,\beta_m\}$ and $\{ \alpha'_1, \alpha'_2, \dots \alpha'_{s'}, \beta'_1, \beta'_2,\dots,\beta'_{m'}\}$ coincide.	

Recall that the dot action of  $W(2n)$ on  $\mathfrak{h}_{\mathfrak{g}(2n)}^*$ is defined in Subsection \ref{sub:pre} by setting $w\cdot\lambda=w(\lambda+\rho_{\mathfrak{g}(2n)})-\rho_{\mathfrak{g}(2n)}$.
For an element $w\in W({2n})$, we denote $I(w):=I(w\cdot\lambda)$.
Two elements $w_1$ and $w_2$ of $W(2n)$ are called \emph{equivalent}, written $w_1\sim w_2$, if $I(w_1)=I(w_2)$.

\begin{theorem}\label{teor}\textup{\cite[Theorem 18]{BV}}
Let $\Delta({2n})$ be the root system of type $D_n$ or $C_n$, $W(2n)$ be the Weyl group of $\Delta(2n)$, and $w, w_1, w_2$ be elements of $W(2n)$. Then the following holds.

\begin{enumerate}
			\item The elements $w_1$  and  $w_2$  have the same tableaux  $Y({w_1})=Y({w_2})$  if and only if $w_1\sim w_2$.
				
			\item There exists $w'\in W(2n)$  such that  $w'\sim w$ and the symbol $\Lambda(w')$ of $w'$  is special and is a permutation of $\Lambda(w)$.

		\end{enumerate}
\end{theorem}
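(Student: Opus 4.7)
The plan is to combine Duflo's theorem with Joseph's description of primitive ideals via Kazhdan--Lusztig left cells in integral Weyl groups, and then to analyze these cells purely combinatorially. By Duflo, every primitive ideal in $U(\mathfrak{g}(2n))$ with the same infinitesimal character as $L(\lambda-\rho_{\mathfrak{g}(2n)})$ has the form $I(w)$ for some $w\in W_\lambda$. A theorem of Joseph then asserts that $I(w_1)=I(w_2)$ if and only if $w_1$ and $w_2$ lie in a common left Kazhdan--Lusztig cell of $W_\lambda$. Thus both parts of the theorem translate into combinatorial statements about left cells.

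For part (1), I would use the factorization $W_\lambda=W_1\times W_2\times\dots\times W_s$ arising from the equivalence classes $E_i$ on $[\pm n]$: left cells respect direct products of Weyl groups, so one may handle each factor separately. For $i\geq 3$ the group $W_i$ is a symmetric group and Joseph's type-$A$ result directly identifies the left cell of $w_i$ with its insertion tableau under the Robinson--Schensted algorithm. For $W_1$ and $W_2$ of classical type $B/C/D$, the plan is to appeal to Garfinkle's description of left cells via domino tableaux, and to identify $Y(w)$, obtained by running the ordinary Robinson--Schensted algorithm on $w$ viewed as a permutation of $[\pm n]$, with the combinatorial invariant of the Garfinkle domino tableau that classifies the cell. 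The desired equivalence $Y(w_1)=Y(w_2)\iff w_1\sim w_2$ would then follow from an analysis of $\tau$-invariants and Knuth-type equivalences on the hyperoctahedral group.

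For part (2), I would invoke Joseph's theorem that $\operatorname{Var}(I(w))$ is the closure of a single nilpotent coadjoint orbit of $\mathfrak{g}(2n)$, whose Jordan type is precisely $\nu_{C,D}(\Lambda(w))$. Nilpotent orbits in $\mathfrak{o}(2n)$ and $\mathfrak{sp}(2n)$ are parametrized by special symbols, so there is a unique special symbol $\Lambda^{\mathrm{sp}}$ associated to the orbit; Lusztig's $d$-map produces $\Lambda^{\mathrm{sp}}$ from $\Lambda(w)$ by rearranging the multiset of entries into a special configuration, which by construction is a permutation of $\Lambda(w)$. It remains to reverse the recipe $w\mapsto\Lambda(w)$ to produce an element $w'\in W(2n)$ with $\Lambda(w')=\Lambda^{\mathrm{sp}}$; since $Y(w')$ has the same shape as $Y(w)$, combining this with part (1) and the fact that all elements with the same tableau share the same associated variety, one gets $w'\sim w$ (possibly after adjusting $w'$ within its shape class).

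The principal obstacle is part (1): matching the naive two-row Robinson--Schensted tableau $Y(w)$, computed from $w$ regarded as an element of $S_{2n}$, with the intrinsically type-$B/C/D$ Kazhdan--Lusztig cell structure encoded by Garfinkle's domino tableaux. Establishing this correspondence is the technical heart of the argument and requires a delicate combinatorial bridge between ordinary and signed tableau theory; it cannot be read off directly from the type-$A$ result of Joseph.
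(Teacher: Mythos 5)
First, a point of reference: the paper does not prove this statement at all --- it is quoted, with attribution, as Theorem 18 of Barbasch--Vogan \cite{BV} --- so there is no internal proof to compare yours against. Your proposed route (Duflo, then Joseph's identification of primitive-ideal fibres with Kazhdan--Lusztig left cells, then Garfinkle's domino-tableau description of those cells) is in any case genuinely different from the original argument of \cite{BV}, which predates both Garfinkle's work and the systematic use of KL cells: Barbasch and Vogan work with the generalized $\tau$-invariant and the wall-crossing operators $T_{\alpha\beta}$, show these generate the fibres of $w\mapsto I(w)$, and then carry out an explicit combinatorial analysis identifying the resulting classes via the ordinary Robinson--Schensted tableau of $w$ viewed inside $S_{2n}$. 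The step you defer as ``the technical heart'' of part (1) --- matching the naive $S_{2n}$-insertion tableau with the cell invariant --- is not a bridge between two established theories but is essentially the entire content of the statement; Garfinkle's invariant (domino tableaux modulo moving through open cycles) does not visibly coincide with the ordinary insertion tableau, and without that comparison your part (1) proves nothing.

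Second, the concluding step of your part (2) is broken. You claim $Y(w')$ ``has the same shape as $Y(w)$'' and then invoke part (1) to conclude $w'\sim w$. But by Proposition 3.1 the symbol $\Lambda(w)$ is uniquely determined by the shape $p(w)$ of $Y(w)$, and a nontrivial permutation of a symbol changes $\nu_{C,D}$: for $\mathfrak{sp}(4)$, the non-special symbol with rows $(0,1)$ and $(2)$ gives shape $(3,1)$, while its special rearrangement with rows $(0,2)$ and $(1)$ gives $(2,2)$. So the special representative $w'$ demanded by part (2) in general has a \emph{different} tableau shape from $w$, and ``same shape'' can never be the mechanism producing $w'\sim w$; indeed, the whole point of part (2) is that $w'$ is equivalent to $w$ despite the symbols differing. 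A correct argument must exhibit the chain of $T_{\alpha\beta}$-moves (equivalently, in your language, the passage through open cycles of the domino tableau) carrying $w$ to an element with special symbol, which is precisely what \cite{BV} do and what your sketch does not supply. I would also flag that this same tension should make you suspicious of the ``if and only if'' in part (1) as transcribed in the thesis --- the introduction of the paper itself warns that the naive type-$A$ statement ``same insertion tableau iff same annihilator'' fails in types $B$, $C$, $D$ --- so before proving the statement you should pin down exactly which invariant and which implication \cite{BV} actually establish.
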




Let $\Sigma_\lambda$ be the set of simple roots in $\Delta_\lambda^+$, and let $w \in W_\lambda$. Put

$$S_\lambda(w):=\{\alpha \in \Delta_\lambda^+ \mid w\cdot \alpha \notin \Delta_\lambda^+ \},$$
$$\tau_\lambda(w):=S_\lambda(w)\cap \Sigma_\lambda .$$

\begin{prop}\textup{\cite{J2}} \label{prop}
Let $\alpha \in \Sigma_\lambda$ and  $w \in W_\lambda$. Suppose  $\alpha\in \tau_\lambda(w^{-1})$ is such  that $\tau_\lambda(w^{-1}s_\alpha)\nsubseteq \tau_\lambda(w^{-1})$, where $s_\alpha$ is the reflection corresponding to the root $\alpha$. Then
$$I(s_\alpha w)= I(w).$$
\end{prop}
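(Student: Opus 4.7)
This is one of Joseph's classical ``$\tau$-moves'' for primitive ideals (\cite{J2}), and the natural approach is the Jantzen--Zuckerman translation principle applied across the wall defined by $\alpha$. I would introduce the wall-crossing functor $\theta_\alpha = T_\alpha \circ T^\alpha$ on the principal block $\mathcal{O}_\lambda$, where $T^\alpha$ translates to the $\alpha$-wall and $T_\alpha$ translates back, and invoke the fundamental property (Jantzen, Vogan): for any $y\in W_\lambda$,
$$\theta_\alpha L(y\cdot\lambda)=0 \iff \alpha\notin \tau_\lambda(y^{-1}),$$
and when $\alpha\in\tau_\lambda(y^{-1})$, the module $\theta_\alpha L(y\cdot\lambda)$ is nonzero, has $L(y\cdot\lambda)$ as both socle and head, and every other composition factor is an $L(z\cdot\lambda)$ with $\alpha\in\tau_\lambda(z^{-1})$. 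The crucial property for us is the preservation of annihilators: $\operatorname{Ann}(\theta_\alpha M)=\operatorname{Ann}(M)$ whenever $\theta_\alpha M\neq 0$.

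First I would unpack the hypothesis. Since $\tau_\lambda(w^{-1}s_\alpha)\not\subseteq\tau_\lambda(w^{-1})$, there is a simple root $\beta\in\Sigma_\lambda$ with $w^{-1}s_\alpha\beta\in\Delta_\lambda^-$ and $w^{-1}\beta\in\Delta_\lambda^+$. A short computation using $s_\alpha\beta=\beta-\langle\beta,\alpha^\vee\rangle\alpha$ together with $w^{-1}\alpha\in\Delta_\lambda^-$ rules out $\beta=\alpha$ (there $w^{-1}s_\alpha\alpha=-w^{-1}\alpha>0$) and the case $\langle\beta,\alpha^\vee\rangle=0$ (there $w^{-1}s_\alpha\beta=w^{-1}\beta>0$). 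So $\alpha$ and $\beta$ are adjacent in the Dynkin diagram of $\Sigma_\lambda$, and the hypothesis encodes a genuine rank-two $\tau$-configuration for the parabolic subgroup $\langle s_\alpha,s_\beta\rangle\subset W_\lambda$.

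Next, I would apply $\theta_\alpha$ to $L(w\cdot\lambda)$, which is nonzero since $\alpha\in\tau_\lambda(w^{-1})$. The standard Kazhdan--Lusztig analysis---using that $\alpha\in\tau_\lambda(w^{-1})$ is equivalent to $s_\alpha w<w$ in Bruhat order---produces $L(s_\alpha w\cdot\lambda)$ as a composition factor of $\theta_\alpha L(w\cdot\lambda)$. Combined with annihilator-preservation this gives
$$\operatorname{Ann}\bigl(L(s_\alpha w\cdot\lambda)\bigr)\supseteq \operatorname{Ann}\bigl(\theta_\alpha L(w\cdot\lambda)\bigr)=\operatorname{Ann}\bigl(L(w\cdot\lambda)\bigr),$$
i.e.\ $I(s_\alpha w)\supseteq I(w)$.

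For the reverse inclusion the hypotheses are not quite symmetric in $w$ and $s_\alpha w$, and I expect this to be the main obstacle. The resolution is to replace the simple one-wall crossing by a two-step argument through the rank-two parabolic $\langle s_\alpha,s_\beta\rangle$: one considers $\theta_\beta L(s_\alpha w\cdot\lambda)$ (whose nonvanishing is guaranteed by $\beta\in\tau_\lambda(w^{-1}s_\alpha)=\tau_\lambda((s_\alpha w)^{-1})$) and uses that in the $A_2$/$B_2$-patterns of composition factors forced by the adjacency of $\alpha,\beta$, the module $L(w\cdot\lambda)$ appears as a subquotient. Applying annihilator-preservation again gives the reverse inclusion, and the equality $I(s_\alpha w)=I(w)$ follows. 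The core technical point is this rank-two combinatorial verification, which is exactly where the extra hypothesis on $\beta$ enters essentially; everything else is formal consequences of Jantzen's translation principle.
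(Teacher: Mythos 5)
The paper does not prove this proposition at all --- it is quoted verbatim from Joseph \cite{J2} and used as a black box (its only role here is to feed Corollary \ref{cor}, whose proof consists of verifying the hypothesis $\tau_\lambda(w^{-1}s_\alpha)\nsubseteq\tau_\lambda(w^{-1})$ in eight concrete situations). So there is no in-paper argument to compare yours against; the comparison has to be with Joseph's and Vogan's original proofs, and your outline is indeed the standard route: wall-crossing functors, the vanishing criterion $\theta_\alpha L(y\cdot\lambda)=0\Leftrightarrow\alpha\notin\tau_\lambda(y^{-1})$, and the reduction to a rank-two configuration. Your verification that the witness $\beta\in\tau_\lambda(w^{-1}s_\alpha)\setminus\tau_\lambda(w^{-1})$ cannot equal $\alpha$ and cannot be orthogonal to $\alpha$ is correct and is exactly the combinatorial content of the hypothesis; likewise the identification of $L(s_\alpha w\cdot\lambda)$ as a factor of $\theta_\alpha L(w\cdot\lambda)$ (it sits in the middle layer with multiplicity $\mu(s_\alpha w,w)=1$) and the symmetric role of $\theta_\beta$ for the reverse inclusion.

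The one step I would not accept as written is the assertion that $\operatorname{Ann}(\theta_\alpha M)=\operatorname{Ann}(M)$ whenever $\theta_\alpha M\neq 0$. This is not a correct general statement: $\theta_\alpha M$ is only a subquotient of $M\otimes E\otimes E^*$ for a finite-dimensional $E$, and tensoring with a finite-dimensional module neither preserves nor enlarges annihilators (already $\operatorname{Ann}(\mathbb{C})$ and $\operatorname{Ann}(\mathbb{C}\otimes E)$ are incomparable); moreover the intermediate module $T^\alpha M$ lives at a singular central character, so its annihilator is incomparable with $\operatorname{Ann}(M)$ and one cannot simply compose two ``preservations.'' What is actually available, and what Joseph and Vogan prove, is the more delicate ordering statement: if $\alpha\in\tau_\lambda(x^{-1})$ and $L(z\cdot\lambda)$ occurs in the middle layer $U_\alpha L(x\cdot\lambda)$ of $\theta_\alpha L(x\cdot\lambda)$, then $\operatorname{Ann}L(x\cdot\lambda)\subseteq\operatorname{Ann}L(z\cdot\lambda)$. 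Its proof goes through Harish--Chandra bimodules (equivalently, the self-adjointness and exactness of $\theta_\alpha$ together with the fact that $\theta_\alpha L(x\cdot\lambda)$ has simple socle and head isomorphic to $L(x\cdot\lambda)$), not through an equality of annihilators of $M$ and $\theta_\alpha M$. Once you replace your ``annihilator preservation'' by this one-sided comparison, applied once with $\theta_\alpha$ to get $I(w)\subseteq I(s_\alpha w)$ and once with $\theta_\beta$ (legitimate precisely because $\beta\in\tau_\lambda((s_\alpha w)^{-1})$ and $\beta\notin\tau_\lambda(w^{-1})$) to get the opposite inclusion, your argument becomes the correct classical proof.
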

  
We should also note that, for $\mathfrak{g}(2n)=\mathfrak{sp}(2n)$, $W_1$ is a Weyl group of type $C$, $W_2$ is a Weyl group of type $D$, and $W_i$ for $i\neq 1,2$ is of type $A$. For the case $\mathfrak{g}(2n)=\mathfrak{o}(2n)$, $W_1$ and $W_2$ are  Weyl groups of type $D$, and $W_i$  is of type $A$ for $i\neq 1,2$.

\begin{corollary}  \label{cor}  Let $W_j$ be of type $D_n$ or $C_n$ and let $s_{\alpha}=s_{-i,-i+1}\in W_j$ be the simple reflection corresponding to a root $\alpha=\varepsilon_{-i}-\varepsilon_{-i+1}$, for $ -n+1\leq i\leq -1$. Denote  $v=w^{-1}$ for $w\in W_j$.
Then
$$I(s_{-i,-i+1}w)=I(w)$$
whenever one of the following inequalities holds
\begin{enumerate}
 \item $v(i-1)>v(i+1)>v(i)>0$\textup{ where }$-n<i<-1$,
\item $v(i)>v(i+1)>v(i-1)>0$\textup{ where }$-n<i<-1$,
 \item $v(i-1)>v(i-2)>v(i)>0$\textup{ where }$-n+1<i<0$,
 \item $v(i)>v(i-2)>v(i-1)>0$\textup{ where }$-n+1<i<0$,
 \item $v(i)>0, v(i-1)<0\textup{ and }v(i-1)>v(i+1)$\textup{ where }$-n<i<-1$,
 \item $v(i)<0, v(i-1)>0\textup{ and }v(i+1)>v(i)\textup{ where }$$-n<i<-1$,
 \item  $v(i)>0, v(i-1)<0\textup{ and }v(i-2)>v(i)$\textup{ where }$-n+1<i<-1$,
 \item  $v(i)<0, v(i-1)>0\textup{ and }v(i-2)>v(i-1)$\textup{ where }$-n+1<i<-1$.
\end{enumerate}

\end{corollary}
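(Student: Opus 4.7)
The plan is to derive the corollary directly from Proposition \ref{prop}, applied either to $w$ or to $w':=s_\alpha w$. Since the conclusion $I(s_\alpha w)=I(w)$ is symmetric in $w$ and $w'$, and since $\alpha\in\tau_\lambda(w^{-1})$ holds precisely when $\alpha\notin\tau_\lambda((w')^{-1})=\tau_\lambda(w^{-1}s_\alpha)$, exactly one of these two elements will satisfy the $\alpha\in\tau_\lambda(\cdot)$ hypothesis of Proposition \ref{prop}. As it turns out, conditions (1), (3), (5), (7) are those for which $u(\alpha)$ is a negative root and the proposition is applied to $w$ directly, whereas in (2), (4), (6), (8) the root $u(\alpha)$ is positive and the proposition is applied to $w'$ instead.

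The computational backbone is the identity $u(\varepsilon_{-k})=-\varepsilon_{v(k)}$, valid under the convention $\varepsilon_{-k}=-\varepsilon_k$ with $u:=v=w^{-1}$. Taking the two possible adjacent simple roots $\beta_L:=\varepsilon_{-i-1}-\varepsilon_{-i}$ and $\beta_R:=\varepsilon_{-i+1}-\varepsilon_{-i+2}$ of $\alpha$, this yields the uniform formulas
\begin{gather*}
u(\alpha)=\varepsilon_{v(i-1)}-\varepsilon_{v(i)},\\
u(\beta_L)=\varepsilon_{v(i)}-\varepsilon_{v(i+1)},\quad u(s_\alpha\beta_L)=\varepsilon_{v(i-1)}-\varepsilon_{v(i+1)},\\
u(\beta_R)=\varepsilon_{v(i-2)}-\varepsilon_{v(i-1)},\quad u(s_\alpha\beta_R)=\varepsilon_{v(i-2)}-\varepsilon_{v(i)}.
\end{gather*}
I combine these with the observation that $\varepsilon_a-\varepsilon_b$ is a positive root of $\mathfrak{o}(2n)$ or $\mathfrak{sp}(2n)$ exactly when $a$ precedes $b$ in the ordering $1\prec 2\prec\dots\prec n\prec -n\prec -n+1\prec\dots\prec -1$; once this dictionary is in place, the sign of each of the six roots above is determined directly by the relative positions and signs of $v(i-2),v(i-1),v(i),v(i+1)$ appearing in each hypothesis.

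The eight cases then reduce to a direct verification. In conditions (1), (3), (5), (7) the stated inequalities force $u(\alpha)\in\Delta_\lambda^-$, and for the appropriate choice of $\beta$ --- namely $\beta_L$ in (1), (5) and $\beta_R$ in (3), (7) --- they also force $u(\beta)\in\Delta_\lambda^+$ and $u(s_\alpha\beta)\in\Delta_\lambda^-$, so that $\beta\in\tau_\lambda(w^{-1}s_\alpha)\setminus\tau_\lambda(w^{-1})$ and Proposition \ref{prop} applies to $w$. Conditions (2), (4), (6), (8) use the same choices of $\beta$, but the inequalities now put $u(\alpha)$ in $\Delta_\lambda^+$, $u(\beta)$ in $\Delta_\lambda^-$ and $u(s_\alpha\beta)$ in $\Delta_\lambda^+$; consequently $\beta\in\tau_\lambda(w^{-1})\setminus\tau_\lambda(w^{-1}s_\alpha)$, and the proposition is applied instead to $w'=s_\alpha w$, which is equivalent to the claim for $w$.

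The main obstacle I anticipate is the bookkeeping required when the ordering $\prec$ differs from the ordinary ordering on $\mathbb Z$: the sign changes built into conditions (5)--(8) force me to compare mixed positive and negative values of $v$ in order to read off whether a root of the form $\varepsilon_{v(a)}-\varepsilon_{v(b)}$ is positive or negative, and each sign pattern must be inspected individually. A further technicality arises at the boundary $i=-n+1$, where $\beta_R$ is no longer a root of $\Delta_\lambda$ and has to be replaced by the type-dependent terminal simple root ($2\varepsilon_n$ in type $C_n$, $\varepsilon_{n-1}+\varepsilon_n$ in type $D_n$); a short parallel check with this modified $\beta$ then closes the proof.
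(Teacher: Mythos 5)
Your proposal is correct and follows essentially the same route as the paper: both apply Joseph's criterion (Proposition \ref{prop}) with the adjacent simple root $\beta_L=\varepsilon_{-i-1}-\varepsilon_{-i}$ as witness in cases (1),(5) and $\beta_R=\varepsilon_{-i+1}-\varepsilon_{-i+2}$ in cases (3),(7), and both dispose of cases (2),(4),(6),(8) by replacing $w$ with $s_\alpha w$ and invoking the symmetry of the conclusion. (Your worry about the boundary $i=-n+1$ is unnecessary, since the ranges in conditions (3),(4),(7),(8) already exclude it, which is why the paper can work with short roots only and treat types $C$ and $D$ simultaneously.)
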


\begin{proof}
Recall the choice of the sets of  positive and simple roots from  Subsection \ref{2.3}. 
We will argue  simultaneously  in both cases $D_n$ and $C_n$. This is possible because in the proof we only use short roots.

 Assume $\varepsilon_{-i}-\varepsilon_{-i+1} \in  \tau_\lambda(v)$, $v(\varepsilon_{-i}-\varepsilon_{-i+1})\notin \Delta^+_{\lambda}$. This implies exactly one of the  three inequalities:

\begin{enumerate}
 \item[a)] $v(i-1)>v(i)>0$,
\item[b)] $v(i)<v(i-1)<0$,
 \item[c)] $v(i)>0>v(i-1)$.
\end{enumerate}
Clearly, we have  $I(s_{-i,-i+1}w)=I(w)$ if  $\tau_\lambda(vs_\alpha)\nsubseteq \tau_\lambda(v)$, i.e., if there exists  a simple root $\beta=\varepsilon_{-j}-\varepsilon_{-j+1}$ such that $vs_\alpha\cdot\beta\notin \Delta^+_{\lambda} $ and $v\cdot\beta\in\Delta^+_{\lambda}$. 

First, assume that $v$ satisfies inequality $1)$, and hence also  inequality a). We have,
$$v\cdot(\varepsilon_{-i-1}-\varepsilon_{-i})=-\varepsilon_{v(i+1)}+\varepsilon_{v(i)}\in\Delta^+_{\lambda},$$
\textup{because  }$v(i-1)>v({i-2})>0,$  \textup{and}
$$vs_{-i,-i+1}\cdot(\varepsilon_{-i-1}-\varepsilon_{-i})=v\cdot(\varepsilon_{-i-1}-\varepsilon_{-i+1})=-\varepsilon_{v(i+1)}+\varepsilon_{v(i-1)}\notin \Delta^+_{\lambda},$$
 \textup{because  }$v(i-1)>v(i+1)>0.$

Next, assume that $v$ satisfies inequality $3)$, and hence also inequality a). Then,
$$v\cdot(\varepsilon_{-i+1}-\varepsilon_{-i+2})=-\varepsilon_{-v(i-1)}+\varepsilon_{v(i-2)}\in\Delta^+_{\lambda},$$ 
\textup{because }$v(i-2)>v({i})>0,$ \textup{and}
$$vs_{-i,-i+1}\cdot(\varepsilon_{-i+1}-\varepsilon_{-i+2})=v\cdot(\varepsilon_{-i}-\varepsilon_{-i+2})=-\varepsilon_{v(i)}+\varepsilon_{v(i-2)}\notin \Delta^+_{\lambda},$$
\textup{because }$v(i-2)>v({i})>0.$

Now, assume that $v$ satisfies inequality $5)$, hence also inequality c). In this case,
$$v\cdot(\varepsilon_{-i-1}-\varepsilon_{-i})=\varepsilon_{-v(i+1)}+\varepsilon_{v(i)}\in\Delta^+_{\lambda},$$
\textup{because }$v(i)>0>v({i+1}),$ \textup{and}
$$vs_{-i,-i+1}\cdot(\varepsilon_{-i-1}-\varepsilon_{-i})=v\cdot(\varepsilon_{-i-1}-\varepsilon_{-i+1})=\varepsilon_{-v(i+1)}-\varepsilon_{-v(i-1)}\notin \Delta^+_{\lambda},$$
\textup{because }$v(i+1)<v({i-1})<0.$

Finally, assume that $v$ satisfies inequality $7)$, hence also  inequality c). Then,
$$v\cdot(\varepsilon_{-i+1}-\varepsilon_{-i+2})=\varepsilon_{-v(i-1)}+\varepsilon_{v(i-2)}\in\Delta^+_{\lambda},$$
\textup{because }$v(i-2)>0>v(i-1),$ \textup{and}
$$vs_{i,i+1}\cdot(\varepsilon_{-i+1}-\varepsilon_{-i+2})=v\cdot(\varepsilon_{-i}-\varepsilon_{-i+2})=-\varepsilon_{v(i)}+\varepsilon_{v(i+2)}\notin \Delta^+_{\lambda},$$
 \textup{because }$v(i+2)>0>v(i).$

Thus we proved the corollary for the inequalities $1),3),5),7)$. Note that if $I(s_{i,i+1}w)=I(w)$ then $I(s^2_{i,i+1}w)=I(s_{i,i+1}w)$. Hence, the element $w$ satisfies inequality $2),4),6$ or $8)$ if  and only of the element $ws_{i,i+1}$ satisfies  inequality $1),3),5),7)$ respectively. The proof is complete.

\end{proof}

\medskip

\subsection{Primitive ideals of $U(\mathfrak{o}(\infty))$ and $U(\mathfrak{sp}(\infty))$ }

In this section we show that every primitive ideal of $U(\mathfrak{o}(\infty))$ and $U(\mathfrak{sp}(\infty))$ is weakly bounded.

Let $\mathfrak{g}(\infty)$  be  equal to  $\mathfrak{o}(\infty)$ or  $\mathfrak{sp}(\infty)$, and  $\mathfrak{g}({2n})$ be  the Lie algebra $\mathfrak{o}({2n})$ or  $\mathfrak{sp}({2n})$ respectively. As usual, denote by $\operatorname{SL}(2n,\mathbb{C})$ the group of $2n\times 2n$ complex matrices with determinant equal to $1$. Recall that $\mathfrak{g}(2n)=\{X\in \mathfrak{gl}(2n)\mid XF+FX^t=0\}$, where $F$ is defined in Subsection \ref{sub:infdimLA}. Put $$G(2n)=\{g\in {SL}(2n,\mathbb{C})\mid    g^t F g=F\}.$$ Then $\mathfrak{g}(2n)$ is the Lie algebra of the Lie group $G(2n)$. 

Set $U:=U(\mathfrak{g})$ and  $U_{2n}:=U(\mathfrak{g}({2n}))$.

		Our goal in this subsection is to prove the following proposition.

\begin{prop}\label{Prad} Let $I$ be an ideal of $U$, and let  $I_{2n}=I\cap U_{2n}$. Then there exists $r \in \mathbb{Z}_{>0}$ such that,  for $n\gg 0$ (i.e, for each sufficiently large $n$)    the intersection $J(2n)\cap U_{2f(2n)}$ for an arbitrary primitive ideal $J(2n)$ containing $I_{2n}$, is a bounded ideal of $U_{2f(2n)}$ where $f(2n)= \left[\frac{n-3r/2}{r+1}\right]-r/2$.\end{prop}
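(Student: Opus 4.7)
The plan is to apply Duflo's theorem to represent any primitive ideal $J(2n) \supseteq I_{2n}$ as $I(\lambda) = \operatorname{Ann}_{U_{2n}}\bigl(L(\lambda-\rho_{\mathfrak{g}(2n)})\bigr)$ for some $\lambda$ with $-\lambda$ dominant, and then to exhibit an equivalent weight $w\cdot\lambda$ whose restriction to the sub-Cartan $\mathfrak{h}(2f(2n))$ defines a bounded $\mathfrak{g}(2f(2n))$-module (so that its annihilator is a bounded ideal sitting inside $J(2n)\cap U_{2f(2n)}$; intersecting over all such $J(2n)$ then yields weak boundedness at the required level). The parameter $r$ should be extracted from $I$ itself: since $I$ is a fixed ideal of $U(\mathfrak{g}(\infty))$, the associated variety $\operatorname{Var}(I_{2n})$ consists of matrices of uniformly bounded ``corank'', so any primitive $J(2n)\supseteq I_{2n}$ has an associated nilpotent orbit whose Jordan type $\nu_{C,D}(\Lambda(w))$ contains at most $r$ rows of length greater than one. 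Equivalently, the Young tableau $Y(w)$ of Subsection \ref{sub:RS} has at most $r$ columns of height $>1$.

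My first step is to apply Theorem \ref{teor}(2) to replace $w$ with an equivalent $w'$ whose symbol $\Lambda^\lambda(w')$ is special, thereby pinning down the Jordan-type data and identifying the ``long'' and ``short'' parts of the tableau with explicit index sets in the one-line notation of $v' := (w')^{-1}$. I would then apply Corollary \ref{cor} iteratively as a sequence of Joseph-type braid moves: each of its eight inequalities permits a swap of two adjacent entries of $v'$ without altering $I(w')$. The goal is to herd every ``exceptional'' index -- namely, those contributing to the long rows of $Y(w')$ or otherwise violating an arithmetic-progression pattern -- into the outermost $O(r)$ positions at both ends of $\{-n,\dots,-1,1,\dots,n\}$, leaving a central block of length at least $2f(2n)$ on which $v'$ is strictly monotone and its values are spaced by one. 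On this central block, the weight $w'\cdot\lambda$ satisfies the conditions $(1)$--$(3)$ of Lemma \ref{prop:sp.bounded} for $\mathfrak{sp}(2f(2n))$ (or yields a dominant integral weight for $\mathfrak{o}(2f(2n))$, where bounded is automatic by \cite{F}), giving the required bounded $\mathfrak{g}(2f(2n))$-module.

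The explicit function $f(2n)=\bigl[\tfrac{n-3r/2}{r+1}\bigr]-r/2$ arises from a pigeonhole partition of $[\pm n]$: after excising at most $3r/2$ indices to accommodate the exceptional entries plus a small buffer needed to separate the subsystems $W_1$ (integer coordinates) and $W_2$ (half-integer coordinates in the $\mathfrak{sp}(\infty)$ case) and $W_3,W_4,\dots$, the remaining indices split into $r+1$ blocks of equal length; at least one such block is wholly free of exceptional entries, and after symmetric trimming of $r/2$ positions on each side to allow for boundary effects of the Joseph swaps and to meet the ``$\lambda(v_n)\in 1/2+\mathbb{Z}$'' constraint of Lemma \ref{prop:sp.bounded}, this block has length $2f(2n)$.

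The main obstacle will be the rigorous bookkeeping of the iterative application of Corollary \ref{cor}: one must check that the hypotheses of one of its eight inequalities can be arranged at every step of the sorting process, and that the moves required for $W_1$ and $W_2$ do not interfere with each other (they act on disjoint index sets, but the ordering constraints determining which inequality of Corollary \ref{cor} applies are global). A subsidiary difficulty is condition (3) of Lemma \ref{prop:sp.bounded} -- the requirement $\lambda(v_{n-1}+v_n)\in\mathbb{Z}_{\geq -2}$ -- which constrains the last two entries of the bounded central block; satisfying it forces the ``$-r/2$'' correction in the formula for $f(2n)$ and, in the $\mathfrak{sp}(\infty)$ case, dictates that the rightmost end of the central block be drawn from $E_2$ rather than $E_1$. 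Once the combinatorial construction of $w'$ is in place, the conclusion that $J(2n)\cap U_{2f(2n)}$ is bounded follows from the definition, completing the proof.
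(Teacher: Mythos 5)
Your overall architecture coincides with the paper's: Duflo plus the rank bound on $\operatorname{Var}(I_{2n})$ (the paper's Lemma \ref{Lrk17}, via \cite[Lemma 4.12]{PP2}) to get a tableau $Y(w)$ with one row of length $\ge 2n-r$, then Theorem \ref{teor}(2) to make the symbol special, then a pigeonhole count producing the quantity $\left[\frac{n-3r/2}{r+1}\right]$, then the boundedness criterion of Lemma \ref{prop:sp.bounded} (resp.\ integrality for $\mathfrak{o}$) applied to a truncated weight. So the skeleton is right.

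The gap is in the combinatorial core, which is exactly the step you flag as ``the main obstacle.'' Your plan is to use the moves of Corollary \ref{cor} to \emph{globally sort} $v'=(w')^{-1}$, herding all exceptional indices to the outer $O(r)$ positions so that the central block becomes monotone. This is not achievable with the tools available: conditions $1)$--$4)$ of Corollary \ref{cor} are Knuth-type moves, so they only let you travel within the Knuth-equivalence class of the word, i.e.\ among words with the same insertion tableau; you cannot rearrange an arbitrary permutation into a near-identity with exceptional entries parked at the ends (and conditions $5)$--$8)$, which change signs of entries, do not rescue this). Moreover the target you set is stronger than needed (``values spaced by one'' is not required by Lemma \ref{prop:sp.bounded}; only strictly decreasing integral gaps are). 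The paper's Lemma \ref{qwe} avoids global sorting entirely: since $h(w'_c)$ has $n-r/2$ entries of which at most $r$ are negative, the positive entries form at most $r+1$ \emph{contiguous} positive intervals, so one such interval $A_0$ already has length $\ge\left[\frac{n-3r/2}{r+1}\right]$ \emph{without moving anything}. One then applies Robinson--Schensted only to $A_0$ and replaces $A_0$ by the concatenation of the rows of its insertion tableau (in increasing length order); this is Knuth-equivalent, hence realized by moves $1)$--$4)$ and preserves $I(w')$, and by Lemmas \ref{Lrk17} and \ref{Lslnrx} all but the first $r/2$ entries of the rearranged interval lie in the single long row, so they form a strictly decreasing sequence. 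That is where the correction $-r/2$ in $f(2n)$ comes from (not from condition (3) of Lemma \ref{prop:sp.bounded}). Everything outside this interval is simply erased, and Corollary \ref{3.8} plus conjugating $\mathfrak{g}(\bar\lambda)$ onto the standard $\mathfrak{g}(2f(2n))$ finishes the proof. To repair your write-up, replace the sorting step by this ``find a contiguous positive interval, then reorder only inside it via Robinson--Schensted'' argument.
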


For the proof of Proposition \ref{Prad} we need to discuss some facts related to the associated variety of a primitive ideal defined in Subsection \ref{sub:var}.
		
 It is clear that \begin{center}if $J_1\subset J_2$ then $\operatorname{Var}(J_2)\subset\operatorname{Var}(J_1)$.\end{center}
If $I$ is an ideal of $U$, then the intersections $I_{2n}=I\cap U_{2n}$ determine a sequence of ${ G}(2n)$-stable varieties $\operatorname{Var}(I_{2n})\subset\frak{g}(2n)^*$, and we have

\begin{equation}\label{Ephimn}\phi_{2m, 2n}(\operatorname{Var}(I_{2m}))\subset\operatorname{Var}(I_{2n})                     \end{equation}
for $m\geq n$, where the map $\phi_{2m, 2n}\colon\frak{g}({2m})^*\to\frak{g}(2n)^*$ is induced by the natural inclusion $\frak{g}(2n)\hookrightarrow\frak{g}({2m})$. 

For any $n\ge2$ and any $r'\in\mathbb Z_{\ge0}$ we put $$\frak{g}(2n)^{\le r'}:=\{x\in\frak{g}(2n)\mid 
\operatorname{rk}(x)
\le r'\},$$
where $\operatorname{rk}$ refers to the rank of a matrix. We identify $\frak{g}(2n)$ and $\frak{g}(2n)^*$ via the Killing form, and so we consider $\frak{g}(2n)^{\le r'}$ as a subset of $\frak{g}(2n)^*$. 

\begin{lemma}\label{Lrk17}Let $I$ be a nonzero ideal of $U$. Then there exists $r\in\mathbb Z_{\ge0}$ such that $$\operatorname{Var}(I_{2n})\subset(\frak{g}(2n))^{\le r}$$ for all $n\gg0$.\end{lemma}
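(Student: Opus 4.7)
The plan is to pass from the whole infinite-dimensional ideal $I$ to a single nonzero element and its principal symbol, and then show that the $G(2n)$-orbit of the symbol is rich enough to force any $X$ of large matrix rank out of the common zero set. Since $I\neq0$, I choose $N$ so that $I_{2N}:=I\cap U_{2N}$ is nonzero, pick $u\in I_{2N}\setminus\{0\}$ of filtration degree $d$, and set $f:=\bar u\in S^d(\mathfrak{g}(2N))$, a nonzero homogeneous polynomial of degree $d$. Since $I$ is two-sided, for every $n\ge N$ and $g\in G(2n)$ one has $gug^{-1}\in I_{2n}$, so its symbol $\operatorname{Ad}(g)\cdot f$ lies in $\operatorname{gr}(I_{2n})$; consequently $\operatorname{gr}(I_{2n})$ contains the $G(2n)$-invariant ideal $J_f(2n)\subset S^\bullet(\mathfrak{g}(2n))$ generated by $f$, giving
$$\operatorname{Var}(I_{2n})\subset V(J_f(2n))=\{X\in\mathfrak{g}(2n)\mid f(gXg^{-1})=0\text{ for all }g\in G(2n)\}.$$
Under the Killing-form identification, $\mathfrak{g}(2N)\subset\mathfrak{g}(2n)$ has an orthogonal complement, and the orthogonal projection $\pi\colon\mathfrak{g}(2n)\to\mathfrak{g}(2N)$ coincides with the central $2N\times 2N$ block projection; viewing $f$ as a polynomial on $\mathfrak{g}(2n)$ that depends only on $\pi$, one rewrites $V(J_f(2n))=\{X\in\mathfrak{g}(2n)\mid\pi(G(2n)\cdot X)\subset Z_f\}$, where $Z_f\subsetneq\mathfrak{g}(2N)$ is the (proper) zero set of $f$.

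The heart of the proof is the following claim: there exists $r=r(N,d)\in\mathbb{Z}_{\ge0}$ such that for all sufficiently large $n$ and every $X\in\mathfrak{g}(2n)$ with $\operatorname{rk}(X)>r$, the image $\pi(G(2n)\cdot X)$ is Zariski-dense in $\mathfrak{g}(2N)$. Granted this claim, $\pi(G(2n)\cdot X)$ cannot lie inside the proper subvariety $Z_f$, so $X\notin V(J_f(2n))\supset\operatorname{Var}(I_{2n})$, and the lemma follows. To prove the claim I would apply a tangent-space criterion for dominance: the morphism $\phi_X\colon G(2n)\to\mathfrak{g}(2N)$, $g\mapsto\pi(gXg^{-1})$, is dominant if and only if its differential is surjective at some point, and the differential at the identity is the linear map $v\mapsto\pi([v,X])$. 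Decomposing the index set as $A=\{\pm1,\dots,\pm N\}$ and $B=\{\pm(N+1),\dots,\pm n\}$ and writing the block decompositions $X=X_A+X_{AB}+X_{BA}+X_B$ and $v=v_A+v_{AB}+v_{BA}+v_B$, a direct computation gives
$$\pi([v,X])=[v_A,X_A]+v_{AB}X_{BA}-X_{AB}v_{BA},$$
and a short argument shows that whenever $X_{AB}$ has maximal row-rank $2N$ as a $2N\times 2(n-N)$ matrix, the restricted linear map $v_{AB}\mapsto\pi([v,X])$ already surjects onto $\mathfrak{g}(2N)$ subject to the symmetry constraint linking $v_{AB}$ and $v_{BA}$.

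The main obstacle will be the final step: showing that for $X\in\mathfrak{g}(2n)$ of sufficiently large matrix rank some $G(2n)$-conjugate has $X_{AB}$ of full row-rank $2N$. The argument uses the Weyl group of $\mathfrak{g}(2n)$, which acts on indices by signed permutations: if $\operatorname{rk}(X)$ exceeds a constant depending only on $N$, one finds $N$ index pairs $\{j_k,-j_k\}$ such that the $2N$ columns $X_{\cdot,\pm j_k}$ span a $2N$-dimensional subspace, and then conjugates by the signed permutation sending $\{\pm j_1,\dots,\pm j_N\}$ onto $A$. The existence of such an $N$-tuple is the combinatorial heart of the argument and requires some care because of the symmetry $X_{ij}=\pm X_{-j,-i}$ built into $\mathfrak{g}(2n)=\mathfrak{o}(2n)$ or $\mathfrak{sp}(2n)$, but the constant $r$ so obtained depends only on $N$ and $d=\deg f$, yielding the uniform rank bound claimed in the lemma.
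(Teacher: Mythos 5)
Your reduction coincides with the paper's: both arguments observe that for $X\in\operatorname{Var}(I_{2n})$ the projection of the full coadjoint orbit $G(2n)\cdot X$ to the small subalgebra lands in a fixed proper subvariety (for you, the zero set $Z_f$ of one symbol; in the paper, $\operatorname{Var}(I_{2N})$ itself), so the projected orbit is not dense, and hence $\operatorname{rk}(X)$ is bounded. The divergence is only in how the key density statement is handled: the paper simply cites \cite[Lemma 4.12]{PP2} for the fact that elements of sufficiently large rank have Zariski-dense projected orbits, whereas you sketch a self-contained proof via the differential criterion for dominance. Your computation $\pi([v,X])=[v_A,X_A]+v_{AB}X_{BA}-X_{AB}v_{BA}$ is correct, and full column rank of $X_{BA}$ does make the map surjective onto $\mathfrak{g}(2N)$ (the constrained term becomes $M-F_AM^tF_A^{-1}$ with $M=v_{AB}X_{BA}$ ranging over all of $\mathfrak{gl}(2N)$). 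The one step that does not work as written is the last one: choosing index pairs so that the $2N$ full columns $X_{\cdot,\pm j_k}$ are linearly independent and moving them into $A$ gives $\operatorname{rk}(X_{\cdot,A})=2N$, not $\operatorname{rk}(X_{BA})=2N$ --- the chosen columns could be supported entirely on the rows indexed by $A$, leaving $X_{BA}=0$. You need the columns to remain independent after deleting the $2N$ rows indexed by $A$, which costs an extra additive $2N$ in the rank threshold (and is exactly what the hypothesis $n>3m$ in \cite[Lemma 4.12]{PP2} is calibrated for). This is repairable, but as stated the combinatorial step is the genuine gap in your otherwise correct argument; if you do not want to fix it, citing the lemma as the paper does is the shorter route.
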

\begin{proof}If $I$ is nonzero then $\operatorname{Var}(I_{2m})\ne\frak{g}(2m)^*$ for some $m\ge 2$. For every $n\ge m$ and every $X\in\operatorname{Var}(I_{2m})$, formula~(\ref{Ephimn}) shows that
$$\phi_{2n, 2m}({ G}(2n)\cdot X)\subset \operatorname{Var}(I_{2m})\ne\frak{g}(2m)^*,$$
where ${G}(2n)\cdot X$ is the coadjoint orbit of $X$ in $\frak{g}(2n)^*$. Hence $\phi_{2n, 2m}({G}(2n) \cdot X)$ is not dense in $\frak{g}(2n)^*$. This, together with \cite[Lemma 4.12]{PP2}, implies the required result for $r=m$ under the assumption that $n>3m$.
\end{proof}

Further, without loss of generality, we assume that the number $r$ is even (we reassign $r:=r+1$ in the case of odd $r$).

A well-known theorem of A.~Joseph \cite{J1} implies that the associated variety of a primitive ideal $J(2n)\subset U_{2n}$ equals the closure of a nilpotent coadjoint orbit. The natural inclusion $\mathfrak{g}(2n)\hookrightarrow \mathfrak{gl}(2n)$ induces the surjection
 $\mathfrak{gl}^*(2n)\twoheadrightarrow \mathfrak{g}^*(2n)$.
  The conjugacy classes of nilpotent $(2n\times 2n)$-matrices surject naturally to the  nilpotent coadjoint orbits of $\frak{g}(2n)$.  
Moreover, these conjugacy classes are related to partitions of $2n$: the partition attached to a conjugacy class comes from the Jordan normal form  of a representative of this class. In this way we attach the partition $s(J(2n))$ of $2n$ to $J(2n)$. 
By $p(J(2n))$ we denote the partition conjugate to that partition. Let $r(J(2n))$  be the difference between $2n$ and the maximal element of $p(J(2n))$. It is easy to check that  $r(2n)\colon=r(J(2n))$ equals the rank of an arbitrary element in the orbit defined by the partition $p(2n):=p(J(2n))$. 
Note that, given $w\in W$ we have  $\mu_{C,D}(\Lambda(w))=p(I(w))$ if $\Lambda(w)$ is special, \cite[Theorem 7]{BV}.

\medskip

\begin{lemma}\label{Lslnrx}Let $X\in\frak{g}(2n)^{\le r}$ be a nilpotent matrix and $p(2n)$ be the partition attached to the conjugacy class of $X$. Then $r(2n)\le r$.\end{lemma}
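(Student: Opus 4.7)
The plan is to unwind the definitions of $p(2n)$ and $r(2n)$ and to translate the rank bound on $X$ into a bound on the largest part of $p(2n)$. First I would recall that the partition $s(2n)$ assigned to the conjugacy class of the nilpotent matrix $X$ records the sizes of its Jordan blocks, so the number of parts of $s(2n)$ equals the number of Jordan blocks of $X$, which in turn equals $\dim\ker X=2n-\operatorname{rk}(X)$. Since $p(2n)$ is by definition the partition conjugate to $s(2n)$, the largest part of $p(2n)$ equals the number of parts of $s(2n)$, namely $2n-\operatorname{rk}(X)$.

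From the definition $r(2n)=2n-\max p(2n)$ I would then compute
$$r(2n)\;=\;2n-\max p(2n)\;=\;2n-\bigl(2n-\operatorname{rk}(X)\bigr)\;=\;\operatorname{rk}(X),$$
which is precisely the remark already recorded in the paragraph preceding the lemma (the equality of $r(2n)$ with the rank of any matrix in the orbit). The hypothesis $X\in\mathfrak{g}(2n)^{\le r}$ says exactly that $\operatorname{rk}(X)\le r$, so combining these gives $r(2n)\le r$, as required.

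The only subtlety to mention, for completeness, is the identification of $\mathfrak{g}(2n)$ with $\mathfrak{g}(2n)^*$ via the Killing form, under which the coadjoint orbit through $X$ corresponds to the adjoint orbit through the same matrix; this is what guarantees that the Jordan-block partition attached to the orbit is genuinely computed from the matrix rank of $X$. No real obstacle is expected, since the argument is a direct consequence of the combinatorics of conjugate partitions together with the rank-nullity theorem.
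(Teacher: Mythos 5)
Your proof is correct and follows the same route as the paper's: the paper's own argument is simply the observation that $r(2n)=\operatorname{rk}(X)\le r$, relying on the remark preceding the lemma that $r(2n)$ equals the rank of any element of the orbit. You have merely spelled out, via conjugate partitions and rank--nullity, the computation that the paper labels ``easy to check,'' which is a welcome but not substantively different elaboration.
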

\begin{proof} We have ${\operatorname{rk}}(X)\le r$.   Then $r(2n)={\operatorname{rk}}X\le r$
\end{proof}

Let $J(2n)=\operatorname{Ann}(L(\lambda'-\rho_{\mathfrak{g}(2n)}))$ be a primitive ideal as in Proposition \ref{Prad} for a weight $\lambda'\in \mathfrak{h}^*_{\mathfrak{g}(2n)}$.
Then there exists $w\in W$  such that $\lambda=w^{-1}\lambda'$ and $-\lambda$ is a dominant weight.  Decompose the subgroup $W_\lambda\simeq W_{1}\times W_2 \times\dots\times W_s$ as in Subsection \ref{prim id}. In what follows, given an element $w'\in W_{\lambda}$, we write $w'=w'_1 w'_2\dots w'_s$ for $w'_i\in W_i$.
In particular, $w=w_1 w_2\dots w_s$ for $w_i\in W_i$. Let $Y(w_i)$ be the Young tableaux defined in Subsection~\ref{2.6}. 
 Lemmas \ref{Lrk17}, 3.5 
 and Theorem  \ref{teor} allow us to  conclude that $2n-r$ is the number of Jordan  blocks of the  conjugacy class corresponding to the nilpotent orbit whose closure is the associated variety of $I(\lambda)$. Next, let $l_i$ be the length of the longest row of $Y(w_i)$. Then $\operatorname{max}\{l_1,\dots,l_s\}=2n-r$, and one can easily see that  in fact $2n-r$ equals  $l_1$ or $l_2$.


Choose an element $w'=w'_1w'_2\dots w'_s$   such that  $ w \sim w'$ and the symbol  $\Lambda^\lambda(w')$ is special (i.e., $\Lambda(w'_1)$ and $\Lambda(w'_2)$ are special).
Since $w\sim w'$, we have $$l:=\operatorname{max}\{l_1,\dots,l_s\}=\operatorname{max}\{l'_1,\dots,l'_s\}=2n-r,$$ where $l'_1,\dots,l'_s$ are defined with respect to the decomposition  $w'=w'_1w'_2\dots w'_s$. Moreover $l=l'_1$ or $l=l'_2$. Therefore $l'_1 \neq l'_2$ for $2n>2r$.    In what follows we assume that $Y(w'_c)$, for  $c=1$ or $2$, has a row of length  $l$ . Note that tableaux of types $C$ and $D$ have even number of elements, and let  $2h$ be the number of elements of $Y(w'_c)$. 


In order to state the next result we need the following definition.

\begin{defn}
 A weight $\gamma=\sum\gamma_i\varepsilon_i\in \mathfrak{h}_{\mathfrak{g}(2n)}^*$  is \emph{half-integral} if $\gamma_i-\gamma_j\in \mathbb{Z}_{}$  and $\gamma_i+\gamma_{j}\in \mathbb{Z}_{}$ for all $i,j$.
\end{defn}


\begin{lemma}\label{qwe}
There exist an element $\tilde w\in W_{\lambda}$, satisfying $\tilde w \sim w'$, and an integer $k(2n)\in 2\mathbb{Z}_{>0}$ such that, after erasing the first   and last  $k(2n)+r$   coordinates of $\tilde w\lambda$, as well as the $2n-2k(2n)-2f(2n)-2r$ central coordinates, we obtain a half-integral dominant regular weight of $\mathfrak{g}({2f(2n)})$ (where \break$f(2n)=[\frac{n-3r/2}{r+1}]-r/2$ as in  Proposition 3.3).

\end{lemma}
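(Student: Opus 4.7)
The plan is to construct $\tilde w$ from $w'$ by a sequence of simple reflections each of which preserves the primitive ideal, using Corollary~\ref{cor}. Since $\Lambda(w'_c)$ is special and $Y(w'_c)$ has a row of length $l = 2n - r$, the one-line notation of $w'_c$ (viewed in $S_{2n}$) contains a strictly decreasing subsequence of length $l$. Because $w'_c$ fixes all indices outside the equivalence class $E_c$, both the positions of this subsequence and their images lie entirely in $E_c$, so the $2n - r$ corresponding coordinates of $w'\lambda$ are half-integral of a single type (integer if $c=1$, half-integer if $c=2$). These form the pool of candidate coordinates for the central block of $\tilde w\lambda$.

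Next I would use the reflections supplied by Corollary~\ref{cor} to sort this pool into a strictly monotonic arrangement occupying the central $2f(2n)$ positions of $\tilde w\lambda$. The corollary allows a swap $s_{-i,-i+1}$ whenever the local data $v(i-2), v(i-1), v(i), v(i+1)$ satisfies one of its eight inequalities; by the condition $\tilde w(-i) = -\tilde w(i)$, the analogous moves on the positive side are automatic. A bubble-sort carried out under these constraints pushes the remaining $r$ ``bad'' coordinates (those lying outside $\lambda(E_c)$), together with any coordinates that would create repetitions, into the two outer strips of length $k(2n) + r$. Dominance and regularity of the resulting weight on $\mathfrak{g}(2f(2n))$ then follow from the strict monotonicity of the sorted values together with a judicious choice of signs, arranged so that the central coordinates on the negative side are all positive.

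The value $f(2n) = \lfloor (n - 3r/2)/(r+1) \rfloor - r/2$ is governed by a pigeonhole estimate: among the $l = 2n - r$ candidate positions, at most $r$ places produce an obstruction to the sort, splitting the run into at most $r + 1$ workable intervals; at least one such interval has length at least $\lfloor (n-3r/2)/(r+1)\rfloor$, and a further trim of $r/2$ at each endpoint enforces the strict inequalities for regularity and the positivity required for dominance. The integer $k(2n)$ must then be chosen even and large enough to accommodate all unwanted coordinates in the two outer strips; for $n\gg 0$ such a choice always exists, and the parity of $k(2n)$ is consistent with the embedding $\mathfrak{g}(2f(2n))\hookrightarrow\mathfrak{g}(2n)$.

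The main obstacle is to guarantee that an admissible reflection from Corollary~\ref{cor} is in fact available at every stage of the bubble sort; that is, one must verify that the intermediate signed permutations always meet one of the eight local inequalities whenever a swap is required. This amounts to a delicate inductive argument tracking the candidate $E_c$-subsequence through the sorting process, ensuring that the special combinatorial structure inherited from $\Lambda(w'_c)$ persists through all intermediate configurations and, in particular, that the sorting terminates in the configuration required for the central block.
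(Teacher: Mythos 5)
Your overall architecture matches the paper's: restrict attention to the class $E_c$, use the pigeonhole count ($\le r$ obstructions splitting $n-r/2$ entries into at most $r+1$ runs, one of length at least $[\frac{n-3r/2}{r+1}]$) to locate the central block, apply ideal-preserving reflections from Corollary~\ref{cor}, and trim $r/2$ entries to force strict monotonicity. But the step you yourself flag as ``the main obstacle'' is a genuine gap, and moreover your proposed target configuration is the wrong one. The reflections in conditions $1)$--$4)$ of Corollary~\ref{cor} are exactly Knuth moves: they preserve the shape (indeed the insertion tableau) of $Y(w'_c)$. Consequently the only rearrangements of a segment that are guaranteed reachable by such moves are those Knuth-equivalent to the original, and a fully sorted, strictly monotonic central block of length $2f(2n)$ is in general \emph{not} Knuth-equivalent to the starting interval --- a strictly decreasing word has a single-row insertion tableau, whereas the interval's tableau need not be a single row. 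So no ``delicate inductive argument'' can make your bubble sort terminate in the configuration you want while staying inside the allowed moves; the sort would get stuck precisely where none of the eight local inequalities holds, and that is unavoidable whenever the tableau shape obstructs full sorting.

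The paper circumvents this by choosing a reachable target: it finds the leftmost long positive interval $A_0$ in the negative half $h(w'_c)$ of the one-line notation, applies Robinson--Schensted to $A_0$, and replaces $A_0$ by the concatenation of the rows of the insertion tableau $Y_0$ in increasing length order. This word is Knuth-equivalent to $A_0$ by Knuth's theorem, hence reachable by moves $1)$--$4)$, hence $I(\tilde w)=I(w')$. The strictly decreasing block is then not the whole of $A'_0$ but only its longest row: since by Lemmas~\ref{Lrk17} and~\ref{Lslnrx} the tableau has at most $r$ boxes outside the longest row, erasing the first $r/2$ elements of $A'_0$ (the short rows) exposes a single strictly decreasing row, which after the remaining trims yields the half-integral dominant regular weight. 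This substitution --- ``row reading word of the insertion tableau'' in place of ``fully sorted block'' --- is the missing idea; without it the regularity and dominance claims at the end of your argument do not follow.
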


\begin{proof}

According  to Theorem \ref{teor} we can suppose    without loss of generality that the symbols $\Lambda(w'_1)$ and $\Lambda(w'_2)$ are special.
Recall that the set of indices of the weight $\lambda$  is $\{-n,-n+1,\dots,-1,1,\dots,n-~1,n\}$.
Note that  interchanging of coordinates of $\lambda$ without changing the order within the classes $E_{i}$ preserves the primitive ideal $I(\lambda)$.
Therefore   we can assume  that
the equivalence class   $E_c$ has the form 
$$\left\{\frac{-\#E_c}{2},\frac{-\#E_c}{2}+1,\dots,-1,1,\dots,\frac{\#E_c}{2}-1,\frac{\#E_c}{2}\right\}.$$ 
 Lemma \ref{Lslnrx} implies  $\#E_c\geq 2n-r$, hence after erasing the first and last $r/2$ coordinates of $\lambda$ we obtain a half-integral weight $\lambda'$.

Let $r'=2h-l$. Note that $r'\leq r$.
 The length of the longest row of $Y(w_c)$ equals  the length of the longest decreasing subsequence of the sequence 
$$a=(w_c'(-h), w_c'(-h+1),\dots,w_c'(-1),  w_c'(1), \dots, w_c'(h-1), w_c'(h)).$$
 Set $k= \#\{i\mid w'_c(i)>0, i>0\}$.  Obviously, the length of the longest decreasing subsequence of $a$ is less or equal to $2h-k$, because  $a$ cannot contain both $w'(i)$ and $w'(-i)$ if $i>0$ and $w'(i)<0$.

Note that the simple reflections  satisfying one of   conditions  $1)-4)$ of Corollary  \ref{cor} preserve the shape of the Young tableau $Y(w'_c)$ (see \cite{K}). 
For each $t\in \mathbb{Z}_{>0}$ and  each $w\in W(2t)$,  we define the sequence of integers
 $$\sigma(w)=\{w(-t), w(-t+1)\dots w(-1), w(1),\dots, w(t-1), w(t)\}.$$
 Let $s_\alpha$ be a simple reflection satisfying one of conditions    $1)-4)$ of Corollary  \ref{cor}. Then, by the very definition of the transformation $s_\alpha$, we can change $\sigma(w'_c)$ to  $\sigma(  s_\alpha w'_c)$ without changing the ideal $I(w')$. Furthermore, $ s_\alpha (\sigma( w'_c ))=\sigma(s_\alpha w'_c)$.  Denote $h(\delta):=\{\delta(-t), \delta(-t+1), \dots, \delta(-1)\}$  for $\delta \in W(2t)$ or $\delta \in W(2t)$. Observe that $h(w'_c)$ determines $\sigma(\delta)$  since $\delta(j)=-\delta(-j)$. 

We call a subsequence $A$ of a finite sequence  of numbers  $\{a_1,a_2\ldots, a_v\}$  \textit{positive interval} if   $A$ has the form $\{a_u, a_{u+1},\ldots a_{w-1}, a_w\}$ for\break $1\leq u<w\leq v$ and any $a_j\in A$ is positive. The notation $|A|$ stands for the number of elements of a positive interval $A$. Note that $\operatorname{max}_{A\subset h(w'_c)}|A|\geq [\frac{n-3r/2}{r+1}]$. Indeed,  the sequence $h(w'_c)$ consists  of $n-r/2$ elements with no more than $r$ negative elements. Consequently, $h(w'_c)$ contains at least $n-3/2r$  positive elements and at most $r+1$ positive intervals. Hence, there exists at least one positive interval with at least $[\frac{n-3r/2}{r+1}]$ elements.

Next, we find the leftmost maximal (by inclusion) positive interval $A_0$ of  the sequence $ h( w'_c)$ such that $|A_0|\geq [\frac{n-3r/2}{r+1}]$. 
Then we apply the Robinson--Schensted algorithm to the interval $A_0$  of $h(w'_c)$, and  denote by $Y_{0}$ the output insertion Young  tableau. Starting   from the bottom left corner of $Y_0$, we place all rows one after another in increasing length order. This transforms  the sequence $h(w'_c)$ to a new sequence  $h'$, and the interval $A_0$ to an interval $A'_0$. 
As a next step, we express the sequence $h'$  as $h(\tilde w_c)$ for some $\tilde w_c\in W(2h)$.

Now, we extend $\tilde w_c$ to $\tilde w\in W(2n)$ by putting $\tilde w_i=w'_i$ for all $i\neq c$. Then \break
$I(\tilde w)=I(w')$, because  $\tilde w(w')^{-1}$ equals a product of simple reflections  satisfying some condition among $1)-4)$ (see \cite{K}).

After that, we erase in $h(\tilde w_c)$ all elements to the left of $A'_0$ and denote the number of these elements by $k(2n)$. Next, we erase all elements to the right of $A'_0$ and denote the number of these elements by $m(2n)$. Note that $$m(2n)<n-r/2-k(2n)-[\frac{n-3r/2}{r+1}]=n-k(2n)-f(2n).$$  In this way we get a sequence which equals $A'_0$.  Since $A'_0$ consists of positive integers or half-integers  ordered as described above ($A'_0$ is comprised of the rows of the Young tableau $Y_0$ in the opposite order),  Lemmas  \ref{Lrk17},  \ref{Lslnrx} show  that after erasing the  first $r/2$ elements of $A'_0$ we obtain a strictly decreasing sequence $A''_0$. This means precisely that after erasing the first   and last $k(2n)+r$  coordinates, as well as the $2n-2k(2n)-2f(2n)$ central coordinates of $\tilde w\lambda$, we obtain a half-integral dominant regular weight of $\mathfrak{g}({2f(2n)})$.
\end{proof}

Let  $\tilde w$ be as in Lemma \ref{qwe}. Denote by $\bar\lambda$ the weight obtained from $\tilde w \lambda$ via replacing by zeros  the first   and last $k(2n)+r$  coordinates, as well as the $2n-2k(2n)-2f(2n)$ central coordinates.

Denote by $\mathfrak{g}(\bar\lambda)$  the root subalgebra of $\mathfrak{g}(2n)$ whose dual Cartan subalgebra $\mathfrak{h}(\bar\lambda)$   is spanned by all elements $\varepsilon_i$ such that $(\bar\lambda,\varepsilon_i )\neq0$.

Let $I$ be a primitive ideal of $U(\mathfrak{g}(\infty))$, and $J(2n)$ be as in Proposition \ref{Prad}.

\begin{corollary}\label{3.8}
In the notation  of Proposition \ref{Prad}, $J(2n)\cap U(\mathfrak{g}(\bar \lambda))$ is a bounded ideal of $U(\mathfrak{g}(\bar \lambda))$. Moreover, this ideal is integrable if $\mathfrak{g}(2n)=\mathfrak{o}({2n})$ or if $\bar\lambda^{2f(2n)}_i\in \mathbb{Z}$.
\end{corollary}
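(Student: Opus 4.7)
The plan is to construct a bounded simple $\mathfrak{g}(\bar\lambda)$-module $M$ whose annihilator in $U(\mathfrak{g}(\bar\lambda))$ coincides with $J(2n)\cap U(\mathfrak{g}(\bar\lambda))$, and then to upgrade boundedness to integrability in the two listed cases by checking that $M$ is in fact finite-dimensional.

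First I would set up the relevant module. By Duflo's theorem together with the chain of equivalences $w\sim w'\sim\tilde w$ furnished by Lemma \ref{qwe}, the primitive ideal admits the presentation
$$J(2n)=\operatorname{Ann}_{U(\mathfrak{g}(2n))}L^{\tilde w}, \qquad L^{\tilde w}:=L(\tilde w\cdot\lambda-\rho_{\mathfrak{g}(2n)}),$$
and restriction of the action to the subalgebra $\mathfrak{g}(\bar\lambda)\cong\mathfrak{g}(2f(2n))$ yields
$$J(2n)\cap U(\mathfrak{g}(\bar\lambda))=\operatorname{Ann}_{U(\mathfrak{g}(\bar\lambda))}\bigl(L^{\tilde w}\big|_{\mathfrak{g}(\bar\lambda)}\bigr).$$
Let $v$ be a highest weight vector of $L^{\tilde w}$ relative to $\mathfrak{b}_{\mathfrak{g}(2n)}$. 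Since $\mathfrak{b}_{\mathfrak{g}(2n)}\cap\mathfrak{g}(\bar\lambda)$ is a Borel of $\mathfrak{g}(\bar\lambda)$, the vector $v$ is also a highest weight vector for $\mathfrak{g}(\bar\lambda)$ of weight $\nu:=(\tilde w\cdot\lambda-\rho_{\mathfrak{g}(2n)})|_{\mathfrak{h}(\bar\lambda)}$. Lemma \ref{qwe} asserts that $\nu+\rho_{\mathfrak{g}(\bar\lambda)}$ is a half-integral dominant regular weight of $\mathfrak{g}(2f(2n))$, so $\nu$ satisfies the hypotheses of Lemma \ref{prop:sp.bounded} in the genuinely half-integer $\mathfrak{sp}$-case, and is integral dominant regular otherwise. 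In either case the simple quotient $M:=L_{\mathfrak{g}(\bar\lambda)}(\nu)$ of the highest weight submodule $N:=U(\mathfrak{g}(\bar\lambda))\cdot v\subset L^{\tilde w}$ is a bounded $\mathfrak{g}(\bar\lambda)$-module.

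The inclusion $J(2n)\cap U(\mathfrak{g}(\bar\lambda))\subseteq\operatorname{Ann}_{U(\mathfrak{g}(\bar\lambda))}(M)$ is immediate, since any element killing $L^{\tilde w}$ kills $v$, hence $N$, hence its simple quotient $M$. The main obstacle, and the chief content of the argument, is the reverse inclusion, equivalently that $\operatorname{Ann}_{U(\mathfrak{g}(\bar\lambda))}(L^{\tilde w}|_{\mathfrak{g}(\bar\lambda)})$ coincides with the primitive ideal $\operatorname{Ann}_{U(\mathfrak{g}(\bar\lambda))}(M)$. My intended route is through the Barbasch--Vogan parametrization (Theorem \ref{teor}) applied inside $\mathfrak{g}(\bar\lambda)$: the construction of $\tilde w$ in Lemma \ref{qwe} — the rearrangement of the positive interval $A_0$ into the row-by-row reading of $Y_0$ — was designed precisely so that, when one restricts to the Weyl subgroup corresponding to $\mathfrak{g}(\bar\lambda)$, the resulting tableau/symbol data produces the same partition that, via Theorem \ref{teor}, governs the primitive ideal $\operatorname{Ann}(M)$. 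The simple-reflection manipulations of Proposition \ref{prop} and Corollary \ref{cor} should then be used to show that the zero-padded coordinates of $\bar\lambda$ contribute nothing further to $\operatorname{Ann}_{U(\mathfrak{g}(\bar\lambda))}(L^{\tilde w}|_{\mathfrak{g}(\bar\lambda)})$, forcing the required equality.

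Finally, the integrability statement follows once one observes that, in the two listed cases, the bounded module $M$ is in fact finite-dimensional: for $\mathfrak{g}(2n)=\mathfrak{o}(2n)$ this is the absence of infinite-dimensional bounded $\mathfrak{o}$-modules recorded after Lemma \ref{prop:sp.bounded}, while for integer $\bar\lambda^{2f(2n)}_i$ it is the contrapositive of Lemma \ref{prop:sp.bounded} (no infinite-dimensional bounded $\mathfrak{sp}$-module has an integer $v_n$-eigenvalue). A finite-dimensional module is trivially integrable, so $J(2n)\cap U(\mathfrak{g}(\bar\lambda))=\operatorname{Ann}_{U(\mathfrak{g}(\bar\lambda))}(M)$ is an integrable ideal in those cases.
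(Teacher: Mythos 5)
Your proposal follows essentially the same route as the paper: present $J(2n)$ as $\operatorname{Ann}_{U_{2n}}L(\tilde w\lambda-\rho_{\mathfrak{g}(2n)})$ via Lemma \ref{qwe}, pass to the highest weight vector to obtain the simple $\mathfrak{g}(\bar\lambda)$-module with the half-integral dominant (regular) highest weight $\bar\lambda^{2f(2n)}-\rho_{\mathfrak{g}(2f(2n))}$, invoke Lemma \ref{prop:sp.bounded} for boundedness in the genuinely half-integral $\mathfrak{sp}$-case, and observe that in the two remaining cases the module is finite dimensional, whence the ideal is integrable. The one step you explicitly defer --- the inclusion $\operatorname{Ann}_{U(\mathfrak{g}(\bar\lambda))}(M)\subseteq J(2n)\cap U(\mathfrak{g}(\bar\lambda))$, i.e.\ that the annihilator of the full restriction $L^{\tilde w}|_{\mathfrak{g}(\bar\lambda)}$ is no smaller than that of the single bounded constituent $M$ --- is genuinely needed, since the restriction itself is not a bounded $\mathfrak{h}(\bar\lambda)$-weight module and the definition of bounded ideal requires exact equality with the annihilator of some bounded module; but it is also precisely the point the paper's own proof passes over in silence, simply asserting that $J(2n)\cap U(\mathfrak{g}(\bar\lambda))$ is bounded once the weight $\bar\lambda^{2f(2n)}$ has been produced. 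Your proposed route for closing it (tracking the Barbasch--Vogan tableau data of $\tilde w$ inside the Weyl subgroup of $\mathfrak{g}(\bar\lambda)$, using Proposition \ref{prop} and Corollary \ref{cor} to dispose of the zeroed-out coordinates) is consistent with how Lemma \ref{qwe} was engineered, but as written it remains a sketch rather than a proof; to match the paper you would either carry that computation out or state the identification as an assertion, as the paper does.
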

\begin{proof}

By  Lemma \ref{qwe} we can construct an element $\tilde w\in W_{\mathfrak{g}({2n})}$ satisfying $$J(2n)=\operatorname{Ann}_{U_{2n}}L(\tilde w\lambda-\rho_{\mathfrak{g}({2n})}),$$ and such that after erasing  the  first and last $k(2n)+r$ coordinates, as well as the $2n-2k(2n)-2f(2n)$ central coordinates of $\tilde w\lambda$,  we obtain  a  half-integral dominant weight $\bar\lambda^{2f(2n)}$ of $\mathfrak{g}({2f(2n)})$ (so that $\restr{\bar\lambda}{\mathfrak{h}_{\mathfrak{g}(2f(2n))}}=\lambda^{2f(2n)}$).

The module  $L(\bar\lambda^{2f(2n)}-\rho_{\mathfrak{g}(2f(2n))})$ is a simple $\frak{g}(2f(2n))$-module with  highest weight $\bar\lambda^{2f(2n)}-\rho_{\mathfrak{g}(2f(2n))}$.
The ideal $J(2n)\cap U(\frak{g}({\bar\lambda}))$ is a bounded ideal of $U(\frak{g}({\bar\lambda}))$. Indeed, in the case of $\mathfrak{g}(2n)=\mathfrak{o}({2n})$ a half-integral dominant weight is integral dominant, and hence the ideal $J(2n)\cap U(\frak{g}({\bar\lambda}))$ is integrable. If $\mathfrak{g}(2n)=\mathfrak{sp}(2n)$ and ${\bar{\lambda}}^{2f(2n)}_i\in \mathbb{Z}$, then the weight $\bar\lambda^{2f(2n)}$ is integral dominant  hence  $J(2n)\cap U(\frak{g}({\bar\lambda}))$ is an integrable ideal. For the last case, where   $\mathfrak{g}(2n)=\mathfrak{sp}(2n)$ and ${\bar{\lambda}}^{2f(2n)}_i\in \mathbb{Z}-\frac{1}{2}$, the fact that $J(2n)\cap U(\frak{g}({\bar\lambda}))$ is  bounded follows from Lemma \ref{prop:sp.bounded}.
\end{proof}
\medskip

Let $I$ be an ideal of an associative algebra $A$. We denote by $\sqrt I$ the intersection of all primitive ideals of $A$ containing $I$. Note that $\sqrt I$ is the pullback in $A$ of the Jacobson radical of the ring $A/I$. If $I$ is a primitive ideal then $I=\sqrt I$.

\begin{lemma}\label{Lalg}\textup{\cite{PP4}} Assume that the dimension of $A$ is finite or countable. Then the following conditions on an element $z\in A$ are equivalent:

1) $z\in \sqrt I$,

2) for every $a\in A$ there is $k\in\mathbb Z_{>0}$, such that $(az)^k\in I$.\end{lemma}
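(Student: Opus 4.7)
My plan is to prove the equivalence by first passing to the quotient $\bar A := A/I$, which remains of countable dimension over $\mathbb C$. Under this reduction, primitive ideals of $A$ containing $I$ correspond bijectively to primitive ideals of $\bar A$, so the image of $\sqrt I$ in $\bar A$ is precisely the Jacobson radical $J(\bar A)$, and the condition $(az)^k \in I$ becomes $(\bar a \bar z)^k = 0$. Thus the statement reduces to: for $\bar z \in \bar A$, one has $\bar z \in J(\bar A)$ if and only if $\bar a \bar z$ is nilpotent for every $\bar a \in \bar A$.

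For the implication $(2) \Rightarrow (1)$, I would argue contrapositively. If $\bar z \notin J(\bar A)$, pick a primitive ideal $J \subsetneq \bar A$ with $\bar z \notin J$, and realize $J = \operatorname{Ann}_{\bar A}(M)$ for a simple left $\bar A$-module $M$. Then $\bar z$ does not annihilate $M$, so some $m \in M$ satisfies $\bar z m \neq 0$. Simplicity gives $\bar A \cdot (\bar z m) = M$, so there exists $\bar a \in \bar A$ with $\bar a \bar z m = m$. By induction $(\bar a \bar z)^k m = m$ for every $k \geq 1$, precluding nilpotency of $\bar a \bar z$ and giving the contradiction.

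For $(1) \Rightarrow (2)$, the key input is Amitsur's theorem: the Jacobson radical of an algebra of dimension strictly less than $|k|$ over a field $k$ is a nil ideal. Since $\bar A$ is countable-dimensional over $\mathbb C$, which is uncountable, $J(\bar A)$ is nil. If $\bar z \in J(\bar A)$, then $\bar a \bar z \in J(\bar A)$ for every $\bar a$ because $J(\bar A)$ is a two-sided ideal, and hence $\bar a \bar z$ is nilpotent, yielding the desired $k$.

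The only nontrivial ingredient, and hence the main obstacle to a fully self-contained argument, is the cardinality step inside Amitsur's theorem. In a final write-up I would either cite it or sketch the standard resolvent argument: for $\bar z \in J(\bar A)$ and $\alpha \in \mathbb C^\times$, the element $1 - \alpha^{-1} \bar z$ is invertible, so $\bar z - \alpha$ is invertible; if $\bar z$ were transcendental over $\mathbb C$, the family $\{(\bar z - \alpha)^{-1} : \alpha \in \mathbb C^\times\}$ would be $\mathbb C$-linearly independent in $\bar A$ (by the partial-fraction identity applied formally to $\mathbb C(t)$), contradicting countable dimensionality; so $\bar z$ is algebraic over $\mathbb C$, and writing the minimal polynomial as $p(t) = t^{n_0} \prod_i (t - \alpha_i)^{n_i}$ with $\alpha_i \neq 0$, invertibility of each $\bar z - \alpha_i$ forces $\bar z^{n_0} = 0$. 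Everything else is routine module theory.
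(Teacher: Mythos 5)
Your proof is correct and follows essentially the same route as the paper: the $(2)\Rightarrow(1)$ direction is the identical simple-module argument (you phrase it contrapositively, the paper by contradiction), and the $(1)\Rightarrow(2)$ direction rests on the same fact — that the Jacobson radical of a countable-dimensional $\mathbb{C}$-algebra is nil — which the paper simply cites from McConnell--Robson while you sketch Amitsur's resolvent argument. No gaps.
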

\begin{proof} The fact that 1) implies 2) follows from~\cite[~Corollary~1.8]{MR}. We will show that 2) implies 1).

Let $z\in A$ satisfy 2), and let $\bar x$ be the image of $x\in A$ in $A/I$. Assume to the contrary that there exists a simple $A/I$-module $M$ such that $\bar z\cdot M\ne0$. Pick $m\in M$  with $\bar z\cdot m\ne0$. There is $ a\in A$ such that $\bar a\cdot(\bar z\cdot m)=m$. Let $k\in\mathbb Z_{>0}$ satisfy $(\bar a\bar z)^k=0$. Then
$0=(\bar z(\bar a\bar z)^k)\cdot m=\bar z\cdot m\ne0.$
This contradiction concludes the proof.
\end{proof}

Now we have all tools to prove Proposition \ref{Prad}.

\begin{proof}
Consider the  primitive ideal $J(2n)=\operatorname{Ann}_{U_{2n}}L(\tilde w\lambda-\rho_{\mathfrak{g}({2n})})$ such that $I_{2n}\subset J(2n)$.   The associated variety Var$(J(2n))$ is the closure of a nilpotent orbit, and there exists an integer $r$ such that the rank  of any  $X\in\operatorname{Var}(J(2n))$ is less or equal $ r$ for all $n$. More precisely, according to Lemma~\ref{Lrk17}, there exists $r\in\mathbb Z_{\ge0}$ such that $\operatorname{Var}(I_{2n})\subset\frak{g}(2n)^{\le r}$ for $n\gg0$. Since $J(2n)\supset I_{2n}$, we have \begin{equation}\operatorname{Var}(J(2n))\subset\operatorname{Var}(I_{2n})\subset\frak{g}(2n)^{\le r}.\label{Erk17}\end{equation}

 Corollary \ref{3.8} states that   $J(2n)\cap U(\frak{g}({\bar\lambda}))$ is a bounded ideal of $U(\frak{g}(\bar\lambda))$, where $\mathfrak{g}(\bar \lambda)\simeq \mathfrak{g}(2f(2n))$ for $f(2n)=[\frac{n-3r/2}{r+1}]-r/2$.
In order to conclude that the ideal $J(2n)\cap U(2f(2n))$ is bounded, it suffices to observe that the root subalgebra  $\mathfrak{g}(\bar\lambda)$ of $\mathfrak{g}(2n)$ is conjugate to  $\mathfrak{g}(2f(2n))$ naturally embedded in $\mathfrak{g}(2n)$.
\end{proof}

\begin{theorem}\label{asd} If $\mathfrak{g}(\infty)=\mathfrak{o}(\infty), \mathfrak{sp}(\infty)$ then any primitive ideal $I\subset U(\mathfrak{g}(\infty))$  is weakly bounded. Moreover, each primitive ideal of $U(\mathfrak{o}(\infty))$ is locally integrable.

\end{theorem}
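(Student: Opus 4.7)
The plan is to deduce both assertions from Proposition~\ref{Prad} and Corollary~\ref{3.8} by combining a semiprime decomposition with a restriction argument. Fix a primitive ideal $I=\operatorname{Ann}_U(M)\subset U$, where $M$ is a simple $U$-module. For any $n\ge 2$ the ideal $I\cap U_{2n}$ equals $\operatorname{Ann}_{U_{2n}}(M)$, and is therefore semiprime in $U_{2n}$. Since $\mathfrak{g}(2n)$ is a finite-dimensional semisimple Lie algebra, a theorem of Dixmier identifies prime and primitive ideals of $U_{2n}$, and every semiprime ideal of $U_{2n}$ is the intersection of the primitive ideals containing it. Thus
$$I\cap U_{2n}=\bigcap_{J\supset I\cap U_{2n}} J,$$
with $J$ running over primitive ideals of $U_{2n}$.

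To prove weak boundedness, fix $m\ge 2$ and choose $n$ so large that $2f(2n)\ge m$; this is possible because $f(2n)\to\infty$. For this $n$, Proposition~\ref{Prad} guarantees that every primitive ideal $J\subset U_{2n}$ containing $I\cap U_{2n}$ satisfies that $J\cap U_{2f(2n)}$ is a bounded ideal of $U_{2f(2n)}$. Intersecting over all such $J$ and then with $U_m$ gives
$$I\cap U_m=\bigcap_{J}\bigl(J\cap U_m\bigr),$$
so each summand is the restriction to $U_m$ of a bounded ideal of $U_{2f(2n)}$. Writing $J\cap U_{2f(2n)}=\operatorname{Ann}_{U_{2f(2n)}}(N)$ for a bounded weight module $N$, one has $J\cap U_m=\operatorname{Ann}_{U_m}(N)=\bigcap_S\operatorname{Ann}_{U_m}(S)$, the intersection being taken over the simple $U_m$-subquotients $S$ of $N$. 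The classification of bounded simple $U(\mathfrak{o}(2k))$-modules (all finite-dimensional, \cite{F}) and of bounded simple $U(\mathfrak{sp}(2k))$-modules (Lemma~\ref{prop:sp.bounded}), together with the polynomial realization of Shale--Weil modules in Example~\ref{SW}, shows that every simple subquotient $S$ is itself a bounded weight $U_m$-module, which proves the first assertion.

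For the second assertion, in the orthogonal case Corollary~\ref{3.8} strengthens the conclusion of Proposition~\ref{Prad}: each $J\cap U_{2f(2n)}$ is actually integrable, i.e.\ an intersection of ideals of finite codimension in $U_{2f(2n)}$. Since the restriction of a finite-codimension ideal from $U_{2f(2n)}$ to $U_m$ is again of finite codimension, the same decomposition realises $I\cap U_m$ as an intersection of cofinite ideals of $U_m$, which by the equivalent reformulation of local integrability noted after its definition is precisely the assertion that $I$ is locally integrable. The main obstacle is the restriction-compatibility step for weak boundedness in the symplectic case: one must verify that the restriction of a Shale--Weil module $SW^\pm(2f(2n))$ of $\mathfrak{sp}(2f(2n))$ to $\mathfrak{sp}(2m)$ is a sum of bounded $\mathfrak{sp}(2m)$-modules. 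The polynomial realization in Example~\ref{SW} makes this transparent: writing $\mathbb{C}[x_1,\dots,x_{f(2n)}]=\mathbb{C}[x_1,\dots,x_m]\otimes\mathbb{C}[x_{m+1},\dots,x_{f(2n)}]$ with $\mathfrak{sp}(2m)$ acting only on the first factor displays $SW^\pm(2f(2n))$ as an infinite direct sum of copies of $SW^\pm(2m)$, each of which is a simple bounded $\mathfrak{sp}(2m)$-module by Lemma~\ref{prop:sp.bounded}.
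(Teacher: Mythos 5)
Your argument collapses at its very first step. You assert that $I\cap U_{2n}=\operatorname{Ann}_{U_{2n}}(M)$ ``is therefore semiprime in $U_{2n}$,'' but the annihilator of a module is not semiprime in general (it is for a semisimple module, but $M$ restricted to $U_{2n}$ has no reason to be semisimple, and for a non-split extension the annihilator can be strictly smaller than the intersection of the annihilators of the subquotients). The natural attempt to prove semiprimeness fails precisely because $U_{2n}$ is small: from $zU_{2n}z\cdot M=0$ one cannot conclude $z\cdot M=0$, since simplicity of $M$ only gives $U\cdot(zM)=M$ with $U$ the full enveloping algebra, not $U_{2n}$. Without semiprimeness your decomposition $I\cap U_{2n}=\bigcap_J J$ over primitive ideals $J$ of $U_{2n}$ is unavailable, and the rest of the proof, which intersects the restrictions $J\cap U_m$, has nothing to stand on. This is exactly the difficulty the paper's proof is built to circumvent: it works with the radicals $\sqrt{I_{2n'}}$ (which \emph{are} intersections of primitive ideals by definition), uses Lemma~\ref{Lalg} to establish the identity $(\sqrt I)_{2n}=\bigcap_{2n'\ge 2n}\bigl(\sqrt{I_{2n'}}\cap U_{2n}\bigr)$, and then invokes $I=\sqrt I$ (valid because $I$ is primitive in the \emph{infinite-dimensional} algebra $U$) to recover $I_{2n}$ as an intersection of restrictions of semiprime ideals of the larger algebras $U_{2n'}$, to each of which Proposition~\ref{Prad} applies. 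You cannot shortcut this by radicalizing only at level $2n$.

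The remainder of your argument is sound in outline and in places more explicit than the paper: the choice of $n$ with $2f(2n)\ge m$ plays the role of the paper's identity $2f((r+1)(2n+r)+3r)=2n$, and your verification that bounded ideals restrict to intersections of bounded ideals (via simple subquotients, finite-dimensionality in the orthogonal case, and the factorization $\mathbb{C}[x_1,\dots,x_{f(2n)}]=\mathbb{C}[x_1,\dots,x_m]\otimes\mathbb{C}[x_{m+1},\dots,x_{f(2n)}]$ for the Shale--Weil modules) fills a step the paper leaves implicit. But to repair the proof you must replace your first paragraph by the radical argument: apply Proposition~\ref{Prad} to the primitive ideals of $U_{2n'}$ occurring in $\sqrt{I_{2n'}}$ for all sufficiently large $n'$, and only then intersect down to $U_m$.
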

\begin{proof} Note that $2f((r+1)(2n+r)+3r)=2n$. Hence, Proposition~\ref{Prad} implies the existence of $r\ge0$ such that
$\sqrt{I_{(r+1)(2n+r)+3r}}\cap U_{2n}$ is a bounded (integrable for $\mathfrak{g}(\infty)=\mathfrak{o}(\infty)$) ideal of $U_{2n}$ for $n\gg0$. 
 Next, Lemma~\ref{Lalg} shows that $(\sqrt I)_{2n}=\cap_{2n'\ge 2n}\sqrt{I_{2n'}}$ for all $n\ge 2$. However, $$\cap_{2n'\ge2 n}\sqrt{I_{2n'}}=(\cap_{2n'\ge {(r+1)(2n+r)+3r}}\sqrt{I_{2n'}})\cap U_{2n}.$$
Being bounded (integrable for $\mathfrak{g}(\infty)=\mathfrak{o}(\infty)$) in $U_{2f(n')}$, the ideal $\sqrt{I_{2n'}}\cap U_{2f(n')}$ is an intersection of bounded  ideals (ideals of finite  codimension for $\mathfrak{g}(\infty)=\mathfrak{o}(\infty)$)  in $U_{2f(2n')}$, hence $$(\sqrt I)_{2n}=(\cap_{2n'\ge {(r+1)(2n+r)+3r}}\sqrt{I_{2n'}})\cap U_{2n}$$ is an intersection of bounded ideals (ideals of finite codimension for $\mathfrak{g}(\infty)=\mathfrak{o}(\infty)$) in $U_{2n}$. This means that the ideal $(\sqrt I)_{2n}$ is bounded (integrable for $\mathfrak{g}(\infty)=\mathfrak{o}(\infty)$) for $n\gg 0$. A very similar argument shows that $(\sqrt I)_{2n}$ is bounded (integrable for $\mathfrak{g}(\infty)=\mathfrak{o}(\infty)$) for all $n\ge 2$. 
\end{proof}
\newpage

\section{ Integrable  ideals  and c.l.s. for $U(\mathfrak{o}(\infty))$ and $U(\mathfrak{sp}(\infty))$}\label{4}

In this section we recall the notion of precoherent local system. It allows us to show that each primitive locally integrable ideal of $U(\mathfrak{o}(\infty))$ and $U(\mathfrak{sp}(\infty))$ is integrable.

\subsection{Precoherent local systems and integrability of locally integrable  ideals  }

Let $\mathfrak{g}(\infty)=\mathfrak{o}(\infty),\mathfrak{sp}(\infty)$.
Our next goal is to establish that the conditions on an ideal $I$ in $U(\mathfrak{g}(\infty))$ to be integrable and locally integrable are equivalent,  see  Theorem~\ref{asd1} below.

	Let $I $ be a locally integrable ideal of $U(\mathfrak{g}(\infty))$. For every $n
	\in \mathbb{Z}_{\geq0}$, $I \cap U(\mathfrak{g}(2n))$ is an intersection of  ideals of
	finite codimension in $U(\mathfrak{g}(2n))$. Thus, $I \cap
	U(\mathfrak{g}(2n))$ is an intersection of annihilators of finite-dimensional
	$\mathfrak{g}(2n)$-modules. Since any finite-dimensional module of a  semisimple
	finite-dimensional Lie algebra is semisimple, it follows that $I \cap
	U(\mathfrak{g}(2n))$ is an intersection of annihilators of simple finite-dimensional  $U(\mathfrak{g}(2n))$-modules.
Recall that by $Irr_{2n}$ we denote the set of classes of isomorphism of simple $\mathfrak{g}(2n)$-modules. 
	

			\begin{defn}
			A \textup{precoherent local system of modules} (further p.l.s.)  for
			$\mathfrak{g}(\infty)$ is a collection of sets
			
			$$\{Q_n\}_{n \in \mathbb{Z}_{\geq 1}} \subset \Pi_{n \in \mathbb{Z}_{\geq 1} }
			Irr_{2n} $$
			such that $Q_m \supset \langle Q_n \rangle_m$ for any $n\geq m$, where	$\langle Q_n
			\rangle_m$ denotes the set of all simple $\mathfrak{g}({2m})$-constituents of the
			$\mathfrak{g}({2n})$-modules from $Q_n$.
			
		\end{defn} 

The fact that, for any locally integrable ideal $I$, the intersection  $I \cap
	U(\mathfrak{g}(2n))$ is an intersection of annihilators of simple finite-dimensional \break $U(\mathfrak{g}(2n))$-modules implies that we can extend the definition of $Q(I)$ (see Section \ref{sub:tensor}) to locally integrable ideals. Namely, if $I\subset U(\mathfrak{g}(\infty))$ is locally integrable ideal  we define  the p.l.s.  $Q(I)$  by putting

	$$ Q(I)_n:=\{z\in Irr_{2n} \mid I \cap U(\mathfrak{g}(2n)) \subset 
	\operatorname{Ann}_{U(\mathfrak{g}(2n))}z \}.$$
It is clear that $Q(I)$ is a p.l.s., because if $I\cap U(\mathfrak{g}(2n))$ annihilates a module $M$ from a class $z\in Irr_{2n}$ then $I\cap U(\mathfrak{g}(2m))$ annihilates all simple constituents of $M$ as $\mathfrak{g}(2m)$-module, for $n\geq m$.

Moreover, we claim that $I(Q(I))=I$ (see Subsection \ref{2.9} for the definition $I(Q)$).
 Indeed, from the definitions of $Q(I)$ and $I(Q)$ we have that $I(Q(I))\cap U(\mathfrak{g}(2n))$ equals  the  intersection of the   annihilators of  simple modules annihilated by  $I\cap U(\mathfrak{g}(2n))$ 
and, possibly, some  annihilators  of isomorphism classes $\bar z$ such that $\operatorname{Ann}_{U(\mathfrak{g}(2n))}\bar z \supset I\cap U(\mathfrak{g}(2n))$.  Since $$I\cap U(\mathfrak{g}(2n)) \cap\operatorname{Ann}_{U(\mathfrak{g}(2n))}\bar z =I\cap U(\mathfrak{g}(2n))$$ for any  $\bar z$, we conclude $ I(Q(I))\cap U(\mathfrak{g}(2n))=I\cap U(\mathfrak{g}(2n))$.

	Now we are ready to formulate the main result of this section.
	This  is an analogue of  I. Penkov's and A. Petukhov's  result for   $\mathfrak{sl}(\infty)$ \cite[Theorem 4.2]{PP4}.
	\begin{theorem}\label{asd1} 
		
		If $I \subset U(\mathfrak{g}(\infty))$ is a locally integrable ideal then $I $ is integrable.
	\end{theorem}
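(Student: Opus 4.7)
The approach is to construct a coherent local system $\tilde Q$ for $\mathfrak{g}(\infty)$ with $I(\tilde Q) = I$, and then invoke the Corollary to Proposition~\ref{prop:annih} to deduce that $I$ is integrable. The starting point is already provided in the paragraph preceding the theorem: since $I$ is locally integrable, the collection $Q(I) = \{Q(I)_n\}$ is a well-defined p.l.s.\ and $I = I(Q(I))$; equivalently, $I \cap U(\mathfrak{g}(2n)) = \bigcap_{z \in Q(I)_n}\operatorname{Ann}_{U(\mathfrak{g}(2n))}(z)$ for every $n \geq 1$.

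Next, I would extract a c.l.s.\ $\tilde Q \subseteq Q(I)$ as follows. Call $z \in Q(I)_n$ \emph{coherent} if there exists an infinite upward chain $z = z^{(n)}, z^{(n+1)}, z^{(n+2)}, \ldots$ with $z^{(k)} \in Q(I)_k$ and with $z^{(k)}$ occurring as a $\mathfrak{g}(2k)$-constituent of $z^{(k+1)}|_{\mathfrak{g}(2k)}$. Let $\tilde Q_n \subseteq Q(I)_n$ be the set of coherent elements at level $n$. Since every simple finite-dimensional $\mathfrak{g}(2k+2)$-module has only finitely many simple $\mathfrak{g}(2k)$-constituents, a K\"onig lemma argument (combined with transitivity of the constituent relation) shows that $\tilde Q$ satisfies $\tilde Q_m = \langle \tilde Q_n \rangle_m$ for all $n > m$, so $\tilde Q$ is a c.l.s.

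The main step is then to verify $I(\tilde Q) = I$. The inclusion $I \subseteq I(\tilde Q)$ is immediate from $\tilde Q \subseteq Q(I)$. For the reverse inclusion it suffices to show that at each level $n$, the non-coherent elements of $Q(I)_n$ add no new condition to $I \cap U(\mathfrak{g}(2n))$ beyond those already imposed by $\tilde Q_n$. The strategy here is to apply Proposition~\ref{2.6} (Zhilinskii's classification of irreducible c.l.s.) to decompose $\tilde Q$ as a finite union of irreducible components, and to combine this with Theorem~\ref{asd} (weak boundedness of primitive ideals) in order to analyze the branching structure of $Q(I)$: one aims to argue that if $z \in Q(I)_n$ has no infinite upward chain inside $Q(I)$, then $\operatorname{Ann}(z)$ already contains $\bigcap_{z' \in \tilde Q_n}\operatorname{Ann}(z')$, so that $z$ is redundant.

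Once $I(\tilde Q) = I$ has been established, the Corollary to Proposition~\ref{prop:annih} exhibits an integrable $\mathfrak{g}(\infty)$-module whose annihilator is $I(\tilde Q) = I$, and we are done. The hard part is the third step: ruling out the possibility that some non-coherent class $z \in Q(I)_n \setminus \tilde Q_n$ contributes a genuinely new annihilator relation. I expect this to be the technical core of the proof and to require an explicit case analysis guided by the form of the basic c.l.s.\ listed in Proposition~\ref{2.6} and the tensor-product description from Subsection~\ref{sub:tensor}.
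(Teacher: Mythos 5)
Your overall reduction is the right one and matches the paper's strategy in outline: exhibit a c.l.s.\ whose globalization is $I$ and then invoke Proposition~\ref{prop:annih} and its corollary. The paper does this by proving that \emph{every} p.l.s.\ is equivalent to a c.l.s.\ (Propositions~\ref{clsint} and \ref{4.3}), via the decomposition $Q=\bigcap_{\lambda\notin Q}Q^{\vee}(\lambda)$, the explicit identification of $Q^{\vee}(\lambda)$ with a c.l.s.\ $Q(\lambda)$ (Proposition~\ref{plsnota}), and the artinian property of the lattice of c.l.s. But your write-up stops exactly where the real work begins. The claim that $I(\tilde Q)=I$ --- equivalently, that the non-coherent classes in $Q(I)_n$ impose no extra conditions, which in practice amounts to $Q(I)_n=\tilde Q_n$ for $n\gg 0$ --- is not a routine verification; it \emph{is} the content of Proposition~\ref{4.3}, and you explicitly defer it as ``the technical core.'' Moreover, the tools you propose for it (Zhilinskii's classification of irreducible c.l.s.\ and Theorem~\ref{asd} on weak boundedness) are not the relevant ones: the paper's argument at this point is a purely combinatorial analysis of the Gelfand--Tsetlin branching order, whose crux is Lemma~\ref{equiv} ($\mu\succ\lambda$ iff $\mu_k\geq\lambda_k$ for all $k\leq\#\lambda$, provided $\#\mu\geq 2\#\lambda$), proved by an induction using the operations $R(\cdot,k)$ and $L(\cdot,k)$ of Lemma~\ref{RandL}. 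Nothing in your proposal substitutes for that lemma.

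A secondary but genuine worry: your appeal to K\"onig's lemma is misplaced. The branching relation is finitely branching \emph{downward} (a simple $\mathfrak{g}(2n+2)$-module has finitely many $\mathfrak{g}(2n)$-constituents) but infinitely branching \emph{upward} (a fixed $z\in Irr_{2n}$ occurs in infinitely many simple $\mathfrak{g}(2n+2)$-modules), so the tree of upward chains starting at $z$ is not finitely branching and K\"onig's lemma does not show that ``extendable to every finite height'' implies ``coherent.'' The c.l.s.\ axiom $\tilde Q_m=\langle\tilde Q_n\rangle_m$ for your $\tilde Q$ can in fact be checked directly by transitivity of $\succ$ without K\"onig, but you are then left with no argument that $\tilde Q_n$ is nonempty, let alone large enough to satisfy $I(\tilde Q)=I$; if $\tilde Q$ were empty one would get $I(\tilde Q)=U(\mathfrak{g}(\infty))\neq I$. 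Both gaps are closed in the paper precisely by the explicit combinatorics of Subsection~\ref{prop}, which your proposal would need to reproduce in some form.
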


	Since $I(Q(I))=I$ for any locally integrable ideal $I$, Theorem \ref{asd1} follows from  the following proposition.
	
	\begin{prop}
\label{clsint}
		If $Q$ is a p.l.s. then $I(Q)$ is an integrable ideal.
	\end{prop}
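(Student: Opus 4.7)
The plan is to produce a coherent local system $\tilde Q$ with $I(\tilde Q) = I(Q)$, and then invoke the Corollary to Proposition~\ref{prop:annih} to conclude that $I(Q)$ is integrable.

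First I define
$$\tilde Q_m := \{z \in Irr_{2m} \mid I(Q) \cap U(\mathfrak{g}(2m)) \subset \operatorname{Ann}_{U(\mathfrak{g}(2m))} z\}.$$
The inclusion $\langle \tilde Q_n\rangle_m \subset \tilde Q_m$ for $n \ge m$ is straightforward: for $z \in \tilde Q_n$ and any simple $\mathfrak{g}(2m)$-constituent $z'$ of $z$, semisimplicity of the restriction gives
$$\operatorname{Ann}_{U(\mathfrak{g}(2m))} z' \supset \operatorname{Ann}_{U(\mathfrak{g}(2n))} z \cap U(\mathfrak{g}(2m)) \supset I(Q) \cap U(\mathfrak{g}(2m)),$$
so $z' \in \tilde Q_m$. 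The opposite inclusion $\tilde Q_m \subset \langle \tilde Q_n\rangle_m$, which upgrades $\tilde Q$ from a p.l.s.\ to a c.l.s., is the substantial step. I would combine the branching rules for $\mathfrak{g}(2n) \supset \mathfrak{g}(2m)$, which bound the highest weights of any simple $\mathfrak{g}(2n)$-module $\tilde z$ admitting $z$ as a $\mathfrak{g}(2m)$-constituent to a finite set, with a stabilization argument: since $\langle Q_k\rangle_m$ is monotone nonincreasing in $k$, the relevant information about $Q$ at level $m$ is concentrated in finitely many candidate lifts to level $n$, and at least one of them must lie in $\tilde Q_n$.

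Next I would check the equality $I(\tilde Q) = I(Q)$. Since $U(\mathfrak{g}(\infty)) = \varinjlim U(\mathfrak{g}(2m))$, it suffices to verify $I(\tilde Q) \cap U(\mathfrak{g}(2m)) = I(Q) \cap U(\mathfrak{g}(2m))$ for every $m$. The inclusion $I(Q) \cap U(\mathfrak{g}(2m)) \subset \bigcap_{z \in \tilde Q_m} \operatorname{Ann}_{U(\mathfrak{g}(2m))} z$ is the defining property of $\tilde Q_m$. The opposite inclusion follows from the presentation
$$I(Q) \cap U(\mathfrak{g}(2m)) = \bigcup_{n \ge m} \bigcap_{z' \in \langle Q_n\rangle_m} \operatorname{Ann}_{U(\mathfrak{g}(2m))} z',$$
(itself a consequence of the fact that the annihilator of any simple $\mathfrak{g}(2n)$-module viewed as a $\mathfrak{g}(2m)$-module equals the intersection of the annihilators of its $\mathfrak{g}(2m)$-constituents) together with the observation that the stable elements of the directed family $\langle Q_n\rangle_m$ are precisely the elements of $\tilde Q_m$.

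Once $\tilde Q$ is known to be a c.l.s., the Corollary to Proposition~\ref{prop:annih} produces an integrable $\mathfrak{g}(\infty)$-module whose annihilator is $I(\tilde Q)$; combined with $I(Q) = I(\tilde Q)$, this shows that $I(Q)$ is integrable. The principal obstacle is the nontrivial half of showing that $\tilde Q$ is a c.l.s., namely the lifting property $\tilde Q_m \subset \langle \tilde Q_n\rangle_m$: producing, for each $z \in \tilde Q_m$, a single simple $\mathfrak{g}(2n)$-module $\tilde z \in \tilde Q_n$ with $z \in \langle \tilde z\rangle_m$ is a coherence statement ultimately relying on the finite-type structure inherent in the p.l.s., and is where Zhilinskii's classification (Proposition~\ref{2.6}) enters implicitly by bounding the set of admissible lifts.
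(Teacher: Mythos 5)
Your overall skeleton matches the paper's: both arguments replace $Q$ by a coherent local system with the same annihilator ideal and then invoke the corollary to Proposition~\ref{prop:annih}. But the entire mathematical content of the statement lives in the step you defer, namely showing that your $\tilde Q$ actually has the c.l.s.\ lifting property $\tilde Q_m\subset\langle\tilde Q_n\rangle_m$, and your sketch of that step does not work as written. First, the assertion that the branching rules confine the simple $\mathfrak{g}(2n)$-modules $\tilde z$ admitting a fixed $z$ as a $\mathfrak{g}(2m)$-constituent to a finite set is false: by the Gelfand--Tsetlin interlacing conditions, for a fixed $\mu$ there are infinitely many dominant $\lambda$ with $\lambda\succ\mu$ (one may, for instance, increase $\lambda_1$ arbitrarily). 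Second, the sets $\langle Q_k\rangle_m$ are indeed nonincreasing in $k$, but each may be infinite, so monotonicity alone yields no stabilization and no guarantee that, for a given $z\in\tilde Q_m$, some single lift of $z$ survives to every level. Producing such a lift is exactly the coherence statement that the paper spends all of the subsection \emph{Equivalence of p.l.s.\ and c.l.s.}\ proving.

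Concretely, the paper establishes Proposition~\ref{4.3} by showing that the maximal p.l.s.\ $Q^{\vee}(\lambda)$ omitting a given admissible tuple $\lambda$ is equivalent to an explicit c.l.s.\ $Q(\lambda)$ (Proposition~\ref{plsnota}); the engine behind this is the combinatorial Lemma~\ref{equiv} (for $\#\mu\ge 2\#\lambda$ one has $\mu\succ\lambda$ if and only if $\mu_k\ge\lambda_k$ for all $k$), itself proved via the insertion operations $R(\cdot,k)$ and $L(\cdot,k)$ of Lemma~\ref{RandL}. One then writes $Q=\bigcap_{\lambda\notin Q}Q^{\vee}(\lambda)$ and uses the artinian property of the lattice of c.l.s.\ to reduce to a finite intersection. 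None of this is present in, or implied by, your stabilization heuristic, and Zhilinskii's product decomposition (Proposition~\ref{2.6}) cannot ``enter implicitly'' to bound the lifts: it classifies irreducible c.l.s., which presupposes exactly the coherence you have not yet established for $\tilde Q$. The bookkeeping parts of your proposal (that $\tilde Q$ is a p.l.s., and that $I(\tilde Q)=I(Q)$ once the rest is in place) are fine, but until the lifting property is actually proved the argument restates the problem rather than solving it.
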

	
\begin{defn}
		 Two p.l.s. $Q$ and $Q'$ are \textup {equivalent} if there exists an integer $n$ such that $Q_{n'} = Q'_{n'}$ for any $n' > n$.
		
\end{defn} 

It follows directly from the definition of equivalence of p.l.s. that \break $I(Q)=I(Q')$ whenever $Q$ and $Q'$ are equivalent p.l.s. 
Thus, to prove Proposition \ref{clsint} it is enough to prove the following proposition.
	
	\begin{prop}
\label{4.3}
		For any p.l.s. $Q$, there exists a c.l.s. $Q'$ such that $Q$ and $Q'$ are equivalent.
	\end{prop}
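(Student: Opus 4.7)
The plan is to construct $Q'$ by restricting $Q$ to those elements that lift indefinitely to higher levels. Concretely, I would set
$$Q'_n := \bigcap_{k \geq 0}\, \langle Q_{n+k} \rangle_n.$$
The p.l.s.~axiom $\langle Q_N \rangle_m \subset Q_m$ for $N \geq m$ gives $\langle Q_{n+k+1} \rangle_n = \langle \langle Q_{n+k+1} \rangle_{n+k} \rangle_n \subset \langle Q_{n+k} \rangle_n$, so the intersection defining $Q'_n$ is a nested chain, and $Q'_n$ consists of the elements $z \in Q_n$ that arise as $\mathfrak{g}(2n)$-constituents of modules from $Q_{n+k}$ for every $k \geq 0$.

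Next I would verify that $Q'$ is a c.l.s., that is, $\langle Q'_{n+1}\rangle_n = Q'_n$ for every $n$. The inclusion $\subset$ is formal: if $y \in Q'_{n+1}$, then $y \in \langle Q_{n+1+k}\rangle_{n+1}$ for all $k$, so every $\mathfrak{g}(2n)$-constituent of $y$ lies in $\langle Q_{n+1+k}\rangle_n$ for all $k$, hence in $Q'_n$. For the reverse, fix $z \in Q'_n$ and, for each $k \geq 1$, pick $x_k \in Q_{n+k}$ with $z \in \langle x_k\rangle_n$; factoring the descent through level $n+1$ yields $y_k \in \langle x_k\rangle_{n+1} \cap Q_{n+1}$ with $z \in \langle y_k\rangle_n$ and $y_k \in \langle Q_{n+k}\rangle_{n+1}$. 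One then needs a single $y \in Q_{n+1}$ lying in $\langle Q_{n+k}\rangle_{n+1}$ for every $k$, for which a König-type argument on the rooted tree of compatible descent chains from $z$ is the natural tool.

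The main technical obstacle is that the branching of this tree can be a priori infinite (for example, when $Q$ meets a basic c.l.s.~of type $L^\infty_v$ or $E^\infty$, so that $Q_{n+1}$ itself is infinite), and hence König's lemma does not apply directly. My plan to circumvent this, and simultaneously to secure the equivalence $Q \sim Q'$ (i.e., $Q_n = Q'_n$ for $n \gg 0$), is to decompose $Q$ into a finite union of maximal irreducible sub-p.l.s.~and analyze each component via Zhilinskii's classification (Proposition~\ref{2.6}): each irreducible c.l.s.~is pinned down by a finite tuple of discrete invariants $(v,m,\{x_i\}, \ldots)$, and the p.l.s.~axiom forces these invariants to be weakly monotone in the level $n$ along any irreducible component. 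Discreteness combined with monotonicity forces stabilization from some $n_0$ onward; past $n_0$, each $z \in Q_n$ is forced to admit lifts to all higher $Q_{n+k}$, the candidate set $\{y \in Q_{n+1} : z \in \langle y\rangle_n\}$ becomes effectively finite within a fixed component (and the König argument goes through), and $Q'_n = Q_n$. Making this component-wise monotonicity argument precise for p.l.s.~(where the decomposition into irreducible components is less rigid than for c.l.s., the setting to which Zhilinskii's theorem applies verbatim) is the hardest step, and is the $\mathfrak{o}(\infty)/\mathfrak{sp}(\infty)$ analog of the corresponding difficulty surmounted for $\mathfrak{sl}(\infty)$ in \cite[Theorem 4.2]{PP4}.
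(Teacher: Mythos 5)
Your reduction of the problem is sensible, and the two obstacles you flag are indeed the real ones; but the proposal does not overcome either of them, and the resolution you sketch is essentially circular. First, the K\"onig-type argument needs finite branching at each node of the descent tree, and you correctly note this fails whenever $Q$ contains components of type $L^\infty_v$ or $E^\infty$. Second, and more seriously, your fix --- decomposing the p.l.s.\ $Q$ into finitely many maximal irreducible sub-p.l.s.\ and reading off Zhilinskii's discrete invariants on each component --- presupposes exactly what is not available: Zhilinskii's classification (Proposition \ref{2.6}) and the finite decomposition into irreducible components are theorems about c.l.s., and extending them to p.l.s.\ is not a routine adaptation but is, in substance, the statement being proved. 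You acknowledge this is ``the hardest step,'' but you do not carry it out, so the argument for both the c.l.s.\ property of $Q'$ and the stabilization $Q'_n=Q_n$ for $n\gg0$ remains open. Note also that the stabilization claim is the genuine crux: an element $z\in Q_n$ may fail to lift to level $n+k$ for some $k$ depending on $z$, and nothing in the proposal rules out that such failures occur at every level.

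The paper's proof sidesteps both difficulties by working directly with the Gelfand--Tsetlin combinatorics of admissible tuples rather than with abstract lifting trees. It first treats the ``co-principal'' p.l.s.\ $Q^{\vee}(\lambda)$ (the largest p.l.s.\ omitting a fixed tuple $\lambda$) and proves the key Lemma \ref{equiv}: once $\#\mu\ge 2\#\lambda$, the relation $\mu\succ\lambda$ is equivalent to the coordinatewise inequalities $\mu_k\ge\lambda_k$. This identifies $Q^{\vee}(\lambda)$, from level $2\#\lambda$ on, with the explicit c.l.s.\ $Q(\lambda)=\bigcup_k Q(k,\lambda_k)$ of Proposition \ref{plsnota}; the general case then follows by writing $Q=\bigcap_{\lambda\notin Q}Q^{\vee}(\lambda)$ and invoking the artinian property of the lattice of c.l.s.\ \cite{Zh3} to reduce to a finite intersection. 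If you want to salvage your approach, you would need an independent proof that every element of $Q_n$ lifts to all higher levels once $n$ is large --- and the cleanest route to that is precisely a coordinatewise criterion such as Lemma \ref{equiv}, at which point the detour through $Q'$ and K\"onig's lemma becomes unnecessary.
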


The rest of this section is devoted to the proof of Proposition \ref{clsint}.

\medskip
\subsection{Equivalence of p.l.s. and c.l.s.}
\label{prop}

In this subsection we provide a somewhat explicit construction  of a c.l.s. $Q'$ which is equivalent to a given p.l.s. $Q$, and  in this way give a  proof of Proposition \ref{4.3}.


Finite-dimensional  $\mathfrak{o}(2n)$-modules are in one-to-one correspondence with  $n$-tuples  $ \lambda = (\lambda_1,...,\lambda_n) $ of numbers $\lambda_i$ 
which are simultaneously either integers or half-integers and satisfy
	\begin{equation} \label{o:case}
	\lambda_1 \geqslant \lambda_2 \geqslant ... \geqslant \lambda_{n-1} \geqslant |\lambda_n|.
\end{equation}
	Finite-dimensional  $\mathfrak{sp}(2n)$-modules are in one-to-one correspondence with  $n$-tuples  $ \lambda = (\lambda_1,...,\lambda_n) $ of integers $\lambda_i$ satisfying 
	\begin{equation}\label{sp:case}\lambda_1 \geqslant \lambda_2 \geqslant ... \geqslant \lambda_{n-1} \geqslant \lambda_n \geqslant 0.\end{equation}

	This correspondence is nothing but the assigment of the highest weight $\lambda$ to a  simple finite-dimensional module $L$ so that $ L=L(\lambda)$. In what follows we call $n$-tuples satisfying~(\ref{o:case})  in the case of  $\mathfrak{o}(2n)$ and (\ref{sp:case}) in the case of $\mathfrak{sp}(2n)$ \emph{admissible $n$-tuples}.
	We refer to $n$ as the  \textit{width} of $\lambda$, and write  $\#\lambda=n$.

	\emph{The Gelfand--Tsetlin rule} \cite{Mo} claims that, for two admissible $n$-tuples $\lambda$ and $\mu $ with $  \#\lambda = n$,  $ \#\mu = n-1$, the following conditions are equivalent.
	\begin{itemize}
		
		\item $ \operatorname{Hom
}_{\mathfrak{g}({2\#\mu})} ( L({\mu}) , L({\lambda}) ) \neq 0.$
		
		\item There  exists an $n$-tuple of integers $\nu = (\nu_1,\dots,\nu_n)$ which satisfy the inequalities
\begin{equation*}
\begin{split}
\lambda_1 \geqslant \nu_1 \geqslant \lambda_2 \geqslant\nu_2 \geqslant  &\ldots \geqslant \lambda_{n-1} \geqslant \nu_{n-1} \geqslant  \lambda_n\geqslant  \nu_{n}\geqslant 0,\\
\nu_1 \geqslant \mu_1 \geqslant\nu_2 \geqslant \mu_2 \geqslant   &\ldots \geqslant \nu_{n-1} \geqslant \mu_{n-1} \geqslant \nu_{n} \geqslant 0  
\end{split}
\end{equation*}
 in the case of  $\mathfrak{sp}(2n)$.		
		
\item There exists a $(n-1)$-tuple of integers or half-integers $\nu = (\nu_1,\dots,\nu_n)$ which satisfy the inequalities
\begin{equation*}
\begin{split}
 \lambda_1 \geqslant \nu_1 \geqslant \lambda_2 \geqslant\nu_2 \geqslant  &\ldots \geqslant \nu_{n-2} \geqslant \lambda_{n-1} \geqslant \nu_{n-1} \geqslant |\lambda_n | ,\\
\nu_1 \geqslant \mu_1 \geqslant\nu_2 \geqslant \mu_2 \geqslant   &\ldots \geqslant \mu_{n-2} \geqslant \nu_{n-1} \geqslant| \mu_{n-1} |  
\end{split}
\end{equation*}	
in the case of $\mathfrak{o}(2n)$ .

	\end{itemize}

	We note that the set of admissible $n$-tuples for $\mathfrak{sp}(2n)$ is a subset of the set of admissible $n$-tuples for $\mathfrak{o}(2n)$. Furthermore, the Gelfand--Tsetlin rule for the set of admissible  $n$-tuples for $\mathfrak{sp}(2n)$ can be obtained by  restriction of the Gelfand--Tsetlin  rule  for $\mathfrak{o}(2n)$. 
We will write $\lambda > \mu$ whenever a pair $(\lambda,\mu)$ as above satisfies the Gelfand--Tsetlin rule.
For  tuples $\lambda$ and $\mu $ with $\#\lambda \geqslant \# \mu$, the Gelfand--Tsetlin rule implies that the following conditions are equivalent.
\begin{itemize}
	
	\item $\operatorname{Hom}_{\mathfrak{g}(2\#\mu)} ( L({\mu}) , L({\lambda}) ) \neq 0 $
	
	\item There exists a sequence of admissible tuples $\lambda=\lambda^0,\lambda^1,\ldots,\lambda^{\#\lambda-\#\mu}=\mu$ such that $\#\lambda^i=\#\lambda-i$ and
	$$\lambda=\lambda^0>\lambda^1>\ldots>\lambda^{\#\lambda-\#\mu}=\mu$$
	We write $\lambda \succ \mu$ whenever these conditions hold.
\end{itemize}

By writing $\{a_1,a_2\}\geqslant\{b_1,b_2\}$ we indicate the  validity of all  inequalities $a_i \geqslant b_j$ for all $i$ and $j$. 
	Using this notation we can rewrite the Gelfand--Tsetlin rule in a more convenient form.
	It is easy to check that the following two conditions are equivalent:
	
	\begin{itemize}
		
		\item $\lambda \succ \mu$ and $\#\lambda - \#\mu=1$,

		\item  $\{\lambda_1,\lambda_1\}\geqslant \{\lambda_2,\mu_1\}\geqslant\{\lambda_3,\mu_2\}\geqslant \ldots \geqslant \{\lambda_{n-1},\mu_{n-2}\}\geqslant\{|\lambda_{n}|,|\mu_{n-1}|\}$.
	\end{itemize}

	
	We can now rephrase the definitions of p.l.s. and c.l.s. 
	
\noindent a$)$ The following conditions are equivalent:
	\begin{itemize}
		\item $Q$ is a p.l.s.
		
		\item for all $\lambda$, $\mu$ such that $\lambda \succ \mu$ and $\lambda \in Q_{\#\lambda}$, we have $\mu \in Q_{\#\mu}. $
	\end{itemize}
	b$)$ The following conditions are equivalent:
	\begin{itemize}
		\item $Q$ a is c.l.s.
		
		\item $Q$ a is p.l.s. and for every $\mu \in Q_{\#\mu}$ there is $\lambda \in Q_{\#\mu}$ such that $\lambda \succ \mu .$ \\
		
	\end{itemize}

We denote by $Q^{\vee}(\lambda)$ the largest p.l.s. $Q$ which does not contain a given tuple $\lambda$.
	
\begin{prop}
\label{plsnota}		For any admissible $n$-tuple $\lambda$, the p.l.s. $Q^{\vee}(\lambda)$ is equivalent to the c.l.s.
		$$Q(\lambda):=\bigcup\limits_{1\leqslant k \leqslant \#\lambda} Q(k,\lambda_k), $$
		where the collection of sets $Q(k,a)$ for $k\in \mathbb{Z}_{\geq 0}$ and $a\in \mathbb{Z}/2$ is defined by putting
		$$Q(k,a)_m:=\{\mu \in Irr_{2m}\mid\mu_k<a,\:if\:k\leq\#\mu\}.$$\textbf{}

	\end{prop}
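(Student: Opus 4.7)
The plan is to establish two complementary inclusions: first, that $Q(\lambda)$ is itself a c.l.s.\ not containing $\lambda$, which by maximality forces $Q(\lambda)\subset Q^{\vee}(\lambda)$; second, that $Q^{\vee}(\lambda)_m\subset Q(\lambda)_m$ for all sufficiently large $m$, using the iterated Gelfand--Tsetlin rule. Combined, these give the claimed equivalence.

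First I would verify that $Q(\lambda)$ is a c.l.s.\ with $\lambda\notin Q(\lambda)_{\#\lambda}$. Non-containment is immediate since $\lambda_k<\lambda_k$ never holds. For the p.l.s.\ condition, suppose $\mu\in Q(\lambda)_m$ via index $k$ (so $\mu_k<\lambda_k$ when $k\le m$; otherwise vacuous), and let $\mu'$ be a $\mathfrak{g}(2(m-1))$-constituent of $\mu$ with $\#\mu'=m-1$. The Gelfand--Tsetlin rule forces $\mu'_k\le\mu_k$, so either $k\le m-1$ and $\mu'_k<\lambda_k$, or $k=m>\#\mu'$ and membership is vacuous; in both cases $\mu'\in Q(k,\lambda_k)_{m-1}\subset Q(\lambda)_{m-1}$. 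For the c.l.s.\ condition I would append a zero coordinate, setting $\tilde\mu=(\mu_1,\ldots,\mu_m,0)$; one checks directly from the Gelfand--Tsetlin inequalities that $\tilde\mu\succ\mu$, and the defining coordinate $\tilde\mu_k=\mu_k<\lambda_k$ carries over to show $\tilde\mu\in Q(\lambda)_{m+1}$. Maximality of $Q^{\vee}(\lambda)$ then yields $Q(\lambda)\subset Q^{\vee}(\lambda)$.

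For the reverse inclusion I would invoke the iterated form of the Gelfand--Tsetlin rule of Section 4.2. The one-step rule $\mu_k\in[\lambda_{k+2},\lambda_k]$ iterates, because of the intermediate weight $\nu$ at each step, to give the following: for $\#\mu=m$ and $\#\lambda=m-s$, the relation $\mu\succ\lambda$ is equivalent to
\[
\mu_k\ge\lambda_k\ge\mu_{k+2s}\qquad\text{for }1\le k\le\#\lambda,
\]
with the convention $\mu_j:=0$ for $j>m$. The shift by $2s$ (rather than $s$, as for $\mathfrak{gl}$) is the key combinatorial feature. Now suppose $\mu\in Q^{\vee}(\lambda)_m\setminus Q(\lambda)_m$. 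Then $\mu_k\ge\lambda_k$ for every $k\le\#\lambda$, giving the first family of inequalities. Once $m\ge 2\#\lambda$ one has $k+2(m-\#\lambda)>m$ for all $k\ge 1$, so $\mu_{k+2(m-\#\lambda)}=0\le\lambda_k$ by convention and the second family of inequalities holds automatically. Hence $\mu\succ\lambda$, contradicting $\mu\in Q^{\vee}(\lambda)_m$. Thus $Q^{\vee}(\lambda)_m\subset Q(\lambda)_m$ for all $m\ge 2\#\lambda$, and combined with the first inclusion this proves $Q^{\vee}(\lambda)_m=Q(\lambda)_m$ eventually, which is exactly the equivalence required.

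The main technical point is correctly iterating the $\mathfrak{o}/\mathfrak{sp}$-specific Gelfand--Tsetlin rule: the lower bound on each coordinate shifts by two indices at every branching step, so after $s$ steps the coordinate $\mu_{k+2s}$ (rather than $\mu_{k+s}$) governs the obstruction to $\mu\succ\lambda$. In the $\mathfrak{o}$-case one must also handle the absolute-value convention on the last coordinate, but since we only need the asymptotic comparison for $m\gg\#\lambda$, the terminal indices are pushed out of range and this creates no additional difficulty.
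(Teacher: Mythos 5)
Your overall strategy coincides with the paper's: both reduce the proposition to the statement that, for $\#\mu\ge 2\#\lambda$, the relation $\mu\succ\lambda$ is equivalent to the coordinatewise inequalities $\mu_k\ge\lambda_k$ for $1\le k\le\#\lambda$. Your first half (that $Q(\lambda)$ is contained in $Q^{\vee}(\lambda)$ and satisfies the p.l.s./c.l.s. conditions at levels $m\ge\#\lambda$) is essentially fine, modulo the fact that appending a zero does not give $\tilde\mu\succ\mu$ in the $\mathfrak{o}$-case when $\mu_m<0$, and that the c.l.s. property can fail at levels $m<\#\lambda$ where membership is vacuous — neither of which affects equivalence.

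The genuine gap is in the second half. You assert that the one-step rule ``iterates'' to give the equivalence of $\mu\succ\lambda$ with $\mu_k\ge\lambda_k\ge\mu_{k+2s}$, but the argument you sketch (passing through the intermediate weight $\nu$ at each step) only proves \emph{necessity} of these inequalities. What the proposition actually needs is \emph{sufficiency}: given only the coordinatewise inequalities, one must construct an entire chain of admissible interlacing tuples $\mu=\lambda^0>\lambda^1>\dots>\lambda^s=\lambda$. This is exactly the content of the paper's Lemma \ref{equiv}, and it is the nontrivial part of the whole section: the paper proves it by introducing the operations $R(\cdot,k)$ and $L(\cdot,k)$, establishing Lemma \ref{RandL} with its case analysis $(*)$, $(*\,*)$, $(*\!*\!*)$, and running an induction on $\#\lambda$ in which one must locate a consecutive pair $(\lambda^{i-1},\lambda^i)$ satisfying $(*\,*)$ or $(*\!*\!*)$. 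Your proposal assumes this lemma rather than proving it. (The claim is in fact true, and can even be established more directly than in the paper by defining the intermediate tuples explicitly, e.g.\ $\lambda^1_k=\max(\lambda_k,\mu_{k+2})$ and iterating, at least in the $\mathfrak{sp}$ case — but some such construction must be supplied; without it the proof is incomplete at its central step.)
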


Note that Proposition \ref{plsnota} implies Proposition \ref{4.3}.
Indeed, let $Q$  be a p.l.s. Then 
	$$Q=\cap_{\lambda \notin Q}Q^{\vee}(\lambda).$$
	According to Proposition \ref{plsnota}, each p.l.s.  $Q^{\vee}(\lambda)$ is equivalent to a c.l.s. $Q(\lambda)$. The lattice of c.l.s. is artinian \cite{Zh3}, and therefore we conclude that the p.l.s. $Q$ is equivalent to the c.l.s. 
	
	$$Q(\lambda_1)\cap Q(\lambda_2)\cap\ldots\cap Q(\lambda_s)$$
	for some finite set of elements $\lambda_1,\lambda_2,\ldots,\lambda_s \notin Q$.

It remains to prove Proposition \ref{plsnota}. It is clear that Proposition \ref{plsnota} follows from the following lemma.

	\begin{lemma} \label{equiv}
Let $\lambda$ and $\mu$ be  admissible tuples such that $\#\mu \geqslant 2\#\lambda$. Then the following conditions are equivalent:
	\begin{enumerate}
		\item $\mu \succ \lambda$,
		\item $\mu_k\geqslant\lambda_k$ for each $1\leqslant k \leqslant \#\lambda$.
	\end{enumerate}
	\end{lemma}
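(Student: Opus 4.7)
The easy direction $(1) \Rightarrow (2)$ will follow by iterating the branching inequalities: in any chain $\mu = \nu^0, \nu^1, \ldots, \nu^{\#\mu - \#\lambda} = \lambda$ realizing $\mu \succ \lambda$, the simplified bracket form $\{\nu^{i-1}_k, \nu^i_{k-1}\} \geq \{\nu^{i-1}_{k+1}, \nu^i_k\}$ forces $\nu^i_k \leq \nu^{i-1}_k$ at every step; iterating gives $\lambda_k \leq \mu_k$ for $1 \leq k \leq \#\lambda$.

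For the nontrivial direction $(2) \Rightarrow (1)$, the plan is to write down the chain explicitly. Set $s := \#\lambda$, $t := \#\mu$, $N := t - s$ (so $N \geq s$ by hypothesis), and adopt the convention $\mu_j := 0$ for $j > t$. For each $0 \leq i \leq N$ I take the tuple of width $t - i$ given by
\[
\nu^i_k := \begin{cases} \max(\lambda_k,\, \mu_{k+2i}) & \text{if } 1 \leq k \leq s, \\ \mu_{k+2i} & \text{if } s < k \leq t - i. \end{cases}
\]
I then need to check four things: $\nu^0 = \mu$ (immediate from $\mu_k \geq \lambda_k$); $\nu^N = \lambda$ (which uses that $k + 2N > t$ for every $k \geq 1$, so $\mu_{k+2N} = 0$); that each $\nu^i$ is weakly decreasing (a pointwise maximum of weakly decreasing sequences is weakly decreasing, and at the transition position $k = s$ the inequality reduces to $\mu_{s+2i} \geq \mu_{s+1+2i}$); and that $\nu^i_k \in [\nu^{i-1}_{k+2}, \nu^{i-1}_k]$ for all $k$ in range.

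The upper bound $\nu^i_k \leq \nu^{i-1}_k$ will follow at once from $\mu_{k+2i} \leq \mu_{k+2i-2}$. For the lower bound $\nu^i_k \geq \nu^{i-1}_{k+2}$ I will distinguish four cases by the position of $k+2$ relative to $s$: when $k+2 \leq s$ the decrease $\lambda_k \geq \lambda_{k+2}$ suffices; in the two boundary cases $k+2 \in \{s+1, s+2\}$ the $\mu$-indices align exactly thanks to $\mu_{(s-1)+2i} = \mu_{(s+1)+2(i-1)}$ and $\mu_{s+2i} = \mu_{(s+2)+2(i-1)}$; and for $k+2 > s+2$ both sides equal $\mu_{k+2i}$. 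The main obstacle will be arriving at the correct formula — the shift-by-$2i$ on the $\mu$-side is essential and encodes the skip-one interlacing built into the $\mathfrak{sp}$- and $\mathfrak{o}$-branching rules. In the $\mathfrak{o}$ case the last coordinates $\lambda_s$ and $\mu_s$ may be negative, which I will handle by a small sign-adjustment of $\nu^i_s$ in the closing steps of the chain, exploiting the freedom of sign for the final entry that is built into the $|\cdot|$ appearing in the $\mathfrak{o}$-rule; the rest of the argument then carries over verbatim.
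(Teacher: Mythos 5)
Your proof of $(2)\Rightarrow(1)$ is correct in substance but follows a genuinely different route from the paper. The paper proceeds by induction on $\#\lambda$: it appends a coordinate to every term of the chain witnessing $\mu\succ\lambda$, applies the two insertion operators $R(\cdot,k)$ and $L(\cdot,k)$ to the whole chain, invokes the technical Lemma on $R$ and $L$ to show both images are still chains, locates a pivot index at which one may pass from the $L$-chain to the $R$-chain, and finally splices in the two new top coordinates. Your closed-form chain $\nu^i_k=\max(\lambda_k,\mu_{k+2i})$ replaces all of that: the four inequality families you list are exactly the one-step conditions $\nu^{i-1}_k\geq\nu^i_k$ and $\nu^i_k\geq\nu^{i-1}_{k+2}$ coming from the bracket form of the branching rule, and each of them checks out (in particular the two boundary alignments $(s-1)+2i=(s+1)+2(i-1)$ and $s+2i=(s+2)+2(i-1)$ are right). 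Your construction also makes transparent where the hypothesis $\#\mu\geq 2\#\lambda$ enters — it is precisely what forces $\mu_{k+2N}=0$ and hence $\nu^N=\lambda$ — whereas in the paper this hypothesis is absorbed into the reduction to $\#\mu=2\#\lambda$. This is a real simplification.

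Two loose ends, which to be fair the paper's own proof also leaves open. First, padding $\mu$ by zeros is incompatible with half-integral tuples (it produces tuples with mixed integral and half-integral entries, which are not admissible); you need either the paper's shift-by-$\tfrac12$ reduction or a padding by $\tfrac12$ in that case, and you should say which. Second, your promised ``small sign-adjustment'' for a negative last coordinate $\lambda_s$ in the $\mathfrak{o}$-case is not a detail that carries over verbatim: the final branching step requires $\nu^{N-1}_s\geq|\lambda_s|$, not $\nu^{N-1}_s\geq\lambda_s$, and condition $(2)$ as literally stated does not supply this (e.g.\ $\lambda=(-3)$, $\mu=(1,0)$ satisfies $\mu_1\geq\lambda_1$ but $\mu\not\succ\lambda$). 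So either restrict to nonnegative tuples or read $(2)$ with $|\lambda_k|$; with that caveat your argument is complete.
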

	
	Without loss of generality, we can consider our admissible tuples as admissible tuples of integers.  Indeed, if  $\lambda \succ\mu$  and both admissible tuples consist of half-integers, we can add $1/2$ to all entires of the corresponding admissible tuples.

	Let $\lambda=(\lambda_1,\ldots,\lambda_n)$ be an admissible tuple and $k \in \mathbb{Z}$. Set
	\begin{equation*}
R(\lambda,k):= 
\begin{cases}
(\lambda_1,\ldots,\lambda_i,k,\lambda_{i+2},\ldots\lambda_n) &\text{if $k\geq |\lambda_{n}|$} \\
\lambda&\text{if $k < |\lambda_{n}|$, }
\end{cases}
\end{equation*}
	so that $R(\lambda,k)$ is an admissible tuple and $i+1$ is  maximal possible such that $k\geq \lambda_{i+1}$. Set
	\begin{equation*}
L(\lambda,k):= 
\begin{cases}
(\lambda_1,\ldots,\lambda_i,k,\lambda_{i+2},\ldots\lambda_n) &\text{if $k\leq \lambda_{1}$} \\
\lambda&\text{if $k > \lambda_{1}$, }
\end{cases}
\end{equation*}
so that $L(\lambda,k)$ is an admissible tuple and $i$ is maximal possible such that $k\leq \lambda_{i+1}$ .
	
	Let us prove the following technical lemma.
	\newpage
	
	\begin{lemma}
		\label{RandL}	
		Let $\lambda$ and $\mu$ be admissible tuples such that $\#\mu-\#\lambda=1$, $\mu > \lambda$ and let  $k \in \mathbb{Z}$. Then 
		\begin{enumerate}
			\item $R(\mu,k)>R(\lambda,k)$,
				
			\item $L(\mu,k)>L(\lambda,k)$,
				\item $L(\mu,k)>R(\lambda,k)$ whenever one of the following conditions is satisfied
$$ \mu_{i+1} \geqslant k > \mu_{i+2},$$
			$$\mu_{i+2} \geqslant k \geqslant \mu_{i+3}.  $$
for  $i$ such that $\lambda_{i} \geqslant k \geqslant \lambda_{i+1}$.

		\end{enumerate}
\end{lemma}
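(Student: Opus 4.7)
The plan is to unpack the insertion operations $R$ and $L$ concretely, then verify the Gelfand--Tsetlin interlacing conditions for the output pairs by case analysis based on where $k$ lands. First I would fix the reading of the definitions: $R(\lambda,k)$ is the admissible tuple obtained from $\lambda$ by placing $k$ at the rightmost position at which the result remains non-increasing (so $k$ sits between a pair $\lambda_i,\lambda_{i+1}$ with $\lambda_i\geqslant k\geqslant\lambda_{i+1}$, $i$ minimal), and $L(\lambda,k)$ by placing $k$ at the leftmost such position. With this in hand, I would use the paired reformulation of the Gelfand--Tsetlin rule noted just before the lemma,
$$\{\mu_1,\mu_1\}\geqslant\{\mu_2,\lambda_1\}\geqslant\{\mu_3,\lambda_2\}\geqslant\ldots\geqslant\{|\mu_n|,|\lambda_{n-1}|\},$$
so that every claim of the form $\tilde\mu>\tilde\lambda$ is reduced to a fixed finite list of two-sided inequalities between coordinates, which can be checked one at a time.

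For parts (1) and (2), the key observation is that the interlacing $\mu_1\geqslant\lambda_1\geqslant\mu_2\geqslant\lambda_2\geqslant\ldots$ forces the insertion index of $k$ in $\mu$ to differ from the insertion index of $k$ in $\lambda$ by at most one. I would let $i$ be the insertion position in $\lambda$, determine the two possible insertion positions $i$ or $i+1$ in $\mu$, and in each case write out the paired Gelfand--Tsetlin inequalities for the pair $(R(\mu,k),R(\lambda,k))$ (respectively $(L(\mu,k),L(\lambda,k))$). Each new inequality either repeats one already present in the original pairing for $(\mu,\lambda)$, or compares $k$ with a coordinate; the latter comparisons hold automatically from the admissibility conditions $\lambda_i\geqslant k\geqslant\lambda_{i+1}$ and $\mu_j\geqslant k\geqslant\mu_{j+1}$ that define the insertion positions. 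The boundary pair $\{|\mu_n|,|\lambda_{n-1}|\}$ in the $\mathfrak{o}(2n)$ case, and the endpoint relations $|\cdot|\geqslant 0$ for $\mathfrak{sp}(2n)$, are handled in a short separate subcase.

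Part (3) is the real content, and I expect it to be the main obstacle. Here the indices on the two sides no longer match by default: left-insertion into $\mu$ pushes the new entry far to the left while right-insertion into $\lambda$ puts it far to the right, and the Gelfand--Tsetlin pairing only tolerates a shift by one. The purpose of the two hypotheses $\mu_{i+1}\geqslant k>\mu_{i+2}$ and $\mu_{i+2}\geqslant k\geqslant\mu_{i+3}$ is exactly to pin down the leftmost insertion index in $\mu$ as $i+1$ or $i+2$, so that it matches the rightmost insertion index $i+1$ in $\lambda$ up to one. I would translate each of the two hypotheses into a statement of the form ``the left-insertion position of $k$ in $\mu$ equals $j$'' for an explicit $j\in\{i+1,i+2\}$, and in each of the two resulting cases verify the paired Gelfand--Tsetlin inequalities for $(L(\mu,k),R(\lambda,k))$ exactly as in parts (1) and (2); the crucial pairings are the ones involving the two newly inserted copies of $k$ and their immediate neighbours, and these are precisely what the hypotheses were designed to give.

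Beyond the boundary case at the last coordinate (where the type-$D$ sign conventions and the $|\cdot|$ terms must be tracked carefully), the proof is a bookkeeping exercise: the same elementary manipulation of ordered pairs is carried out across three or four subcases. I would organize the argument in a single table of paired inequalities, verifying each row either from the hypothesis $\mu>\lambda$ or from the minimality/maximality defining the insertion position, rather than writing out the full interlacing for each subcase.
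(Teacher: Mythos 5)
Your overall strategy is exactly the paper's: split into the three windows $\mu_i\geqslant k>\mu_{i+1}$, $\mu_{i+1}\geqslant k>\mu_{i+2}$, $\mu_{i+2}\geqslant k\geqslant\mu_{i+3}$ (which is where $k$ must land, since interlacing gives $\mu_i\geqslant\lambda_i\geqslant k\geqslant\lambda_{i+1}\geqslant\mu_{i+3}$), write out the paired Gelfand--Tsetlin inequalities for the modified tuples, and observe that every pair either already appears in the pairing for $(\mu,\lambda)$ or compares $k$ with a neighbouring coordinate and follows from the case hypothesis. Your reading of part (3) is also the right one: the two listed hypotheses exclude precisely the first window, in which the positions of $k$ in $L(\mu,k)$ and $R(\lambda,k)$ are too far apart for the pairing and the required inequalities force $k=\lambda_i$.

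There is, however, one concrete error that would derail the bookkeeping: you have misread the operations $R$ and $L$. They are not insertions. By the displayed definitions, $R(\lambda,k)=(\lambda_1,\ldots,\lambda_i,k,\lambda_{i+2},\ldots,\lambda_n)$ \emph{replaces} the entry $\lambda_{i+1}$ by $k$ and preserves the width $\#\lambda$; $R$ overwrites the entry just to the right of where $k$ fits and $L$ the entry just to the left. Your version, which slots $k$ between $\lambda_i$ and $\lambda_{i+1}$ and lengthens the tuple by one, produces different tuples, a different list of paired inequalities, and breaks the width bookkeeping needed later (in the proof of the preceding lemma one compares $R(\mu[m],\lambda_{n+1})$ with $R(\lambda[m],\lambda_{n+1})$ and the widths must still differ by exactly one and eventually collapse to $\lambda^*$ and $\mu^*$). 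Relatedly, your parenthetical ``rightmost position \ldots $i$ minimal'' is internally inconsistent, and the claim that the positions of $k$ in $\mu$ and in $\lambda$ differ by at most one is false in general --- they can differ by two, which is exactly why three cases appear and why case $(*)$ must be excluded in part (3). Once the definitions are corrected, your table of paired inequalities is the paper's argument verbatim.
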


		\begin{proof}

$1)$ Obviously, $\mu_{i} \geqslant k \geqslant \mu_{i+3}$. There are  three possibilities:
			$$ \mu_{i} \geqslant k > \mu_{i+1,}\eqno{(*)}$$
			$$ \mu_{i+1} \geqslant k > \mu_{i+2},\eqno{(*~*)}$$
			$$ \mu_{i+2} \geqslant k \geqslant \mu_{i+3}.\eqno{(***)}$$
			
			In the case $(*)$ we have $R(\lambda,k)=\{\lambda_1,\ldots,\lambda_i,k,\lambda_{i+2},\ldots,\lambda_n \}$,
			$R(\mu,k)=\{\mu_1,\ldots,\mu_i,k,\mu_{i+2},\ldots,\mu_n \}$.
			To check that $R(\mu,k)>R(\lambda,k)$, we need to check that the corresponding  inequalities hold. But all inequalities not involving $k$ follow from the inequality $\mu >\lambda$. Hence it remains to check that

			$$\{\mu_{i},\lambda_{i-1}\} \geqslant \{k,\lambda_{i}\} \geqslant \{\mu_{i+2},k\} \geqslant \{\mu_{i+3},\lambda_{i+2}\}.$$
			These inequalities are implied by $(*)$. The cases $(*~*)$ and
			$(***)$ can be  verified in a similar way.

		$2)$	The proof is similar to  case $1)$.

	$3)$   We have
		
		$$\lambda_{i} \geqslant k \geqslant \lambda_{i+1}$$
		and one of the three cases $(*)$, $(*~*)$, $(***)$ holds.
		
		For  cases $(*~*)$ and $(***)$ one can easily check the following inequalities
		$$\{\mu_{i},\lambda_{i-1}\} \geqslant \{k,\lambda_{i}\} \geqslant \{\mu_{i+2},k\} \geqslant \{\mu_{i+3},\lambda_{i+2}\},$$
		$$ \{\mu_{i+1},\lambda_{i}\} \geqslant \{k,k\} \geqslant \{\mu_{i+3},\lambda_{i+2}\}.$$
		So, $L(\mu,k)>R(\lambda,k)$ whenever  one of the conditions $(*~*)$ and $(***)$ is satisfied.
\end{proof}

Note that	in the case $(*)$ the inequality  $L(\mu,k)>R(\lambda,k)$ may be false. Indeed
		$$\{\mu_{i-1},\lambda_{i-2}\} \geqslant \{k,\lambda_{i-1}\} \geqslant \{\mu_{i+1},\lambda_{i}\} \geqslant \{\mu_{i+2},k\} \geqslant \{\mu_{i+3},\lambda_{i+2}\}.$$
				We see that  $\lambda_i \geqslant k$ and $k \geqslant \lambda_i$. So $k=\lambda_i$, which is false in the general case.

We are now ready to prove Lemma \ref{equiv}.
	We may assume without loss of generality that  $\#\mu=2\#\lambda$. Indeed, the entries $\lambda_i$ are independent of the entries $\mu_{2n+j}$ for $i,j\in \mathbb{Z}_{>0}$.
	\begin{proof}[Proof of Lemma 4.5]
	The implication  $ 1) \Rightarrow 2)$ is obvious.
	We need to show that $ 2) \Rightarrow 1)$.
	
	Let $\lambda = (\lambda_{1},\lambda_{2},\ldots,\lambda_{n})$ and $k \leqslant \lambda_{n}$. Put $\lambda[k]=(\lambda_{1},\lambda_{2},\ldots,\lambda_{n},k)$. Obviously, if $\lambda\prec\mu$ then $\lambda[k]<\mu[k]$.
We will proceed by induction on $\#\lambda$.
The base:
if	$\#\lambda = 1$, $\#\mu=2$ and $\mu_{1}>\lambda_{1}$, then, clearly, $\mu>\lambda$.

	 To perform the inductive step, we introduce the following notation: for admissible tuples $\lambda= (\lambda_{1},\lambda_{2},\ldots,\lambda_{n})$ and $\mu= (\mu_{1},\mu_{2},\ldots,\mu_{2n})$ such that $\mu \succ \lambda$, and for $\mu_{n+1} \geqslant \lambda_{n+1}$,  $\mu_{2n}\geq\mu_{2n+1}\geq\mu_{2n+2}$, we set
	\begin{equation*}
	\begin{split}
	\lambda^*&:=(\lambda_{1},\lambda_{2},\ldots,\lambda_{n},\lambda_{n+1}), \\
	\mu^*&:=(\mu_{1},\mu_{2},\ldots,\mu_{2n},\mu_{2n+1},\mu_{2n+2}). 
	\end{split}
	\end{equation*}
	
	The inequality $\mu\succ\lambda$ means that we have a chain of admissible tuples

	$$\mu=\lambda^0>\lambda^1>\ldots>\lambda^n=\lambda.$$
	Without loss of generality, we can assume that the last entries of all $\lambda^i$ for $0 \leqslant i \leqslant n-1$ all equal to $\mu_{2n}$. Denote $\mu_{2n}$ by $m$. Put
	$$\mu[m]=\lambda^0[m]>\lambda^1[m]>\ldots>\lambda^n[m]=\lambda[m].$$
	By Lemma \ref{RandL} we have
\begin{equation}\begin{split}
	R(\mu[m],\lambda_{n+1})=&R(\lambda^0[m],\lambda_{n+1})>R(\lambda^1[m],\lambda_{n+1})>\ldots\\&>R(\lambda^n[m],\lambda_{n+1})=R(\lambda[m],\lambda_{n+1}),\\
	L(\mu[m],\lambda_{n+1})=&L(\lambda^0[m],\lambda_{n+1})>L(\lambda^1[m],\lambda_{n+1})>\ldots\\&>L(\lambda^n[m],\lambda_{n+1})=L(\lambda[m],\lambda_{n+1}).
	\end{split}\end{equation}
	
	 Now we are going to show that there exists a pair $(i-1,i)$, where $1 \leqslant i \leqslant n$, for which $\lambda^{i-1}$ and $ \lambda^{i}$ satisfy  conditions $(*~*)$ or $(***)$, i.e., if $\lambda^{i-1}[m]_{k}\geq\lambda_{n+1}\geq\lambda^{i-1}[m]_{k+1}$ then 
	 
	 $$\lambda^{i}[m]_{k+1}\geq\lambda_{n+1}\geq\lambda^{i}[m]_{k+2}$$
	 or
	 $$\lambda^{i}[m]_{k+2}\geq\lambda_{n+1}\geq\lambda^{i}[m]_{k+3}.$$
	 Indeed, if for all pairs $(i-1,i)$  condition $(*)$ is satisfied, then
	$$\lambda_{n+1} \geqslant \lambda[m]^{n-1}_{n+1},\lambda_{n+1} \geqslant \lambda[m]^{n-2}_{n+1},\ldots,\lambda_{n+1} \geqslant \lambda[m]^{0}_{n+1}.$$
	We can rewrite the last inequality as $\lambda_{n+1} \geqslant \mu_{n+1}$. But  we also have  $\lambda_{n+1} \leqslant \mu_{n+1}$. This is a contradiction.
	
	Therefore,  a pair $(i-1,i)$ exists as required.
	This means that $L(\mu[m],\lambda_{n+1})\succ R(\lambda[m],\lambda_{n+1}) $. If $\lambda_{n+1} \geqslant m$ then $ R(\lambda[m],\lambda_{n+1}) = \lambda^*$, because $\lambda_n \geqslant \lambda_{n+1} \geqslant m $. If $\lambda_{n+1} \leqslant m$, we replace $m$ by $\lambda_{n+1}$, and then $L(\mu[m],\lambda_{n+1}) \succ \lambda^*$. This implies
	
	$$L(\mu[m],\lambda_{n+1})=\{\mu_1,\ldots,\mu_{j-1},\lambda_{n+1},\mu_{j+1},\ldots,\mu_{2n},m=\mu_{2n+1} \},$$
	 where 
	   \begin{equation}
	 	\mu_{j} \geqslant \lambda_{n+1} \geqslant \mu_{j+1}
	 \end{equation}
	    for some $j \geqslant n+1$.

	The last thing we need to prove is that  $\mu^*\succ L(\mu[m],\lambda_{n+1})$. For this we need to check the inequalities
	
	\begin{equation*}
	\begin{split}
\mu_{1} \geqslant \{\mu_{2},\mu_{1}\}\geqslant &\ldots\geqslant \{\mu_{j},\mu_{j-1}\}\geqslant \{\mu_{j+1},\lambda_{n+1}\}\geqslant \\\{\mu_{j+2},\mu_{j+1}\}\geqslant &\ldots \geqslant \{\mu_{2n+1},\mu_{2n}\}\geqslant \{\mu_{2n+2},\mu_{2n+1}\}.
\end{split}
\end{equation*}

	All inequalities except
	
	$$\mu_j\geq \lambda_{n+1},\: \mu_j-1\geq \lambda_{n+1},\:\mu_j+1\leq \lambda_{n+1},\:
	\mu_j+2\leq \lambda_{n+1}$$
	are obvious, and these latter inequalities follow from inequality $(3)$.
	This proves the inductive step  and hence the lemma.	
\end{proof}

\newpage

\medskip
\section{Coherent local systems of bounded ideals for $U(\mathfrak{sp}(\infty))$}\label{5}

In this section we generalize the construction of c.l.s. of  simple finite-dimensional $\mathfrak{sp}(2n)$-modules to collections  of bounded   ideals in $U(\mathfrak{sp}(2n))$ when $n$ runs over $\mathbb{Z}_{> 0}$. To do this, we use some tools from Kazhdan--Lusztig theory.

\medskip
\subsection{Kazhdan--Lusztig theory}
\label{sub:KL}
Here we introduce some basic definitions and facts related to Coxeter groups and Kazhdan--Lusztig theory.

\begin{defn}
			Let $G$ be a group with identity $1_G$.  For a (not necessarily finite) subset $S$ of $G$, we say that $G$ is a \emph{(generalized) Coxeter group with respect to }$S$, or that $(G,S)$ is a \emph{(generalized) Coxeter system}, if $G$ is generated by $S$ with a presentation of the form 
			$$
				G=\left\langle S\mid  (st)^{m_{s,t}}=1_G\text{ for }s,t\in S\right\rangle,
			$$
			where, for each $s,t\in S$,
			$
				m_{s,t}=m_{t,s}
			$
		is a positive integer or $\infty$, and, for all $s \in S$,
			$
				m_{s,s}=1\,.
			$
The condition $\displaystyle m_{s,t}=\infty $ means that no relation of the form $\displaystyle (st)^{m}=1_G$ should be imposed.

		The \emph{Coxeter matrix} of $G$ is given by $\left[m_{s,t}\right]_{s,t\in S}$.  We write $\bar{S}$ for the set 
		$$
			\left\{gsg^{-1}\mid\:s\in S\text{ and }g\in G\right\}.
		$$

		\end{defn}

\begin{defn}
			The \emph{Bruhat length} $\ell^G$ of a Coxeter system $(G,S)$ is given by the function $\ell^G \colon G\to\mathbb{Z}_{\geq 0}$ defined in the following way: for all \break$g\in G$, $\ell^G(g)$ is the smallest integer $k\geq 0$ such that $g=s_1s_2\cdots s_k$ for some $s_1,s_2,\ldots,s_k\in S$.   We say that $g=s_1s_2\cdots s_k$ is a \emph{reduced expression}  for $g\in G$ if $s_1,s_2,\ldots,s_k\in S$ and $k=\ell^G(g)$.
		\end{defn}

		\begin{defn}
			Let $(G,S)$ be a Coxeter system and $g,h\in G$.  Then, we write $g \preccurlyeq  h$ if there is a reduced expression $h=s_1s_2\dots s_k$, where $k\in\mathbb{Z}_{\geq 0}$ and $s_1,s_2,\ldots,s_k\in S$, such that $g$ is a \emph{subword} of $s_1s_2\dots s_k$, i.e., there exist $j\in\mathbb{Z}_{\geq 0}$ and integers $i_1,i_2,\ldots,i_j$ with $1\leq i_1<i_2<\ldots <i_j\leq k$ such that $g=s_{i_1}s_{i_2}\dots s_{i_j}$.  The relation $\preccurlyeq $ is called the \emph{Bruhat order} on $G$. It is a well-known fact that this is a partial order. 
		\end{defn}
We will write $g\prec h$ if  $g\preccurlyeq h$ and $g\neq h$.
	\begin{defn}
		Let $(G,S)$ be a  Coxeter system and $q$ be an indeterminate.   The ring $\mathbb{Z}\left[q^{-\frac{1}{2}},q^{+\frac{1}{2}}\right]$ of Laurent polynomials in $q^{\frac{1}{2}}$ is denoted by $\mathcal{A}$.  The \emph{Hecke algebra} $\mathcal{H}$ is an associative algebra which is a free module over $\mathcal{A}$ with generating set $\left\{T_g\mid\:g\in G\right\}$, such that the multiplicative identity of $\mathcal{H}$ is $1_{\mathcal{H}}=T_{1_G}$ and that the following multiplicative relations are satisfied:
		$$
			T_s^2 = (q-1)\,T_s + q\,T_{1_G}\,,
		$$
		$$
			T_g\,T_s = T_{gs} \,\text{   if }g \prec gs\,,
		$$
		$$
			T_s\,T_g = T_{sg} \,\text{   if }g \prec sg\,,
		$$
		for each $s \in S$ and $g \in G$.
	\end{defn}

This algebra has an involution ${v}\mapsto\bar v$ for $v\in\mathcal{H}$ which sends $q^{1/2}$ to $q^{-1/2}$ and each $T_s$ to $T_s^{-1}$.

\begin{theorem}\label{theo:KL}\textup{\cite{KL}}
		Let $(G,S)$ be a Coxeter system with  associated Hecke algebra $\mathcal{H}$.  Denote $l(g)=l^G(g)$ for $g\in G$. For each $g \in G$, there is a unique $C^G_g \in \mathcal{H}$ fixed by the involution on $\mathcal{H}$ and satisfying the condition
		\begin{align}
			C^G_g = q^{-\frac{\ell(g)}{2}}\, \sum_{x \preccurlyeq g}\,(-1)^{\ell(x)-\ell(g)}q^{\ell(x)-\ell(g)}\,\overline{P^G_{x,g}(q)}\,T_x\,,
		\end{align}
		where, for each $x,y \in G$,
		\begin{itemize}
			\item[(i)] $P^G_{x,y}(q) \in \mathbb{Z}[q]$,
			\item[(ii)] $P^G_{x,y}(q)=0$ if $x \not\preccurlyeq y$,
			\item[(iii)] $P^G_{x,x}(q)=1$, and
			\item[(iv)] $\deg\left(P^G_{x,y}(q)\right) \leq \frac{\ell(y)-\ell(x)-1}{2}$ if $x \prec y$.
		\end{itemize}
		The polynomials $P^G_{x,y}(q)$, where $x,y \in G$ for $x\preccurlyeq y$, are known as the \emph{Kazhdan--Lusztig (KL) polynomials} of $G$.
	\end{theorem}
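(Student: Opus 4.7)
The approach is the classical Kazhdan--Lusztig argument \cite{KL}: first establish uniqueness by a comparison using the bar involution, and then construct $C_g^G$ by induction on the Bruhat length $\ell^G(g)$.

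For uniqueness, assume that $C, C'$ both satisfy the four conditions, and set $D := C - C'$. Then $D$ is bar-invariant, supported on $\{T_x \mid x \prec g\}$, and each coefficient has the form $q^{-\ell(g)/2}(-1)^{\ell(x)-\ell(g)} q^{\ell(x)-\ell(g)} \overline{Q_x(q)}$ for some $Q_x \in \mathbb{Z}[q]$ of degree at most $(\ell(g)-\ell(x)-1)/2$. Expanding bar-invariance of $D$ using the explicit formula $\overline{T_s} = q^{-1} T_s + (q^{-1}-1)T_{1_G}$ for $s \in S$, extended multiplicatively, yields a downward recursion on $\ell(x)$; combined with the degree bound, this forces $Q_x = 0$ for every $x \prec g$, so $C = C'$.

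For existence, I would proceed by induction on $\ell(g)$. The base case $g = 1_G$ is immediate with $C_{1_G}^G := T_{1_G}$ and $P_{1_G, 1_G}^G := 1$. For the inductive step, fix $g$ with $\ell(g) \geq 1$, choose $s \in S$ with $h := sg \prec g$, and use the previously constructed $C_h^G$. Since $q^{-1/2}(T_s + T_{1_G})$ is bar-invariant (as one checks directly from the relation $T_s^2 = (q-1)T_s + q\, T_{1_G}$), the product $B := q^{-1/2}(T_s + T_{1_G}) \, C_h^G$ is bar-invariant and has coefficient $q^{-\ell(g)/2}$ at $T_g$. I would then subtract an integer-linear combination $\sum_{x \prec h,\ sx \prec x} \mu(x, h) \, C_x^G$, where $\mu(x, h)$ is the coefficient of $q^{(\ell(h) - \ell(x) - 1)/2}$ in $P_{x, h}^G$, and declare the result to be $C_g^G$. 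The polynomials $P_{x, g}^G$ are then read off from its coefficients, and conditions (i)--(iii) follow immediately from the construction together with the inductive hypothesis.

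The delicate step is to verify that, after these $\mu$-corrections, each coefficient $P_{x, g}^G$ lies in $\mathbb{Z}[q]$ rather than merely in $\mathcal{A}$, and that its degree satisfies the strict bound in (iv). This requires the precise cancellation of the borderline half-integer degree terms coming from $(T_s + T_{1_G}) C_h^G$ by the chosen $\mu$-contributions, which is the combinatorial heart of the Kazhdan--Lusztig construction. A careful analysis of the coefficient of $T_x$ in $B$, split according to whether $sx \prec x$ or $x \prec sx$, shows that the offending top-degree terms are exactly the $\mu(x,h)$ just subtracted; once this cancellation is confirmed, the induction closes and the theorem follows.
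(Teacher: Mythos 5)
The paper does not prove this statement at all --- it is quoted from Kazhdan--Lusztig \cite{KL} as a black box --- so there is no in-paper argument to compare yours against; I can only judge your sketch against the classical proof it is clearly reproducing. Your architecture is the right one: uniqueness from bar-invariance together with the degree bound (the difference of two candidates has, at a Bruhat-maximal element of its support, a coefficient that is simultaneously bar-invariant and confined to negative half-powers of $q$, hence zero, and one descends), and existence by induction on $\ell(g)$ via multiplication by a bar-invariant degree-one element followed by subtraction of $\mu$-corrections. Two points, however, keep this from being a proof. First, a normalization issue: as displayed in the thesis the coefficient of $T_x$ is $(-1)^{\ell(x)-\ell(g)}q^{\ell(x)-3\ell(g)/2}\overline{P^G_{x,g}}$, and already for $g=s\in S$ (where $P_{1_G,s}=1$) the resulting element $q^{-1/2}T_s-q^{-3/2}T_{1_G}$ is not bar-invariant; the intended exponent is $\ell(g)/2-\ell(x)$, giving $C^G_s=q^{-1/2}T_s-q^{1/2}T_{1_G}$. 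Relatedly, your generator $q^{-1/2}(T_s+T_{1_G})$ is the bar-invariant element for the \emph{unsigned} basis $C'_w$; to produce the signed element of the (corrected) statement you must instead multiply by $q^{-1/2}T_s-q^{1/2}T_{1_G}$, or build $C'_g$ and apply the standard algebra involution exchanging the two bases at the end.

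Second, and more seriously, you explicitly defer the verification that after subtracting $\sum_{x\prec h,\,sx\prec x}\mu(x,h)\,C^G_x$ the coefficients define polynomials in $\mathbb{Z}[q]$ satisfying the strict bound (iv). This is not a routine loose end: it is the entire inductive content of the theorem. One must write out the coefficient of $T_x$ in $(q^{-1/2}T_s-q^{1/2}T_{1_G})C^G_h$, split according to whether $sx\prec x$ or $x\prec sx$, observe that in the first case a single monomial of degree exactly $\frac{\ell(g)-\ell(x)}{2}$ (one more than allowed) can appear with coefficient $\mu(x,h)$, and check that the subtracted multiple of $C^G_x$ cancels precisely that monomial while all remaining terms already satisfy (i)--(iv) by the inductive hypothesis. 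Until that computation is carried out, the induction does not close; as written, your proposal correctly names the gap but does not fill it.
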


\medskip

\subsection{Kazhdan--Lusztig  multiplicities for $\mathfrak{sp}(2n)$ and $\mathfrak{o}(2n)$}

In this subsection we recall the equalities which are commonly known as the Kazhdan-Lusztig conjecture. This will allow us to establish a connection between simple bounded infinite-dimensional highest weight  modules of $\mathfrak{sp}(2n)$ and simple finite-dimensional modules with half-integral highest weights of $\mathfrak{o}(\infty)$.

Let $\mathfrak{g}(2n)$ equal  $\mathfrak{o}(2n)$ or $\mathfrak{sp}(2n)$.
Let $\lambda$ be an integral dominant weight with half-integral entries for the Lie algebra $\mathfrak{o}(2n)$, and let $\lambda$    be a dominant integral weight, or a weight satisfying the conditions of  Lemma~\ref{prop:sp.bounded},  for the Lie algebra $\mathfrak{sp}(2n)$.   Denote       by     $M_w$  the Verma module with highest weight
$w\cdot \lambda$, and by $L_w$ the unique simple quotient  of $M_w$.  
If $V$ is a weight $\mathfrak{g}(2n)$-module then the formal character $ch(V)$  equals the formal sum $\sum_\mu m_\mu e^\mu$, where $m_\mu$ is the dimension of the weight space $V^{\mu}$ for $m_\mu\in\mathbb{Z}_{\geq 0}\cup\{\infty\}$.  We can define in a similar way the formal character of a weight module over any reductive finite-dimensional Lie algebra.

The following equality is often referred to as  \emph{Kazhdan--Lusztig conjecture}:

$$ch (L_w)= \sum_{y{\succcurlyeq}w} (-1)^{-\ell(w)-\ell(y)} P^W_{y w_0,w w_0}(1)ch(M_y),$$

$$ch (M_w)= \sum_{y{\succcurlyeq}w} P^W_{w, y}(1)ch(L_y),$$
where $W$ is the Weyl group of $\mathfrak{g}(2n)$, the Coxeter system is defined by the set of simple reflections, and  $w_0\in W$   is the element of maximal length.
The Kazhdan-Lusztig conjecture was proved independently  in \cite{BB} and \cite{BK}.



We put $\varepsilon(2n)='\rho_{\mathfrak{sp}(2n)}-\rho_{\mathfrak{o}(2n)}'=(1,1,\dots,1)$.

The subgroup of $W$ which preserves $\lambda$ is called \emph{the isotropy group of }$\lambda$.

\begin{theorem}\label{theorem5.3}\textup{\cite[page 267]{H}}
Let $\mathfrak{g}'$ and $\mathfrak{g}''$ be  finite-dimensional semisimple Lie algebras, with   respective Weyl groups $W'$ and $W''$. Fix weights $\lambda'$ for $\mathfrak{g}'$ and $\lambda''$ for $\mathfrak{g}''$, with  corresponding blocks $\mathcal{O}'_{\lambda'}$ and $\mathcal{O}''_{\lambda''}$ and  reflection subgroups $W'_{[\lambda']}$ and $W''_{[\lambda'']}$. If there is an isomorphism between these Weyl groups as Coxeter groups, which sends the isotropy group of $\lambda'$ to the isotropy group of $\lambda''$, then the category $\mathcal{O}'_{\lambda'}$ is equivalent to $\mathcal{O}''_{\lambda''}$, with $L(\lambda')$ sent to $L(\lambda'')$ and $M(\lambda')$ sent to $M(\lambda'')$.
\end{theorem}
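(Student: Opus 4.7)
My strategy is to show that each of the two blocks is equivalent to a combinatorially defined category that depends only on the pair $(W_{[\lambda]}, W_\lambda^{\mathrm{iso}})$ consisting of a Coxeter group together with a distinguished parabolic subgroup (the isotropy group). I would begin by recalling that, when $-\lambda$ is dominant, the simple objects of $\mathcal{O}_\lambda$ are indexed by the shortest representatives of cosets in $W_\lambda^{\mathrm{iso}} \backslash W_{[\lambda]}$, via $w \mapsto L(w \cdot \lambda)$, and similarly for Verma modules $M(w \cdot \lambda)$. The proven Kazhdan--Lusztig conjecture of \cite{BB} and \cite{BK} expresses the composition multiplicities $[M(x \cdot \lambda) : L(y \cdot \lambda)]$ as values $P^{W_{[\lambda]}}_{y w_0, x w_0}(1)$ of Kazhdan--Lusztig polynomials of the Coxeter system $W_{[\lambda]}$. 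These polynomials, together with the Bruhat order and the set of minimal coset representatives, are purely Coxeter-theoretic invariants (see Theorem~\ref{theo:KL}). Consequently, the Coxeter isomorphism $W'_{[\lambda']} \cong W''_{[\lambda'']}$, which by assumption carries $W_{\lambda'}^{\mathrm{iso}}$ onto $W_{\lambda''}^{\mathrm{iso}}$, identifies all of this combinatorial data on the two sides.

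\textbf{From numerical data to an equivalence.} The matching of multiplicities is by itself not enough to produce an equivalence of categories, so the next step would be to upgrade it. I would follow Soergel's approach: choose a minimal projective generator $P_\lambda$ of $\mathcal{O}_\lambda$ (a direct sum of the indecomposable projective covers $P(w \cdot \lambda)$) and set $A_\lambda := \mathrm{End}_{\mathfrak{g}}(P_\lambda)^{\mathrm{op}}$, so that $\mathrm{Hom}(P_\lambda, -)$ gives an equivalence $\mathcal{O}_\lambda \simeq A_\lambda\text{-mod}$. Soergel's structure theorems (the \emph{Endomorphismensatz} and \emph{Kombinatorik-Satz}) then describe $A_\lambda$ intrinsically in terms of the coinvariant algebra of $W_{[\lambda]}$ modulo $W_\lambda^{\mathrm{iso}}$, realized as the endomorphism ring of the Soergel (bi)module corresponding to the longest element. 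Since this description depends only on $(W_{[\lambda]}, W_\lambda^{\mathrm{iso}})$ as a Coxeter pair, the algebras $A_{\lambda'}$ and $A_{\lambda''}$ are isomorphic, and the resulting equivalence matches $P(w \cdot \lambda')$ with $P(w \cdot \lambda'')$ coset by coset. Specializing to the identity coset, both $L(\lambda')$ and $M(\lambda')$ are sent to $L(\lambda'')$ and $M(\lambda'')$ respectively, as required.

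\textbf{Main obstacle.} The critical point is precisely this upgrade from combinatorial identification to categorical equivalence; matching of KL polynomials guarantees a bijection of simples and of Verma multiplicities, but not a functor. Soergel's structural result is what bridges this gap. An alternative route would be via translation functors: translation to and from a wall intertwines regular and singular blocks with the same stabilizer, and a suitable composition manufactures the equivalence directly. Either way, the nontrivial content is that no information about $\mathfrak{g}$ beyond the Coxeter-theoretic data $(W_{[\lambda]}, W_\lambda^{\mathrm{iso}})$ is needed to recover the block, which is exactly what the hypothesis of the theorem asserts to coincide for $\lambda'$ and $\lambda''$.
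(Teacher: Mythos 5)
The paper offers no proof of this statement: it is quoted verbatim from Humphreys' book (the citation \textup{\cite[page 267]{H}}), where it is Soergel's block-equivalence theorem, so there is no in-paper argument to compare yours against. Your sketch reproduces, in substance, the standard proof of the cited result: pass from $\mathcal{O}_\lambda$ to $A_\lambda\mbox{-mod}$ with $A_\lambda=\mathrm{End}(P_\lambda)^{\mathrm{op}}$ for a projective generator $P_\lambda$, then invoke Soergel's two structure theorems (the \emph{Endomorphismensatz}, identifying $\mathrm{End}(P(w_0\cdot\lambda))$ with the $W_\lambda^{\mathrm{iso}}$-invariants of the coinvariant algebra of $W_{[\lambda]}$, and the \emph{Struktursatz}, asserting that $\mathrm{Hom}(P(w_0\cdot\lambda),-)$ is fully faithful on projectives) to conclude that $A_\lambda$ is determined by the Coxeter pair $\bigl(W_{[\lambda]},W_\lambda^{\mathrm{iso}}\bigr)$ alone. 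Two small corrections: the Kazhdan--Lusztig multiplicity matching in your first paragraph is not an ingredient of the proof --- as you yourself observe, numerical multiplicities cannot produce a functor, and the Struktursatz route does not use them --- so that paragraph is dispensable; and the name ``Kombinatorik-Satz'' is not standard, the second theorem being the Struktursatz, whose full faithfulness statement is precisely the hard input you are black-boxing. Since the paper black-boxes the entire theorem, your proposal is strictly more informative than what the paper provides and is consistent with the proof in the cited source; the one caveat worth recording is that the theorem as stated (and as used in the paper, via Corollary \ref{5.3}) tacitly assumes the Coxeter isomorphism is between the \emph{integral} Weyl groups $W'_{[\lambda']}$ and $W''_{[\lambda'']}$ with their simple reflections, and that $\lambda'$, $\lambda''$ are antidominant representatives of their blocks --- hypotheses your indexing by coset representatives also silently uses.
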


Let $\lambda'$ be a  $\mathfrak{sp}(2n)$-weight with $\lambda'_i\in\mathbb{Z}+\frac{1}{2}$ satisfying the conditions of Lemma \ref{prop:sp.bounded} and let  $\lambda''=\lambda'+\varepsilon(2n)$. Put $W'=W_{\mathfrak{sp}(2n)}$ and $W''=W_{\mathfrak{o}(2n)}$. We consider $\lambda''$ as an integral dominant $\mathfrak{o}(2n)$-weight.
 Moreover, the reflection subgroup $W''_{[\lambda'']}$ equals  $W''$, and $W'_{[\lambda']}$ is a subgroup of index $2$ of the group $W'$consisting of permutations ${w\in S_{2n}}$ such that $w(-i)=-w(i)$, $1\leq i\leq n$, for which the number $\#\{i>0\mid w(i)<0\}$ is even (here we consider $W'$ as a subgroup of $S_{2n}$, see Subsection \ref{2.3}). The Coxeter groups  $W''_{[\lambda'']}$ and $W'_{[\lambda']}$ are isomorphic.
 These facts and the definition of Kazhdan--Lusztig polynomials imply the following

\begin{corollary}\label{5.3} After identifying $W'_{[\lambda']}$ with $ W''_{[\lambda'']}=W''$,
one has
 $P^{W'_{[\lambda']}}_{w,v}(q)=P^{W''_{[\lambda'']}}_{w,v}(q) $          for all elements $w, v \in W'' $.

\end{corollary}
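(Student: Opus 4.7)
The plan is to show that the identification $W'_{[\lambda']} \simeq W''_{[\lambda'']} = W''$ is actually an isomorphism of Coxeter systems (i.e., takes distinguished simple reflections to distinguished simple reflections), and then to invoke the fact that Kazhdan--Lusztig polynomials are combinatorial invariants of the Coxeter system. Under those circumstances the equality $P^{W'_{[\lambda']}}_{w,v}(q) = P^{W''_{[\lambda'']}}_{w,v}(q)$ is automatic from Theorem \ref{theo:KL}: the characterization given there depends only on the Bruhat order, the Bruhat length, and the bar involution on the Hecke algebra, all of which are intrinsic to the Coxeter system.

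The key computational step is identifying the root subsystem $\Delta_{\lambda'} \subset \Delta_{\mathfrak{sp}(2n)}$. Since $\lambda'_i \in \mathbb{Z} + \tfrac{1}{2}$ for every $i$, one checks the integrality condition $\frac{2(\alpha, \lambda')}{(\alpha, \alpha)} \in \mathbb{Z}$ root by root: for the short roots $\pm(\varepsilon_i \pm \varepsilon_j)$ it holds (their values on $\lambda'$ are integers), while for the long roots $\pm 2\varepsilon_i$ it fails (their values are odd integers, but one would need even integers). Thus $\Delta_{\lambda'}$ consists precisely of the short roots of $C_n$, and this is exactly a $D_n$ root system isomorphic to $\Delta_{\mathfrak{o}(2n)}$. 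Taking the intersection with $\Delta^+_{\mathfrak{sp}(2n)}$ gives $\Delta^+_{\lambda'} = \{\varepsilon_i \pm \varepsilon_j \mid 1 \leq i < j \leq n\}$, whose simple roots are $\varepsilon_1 - \varepsilon_2, \ldots, \varepsilon_{n-1} - \varepsilon_n, \varepsilon_{n-1} + \varepsilon_n$ --- literally the simple roots (\ref{sigmao}) of $\mathfrak{o}(2n)$.

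With this identification, the corresponding reflections generate the same subgroup of $S_{2n}$ under the realizations described in Subsection \ref{2.3}: both $W'_{[\lambda']}$ and $W''$ are the subgroup of signed permutations $\{w \in S_{2n} \mid w(-i) = -w(i)\}$ with an even number of sign changes. Since the distinguished simple reflections on each side are the reflections in the simple roots listed above, the resulting Coxeter systems coincide on the nose.

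Finally, I would observe that the Bruhat length $\ell$ and the Bruhat order $\preccurlyeq$ are defined purely in terms of the chosen set $S$ of simple reflections, and the Hecke algebra together with its bar involution is constructed from $(W, S, \ell, \preccurlyeq)$. Hence the uniqueness clause in Theorem \ref{theo:KL} forces $P^{W'_{[\lambda']}}_{w,v}(q)$ and $P^{W''_{[\lambda'']}}_{w,v}(q)$ to coincide as polynomials in $\mathbb{Z}[q]$ for every pair $w, v$. The only genuinely nontrivial point is the root-system computation showing that $\Delta_{\lambda'}$ is the short-root $D_n$-subsystem; once that is in hand, everything else is bookkeeping.
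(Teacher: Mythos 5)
Your proposal is correct and follows essentially the same route as the paper: the text preceding the corollary identifies $W'_{[\lambda']}$ as the index-two subgroup of even signed permutations (i.e.\ a Weyl group of type $D_n$), notes the Coxeter-group isomorphism with $W''$, and concludes from the definition of the Kazhdan--Lusztig polynomials. You merely make explicit the root-system computation (the long roots $\pm2\varepsilon_i$ fail the integrality test for a half-integral weight, so $\Delta_{\lambda'}$ is the short-root $D_n$-subsystem with the simple roots of (\ref{sigmao})), which the paper asserts without detail.
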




Let $L(\lambda)$ be a simple finite-dimensional $\mathfrak{sp}(2n)$-module with highest weight $\lambda$. Then we set $\varpi_i:=\sum^{i}_{j=1}\varepsilon_j$, for $1\leq j\leq n$.  Then $\lambda=\sum^n_{i=1}v_i \varpi_i$  for  some $v_i\in\mathbb{Z}_{\geq 0}$, where $\varpi_i=\sum^i_{j=1}\varepsilon_i$. Also we denote by $T^j_{\lambda}$ the set  of all weights of the form $ \lambda-\sum^n_{i=1}d_i\varepsilon_i$, where $d_i$ are nonnegative integers, $\sum^n_{i=1}d_i$ is even,  $0\leq d_i\leq v_i$  for  $1\leq i\leq n-1$ and $0\leq d_n+\delta^j_1\leq 2v_n+1$, $\delta^j_1$ being the Kronecker delta.

\begin{lemma}\label{BL}\textup{\cite[Theorem 5.5]{BHL}} Denote $\nu_0=-\frac{1}{2} \varpi_n$ and $\nu_1=\varpi_{n-1}-\frac{3}{2}\varpi_n$.
The $\mathfrak{sp}(2n)$-module $L(\nu_j)\otimes~L(\lambda)$ is completely reducible with the decomposition
$$L(\nu_j)\otimes L(\lambda)\simeq \bigoplus_{\kappa\in T^j_{\lambda}}L(\nu_j+\kappa).$$
\end{lemma}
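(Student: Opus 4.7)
My plan is to verify this decomposition by combining the explicit polynomial realization of the Shale--Weil modules with a highest weight vector analysis, and then confirming the answer through a character identity.

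First, I would invoke Example \ref{SW} to realize $L(\nu_0) = SW^+(2n)$ and $L(\nu_1) = SW^-(2n)$ inside $\mathbb{C}[x_1,\ldots,x_n]$, on which $\mathfrak{sp}(2n)$ acts by the first- and second-order differential operators listed there. Under our fixed splitting Cartan, the weights of $L(\nu_j)$ are exactly the $\nu_j + \sum_{i=1}^n m_i\, \varepsilon_i$ with $m_i \in \mathbb{Z}_{\geq 0}$ and $\sum_i m_i \equiv j \pmod{2}$; in particular, each weight space is one-dimensional, spanned by an explicit monomial, and the support admits a simple combinatorial description.

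Second, I would show that $L(\nu_j) \otimes L(\lambda)$ is a bounded $\mathfrak{sp}(2n)$-module of finite length with $\nu_j + \lambda$ as the unique highest weight of multiplicity one. Boundedness is clear since $\deg L(\nu_j) = 1$ and $L(\lambda)$ is finite-dimensional. Complete reducibility can be derived from the general theory of bounded $\mathfrak{sp}(2n)$-modules: any simple composition factor must be a bounded highest weight module whose highest weight satisfies the conditions of Lemma \ref{prop:sp.bounded}, and $\mathrm{Ext}$-vanishing between such modules with distinct highest weights follows either from a translation-functor argument through the shift $\lambda\mapsto\lambda+\varepsilon(2n)$ discussed around Corollary \ref{5.3} (reducing to the well-known semisimplicity of tensor products of finite-dimensional $\mathfrak{o}(2n)$-modules), or directly from the block structure of the category of bounded weight modules.

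Third, I would identify the simple summands by classifying highest weight vectors in $L(\nu_j)\otimes L(\lambda)$. Since $L(\nu_j)$ has one-dimensional weight spaces spanned by monomials $x_1^{a_1}\cdots x_n^{a_n}$, the condition of being annihilated by the positive root vectors of $\mathfrak{sp}(2n)$ reduces to a linear system on coefficients of weight vectors in $L(\lambda)$. Solving this system, one finds that for each $\kappa \in T^j_\lambda$ there is a unique (up to scalar) highest weight vector of weight $\nu_j+\kappa$, while no other highest weights arise. The bounds $0 \leq d_i \leq v_i$ for $i < n$ come from Pieri-type branching at each coordinate; the parity condition $\sum d_i \equiv j \pmod{2}$ records which Shale--Weil component is being tensored; and the refined range $0 \leq d_n + \delta^j_1 \leq 2v_n + 1$ captures the asymmetric behavior of the last coordinate, which is exactly where $\nu_0$ and $\nu_1$ differ.

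Finally, a character computation closes the argument: one must verify
\[
\operatorname{ch}\bigl(L(\nu_j)\bigr)\cdot \operatorname{ch}\bigl(L(\lambda)\bigr) = \sum_{\kappa\in T^j_\lambda}\operatorname{ch}\bigl(L(\nu_j+\kappa)\bigr)
\]
in the appropriate completion of $\mathbb{Z}[\mathfrak{h}^*]$, using the explicit generating series for $\operatorname{ch}(L(\nu_j))$ from the polynomial realization and the Weyl character formula for $\operatorname{ch}(L(\lambda))$. The main obstacle I anticipate is the bookkeeping around the last-coordinate parity and the enlarged range $2v_n+1$; the cleanest route is likely an induction on $n$, with base case $n=1$ (where $\mathfrak{sp}(2)\cong\mathfrak{sl}(2)$ and the identity becomes the classical Clebsch--Gordan decomposition with a parity twist), and the inductive step controlled by restriction to $\mathfrak{sp}(2n-2)\oplus\mathfrak{sl}(2)$.
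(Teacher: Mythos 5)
First, a point of comparison: the paper does not prove this lemma at all --- it is quoted directly from \cite[Theorem 5.5]{BHL}, so the only ``proof'' in the text is the citation. Your proposal is therefore attempting more than the paper does, and its overall architecture (polynomial realization of the Shale--Weil modules, boundedness and complete reducibility, identification of highest weight vectors, character check) is a sensible plan in the spirit of how such results are established. But as written it is not a proof, because the two steps that carry all of the content are asserted rather than carried out. In your third step you write that ``solving this system, one finds'' that the highest weight vectors are parametrized exactly by $T^j_\lambda$ --- but that parametrization \emph{is} the lemma. Nothing in the proposal actually derives the Pieri bounds $0\leq d_i\leq v_i$, the parity condition $\sum_i d_i\equiv j \pmod 2$, or the anomalous last-coordinate range $0\leq d_n+\delta^j_1\leq 2v_n+1$ from the linear system; these are simply read off from the statement. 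Likewise the fourth step reduces the lemma to a character identity that ``one must verify.'' So the decisive combinatorial work is deferred in both of the places where it could have been done, and the argument is circular at its core.

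Two further remarks. The complete-reducibility step is essentially workable but needs more care than an appeal to ``distinct blocks'': the $SW^+(2n)$-type and $SW^-(2n)$-type constituents can share a central character, since their $\rho$-shifted highest weights differ by a single sign change in the last coordinate, which lies in the full Weyl group of type $C_n$ but not in the integral Weyl group, which is of type $D_n$ for half-integral weights. One must therefore argue with linkage classes (the genuine block decomposition for non-integral weights) rather than with central characters; with that correction the $\operatorname{Ext}^1$-vanishing between non-isomorphic bounded constituents does hold, and the translation to finite-dimensional $\mathfrak{o}(2n)$-modules you suggest is consistent with the paper's Proposition \ref{big}. Finally, a small sign slip: in the oscillator realization the weights of $L(\nu_j)$ are $\nu_j-\sum_i m_i\varepsilon_i$ with $m_i\in\mathbb{Z}_{\geq 0}$ (they lie below the highest weight), not $\nu_j+\sum_i m_i\varepsilon_i$; this is only a convention issue, but it propagates into the linear system you set up in the third step, so it should be fixed before any actual computation is attempted.
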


Lemmas \ref{prop:sp.bounded} and
  \ref{BL} imply  that each simple bounded highest weight infinite-dimensional  module could be constructed as a simple constituent of the above tensor product for some $\lambda$. Then it follows that the tensor product of $L(\nu_j)$ with any  simple bounded infinite-dimensional  highest weight  module is completely reducible.


\begin{corollary} Let $L(\lambda)$ be a  simple bounded  $\mathfrak{sp}(2n)$-module with highest weight $\lambda$. Then the module $L(\lambda)$ is completely reducible as a module over $\mathfrak{sp}(2n-2)$, where the embedding $\psi_{2n-2}\colon\mathfrak{sp}(2n-2)\to \mathfrak{sp}(2n)$ is described in  Subsection \ref{sub:infdimLA}.
\end{corollary}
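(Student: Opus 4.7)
The plan is to reduce the assertion to Lemma~\ref{BL} applied both to $\mathfrak{sp}(2n)$ and to $\mathfrak{sp}(2n-2)$. If $L(\lambda)$ is finite-dimensional, then its restriction to the semisimple finite-dimensional Lie algebra $\mathfrak{sp}(2n-2)$ is finite-dimensional, and Weyl's complete reducibility theorem yields the claim immediately. Hence I restrict attention to the case when $L(\lambda)$ is infinite-dimensional, so that $\lambda$ satisfies the conditions of Lemma~\ref{prop:sp.bounded}; in particular $\lambda(v_n)\in\tfrac{1}{2}+\mathbb{Z}$.

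First, I would exhibit $L(\lambda)$ as a direct summand of a tensor product involving a Shale--Weil module. By Lemma~\ref{BL} together with the remark immediately preceding the corollary, there exist $j\in\{0,1\}$ and a dominant integral $\mathfrak{sp}(2n)$-weight $\mu$ such that $L(\lambda)\simeq L(\nu_j+\kappa)$ for some $\kappa\in T^j_\mu$. The decomposition of Lemma~\ref{BL} then shows that $L(\lambda)$ occurs as a direct summand of the completely reducible $\mathfrak{sp}(2n)$-module $L(\nu_j)\otimes L(\mu)$. Since a direct summand of a semisimple module is semisimple, it suffices to prove that $L(\nu_j)\otimes L(\mu)$ is completely reducible when regarded as an $\mathfrak{sp}(2n-2)$-module.

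Next, I analyze the restrictions of the two tensor factors separately. Weyl's theorem applied to $\mathfrak{sp}(2n-2)$ decomposes $L(\mu)$ as a finite direct sum $\bigoplus_t L(\mu_t)$ of simple finite-dimensional $\mathfrak{sp}(2n-2)$-modules. For the Shale--Weil factor I combine the oscillator realization of Example~\ref{SW} with the explicit form of $\psi_{2n-2}$ from Subsection~\ref{sub:infdimLA}: matching the Cartan generators shows that $\psi_{2n-2}(\mathfrak{sp}(2n-2))$ coincides with the copy of $\mathfrak{sp}(2n-2)$ that acts, via its own oscillator realization, on the subring $\mathbb{C}[x_1,\ldots,x_{n-1}]\subset\mathbb{C}[x_1,\ldots,x_n]$, while $x_n$ is fixed by this subalgebra. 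Consequently
$$
SW^{+}(2n)\oplus SW^{-}(2n)\;\simeq\;\mathbb{C}[x_n]\otimes\bigl(SW^{+}(2n-2)\oplus SW^{-}(2n-2)\bigr)
$$
as $\mathfrak{sp}(2n-2)$-modules. Decomposing $\mathbb{C}[x_n]$ into its one-dimensional weight subspaces and tracking parities shows that each of $SW^{\pm}(2n)$, and hence $L(\nu_j)$, restricts to a countable direct sum of Shale--Weil $\mathfrak{sp}(2n-2)$-modules, so the restriction of $L(\nu_j)$ is completely reducible.

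Combining the two decompositions yields
$$
(L(\nu_j)\otimes L(\mu))\big|_{\mathfrak{sp}(2n-2)}\;\simeq\;\bigoplus_{k,t}L(\nu_{j(k)})\otimes L(\mu_t),
$$
where each $L(\nu_{j(k)})$ is a Shale--Weil $\mathfrak{sp}(2n-2)$-module and each $L(\mu_t)$ is finite-dimensional. A second application of Lemma~\ref{BL}, this time to $\mathfrak{sp}(2n-2)$, shows that every summand on the right is completely reducible; therefore so is $L(\nu_j)\otimes L(\mu)$ as an $\mathfrak{sp}(2n-2)$-module, and so is its direct summand $L(\lambda)$. The main technical obstacle I expect is precisely the explicit verification that the matrix embedding $\psi_{2n-2}$ agrees with the oscillator embedding induced by $\mathbb{C}[x_1,\ldots,x_{n-1}]\hookrightarrow\mathbb{C}[x_1,\ldots,x_n]$; this is a routine but careful bookkeeping against the normalizations of the Cartan generators $v_i$ in \eqref{formula:v_i} and the generators of Example~\ref{SW}.
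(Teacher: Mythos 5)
Your proposal is correct and follows essentially the same route as the paper: realize $L(\lambda)$ as a direct summand of a Shale--Weil module tensored with a finite-dimensional module via Lemma~\ref{BL}, restrict each factor to $\mathfrak{sp}(2n-2)$ (the Shale--Weil factor decomposing into countably many copies of $SW^{\pm}(2n-2)$ via the oscillator realization), and apply Lemma~\ref{BL} again one rank down. Your explicit treatment of the finite-dimensional case and of the compatibility of $\psi_{2n-2}$ with the oscillator embedding only fills in details the paper leaves implicit.
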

\begin{proof}
Recall the definition of  Shale-Weil modules from Subsection \ref{SW}.
Set $\lambda_{0}=\frac{1}{2}\varpi_n$. Then there exists a simple  finite-dimensional module $L(\mu)$ with highest weight $\mu$, such that  $L(\lambda)$ is a simple constituent of  \break$ SW^+(2n) \otimes   L(\mu)$, because of Lemma \ref{BL} and the fact that the highest weight of $SW^+(2n)$ equals $-\lambda_0$. Decompose $L(\mu)=\bigoplus^{k}_{j=1} L(\bar\mu_j)$ for some simple finite-dimensional $\mathfrak{sp}(2n-2)$-modules $L(\bar\mu_{j})$.
Next, one can check directly that $L(\lambda_{0})$ as $\mathfrak{sp}(2n-2)$-module is isomorphic to a countable direct sum of copies of $SW^+(2n-2)\oplus SW^-(2n-2)$. 
Therefore the tensor product  $ SW^+(2n)\otimes L(\mu)$ is isomorphic to a direct sum of countably many copies of $SW^+(2n-2) \otimes L(\bar\mu_j))\oplus(  SW^-(2n-2)\otimes L(\bar\mu_j))$.

The tensor products  $       SW^+(2n-2)   \otimes   L(\bar\mu_j) $ and  $ SW^-(2n-2) \otimes L(\bar\mu_j) $ are completely reducible by   Lemma \ref{BL}. In this way, we see that $L(\mu)\otimes  SW^+(2n)$ is completely reducible over $\mathfrak{sp}(2n-2)$, and the same  holds for its submodule $L(\lambda)$.
\end{proof}

Next, we consider the Lie subalgebra  $\operatorname{Im}({\psi_{2n-2}})+\mathfrak{h}_{\mathfrak{sp}(2n)}$  of $\mathfrak{sp}(2n)$. 
 We set $\mathfrak{sp}^{\mathfrak{h}}(2n-2)=\operatorname{Im}({\psi_{2n-2}})+\mathfrak{h}(2n)$. Analogously, we define the Lie subalgebra $\mathfrak{o}^{\mathfrak{h}}(2n-2)$ of $\mathfrak{o}(2n)$.

 





\begin{prop}\label{big}
 Let $\lambda$ be a  weight  of $\mathfrak{sp}({2n})$ satisfying the conditions of  Lemma \ref{prop:sp.bounded}, and $\mu$ be a   weight  of $\mathfrak{sp}(2n-2)$ satisfying Lemma \ref{prop:sp.bounded}. Moreover, let $\lambda'=\lambda+\varepsilon(2n)$ be an integral dominant weight  of $\mathfrak{o}({2n})$ with half-integral marks, and $\mu'=\mu+\varepsilon(2n-2)$ be an integral dominant weight  of  $\mathfrak{o}(2n-2)$ with half-integral marks. Then $\operatorname{Hom}_{\mathfrak{sp}(2n-2)} ( L({\mu}) , L({\lambda}) ) \neq 0$ if and only if $\operatorname{Hom}_{\mathfrak{o}(2n-2)} ( L({\mu'}) , L({\lambda'}) ) \neq 0$.
\end{prop}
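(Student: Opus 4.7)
The plan is to combine the Kazhdan--Lusztig character formula with Corollary~\ref{5.3} to establish a precise identity between the formal characters of $L(\lambda)$ and $L(\lambda')$ (and likewise between those of $L(\mu)$ and $L(\mu')$), and then to read off the two $\operatorname{Hom}$ spaces by comparing the resulting semisimple restrictions to $\mathfrak{sp}^{\mathfrak{h}}(2n-2)$ and $\mathfrak{o}^{\mathfrak{h}}(2n-2)$.

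First I would observe that $\rho_{\mathfrak{sp}(2n)}-\rho_{\mathfrak{o}(2n)}=\varepsilon(2n)$ together with $\lambda'-\lambda=\varepsilon(2n)$ forces $y\cdot\lambda'=y\cdot\lambda+\varepsilon(2n)$ for every $y$ in the common integral Weyl group $W_{[\lambda']}\simeq W_{[\lambda'']}$ identified by Corollary~\ref{5.3}. Combined with the Verma character formula and the decomposition $\Delta^{+}_{\mathfrak{sp}(2n)}=\Delta^{+}_{\mathfrak{o}(2n)}\sqcup\{2\varepsilon_i\}_{i=1}^n$, this yields
$$
\operatorname{ch}\bigl(M_{\mathfrak{o}(2n)}(\nu+\varepsilon(2n))\bigr)\;=\;\operatorname{ch}\bigl(M_{\mathfrak{sp}(2n)}(\nu)\bigr)\cdot\prod_{i=1}^n\bigl(e^{\varepsilon_i}-e^{-\varepsilon_i}\bigr).
$$
Substituting into the Kazhdan--Lusztig expansions of $\operatorname{ch}(L(\lambda))$ and $\operatorname{ch}(L(\lambda'))$ and invoking Corollary~\ref{5.3} to identify the KL coefficients, I would obtain
$$
\operatorname{ch}(L(\lambda'))\;=\;\operatorname{ch}(L(\lambda))\cdot\prod_{i=1}^n\bigl(e^{\varepsilon_i}-e^{-\varepsilon_i}\bigr),
$$
and by the identical argument with $n$ replaced by $n-1$,
$$
\operatorname{ch}(L(\mu'))\;=\;\operatorname{ch}(L(\mu))\cdot\prod_{i=1}^{n-1}\bigl(e^{\varepsilon_i}-e^{-\varepsilon_i}\bigr).
$$

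Next, the preceding corollary shows that $L(\lambda)$ is completely reducible over $\mathfrak{sp}(2n-2)$, while $L(\lambda')$ is finite-dimensional and hence completely reducible over $\mathfrak{o}(2n-2)$. Because $v_n$ is central in $\mathfrak{sp}^{\mathfrak{h}}(2n-2)$ and $\mathfrak{o}^{\mathfrak{h}}(2n-2)$, I can write
$$
L(\lambda)\;=\;\bigoplus_{\mu,a}\bigl(L(\mu)\otimes\mathbb{C}_a\bigr)^{\oplus m_{\mu,a}},\qquad L(\lambda')\;=\;\bigoplus_{\mu',b}\bigl(L(\mu')\otimes\mathbb{C}_b\bigr)^{\oplus m'_{\mu',b}},
$$
where $\mathbb{C}_c$ is the one-dimensional module on which $v_n$ acts as $c$. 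Taking characters of both decompositions, substituting into the identity for $\operatorname{ch}(L(\lambda'))$ above, and using the analogous identity for $\mu,\mu'$ to regroup the right-hand side, linear independence of $\{\operatorname{ch}(L(\mu'))\}_{\mu'}$ over $\mathbb{Z}[e^{\pm\varepsilon_n}]$ would let me read off
$$
m'_{\mu',b}\;=\;m_{\mu,b-1}-m_{\mu,b+1},\qquad \mu'=\mu+\varepsilon(2n-2).
$$

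The implication $(\Leftarrow)$ of the proposition then follows immediately: if $m'_{\mu',b}>0$ then $m_{\mu,b-1}>0$. For the converse, I would need the set $A_\mu:=\{a:m_{\mu,a}>0\}$ to be bounded above. Here I will invoke Lemma~\ref{BL}: any bounded simple infinite-dimensional $\mathfrak{sp}(2n)$-module occurs as a summand of $SW^{\pm}(2n)\otimes L(\xi)$ for some finite-dimensional $L(\xi)$, and since the $v_n$-spectrum of $SW^{\pm}(2n)$ is bounded above by $-\tfrac12$ while that of $L(\xi)$ is finite, so is that of $L(\lambda)$. As this spectrum is discrete, $A_\mu$ attains its maximum $a^{*}$ whenever non-empty, and then $m'_{\mu',a^{*}+1}=m_{\mu,a^{*}}-m_{\mu,a^{*}+2}=m_{\mu,a^{*}}>0$, giving $\operatorname{Hom}_{\mathfrak{o}(2n-2)}(L(\mu'),L(\lambda'))\neq 0$. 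The hard part, I expect, will be the careful transfer of Kazhdan--Lusztig data through the Weyl-group identification of Corollary~\ref{5.3} and the justification of the formal-character manipulations in the infinite-dimensional bounded setting; once the character identity is secured and the $v_n$-spectrum is shown to be bounded above, the remaining matching argument is essentially combinatorial.
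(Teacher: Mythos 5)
Your proposal is correct in substance and rests on the same two pillars as the paper's proof --- the Kazhdan--Lusztig character formula transported across the identification $W'_{[\lambda']}\simeq W''$ of Corollary~\ref{5.3}, and complete reducibility of the restriction to the enlarged subalgebra --- but it organizes them around a different key identity. You first extract the closed multiplicative relation $\operatorname{ch}(L(\lambda'))=\operatorname{ch}(L(\lambda))\prod_{i=1}^n(e^{\varepsilon_i}-e^{-\varepsilon_i})$ from the ratio of Weyl denominators (this is correct: $\lambda'+\rho_{\mathfrak{o}}=\lambda+\rho_{\mathfrak{sp}}$ gives $y\cdot\lambda'=y\cdot\lambda+\varepsilon(2n)$, and the extra factor is exactly $e^{\varepsilon(2n)}\prod_i(1-e^{-2\varepsilon_i})$ coming from the long roots), and then deduce the telescoping multiplicity relation $m'_{\mu',b}=m_{\mu,b-1}-m_{\mu,b+1}$, from which both implications follow by taking the top of the spectrum. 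The paper never states the multiplicative identity; instead it expands $\operatorname{ch}(L(\lambda'))$ and $\operatorname{ch}(L(\lambda))$ into Verma characters of $\mathfrak{o}^{\mathfrak{h}}(2n-2)$ and $\mathfrak{sp}^{\mathfrak{h}}(2n-2)$ respectively, matches coefficients by an induction on maximal weights, and then explicitly rebuilds $L(\lambda)$ as a direct sum $\bigoplus_{j,a_1}L(\lambda_j-2a_1\varepsilon_1)$. Your route is shorter and yields slightly more (the exact multiplicity recursion), at the price of two points you must nail down and which the paper handles by its coarser bookkeeping: (i) the ``read off by linear independence'' step involves infinite sums, so it must be justified weight-by-weight using the fact that the bounded module $L(\lambda)$ has finite-dimensional weight spaces (this also makes each $m_{\mu,a}$ finite, so the difference $m_{\mu,b-1}-m_{\mu,b+1}$ is well defined); and (ii) the boundedness of the spectrum of the extra Cartan generator, which you correctly supply via Lemma~\ref{BL} --- note that in the paper's convention the stripped coordinate is $\varepsilon_1$, for which boundedness is automatic from the highest-weight structure, so you could avoid invoking Lemma~\ref{BL} entirely by switching to that convention. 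Neither point is a gap in the idea; with those justifications written out, your argument is a valid and arguably cleaner alternative to the proof in the text.
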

\begin{proof}

Consider the decomposition

$$L({\lambda'})=\bigoplus^k_{j=1} L({\lambda'^j}),$$
 over the Lie algebra $\mathfrak{o}^{\mathfrak{h}}(2n-2)$,
where each $L({\lambda'^j})$ is a  simple finite-dimensional representation of $\mathfrak{o}^{\mathfrak{h}}({2n-2})$ with highest weight $\lambda'^j$. Note, that  as an $\mathfrak{o}(2n-2)$-module $L({\lambda'^j})$ is isomorphic to the simple finite-dimensional $\mathfrak{o}(2n-2)$-module $L(\bar\lambda'^j)$ for \break  $\bar\lambda'^j = \sum^{n}_{i=2} \lambda^j_i\varepsilon_{i-1}$, where $\lambda'^j=\sum^{n}_{i=1}\lambda^j_i\varepsilon_i$.
This implies
\begin{equation}\label{12}ch(L({\lambda'}))=\sum^k_{j=1} ch( L({\lambda'^j})).\end{equation}

 We apply the Kazhdan--Lusztig conjecture for $w=\operatorname{id}\in W_{\mathfrak{o}_{2n-2}}$ to each $\mathfrak{o}(2n-2)$-module $L(\bar\lambda'^j)$.
Since each $\bar\lambda'^j$ is  an integral dominant weight, we get

$$ch (L(\bar\lambda'^j))= \sum_{y\in W_{\mathfrak{o}(2n-2)}} (-1)^{-\ell(y)} P^{W_{\mathfrak{o}(2n-2)}}_{y w_0, w_0}(1)ch(M({y\cdot\bar\lambda'^j})).$$


Now we denote by $ M^{\mathfrak{h}}(\tau)$ the $\mathfrak{o}^{\mathfrak{h}}(2n-2)$-module with highest weight $\tau$ such that as $\mathfrak{o}(2n-2)$-module $M^{\mathfrak{h}}(\tau)$ is isomorphic to $M(\bar \tau)$, where $\bar \tau=\sum^{n}_{i=2}\tau_i \varepsilon_{i-1}$. By $M^{\mathfrak{h}}(\alpha, \gamma)$ we denote the Verma module over $\mathfrak{o}^{\mathfrak{h}}(2n-2)$ with highest weight     $\alpha\varepsilon_1+\sum^n_{i=2}\gamma_{i-1}\varepsilon_i$,             where $\gamma=\sum^{n-1}_{i=1}\gamma_i\varepsilon_i$ is a weight of $\mathfrak{o}(2n-2)$ for $\alpha\in \mathbb{C}$. Clearly, $M(\mathfrak{\tau})=M^{\mathfrak{h}}(\tau_1,\bar\tau)$.
Therefore we can write the decomposition 
\begin{equation}\label{13}ch(L(\lambda'^j))=\sum_{{y\in W_{\mathfrak{o}(2n-2)}}} (-1)^{-\ell(y)} P^{W_{\mathfrak{o}(2n-2)}}_{y w_0, w_0}(1)ch(M^{\mathfrak{h}}({\lambda'^j_1, y\cdot\bar\lambda'^j}))\end{equation}
Combining formulas (\ref{12}) and (\ref{13}), we obtain 

\begin{equation}\label{14}ch(L({\lambda'}))=\sum^k_{j=1}\sum_{y\in W_{\mathfrak{o}(2n-2)}} (-1)^{-\ell(y)} P^{W_{\mathfrak{o}(2n-2)}}_{y w_0, w_0}(1)ch(M^{\mathfrak{h}}({\lambda'^j_1, y\cdot\bar\lambda'^j}))\end{equation}

On the other hand, we may apply the Kazhdan--Lusztig conjecture to the $\mathfrak{o}(2n)$-module $L(\lambda')$. This yields
\begin{equation}\label{15}ch (L(\lambda'))= \sum_{y\in W_{\mathfrak{o}(2n)}} (-1)^{-\ell(y)} P^{W_{\mathfrak{o}(2n)}}_{y w_0, w_0}(1)ch(M({y\cdot\lambda'})).\end{equation}

Next, for an $\mathfrak{o}(2n)$-weight $\tau$ we set $$\tau(a_1, a_2, \dots, a_{2n-2}):=\tau-\sum^{n-1}_{i=1}a_i(\varepsilon_{1}-\varepsilon_{i+1})-\sum^{2n-2}_{i=n}a_i(\varepsilon_{1}+\varepsilon_{i-n+2})$$
for $a_i\in \mathbb{Z}_{\geq 0}$.
We  decompose the character of each Verma module $M(y\cdot\lambda')$ over $\mathfrak{o}^{\mathfrak{h}}(2n-2)$:

\begin{equation}\label{16}
ch(M({y\cdot\lambda'}))=\sum^{\infty}_{a_1, a_2\dots, a_{n-1}=0}ch( M^{\mathfrak{h}}((y\cdot\lambda')(a_1,a_2,\dots,a_{n-1}))).
\end{equation}
This is a direct consequence of the definition of Verma module.
Note that, if we consider the restriction of (\ref{16}) to
each weight subspace of $ M({y\cdot\lambda'})$   as an equality of dimensions then the left-hand side is a positive integer, while the right-hand side   is a sum of positive integers. This means that each such restriction has only finitely many terms.

Combining formulas (\ref{15}) and (\ref{16}), we get 

\begin{equation}\begin{split}\label{17}&ch (L'(\lambda))=\\&= \sum_{y\in W_{\mathfrak{o}(2n)}}\sum^{\infty}_{a_1, a_2\dots, a_{n-1}=0} (-1)^{-\ell(y)} P^{W_{\mathfrak{o}(2n)}}_{y w_0, w_0}(1)ch( M^{\mathfrak{h}}((y\cdot\lambda')(a_1,a_2,\dots,a_{n-1}))).\end{split}
\end{equation}
From  equations (\ref{14}) and (\ref{17}) we obtain

$$
\sum^k_{j=1}\sum_{y\in W_{\mathfrak{o}(2n-2)}} (-1)^{-\ell(y)} P^{W_{\mathfrak{o}(2n-2)}}_{y w_0, w_0}(1)ch(M^{\mathfrak{h}}({\lambda'^j_1, y\cdot\bar\lambda'^j}))=$$
\begin{equation}\label{18}
 =\sum_{y\in W_{\mathfrak{o}(2n)}}\sum^{\infty}_{a_1, a_2\dots, a_{n-1}=0} (-1)^{-\ell(y)} P^{W_{\mathfrak{o}(2n)}}_{y w_0, w_0}(1)ch( M^{\mathfrak{h}}((y\cdot\lambda')(a_1,a_2,\dots,a_{n-1}))).
\end{equation}
We rewrite this equation as

$$
\sum^k_{j=1}\sum_{y\in W_{\mathfrak{o}(2n-2)}} (-1)^{-\ell(y)} P^{W_{\mathfrak{o}(2n-2)}}_{y w_0, w_0}(1)ch(M^{\mathfrak{h}}({\lambda'^j_1, y\cdot\bar\lambda'^j}))-$$
\begin{equation}\label{cve}
-\sum_{y\in W_{\mathfrak{o}(2n)}}\sum^{\infty}_{a_1, a_2\dots, a_{n-1}=0} (-1)^{-\ell(y)} P^{W_{\mathfrak{o}(2n)}}_{y w_0, w_0}(1)ch( M^{\mathfrak{h}}((y\cdot\lambda')(a_1,a_2,\dots,a_{n-1})))=0.
\end{equation}

Now we will show that the sum of coefficients in front of $ch( M^{\mathfrak{h}}(\gamma))$, for each $\gamma$ appearing in  formula (\ref{cve}), equals  $0$.
If a weight  $\gamma_0$             appears in  (\ref{cve})    and is  maximal (for the order defined by the fixed Borel subalgebra),  then the above claim is obvious.
Moreover,  finitely many maximal weights $\gamma_0$ exist because of  formulas (\ref{15}) and (\ref{16}). Therefore we can erase from formula (\ref{cve}) all terms of the form $ch(M(\gamma_0))$ for maximal $\gamma_0$.
For any fixed $\gamma$ we prove our claim after finitely many iterations.


In this way, we see that in equation (\ref{18}) the coefficients in front of every character of Verma module  at the left-hand and right-hand sides of the equation are equal.

Next, we apply the Kazhdan--Lusztig conjecture to the $\mathfrak{sp}(2n)$-module $L(\lambda)$. By Corollary \ref{5.3} we have  that the Kazhdan--Lusztig polynomials appearing in (\ref{19}) below are the same  as in formula (\ref{15}) for $L(\lambda')$:

\begin{equation}\label{19}
ch(L(\lambda))=\sum_{{y\in W_{[\lambda]}}} (-1)^{-\ell(y)} P^{W_{[\lambda]}}_{y w_0, w_0}(1)ch(M({y\cdot\lambda})).\end{equation}
 
For a $\mathfrak{sp}(2n)$-weight $\tau$ we set $$\tau(a_1, a_2, \dots, a_{n}):=\tau-2a_1\varepsilon_1-\sum^{n}_{i=2}a_i(\varepsilon_{1}-\varepsilon_{i}).$$  Also we denote by $ M^{\mathfrak{h}}(\tau)$ the $\mathfrak{sp}^{\mathfrak{h}}(2n-2)$-module with highest weight $\tau$. As $\mathfrak{sp}(2n-2)$-module, $M^{\mathfrak{h}}(\tau)$ is isomorphic to $M(\bar \tau)$ for $\bar \tau=\sum^{n}_{i=2}\tau_i \varepsilon_{i-1}$.
Now we  decompose the character of each Verma module $M(y\cdot\lambda)$ over $\mathfrak{sp}^{\mathfrak{h}}(2n-2)$ similarly to formula (\ref{16}):

\begin{equation}\label{20}
ch(M({y\cdot\lambda}))=\sum^{\infty}_{a_1, a_2,\dots, a_{n}=0}ch( M^{\mathfrak{h}}((y\cdot\lambda)(a_1,a_2,\dots,a_{n}))).
\end{equation}

Analogously to the case of $\mathfrak{o}(2n)$ we combine formulas (\ref{19}) and (\ref{20}):

\begin{equation}\begin{split}\label{21}
&ch(L(\lambda))=\\
&=\sum_{y\in W_{[\lambda]}} (-1)^{-\ell(y)} P^{W_{[\lambda]}}_{y w_0, w_0}(1)\sum^{\infty}_{a_1, a_2,\dots, a_{n}=0}ch( M^{\mathfrak{h}}((y\cdot\lambda)(a_1,a_2,\ldots,a_{n})))=\\
&=\sum^{\infty}_{a_1=0}\sum_{y\in W_{[\lambda]}} \sum^{\infty}_{a_2,\dots, a_{n}=0}(-1)^{-\ell(y)} P^{W_{[\lambda]}}_{y w_0, w_0}(1)ch( M^{\mathfrak{h}}((y\cdot\lambda-2a_1\varepsilon_1)(a_2,\dots,a_{n}))).
\end{split}
\end{equation}
From the fact that the  Kazhdan--Lusztig polynomials are the same  as for $\mathfrak{o}(2n)$, and    from our observation concerning the coefficients of each Verma module appearing in formula (\ref{18}), we can rewrite formula  
(\ref{21}) as

\begin{equation}\label{22}
ch(L(\lambda))= \sum^{\infty}_{a_1=0}\sum^k_{j=1}\sum_{y\in W_{[\bar\lambda^j]}} (-1)^{-\ell(y)} P^{W_{[\lambda]}}_{y w_0, w_0}(1)ch(M^{\mathfrak{h}}({\lambda^j_1-2a_1, y\cdot\bar\lambda^j})),
\end{equation}
where $\lambda^j=\lambda'^j-\varepsilon(2n)$ and $\bar\lambda^j = \bar\lambda'^j-\varepsilon(2n-2)$. Now we apply the Kazhdan--Lusztig conjecture  to each inner sum, and keeping in mind that the Kazhdan--Lusztig polynomials here and for $\mathfrak{o}(2n-2)$ are the  same, we obtain

\begin{equation}\label{2n}ch(L(\lambda))= \sum^{\infty}_{a_1=0}\sum^k_{j=1}ch(L(\lambda_j-2a_1\varepsilon_1)).\end{equation}


It is easy to check that  if $M$ is a weight module over $\mathfrak{sp}^{\mathfrak{h}(2n-2)}$, and is semisimple as an $\mathfrak{sp}(2n-2)$-module,
then $M$ is semisimple over $\mathfrak{sp}^{\mathfrak{h}}(2n-2)$. Therefore, $L(\lambda)$ is semisimple over $\mathfrak{sp}^{\mathfrak{h}}(2n-2)$.

 We will show that $L(\lambda_j-2a_1\varepsilon_1)$ is a simple constituent of $L(\lambda)$  for all $j$.
Denote by $A_1$ the set of all weights $\lambda_j-2a_1\varepsilon_1$ for $1\leq j \leq k$, $j,a_1\in\mathbb{Z}_{\geq 0}$. Note that this set is partially ordered $(\alpha\geq\beta \Longleftrightarrow$ $\alpha-\beta$ is a sum  of positive roots from $\Delta_{\mathfrak{sp}(2n)}$). Also this set is bounded from above. Moreover, it is clear that  if a weight $\zeta\in A_1$ is maximal then $\zeta=\lambda_p$ for some $p$, and therefore  there are finitely many maximal weights in $A_1$. Denote this set of maximal weights  by $T_1$. Consider now the set $A_1\backslash T_1$ and repeat the  procedure for this set. We obtain a set $T_2\subset A_1\backslash T_1$, and after $i-1$ steps --- a set $T_i$. Let
 $A_i:=A_1\backslash \bigcup_{j<i} T_j$. By definition, $T_i$ is the set of all maximal weights of $A_i$.

Next, we consider the subspace $M^{T_1}$ of $L(\lambda)$ spanned by all weight spaces with weights $\chi_p\in T_1$ and choose a weight basis $B$ of $M^{T_1}$. By each vector $b\in B$ we generate the submodule $L^b$ of $L(\lambda)$. Each such module is a highest weight module with  respective highest weight $\chi_p\in T_1$.  The semisimplicity of $L(\lambda)$ implies that $L^b$ is  simple, and we set $L^b=L(\chi_p)$.

Denote by $L^{T_1}$ the submodule  generated by $M^{T_1}$.  Each vector $u\in L^T_1$ can be obtained as $u=gv$ for some $v\in M^T_1$ and some $g\in U(\mathfrak{sp}^\mathfrak{h}(2n-2))$. Since $v$ is contained in the span of $B$, the vector $u$ lies in the sum of modules $L^b$. Therefore,  $L^{T_1}$   is isomorphic to the direct sum of the $\mathfrak{sp}^{\mathfrak{h}}(2n-2)$-modules $L(\chi_p)$. Consider a complement $L_1$ in $L(\lambda)$ to $L^{T_1}$. This is a submodule of $L(\lambda)$ with  character
$$ \sum^{\infty}_{a_1=0}\sum^k_{j=1}ch(L(\lambda_j-2a_1\varepsilon_1)) - \sum_{p\in T_1}ch(L(\chi_p))=\sum_{p\in A_2}ch(L(\chi_p)).$$ 
This character is well-defined because all weight spaces are finite-dimensional.

Denote by $L^{T_{e+1}}$ the submodule of $L_e$ generated by $M^{T_{e+1}}$, where $M^{T_{e+1}}$ is the subspace spanned by all weight spaces with weights $\chi_j\in T_{e+1}$. Let $L_{e+1}$ be a complement to $L^{T_{e+1}}$ in $L_{e}$.  One can show that the following formula holds for any $e\in \mathbb{Z}_{\geq 1}$ (by repeating   above decomposition of $L(\lambda)$ for all $L_j$, $j\leq e$):

$$L(\lambda)=\bigoplus^e_{i=1}L^{T_i}\oplus L_e.$$

Since the set $A_1$ is ordered as described above, each $\chi \in A_1$ is an element of $T_i$ for some $i$. Therefore, $L(\lambda_j-2a_1\varepsilon_1)$ is a simple constituent of $L^{T_i}$ for some $i$ as well as a simple constituent of $L(\lambda)$. Hence, 
$$F=\bigoplus^{\infty}_{a_1=0}\bigoplus^k_{j=1} L(\lambda_j-2a_1\varepsilon_1)$$ is a submodule of $L(\lambda)$. However, $F$ has the same character as $L(\lambda)$ therefore $L(\lambda)=F$.

Now, we note that the modules $L(\lambda_j-2a_1\varepsilon_1)$ and $L(\bar\lambda^j)$ are isomorphic as $\mathfrak{sp}(2n-2)$-modules for $\bar\lambda^j=\sum^n_{i=2}\lambda_i\varepsilon_{i-1}.$ We obtain  an isomorphism of $\mathfrak{sp}(2n-2)$-modules


$$L(\lambda)\simeq\bigoplus^k_{j=1}m_j L(\bar\lambda_j),$$
where $m_j$ equals the cardinality $\aleph_0$ for all $j$.

In this way we proved that
$$\operatorname{Hom}_{\mathfrak{o}(2n-2)}(L(\mu'),L(\lambda'))\neq0 \Longleftrightarrow \mu'=\bar\lambda'^j, \textup{for some }j \Longleftrightarrow$$
$$\Longleftrightarrow L(\mu) \textup{ is a simple constituent of } L(\lambda) \textup{ over }\mathfrak{sp}(2n-2) \Longleftrightarrow$$
$$\Longleftrightarrow \operatorname{Hom}_{\mathfrak{sp}(2n-2)}(L(\mu),L(\lambda))\neq0$$
Thus we proved the proposition.
\end{proof}

We have now shown that, for weights  $\lambda$ and $\mu$  respectively of   $\mathfrak{sp}(2n)$ and  $\mathfrak{sp}(2n-2)$,   satisfying Lemma \ref{prop:sp.bounded},  the following conditions are equivalent:
\begin{itemize}
		
		\item $ \operatorname{Hom}_{\mathfrak{sp}(2n-2)} ( L({\mu}) , L({\lambda}) ) \neq 0.$
		
		\item There  exists an $n$-tuple of half-integers $\nu = (\nu_1,\dots,\nu_n)$ which satisfy the inequalities
\begin{equation*}
\begin{split}			
		 \lambda_1+1 \geqslant &\nu_1 \geqslant \lambda_2+1 \geqslant\nu_2 \geqslant  \ldots \geqslant \lambda_{n-1}+1 \geqslant \nu_{n-1} \geqslant  |\lambda_n+1|\geqslant  \nu_{n}\geqslant \frac{1}{2} ,\\
		&\nu_1 \geqslant \mu_1+1 \geqslant\nu_2 \geqslant \mu_2+1 \geqslant   \ldots \geqslant \nu_{n-1} \geqslant |\mu_{n-1}+1| \geqslant \nu_{n} \geqslant \frac{1}{2}.  
\end{split}
\end{equation*}		
\end{itemize}

 In conclusion we would like to recall the following proposition. The degree of a weight module is given in Definition \ref{def:deg}.
\begin{prop}\textup{\cite[Theorem 12.2(ii)]{M}}
Let $L(\lambda)$ be a simple module of $\mathfrak{sp}(2n)$ with highest weight $\lambda$ satisfying Lemma \ref{prop:sp.bounded} and $  L(\lambda+\varepsilon)$ be the simple finite-dimensional module of $\mathfrak{o}(2n)$ with highest weight $\lambda+\varepsilon(2n)$.
 Then
$$\operatorname{deg}(L(\lambda))=\operatorname{dim}( L(\lambda+\varepsilon(2n)))/2^{n-1}.$$
\end{prop}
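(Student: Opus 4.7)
The plan is to proceed by induction on $n$, exploiting the character/branching identity (\ref{2n}) already established in the proof of Proposition~\ref{big}, combined with Weyl's dimension formula for the $\mathfrak{o}(2n)$-side. The starting observation is that $\varepsilon(2n)=\varpi_n$ and $\lambda+\varepsilon(2n)+\rho_{\mathfrak{o}(2n)}=\lambda+\rho_{\mathfrak{sp}(2n)}$, so by Weyl's formula
$$\dim L(\lambda+\varepsilon(2n))=\prod_{\alpha\in\Delta^+_{\mathfrak{o}(2n)}}\frac{(\lambda+\rho_{\mathfrak{sp}(2n)},\alpha)}{(\rho_{\mathfrak{o}(2n)},\alpha)}.$$
The base case $n=1$ is a direct check: the conditions of Lemma~\ref{prop:sp.bounded} then force $L(\lambda)$ to be one of the Shale--Weil $\mathfrak{sl}(2)$-modules, which is multiplicity-free ($\deg=1$), while the corresponding $\mathfrak{o}(2)$-module is one-dimensional, so $1=1/2^{0}$.

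For the inductive step I would use identity (\ref{2n}) to describe $L(\lambda)$ as an $\mathfrak{sp}^{\mathfrak{h}}(2n-2)$-module: the weight space at $\mu=c\varepsilon_1+\eta$ has dimension
$$\dim L(\lambda)_\mu=\sum_{(j,a_1):\,\lambda_{j,1}-2a_1=c}\dim L_{\mathfrak{sp}(2n-2)}(\bar\lambda_j)_\eta.$$
Choosing $c$ to be sufficiently negative and of a fixed parity $\pi$ forces exactly one $a_1$ per $j$ with $\lambda_{j,1}\equiv\pi\pmod{2}$ to contribute; then selecting $\eta$ to maximize the resulting sum, and invoking the inductive hypothesis $\deg L_{\mathfrak{sp}(2n-2)}(\bar\lambda_j)=\dim L_{\mathfrak{o}(2n-2)}(\bar\lambda'_j)/2^{n-2}$, yields
$$\deg L(\lambda)=\frac{1}{2^{n-2}}\sum_{j:\,\lambda_{j,1}\equiv\pi\,(\mathrm{mod}\,2)}\dim L_{\mathfrak{o}(2n-2)}(\bar\lambda'_j).$$
To reach the $\mathfrak{o}$-side of the formula, I would compare this with the parallel branching on the orthogonal side: by the decomposition $L(\lambda+\varepsilon(2n))|_{\mathfrak{o}^{\mathfrak{h}}(2n-2)}=\bigoplus_j L(\lambda'^j)$ from equation (\ref{12}), we have $\dim L(\lambda+\varepsilon(2n))=\sum_j\dim L_{\mathfrak{o}(2n-2)}(\bar\lambda'_j)$, and the parity symmetry of the $v_1$-weights $\lambda'^j_1=\lambda_{j,1}+1$ under the sign-change element of $W_{\mathfrak{o}(2n)}$ shows that the sum over each parity class equals half the total. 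This supplies the extra factor of two that upgrades $2^{n-2}$ to $2^{n-1}$.

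The main technical obstacle is the existence of a single weight $\eta\in\mathfrak{h}^*_{\mathfrak{sp}(2n-2)}$ at which every $\dim L_{\mathfrak{sp}(2n-2)}(\bar\lambda_j)_\eta$ simultaneously attains its maximum; this is needed to exchange $\max\sum$ with $\sum\max$. I would resolve this by appealing to Mathieu's structural result that the degree of a bounded simple highest weight module is attained throughout the interior of its essential support, so all $L(\bar\lambda_j)$ are saturated at a common weight $\eta$ chosen sufficiently deep in their shared support. The parity symmetry itself can be justified either by explicit inspection of the Gelfand--Tsetlin-type branching for $\mathfrak{o}(2n)\to\mathfrak{o}^{\mathfrak{h}}(2n-2)$, or directly from the fact that $W_{\mathfrak{o}(2n)}$ contains a sign change on $\varepsilon_1$ that permutes the $\mathfrak{o}^{\mathfrak{h}}(2n-2)$-isotypic components according to $v_1$-parity. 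An alternative would be to bypass the induction entirely and match the explicit Mathieu formula for $\deg L(\lambda)$ against the Weyl product above directly, ratio-by-ratio.
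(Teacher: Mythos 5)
The paper offers no argument for this proposition: it is quoted verbatim from Mathieu \cite[Theorem 12.2(ii)]{M}. Your derivation from the branching identity (\ref{2n}) established in the proof of Proposition \ref{big} is therefore a genuinely different, thesis-internal route, and its skeleton is sound: for $c\ll 0$ of fixed parity, the $v_1$-weight-$c$ slice of $L(\lambda)$ collects exactly one copy of each $L(\bar\lambda_j)$ with $\lambda_{j,1}\equiv c \pmod 2$, so $\operatorname{deg}L(\lambda)$ is the maximum over the two parity classes of $\sup_\eta\sum_j\dim L(\bar\lambda_j)_\eta$, and the induction closes once you know (i) that a common saturating weight $\eta$ exists and (ii) that each parity class carries half of $\dim L(\lambda+\varepsilon(2n))$. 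What this buys over the paper's citation is an explanation of where the factor $2^{n-1}$ comes from: one factor of $2$ per rank, from the parity splitting at each branching step.

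Two points need repair. First, $W_{\mathfrak{o}(2n)}$ is of type $D_n$ and does not contain the single sign change on $\varepsilon_1$; use instead the simultaneous sign change on $\varepsilon_1,\varepsilon_2$, which does lie in $W_{\mathfrak{o}(2n)}$ and still exchanges the two parity classes, because for half-integral $\mu_1$ the map $\mu_1\mapsto-\mu_1$ shifts $\mu_1$ by the odd integer $-2\mu_1$ and hence flips the class modulo $2\mathbb{Z}$. This is precisely where half-integrality is used, consistent with the failure of the formula for integral (finite-dimensional) $L(\lambda)$. Second, exchanging $\sup_\eta$ with the finite sum over $j$ requires all the $L(\bar\lambda_j)$ to attain their degree at one common weight; your appeal to Mathieu's essential-support theory does close this, but it imports the core of \cite{M} back into the argument, so the proof is not independent of Mathieu --- it trades Theorem 12.2(ii) for the structure theory behind it. Finally, start the induction at $n=2$ (where $\mathfrak{o}(4)\cong\mathfrak{sl}(2)\oplus\mathfrak{sl}(2)$ and both sides are computed by hand): $\mathfrak{o}(2)$ is abelian, and for $n=1$ the conditions of Lemma \ref{prop:sp.bounded} admit every half-integral weight, not only the Shale--Weil ones.
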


\subsection{Coherent local systems of bounded ideals: definition and classification}\label{sub:c.l.s.b.}

In this subsection we introduce the notion of  coherent local systems of\break  bounded ideals, which we abbreviate as c.l.s.b.  This is a generalization  of the notion of a c.l.s. Also we obtain a classification of irreducible c.l.s.b.  based on Zhilinskii's classification of irreducible of c.l.s.
As  above, $\mathfrak{g}(2n)$ denotes the Lie algebra $\mathfrak{o}(2n)$ or $\mathfrak{sp}(2n)$,  and $\mathfrak{g}(\infty)$ denotes the Lie algebra $\mathfrak{o}(\infty)$ or $\mathfrak{sp}(\infty)$.  Also we assume that splitting Cartan subalgebras and Borel subalgebras of $\mathfrak{g}(2n)$ are fixed  as  in Subsection \ref{2.3}.

Recall that  simple bounded highest weight $\mathfrak{o}(2n)$-modules  are finite di\-mensional, and that simple  bounded highest weight $\mathfrak{sp}(2n)$-modules are either finite dimensional,  or are modules with  highest weights satisfying Lemma \ref{prop:sp.bounded}. 

Let $J_n$ denote the set of   annihilators of    simple bounded highest weight $\mathfrak{g}(2n)$-modules (in fact, $J_n$ coincides with the set of annihilators of all bounded $\mathfrak{g}(2n)$-modules).
Next, let $R_n$ denote the set of isomorphism classes of simple bounded highest weight modules.
Note that the annihilator $A$ of a simple  bounded $\mathfrak{o}(2n)$-module determines the simple module annihilated by $A$   up to isomorphism.	
Also, the annihilator $A$ of a  finite-dimensional $\mathfrak{sp}(2n)$-module determines this module  up to isomorphism. 
However, one can show that, for a fixed splitting Borel subalgebra of $\mathfrak{sp}(2n)$, there are precisely two   simple  bounded infinite-dimensional  highest weight  $\mathfrak{sp}(2n)$-modules with a given annihilator $A\in J_n$.

\begin{defn}
	A \textup{coherent local system of bounded ideals} (further c.l.s.b.)  for
		$\mathfrak{g}(\infty)$ is a collection of sets
		
		$$\{\mathbb I_n\}_{n \in \mathbb{Z}_{\geq 2}} \subset \Pi_{n \in \mathbb{Z}_{\geq 2} }
		J_n $$
		such that $\mathbb I_m = \langle \mathbb I_n \rangle_m$ for $n>m$, where	$\langle \mathbb I_n
		\rangle_m$ denotes the set of all annihilators of simple $\mathfrak{g}({2m})$-constituents of the
		$\mathfrak{g}({2n})$-modules which are annihilated by at least one ideal from $\mathbb I_n$.
		
\end{defn}

	\begin{defn}
		A c.l.s.b. $\mathbb I$ is \textup{irreducible} if $\mathbb I \neq \mathbb I' \cup \mathbb I''$ with
		$\mathbb I'\not\subset \mathbb I''$ and $\mathbb I' \not\supset \mathbb I''$, $\mathbb{I}'$, $\mathbb{I}''$ being c.l.s.b.
		
	\end{defn}

Any annihilator $z \in J_n$ of a simple bounded $\mathfrak{g}(2n)$-module 
	corresponds to  one or two  classes in $R_n$ (i.e., either to an integral dominant weight  or to two   
	   half-integral  weights  of $\mathfrak{g}(2n)$). Denote by $\{\lambda(z)\}$ the set of weights $\lambda$ such that $L(\lambda)$ is annihilated by $z$. If the set $\{\lambda(z)\}$  contains only one weight, then we denote this weight by $\lambda(z)$ (otherwise $\#\{\lambda(z)\}=2$).

	 Let $z_1, z_2\in J_n$ and $\#\{\lambda(z_1)\}=1$.
	 We denote by $z_1z_2$ the  set if annihilators of the modules $L(\lambda(z_1)+\mu)$ for $\mu\in\{\lambda(z_2)\}$. For $S_1, S_2 \subset
	J_n$ we put 
	
	$$S_1S_2:=\{z\in J_n \mid z\in z_1z_2 \:\: \textup{for  some}\:\: z_1 \in S_1 \textup{ with} \#\{\lambda(z_1)\}=1\:\:
	\textup{and} \:\: z_2 \in  S_2\}.$$

	Let $Q'$ and $Q''$ be c.l.s.b.. We denote  by $Q'Q''$ the smallest c.l.s.b. such
	that $(Q')_n(Q'')_n \subset (Q'Q'')_n$. By definition, $Q'Q''$ is the \emph{product} of
	$Q'$ and $Q''$.

For any ideal $I \subset  U(\mathfrak{g}(\infty))$, define the collection of sets $Q(I)$ by putting 
	$$ Q(I)_n:=\{z\in J_n \mid I \cap U(\mathfrak{g}(2n)) \subset 
	z \}.$$


Recall that the natural $\mathfrak{g}(\infty)$-module $V$ is the direct limit  $\varinjlim V_n$, where $V_n$ is the natural
	$\mathfrak{g}(2n)$-module. Furthermore, $\Lambda^\bullet(M)$ and $S^
\bullet(M)$ denote respectively the symmetric and the exterior algebras of a module $M$, and $\Lambda^p(M)$ and $S^p(M)$ denote respectively the $p$th symmetric and the $p$th exterior powers $M$.

	For simplicity we will use the following notations:
given $p \in \mathbb{Z}_{\geq 0}$,
\begin{equation*}
\begin{split}
&E:=Q(\operatorname{Ann}(\Lambda^\bullet V)),\: L_p := Q(\operatorname{Ann}(\Lambda^p V)),\: L^\infty_p :=
	Q(\operatorname{Ann}(S^\bullet(V \otimes \mathbb{C}^p))),  \\
  &E^\infty:= \textup{\{annihilators of all  modules with integral highest weight\}}, \\
&R:= 
\begin{cases}\{\textup{annihilators of spinor  modules\} for }\mathfrak{o}(\infty)\\
\{ \textup{ or annihilators of Shale--Weil  modules for }\mathfrak{sp}(\infty)\} 
\end{cases} 
\end{split}
\end{equation*}	
 Then the following table  describes the set of basic c.l.s.b. for the  Lie algebras $\mathfrak{o}(\infty)$ and $\mathfrak{sp}(\infty)$.
\vspace{1em}

\begin{table}[h]
\begin{center}
\begin{tabular}{|c|c|}
\hline
Lie algebra & C.l.s.b. \\
\hline
$\mathfrak{o}(\infty)$ & $E$, $ L_p$, $L^\infty_p$, $ E^\infty$, $R$\\
\hline
$\mathfrak{sp}(\infty)$ & $E$, $ L_p$, $L^\infty_p$, $ E^\infty,$ $R$\\
\hline
\end{tabular}
\end{center}
\end{table}


	\begin{prop}\label{prop: c.l.s.b. class}
		Any irreducible c.l.s.b. can be expressed uniquely as a  product as follows:

		$$(L^\infty_v L_{v+1}^{x_{v+1}} L_{v+2}^{x_{v+2}} \dots L_{v+r}^{x_{v+r}}) E^m$$ or
		$$(L^\infty_v L_{v+1}^{x_{v+1}} L_{v+2}^{x_{v+2}} \dots L_{v+r}^{x_{v+r}}) E^m R$$ where
		$$r,v \in \mathbb{Z}_{\geq0},\:   x_i \in \mathbb{Z}_{\geq 0} \: \textup{for} \: v+1 \leq i \leq v+r. \:$$
		Here, for $v=0$, $L^\infty_v$ is assumed to be the empty c.l.s.b.
	\end{prop}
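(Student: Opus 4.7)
The plan is to reduce this classification to Zhilinskii's classification of irreducible c.l.s.\ (Proposition 2.6) by constructing, in each case, a structure-preserving bijection between c.l.s.b.\ for $\mathfrak{g}(\infty)$ and c.l.s.\ for $\mathfrak{o}(\infty)$.  For $\mathfrak{g}(\infty) = \mathfrak{o}(\infty)$ the identification is automatic: every bounded simple highest weight $\mathfrak{o}(2n)$-module is finite-dimensional, so $J_n$ is in natural bijection with $\mathit{Irr}_{2n}$ and the notions of c.l.s.b.\ and c.l.s.\ coincide.  Proposition 2.6 applied to $\mathfrak{o}(\infty)$ then yields exactly the two families in the statement, together with uniqueness of the decomposition.

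For $\mathfrak{g}(\infty) = \mathfrak{sp}(\infty)$ I would define a map $\Phi$ from c.l.s.b.\ for $\mathfrak{sp}(\infty)$ to c.l.s.\ for $\mathfrak{o}(\infty)$ by replacing, at each level $n$, every annihilator $z\in\mathbb{I}_n$ by the set of classes $L(\lambda + \varepsilon(2n))$ for $\lambda\in\{\lambda(z)\}$.  Proposition 5.6, together with the classical Gelfand--Tsetlin rule handling the integral finite-dimensional case, shows that $L(\mu)$ is an $\mathfrak{sp}(2n-2)$-constituent of $L(\lambda)$ if and only if $L(\mu + \varepsilon(2n-2))$ is an $\mathfrak{o}(2n-2)$-constituent of $L(\lambda + \varepsilon(2n))$.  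Hence $\Phi$ translates the coherence condition on one side into the coherence condition on the other, and it is a bijection with inverse given at level $n$ by subtracting $\varepsilon(2n)$ and taking annihilators.  Because $\Phi$ is compatible with $\cup$, it sends irreducible c.l.s.b.\ to irreducible c.l.s.  One then verifies that $\Phi$ identifies each basic c.l.s.b.\ on the $\mathfrak{sp}$-side with the basic c.l.s.\ of the same name on the $\mathfrak{o}$-side: in particular the Shale--Weil modules $SW^\pm(2n)$, whose highest weights are $-\tfrac{1}{2}\sum_{i=1}^n \varepsilon_i$ and $-\tfrac{1}{2}\sum_{i=1}^{n-1}\varepsilon_i -\tfrac{3}{2}\varepsilon_n$, shift precisely to the two spinor weights $\tfrac{1}{2}\sum_{i=1}^{n-1}\varepsilon_i\pm\tfrac{1}{2}\varepsilon_n$, so $R$ is sent to $R$, while the identifications $E\leftrightarrow E$, $L_p\leftrightarrow L_p$, $L^\infty_p \leftrightarrow L^\infty_p$ and $E^\infty\leftrightarrow E^\infty$ are immediate from the explicit highest weights of the defining modules.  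Transporting Zhilinskii's normal form for $\mathfrak{o}(\infty)$ through $\Phi^{-1}$ then produces exactly the two families in the statement, and uniqueness is inherited from uniqueness on the $\mathfrak{o}$-side.

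The main obstacle will be that the translation $\lambda\mapsto\lambda+\varepsilon(2n)$ depends on $n$ and is not additive, so compatibility of $\Phi$ with the product of c.l.s.b.\ requires care: a product $\mathbb{I}'\mathbb{I}''$ on the $\mathfrak{sp}$-side is formed by adding highest weights and then shifting, whereas passing through $\Phi$ to the $\mathfrak{o}$-side shifts each factor separately and therefore produces one extra copy of $\varepsilon(2n)$.  This mismatch, together with the fact that $\varepsilon(2n)$ itself grows with $n$, has to be absorbed into the factor $(L^\infty_1)^{\otimes v} = L^\infty_v$ of Zhilinskii's normal form and into the $R$-factor when it appears; I expect this bookkeeping to be the only nontrivial computational step, since the branching matching provided by Proposition 5.6 already does the heavy lifting for coherence, irreducibility and the inverse construction.
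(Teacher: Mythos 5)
Your overall strategy coincides with the paper's: reduce to Zhilinskii's classification (Proposition \ref{2.6}) by transporting c.l.s.b.\ for $\mathfrak{sp}(\infty)$ to c.l.s.\ for $\mathfrak{o}(\infty)$ via the shift $\lambda\mapsto\lambda+\varepsilon(2n)$ and the branching comparison of Proposition \ref{big}. However, your map $\Phi$, which shifts \emph{every} weight by $\varepsilon(2n)$, genuinely fails on the integral part. Take $\mathbb{I}$ to be the trivial c.l.s.b., $\mathbb{I}_n=\{\operatorname{Ann}\mathbb{C}\}$. Then $\Phi(\mathbb{I})_n$ consists of the single $\mathfrak{o}(2n)$-weight $(1,\dots,1)$, and by the Gelfand--Tsetlin rule the restriction of $L_{\mathfrak{o}(2n)}(1,\dots,1)$ to $\mathfrak{o}(2n-2)$ contains the constituents $(1,\dots,1,0)$ and $(1,\dots,1,-1)$, neither of which lies in $\Phi(\mathbb{I})_{n-1}=\{(1,\dots,1)\}$. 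So $\Phi(\mathbb{I})$ is not a c.l.s.\ (nor even a p.l.s.). The point is that the ``iff'' you quote only holds with $\mu$ ranging over $\mathfrak{sp}$-dominant weights; the $\mathfrak{o}$-branching of the shifted weight also produces constituents with non-positive last coordinate that are not shifts of $\mathfrak{sp}$-constituents, so $\Phi$ does not translate the exact coherence condition. For the same reason the claim that $E\leftrightarrow E$, $L_p\leftrightarrow L_p$, etc.\ ``are immediate'' is false under a global shift (the shift of $\varepsilon_1+\dots+\varepsilon_p$ is $(2,\dots,2,1,\dots,1)$, which does not occur in $Q(\Lambda^p V)$ for $\mathfrak{o}(\infty)$), and your proposed repair of the product incompatibility---absorbing the extra $\varepsilon(2n)$ into $L^\infty_v$ or $R$---cannot succeed, since the shifted collection is not a c.l.s.\ in the first place. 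Note also that Proposition \ref{big} is only proved for weights satisfying Lemma \ref{prop:sp.bounded}, i.e.\ for the half-integral bounded case, so it gives you nothing for integral weights.

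The correct route, which is what the paper does (see the explicit description of the correspondence at the start of Subsection \ref{sub:p.l.s.b.}), is first to observe that branching preserves the coset of the highest weight modulo the root lattice, so an irreducible c.l.s.b.\ for $\mathfrak{sp}(\infty)$ is either purely integral or purely half-integral; then to shift \emph{only} the half-integral ones. An integral c.l.s.b.\ is literally a c.l.s.\ for $\mathfrak{sp}(\infty)$, and Zhilinskii's theorem yields the normal form $(L^\infty_v L_{v+1}^{x_{v+1}}\cdots L_{v+r}^{x_{v+r}})E^m$ with no shift and no $R$-factor. For a half-integral c.l.s.b., Proposition \ref{big} applies on the nose and the shift is a bijection onto the half-integral dominant $\mathfrak{o}$-weights intertwining the branchings exactly, identifying these c.l.s.b.\ with the c.l.s.\ for $\mathfrak{o}(\infty)$ containing the spinor system, i.e.\ those of the form $(\cdots)E^m R$. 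With this convention the product is automatically compatible, because in a product $z_1z_2$ the first factor is required to have $\#\{\lambda(z_1)\}=1$, hence is integral and is not shifted; the mismatch you flagged disappears rather than needing to be absorbed.
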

	\begin{proof}

A. Zhilinskii proved the analogous statement for c.l.s., see Theorem \ref{2.6}.
Hence, for $\mathfrak{o}(\infty)$ the proposition is obvious because in this case c.l.s. and c.l.s.b. are  the same objects. 

 In Proposition  \ref{big}  we showed
that  the $\mathfrak{sp}(2n-2)$-branching  of  a  bounded $\mathfrak{sp}(2n)$-module $L(\lambda)$
yields a set of highest weights which is obtained by translation via $\sum^n_{i=1}\varepsilon_i$
from the set of highest weights obtained from the  $\mathfrak{o}(2n-2)$-branching  of the   $\mathfrak{o}(2n)$-module $L(\lambda +\sum^n_{i-1}\varepsilon_i)$.

Thus, there is a one-to-one correspondence between the set of c.l.s. for $\mathfrak{o}(\infty)$ and the set of c.l.s.b. for $\mathfrak{sp}(\infty)$, that respects the relation of inclusion, and the product operation. This completes the proof.
\end{proof}
			

\medskip

\subsection{Classification of precoherent local systems \\of bounded ideals}\label{sub:p.l.s.b.}
In this subsection we show that the collection of sets $Q(I)$ corresponding to a primitive ideal $I\in U(\mathfrak{g}(\infty))$ is equivalent to a c.l.s.b.
The respective notion of  equivalence is defined  below.
Recall that every  primitive ideal of $U(\mathfrak{g}(\infty))$ is a weakly bounded ideal, as proved in Theorem \ref{asd}.

\begin{defn}
			A \textup{precoherent local system of bounded ideals} (further p.l.s.b.)  for
			$\mathfrak{g}(\infty)$ is a collection of sets
			
			$$\{\mathbb I_n\}_{n \in \mathbb{Z}_{\geq 2}} \subset \Pi_{n \in \mathbb{Z}_{\geq 2} }
			J_n $$
			such that $\mathbb I_m \supset \langle \mathbb I_n \rangle_m$ for any $n>m$, where	$\langle \mathbb I_n
			\rangle_m$ denotes the set of all annihilators of simple $\mathfrak{g}(2{m})$-constituents of the
			$\mathfrak{g}(2{n})$-module which are annihilated by at least one ideal from $\mathbb I_n$.
			\end{defn} 

The  definition of a weakly bounded ideal implies that $Q(I)$  is a p.l.s.b whenever $I$ is a weakly bounded ideal.

\begin{defn}
		 Two p.l.s.b. $\mathbb I$ and $\mathbb I'$ are \textup {equivalent} if there exists an integer~$n$ such that $\mathbb I_{n'} = \mathbb I'_{n'}$ for any $n' > n$.
		
\end{defn}

As we pointed out above, there is a one-to-one correspondence between the set of c.l.s. for $\mathfrak{o}(\infty)$ and the set of c.l.s.b. for $\mathfrak{sp}(\infty)$ which respects the relation of inclusion, and product operation. If we consider a c.l.s.b.~$Q$ as a purely combinatorial object (i.e., as a set of highest $\mathfrak{sp}(2n)$-weights for $n\geq 2$) then we can describe the corresponding c.l.s. as follows. If our c.l.s.b.  consists of integral $\mathfrak{sp}(2n)$-weights for $n\geq 2$ then
 the corresponding c.l.s. be  the same set of weights considered as $\mathfrak{o}(2n)$-weights.   If our c.l.s.b.  consists of  $\mathfrak{sp}(2n)$-weights with half-integral entries for $n\geq 2$ then 
 the corresponding c.l.s. is obtained by adding $\varepsilon(2n)$ to each $\mathfrak{sp}(2n)$-weight and considering new weights as  $\mathfrak{o}(2n)$-weights.
Hence, the proofs of the Lemmas \ref{plsnota}, \ref{equiv}, \ref{RandL}   are precisely the same as in Subsection \ref{prop} (we us all the notion from this subsection).

\begin{lemma}
		For any admissible $n$-tuple $\lambda$, the p.l.s. $Q^{\vee}(\lambda)$ is equivalent to the c.l.s.
		$$Q(\lambda):=\bigcup\limits_{1\leqslant k \leqslant \#\lambda} Q(k,\lambda_k), $$
		where the collection of sets $Q(k,a)$ for $k\in \mathbb{Z}_{\geq 0}$ and $a\in \mathbb{Z}/2$ is defined by putting
		$$Q(k,a)_m:=\{\mu \in A_{m}\mid\mu_k<a,\:if\:k\leq\#\mu\}.$$\textbf{}
\end{lemma}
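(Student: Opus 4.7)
The plan is to follow the template of Proposition \ref{plsnota} word for word, because the author has already explained that the c.l.s.b. for $\mathfrak{sp}(\infty)$ and the c.l.s. for $\mathfrak{o}(\infty)$ are in a natural bijection that respects inclusion, sending a half-integral bounded $\mathfrak{sp}$-weight $\mu$ to the integral $\mathfrak{o}$-weight $\mu+\varepsilon(2m)$, and sending an integral bounded $\mathfrak{sp}$-weight to itself considered as an $\mathfrak{o}$-weight. Under this identification, Lemma \ref{equiv} and Lemma \ref{RandL} transfer verbatim to the p.l.s.b./c.l.s.b.\ setting, so all I really need to do is unwind the definitions.

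First I would check that the collection of sets $Q(\lambda)$ is indeed a c.l.s.b. That is a direct consequence of the Gelfand--Tsetlin rule (translated via the bijection above): if $\mu$ is a simple constituent of some $\nu$ with $\nu_k<\lambda_k$ and $k\le\#\mu$, then $\mu_k\le\nu_k<\lambda_k$ as well. Next I would verify the two easy inclusions between $Q^\vee(\lambda)$ and $Q(\lambda)$. On the one hand, $Q(\lambda)$ is a p.l.s.b.\ (every c.l.s.b.\ is automatically a p.l.s.b.), and by definition $\lambda\notin Q(\lambda)$, so by the maximality of $Q^\vee(\lambda)$ we get $Q(\lambda)\subseteq Q^\vee(\lambda)$. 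On the other hand, if $\mu\in Q^\vee(\lambda)_m$ with $\#\mu=m\ge 2\#\lambda$, then $\lambda\not\prec\mu$, so by Lemma~\ref{equiv} there exists $k\le\#\lambda$ with $\mu_k<\lambda_k$, hence $\mu\in Q(k,\lambda_k)_m\subseteq Q(\lambda)_m$.

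Combining these two inclusions gives $Q^\vee(\lambda)_m=Q(\lambda)_m$ for every $m\ge 2\#\lambda$, which is precisely the definition of equivalence of p.l.s.b. I would close the argument by remarking that the integrality/half-integrality of the entries of $\lambda$ is preserved throughout branching, so the two cases (of the set $J_n$ corresponding to integral weights, and to weights satisfying Lemma \ref{prop:sp.bounded}) can be handled in parallel and cause no extra difficulty.

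The main obstacle is conceptual rather than computational: one has to be careful about the fact that, for $\mathfrak{sp}(\infty)$, a bounded ideal $z\in J_n$ with half-integral $\{\lambda(z)\}$ corresponds to \emph{two} isomorphism classes of simple modules, so the meaning of $\langle\mathbb I_n\rangle_m$ has to be interpreted in terms of the underlying weights. However, because Proposition~\ref{big} shows that the $\mathfrak{sp}(2n-2)$-branching of a bounded $\mathfrak{sp}(2n)$-module $L(\lambda)$ is, up to the translation by $\varepsilon$, identical to the $\mathfrak{o}(2n-2)$-branching of $L(\lambda+\varepsilon(2n))$, the combinatorics of $\langle\cdot\rangle_m$ on the level of weights matches the ordinary Gelfand--Tsetlin combinatorics exactly, and the argument above goes through without modification.
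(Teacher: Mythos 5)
Your proof is correct and follows essentially the same route as the paper: the paper reduces this lemma to its integral counterpart (Proposition \ref{plsnota} and Lemma \ref{equiv} of Subsection \ref{prop}) via the weight-level bijection between c.l.s.b.\ for $\mathfrak{sp}(\infty)$ and c.l.s.\ for $\mathfrak{o}(\infty)$ furnished by Proposition \ref{big}, exactly as you do, and your unwinding of the two inclusions from the maximality of $Q^{\vee}(\lambda)$ and from Lemma \ref{equiv} is the intended (if unstated) argument.
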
 
	\begin{lemma}
Let $\lambda$ and $\mu$ be  admissible tuples such that $\#\mu \geqslant 2\#\lambda$. Then the following conditions are equivalent:
	\begin{enumerate}
		\item $\mu \succ \lambda$,
		\item $\mu_k\geqslant\lambda_k$ for each $1\leqslant k \leqslant \#\lambda$.
	\end{enumerate}
	\end{lemma}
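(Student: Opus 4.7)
The plan is to derive this statement directly from Lemma \ref{equiv} of Section 4, using the shift correspondence between bounded $\mathfrak{sp}$-branching and $\mathfrak{o}$-branching that was set up via Proposition \ref{big} and summarized in the paragraph preceding this lemma. That correspondence says that, if $\lambda$ is an $\mathfrak{sp}(2n)$-weight with half-integer entries satisfying Lemma \ref{prop:sp.bounded}, then $\lambda' := \lambda + \varepsilon(2n) = \lambda + (1,1,\dots,1)$ is an integral dominant $\mathfrak{o}(2n)$-weight, and a single-step Gelfand--Tsetlin branching $\mu > \lambda$ on the $\mathfrak{sp}$-side is equivalent to $\mu' > \lambda'$ on the $\mathfrak{o}$-side. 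For the integer-entry case there is nothing to add, since bounded $\mathfrak{sp}(2n)$-modules with integral highest weights are finite-dimensional and obey the standard Gelfand--Tsetlin rule.

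First I would dispatch $(1) \Rightarrow (2)$, which is immediate: given a witnessing chain $\mu = \lambda^{0} > \lambda^{1} > \dots > \lambda^{\#\mu - \#\lambda} = \lambda$, the one-step Gelfand--Tsetlin interlacing forces $\lambda^{j-1}_k \geq \lambda^{j}_k$ for every $k$ with $k \leq \#\lambda^j$, and telescoping these inequalities gives $\mu_k \geq \lambda_k$ for $1 \leq k \leq \#\lambda$.

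For $(2) \Rightarrow (1)$, I would split into two subcases. If the entries of $\lambda$ and $\mu$ are integers, the tuples are admissible in the sense used in Subsection \ref{prop}, so Lemma \ref{equiv} applies verbatim. If the entries are half-integers satisfying Lemma \ref{prop:sp.bounded}, I would pass to the shifted tuples $\lambda' := \lambda + \varepsilon(2\#\lambda)$ and $\mu' := \mu + \varepsilon(2\#\mu)$, which are integer-valued $\mathfrak{o}$-admissible. The hypothesis $\mu_k \geq \lambda_k$ is preserved entrywise under the shift, and the width hypothesis $\#\mu \geq 2\#\lambda$ is unchanged; so Lemma \ref{equiv} yields $\mu' \succ \lambda'$. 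Iterating Proposition \ref{big} down the resulting chain $\mu' = \lambda^{0,\prime} > \lambda^{1,\prime} > \dots > \lambda^{\#\mu-\#\lambda,\prime} = \lambda'$ translates each single-step $\mathfrak{o}$-branching back to a single-step bounded $\mathfrak{sp}$-branching, producing the required chain $\mu > \lambda^{1} > \dots > \lambda$, i.e.\ $\mu \succ \lambda$.

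The main obstacle, and the only place requiring genuine verification beyond citing earlier results, is to check that the intermediate tuples $\lambda^{j,\prime}$ produced by the constructive proof of Lemma \ref{equiv} (via the insertion and deletion operations $R(\cdot,\cdot)$ and $L(\cdot,\cdot)$ of Lemma \ref{RandL}) shift back to tuples $\lambda^{j} = \lambda^{j,\prime} - \varepsilon$ that are themselves bounded $\mathfrak{sp}$-admissible in the sense of Lemma \ref{prop:sp.bounded}. Since the constraints of Lemma \ref{prop:sp.bounded} on half-integer tuples are precisely the $-\varepsilon$-shift of the standard $\mathfrak{o}(2n)$-admissibility constraints (both amount to $\lambda_1 \geq \lambda_2 \geq \dots \geq \lambda_{n-1} \geq |\lambda_n|$ after the shift), this pull-back is automatic. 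This is also exactly the point the author alludes to when stating that \emph{``the proofs of the Lemmas \ref{plsnota}, \ref{equiv}, \ref{RandL} are precisely the same as in Subsection \ref{prop}.''}
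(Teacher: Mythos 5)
Your proposal is correct and follows essentially the same route as the paper: the paper's own justification for this restated lemma is exactly the observation that, under the bijection $\lambda\mapsto\lambda+\varepsilon(2n)$ between half-integral bounded $\mathfrak{sp}(2n)$-weights and half-integral dominant $\mathfrak{o}(2n)$-weights coming from Proposition \ref{big}, the admissibility constraints and the one-step branching rule are transported verbatim, so the argument of Subsection \ref{prop} applies unchanged. You package this slightly more cleanly by invoking Lemma \ref{equiv} as a black box on the shifted tuples and pulling the resulting chain back step by step, and you correctly isolate and settle the only point that genuinely needs checking, namely that the intermediate tuples of the chain shift back to weights satisfying Lemma \ref{prop:sp.bounded}.
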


\begin{lemma}
		Let $\lambda$ and $\mu$ be admissible tuples such that $\#\mu-\#\lambda=1$, $\mu > \lambda$ and let  $k \in \mathbb{Z}$. Then 
		\begin{enumerate}
			\item $R(\mu,k)>R(\lambda,k)$,
				
			\item $L(\mu,k)>L(\lambda,k)$,
				\item $L(\mu,k)>R(\lambda,k)$ whenever one of the following conditions is satisfied
$$ \mu_{i+1} \geqslant k > \mu_{i+2},\eqno{(*~*)}$$
			$$\mu_{i+2} \geqslant k \geqslant \mu_{i+3}. \eqno{(***)} $$
for  $i$ such that $\lambda_{i} \geqslant k \geqslant \lambda_{i+1}$.
		\end{enumerate}
\end{lemma}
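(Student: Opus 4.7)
The plan is to observe that this lemma is literally the same statement as Lemma \ref{RandL} from Subsection \ref{prop}, but phrased in the c.l.s.b.\ setting, and to reduce its proof to that earlier one via the correspondence described at the start of Subsection \ref{sub:p.l.s.b.}. More precisely, the operations $R(\lambda,k)$ and $L(\lambda,k)$, the notion of admissible tuple, and the Gelfand--Tsetlin relation $>$ are all defined purely combinatorially by systems of inequalities between the entries of the tuples involved. Under the bijection between c.l.s.b.\ for $\mathfrak{sp}(\infty)$ and c.l.s.\ for $\mathfrak{o}(\infty)$, integral tuples correspond to themselves, while half-integral tuples are matched by a uniform shift by $\varepsilon(2n)=(1,1,\dots,1)$; in either case, all inequalities defining admissibility, $R$, $L$, and $>$ are preserved. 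Hence it suffices to verify the three claims in the integral setting, which is exactly the content of Lemma \ref{RandL}.

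First, I would reduce to admissible tuples of integers by clearing half-integers via the shift by $\varepsilon$, as already done in the earlier subsection. Next, for each of the three claims I would fix $i$ so that $\lambda_i\geq k\geq\lambda_{i+1}$ and split into the three mutually exclusive cases according to where $k$ lies among the entries of $\mu$:
\begin{itemize}
\item[$(*)$] $\mu_i\geq k>\mu_{i+1}$,
\item[$(*\,*)$] $\mu_{i+1}\geq k>\mu_{i+2}$,
\item[$(*\!*\!*)$] $\mu_{i+2}\geq k\geq\mu_{i+3}$.
\end{itemize}
In each case the tuples $R(\mu,k)$, $R(\lambda,k)$, $L(\mu,k)$, $L(\lambda,k)$ are obtained from $\mu$ and $\lambda$ by replacing a single entry, so the Gelfand--Tsetlin inequalities for $R(\mu,k)>R(\lambda,k)$ (resp.\ $L(\mu,k)>L(\lambda,k)$) reduce to the inequalities of $\mu>\lambda$ away from the replaced position, plus a short finite check at the affected positions, which follows directly from the defining case inequality.

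For part (3), the same reduction applies but now the replaced position on the left side is to the left of the one on the right side, so one has to verify a slightly longer chain $\{\mu_i,\lambda_{i-1}\}\geq\{k,\lambda_i\}\geq\{\mu_{i+2},k\}\geq\{\mu_{i+3},\lambda_{i+2}\}$ and the companion chain $\{\mu_{i+1},\lambda_i\}\geq\{k,k\}\geq\{\mu_{i+3},\lambda_{i+2}\}$; both of these are immediate in cases $(*\,*)$ and $(*\!*\!*)$, while case $(*)$ forces $k=\lambda_i$, which is why it must be excluded.  I expect no serious obstacle: the statement is a direct combinatorial translation of Lemma \ref{RandL}, and the main (minor) point to be careful about is tracking the half-integer versus integer case uniformly, which is handled once and for all by the shift by $\varepsilon(2n)$. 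Once this reduction is made, the three verifications are entirely parallel to the case-by-case analysis in the proof of Lemma \ref{RandL}, and can simply be quoted.
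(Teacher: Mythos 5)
Your proposal is correct and matches the paper's own treatment: the paper explicitly states that the proofs of these restated lemmas are "precisely the same" as in the earlier subsection, relying on exactly the reduction you describe (half-integral tuples handled by the uniform shift by $\varepsilon(2n)$, then quoting the case-by-case verification of the earlier lemma). Your recap of the three cases and of why case $(*)$ must be excluded in part (3) also agrees with the original argument.
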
	
	
The following proposition is a corollary of the above three lemmas.

\begin{prop}\label{theor:plsb clsb}
		For any p.l.s.b. $\mathbb I$ there exists a c.l.s.b. $\mathbb I'$ such that $\mathbb I$ and $\mathbb I'$ are equivalent.
	\end{prop}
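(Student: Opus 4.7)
The plan is to mimic the proof of Proposition \ref{4.3} using the three lemmas stated just above the proposition as black boxes, together with the correspondence from Proposition \ref{prop: c.l.s.b. class} that matches c.l.s.b. for $\mathfrak{sp}(\infty)$ with c.l.s. for $\mathfrak{o}(\infty)$. In the case $\mathfrak{g}(\infty)=\mathfrak{o}(\infty)$ there is literally nothing new: bounded $\mathfrak{o}(2n)$-modules are finite-dimensional, so the notion of c.l.s.b. (resp. p.l.s.b.) coincides with the notion of c.l.s. (resp. p.l.s.), and the statement reduces to Proposition \ref{4.3}. So I would focus on $\mathfrak{sp}(\infty)$.

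First, I would translate the statement into the combinatorial language of tuples. An ideal $z\in J_n$ is pinned down by the set $\{\lambda(z)\}$, which is either a single integral dominant $\mathfrak{sp}(2n)$-weight or a pair of half-integral weights satisfying Lemma \ref{prop:sp.bounded}; in the half-integral case, shifting by $\varepsilon(2n)$ converts it (by Proposition \ref{big}) into a single dominant integral $\mathfrak{o}(2n)$-weight. Under this identification, a p.l.s.b.\ $\mathbb I$ becomes a collection of admissible tuples, and the p.l.s.b.\ branching condition becomes exactly the condition ``$\lambda\succ\mu$ and $\lambda\in\mathbb I_{\#\lambda}$ imply $\mu\in\mathbb I_{\#\mu}$'' (for integral tuples $\succ$ is the usual Gelfand--Tsetlin relation, and for half-integral tuples it is obtained from it by the shift $\varepsilon(2n)$, thanks to Proposition \ref{big}).

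Next, I would set, for every admissible tuple $\lambda\notin\mathbb I$, the maximal p.l.s.b.\ $\mathbb I^{\vee}(\lambda)$ avoiding $\lambda$, and observe
\[
\mathbb I\;=\;\bigcap_{\lambda\notin\mathbb I}\mathbb I^{\vee}(\lambda).
\]
Applying the first of the three lemmas stated above the proposition (the analog of Proposition \ref{plsnota}), each $\mathbb I^{\vee}(\lambda)$ is equivalent to the concrete c.l.s.b.
\[
Q(\lambda):=\bigcup_{1\le k\le\#\lambda}Q(k,\lambda_k).
\]
The second and third lemmas are precisely what is needed to run the proof of that equivalence, in the same form as in Subsection \ref{prop}.

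Finally, since under the bijection of Proposition \ref{prop: c.l.s.b. class} the lattice of c.l.s.b.\ for $\mathfrak{sp}(\infty)$ corresponds to the lattice of c.l.s.\ for $\mathfrak{o}(\infty)$, and the latter is artinian by \cite{Zh3}, the descending chain of finite intersections $Q(\lambda_1)\cap\dots\cap Q(\lambda_s)$ stabilizes. Therefore $\mathbb I$ is equivalent to
\[
\mathbb I'\;=\;Q(\lambda_1)\cap Q(\lambda_2)\cap\dots\cap Q(\lambda_s)
\]
for some finite list $\lambda_1,\dots,\lambda_s\notin\mathbb I$, and a finite intersection of c.l.s.b. is plainly a c.l.s.b.

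The main obstacle, to my mind, is not the combinatorics itself (which is copied from Subsection \ref{prop}) but the bookkeeping that justifies ``same proof''; in particular, one must be careful that the half-integral case really reduces to the integral case via $\varepsilon(2n)$ in a way compatible with branching. That compatibility, however, is exactly the content of Proposition \ref{big}, so the reduction is clean.
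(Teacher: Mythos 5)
Your proposal is correct and follows essentially the same route as the paper: reduce the $\mathfrak{o}(\infty)$ case to Proposition \ref{4.3} directly, transport the $\mathfrak{sp}(\infty)$ case to the c.l.s./p.l.s. combinatorics via the shift by $\varepsilon(2n)$ justified by Proposition \ref{big}, and then rerun the argument of Subsection \ref{prop} (decomposition $\mathbb I=\bigcap_{\lambda\notin\mathbb I}\mathbb I^{\vee}(\lambda)$, the three lemmas, and artinian-ness of the lattice of c.l.s. from \cite{Zh3}). The paper states this as an immediate corollary of the three restated lemmas; you have simply written out the same deduction in more detail.
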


\begin{theorem}
Let $I$ be a primitive ideal of $U(\mathfrak{g}(\infty))$. Then $Q(I)$ is equivalent to a c.l.s.b.
\end{theorem}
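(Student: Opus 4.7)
The plan is to assemble the theorem directly from three ingredients already established in the preceding sections: Theorem \ref{asd}, the observation in Subsection \ref{sub:p.l.s.b.} that a weakly bounded ideal gives rise to a p.l.s.b., and Proposition \ref{theor:plsb clsb} that any p.l.s.b. is equivalent to a c.l.s.b. Since all of the heavy lifting has been done in the previous sections (the primitive-implies-weakly-bounded result in Section~\ref{3}, and the combinatorial stabilization argument producing the equivalent c.l.s.b. in Subsection~\ref{sub:p.l.s.b.}), the proof reduces to verifying that the pieces fit together in the required way.

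First, I would invoke Theorem \ref{asd} to conclude that the primitive ideal $I\subset U(\mathfrak{g}(\infty))$ is weakly bounded: for every $n\ge 2$, the intersection $I\cap U(\mathfrak{g}(2n))$ is an intersection of annihilators of bounded weight $\mathfrak{g}(2n)$-modules. In particular, $I\cap U(\mathfrak{g}(2n))$ is contained in at least one annihilator $z\in J_n$, and the set $Q(I)_n=\{z\in J_n\mid I\cap U(\mathfrak{g}(2n))\subset z\}$ is well-defined for every $n\ge 2$.

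Next, I would verify that $Q(I)=\{Q(I)_n\}_{n\ge 2}$ satisfies the p.l.s.b. compatibility condition $Q(I)_m\supset\langle Q(I)_n\rangle_m$ for $n>m$. This is essentially tautological: if $z\in Q(I)_n$ annihilates a bounded simple $\mathfrak{g}(2n)$-module $M$ that is contained in the zero set of $I\cap U(\mathfrak{g}(2n))$, then $I\cap U(\mathfrak{g}(2m))\subset I\cap U(\mathfrak{g}(2n))$ annihilates every simple $\mathfrak{g}(2m)$-constituent of $M$, and each such constituent is again a bounded module (since restrictions of bounded modules along $\mathfrak{g}(2m)\hookrightarrow\mathfrak{g}(2n)$ remain bounded). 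Hence the annihilator in $U(\mathfrak{g}(2m))$ of any such constituent lies in $Q(I)_m$, which is precisely the p.l.s.b. axiom. This is the statement noted just after the definition of p.l.s.b. in Subsection~\ref{sub:p.l.s.b.}.

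Finally, having exhibited $Q(I)$ as a p.l.s.b., I would apply Proposition \ref{theor:plsb clsb} directly: every p.l.s.b. is equivalent to some c.l.s.b. $\mathbb{I}'$, meaning there exists $n_0$ with $Q(I)_{n}=\mathbb{I}'_{n}$ for all $n>n_0$. This gives the conclusion. I do not anticipate any real obstacle here, since the two nontrivial inputs (Theorem~\ref{asd} and Proposition~\ref{theor:plsb clsb}) have both been proved. The only minor point requiring care is to confirm that the stabilization argument used for p.l.s. in Subsection~\ref{prop} has indeed been transported verbatim to the bounded setting in Subsection~\ref{sub:p.l.s.b.}; this was arranged via the Kazhdan--Lusztig correspondence of Proposition~\ref{big}, which identifies the combinatorics of bounded $\mathfrak{sp}(\infty)$-branching with that of integral $\mathfrak{o}(\infty)$-branching (after translation by $\varepsilon(2n)$), and so the Gelfand--Tsetlin-based arguments of Lemmas~\ref{equiv}, \ref{RandL} apply unchanged.
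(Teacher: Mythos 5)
Your proposal is correct and follows exactly the paper's own argument: the paper's proof reads "Follows from Theorem \ref{asd} and Proposition \ref{theor:plsb clsb}," and the intermediate observation that $Q(I)$ is a p.l.s.b.\ for a weakly bounded ideal $I$ is likewise stated in the paper just after the definition of p.l.s.b. Your additional verification of the compatibility condition $Q(I)_m\supset\langle Q(I)_n\rangle_m$ only makes explicit what the paper leaves implicit.
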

\begin{proof} Follows from Theorem \ref{asd} and Proposition \ref{theor:plsb clsb}.
\end{proof}

\newpage

\section{Classification of primitive ideals of $U(\mathfrak{o}(\infty))$ and $U(\mathfrak{sp}(\infty))$} \label{6}

Here we introduce a set  of modules such that each primitive ideal   of $U(\mathfrak{o}(\infty))$ and $U(\mathfrak{sp}(\infty))$ equals  the annihilator of a unique module from this set. Also, for a module from this set we formulate a criterion for integrability of its annihilator.

In this subsection $\mathfrak{g}(2n)$ denotes the Lie algebra $\mathfrak{o}(2n)$ or $\mathfrak{sp}(2n)$, and $\mathfrak{g}(\infty)$ denotes the Lie algebra $\mathfrak{o}(\infty)$ or $\mathfrak{sp}(\infty)$.


Let $Z$ be a Young diagram with row lengths 
$$l_1\geq l_2 \geq \dots \geq l_s >0.$$ 
Then we denote by $V_Z(n)$ the $\mathfrak{g}(2n)$-module  with highest weight
$$\underbrace{(l_1,~l_2,~\ldots,~l_s,~0,~0,~\ldots,~0)}_{n\text{ numbers}}.$$
 The $\mathfrak{g}(2n)$-module $V_Z(n)$ is isomorphic to a simple direct constituent of the tensor product
$$S^{l_1}(V(n))\otimes S^{l_2}(V(n))\otimes\dots\otimes S^{l_s}(V(n)),$$ 
where $S^{l_i}(V(n))$ denotes the $l_i$th symmetric power of the natural module $V(n)$. 
In this way, the $\mathfrak{g}(\infty)$-module $V_Z$ is defined as the direct limit $\varinjlim V_Z(n)$.
We denote by $R$ the $\mathfrak{o}(\infty)$-module which is equal to the direct limit $\varinjlim R(2n)$, where $R(2n)$ is the $\mathfrak{o}(2n)$-module with highest weight $(\frac{1}{2}\sum^{n}_1\varepsilon_i)$. Also we denote by $R$ the $\mathfrak{sp}(\infty)$-module which is equal to the direct limit $\varinjlim SW^+(2n)$.


\begin{prop}\textup{\cite{PP3}} Any nonzero prime integrable ideal $I\subsetneq \operatorname{U}(\mathfrak g(\infty))$ is the annihilator of a unique $\frak g(\infty)$-module of the form
$$\begin{array}{cc}
(\operatorname{S}^\bullet(V))^{\otimes x}\otimes ({ \Lambda}^\bullet(V))^{\otimes y}\otimes V_{Z}&\mbox{for $\mathfrak g(\infty)=\mathfrak{sp}(\infty)$},\\
\begin{array}{c}(\operatorname{S}^\bullet(V))^{\otimes x}\otimes ({ \Lambda}^\bullet(V))^{\otimes y}\otimes V_{Z}\mbox{  or}\\(\operatorname{S}^\bullet(V))^{\otimes x}\otimes ({ \Lambda}^\bullet(V))^{\otimes y}\otimes V_{Z}\otimes R\end{array}&\mbox{for $\mathfrak g(\infty)=\mathfrak o(\infty)$},\\
\end{array}$$
where $x,y\in\mathbb Z_{\ge0}$,           and $Z$ is an arbitrary Young diagram.

\end{prop}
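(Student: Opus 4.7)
The plan is to reduce the statement to Zhilinskii's classification (Proposition \ref{2.6}) via the bijection between integrable prime ideals and irreducible c.l.s., and then to translate the combinatorial factors back into tensor products of the explicitly named modules.

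First, I would invoke the last theorem of Subsection 2.11: for $\frak g(\infty)=\mathfrak o(\infty),\mathfrak{sp}(\infty)$ the mutually inverse maps $I\mapsto Q(I)$, $Q\mapsto I(Q)$ set up a bijection (reversing inclusion) between integrable ideals of $U(\frak g(\infty))$ and c.l.s. for $\frak g(\infty)$; together with Proposition \ref{prop:annih} and the statement recorded before Theorem 2.15 that an integrable ideal is prime iff primitive, this restricts to a bijection between nonzero prime integrable ideals $I\subsetneq U(\frak g(\infty))$ and nontrivial irreducible c.l.s.\ for $\frak g(\infty)$. Thus every such $I$ equals $I(Q)$ for a unique irreducible c.l.s.\ $Q$, and Proposition \ref{2.6} expresses $Q$ uniquely as
$$Q=(L^\infty_v L_{v+1}^{x_{v+1}}\cdots L_{v+r}^{x_{v+r}})E^m\quad\text{or}\quad Q=(L^\infty_v L_{v+1}^{x_{v+1}}\cdots L_{v+r}^{x_{v+r}})E^m R,$$
the second option occurring only for $\frak g(\infty)=\mathfrak o(\infty)$.

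Second, I would translate each basic c.l.s.\ factor back to its module realization. By the definitions in Subsection 2.10, $L^\infty_v=Q(S^\bullet(V\otimes\mathbb C^v))$ and, using the standard identification $S^\bullet(V\otimes\mathbb C^v)\cong(S^\bullet V)^{\otimes v}$ of $\frak g(\infty)$-modules, this gives the $(S^\bullet V)^{\otimes v}$ factor; similarly $E=Q(\Lambda^\bullet V)$ gives the $(\Lambda^\bullet V)^{\otimes m}$ factor, and (for $\frak g(\infty)=\mathfrak o(\infty)$ or in the $\mathfrak{sp}$-case) $R$ gives the Shale--Weil/spinor module factor $R$. For the remaining block $L_{v+1}^{x_{v+1}}\cdots L_{v+r}^{x_{v+r}}$, I would observe that it is a c.l.s.\ of finite type whose unique maximal weight is the Cartan sum
$$\lambda=\sum_{i=1}^{r}x_{v+i}(\varepsilon_1+\cdots+\varepsilon_{v+i}),$$
and that $\lambda$ determines a Young diagram $Z=Z(x_{v+1},\ldots,x_{v+r})$ with row lengths $l_k=\sum_{i\ge k}x_{v+i}$; then one checks the c.l.s.\ equality $L_{v+1}^{x_{v+1}}\cdots L_{v+r}^{x_{v+r}}=Q(V_Z)$. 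Combining these identifications with the relation, recalled in Subsection \ref{sub:tensor}, that the product of c.l.s.\ of integrable modules is the c.l.s.\ of the tensor product, I obtain
$$Q=Q\bigl((S^\bullet V)^{\otimes x}\otimes(\Lambda^\bullet V)^{\otimes y}\otimes V_Z\bigr)\quad\text{or}\quad Q=Q\bigl((S^\bullet V)^{\otimes x}\otimes(\Lambda^\bullet V)^{\otimes y}\otimes V_Z\otimes R\bigr)$$
for $x=v$ and $y=m$, so applying $I(\cdot)$ to both sides realizes $I$ as the annihilator of the module on the right.

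Third, for uniqueness I would run the construction in reverse: given a module of the listed form, read off from $x$, $y$, the Young diagram $Z$, and the presence/absence of $R$ the Zhilinskii data $(v,m,x_{v+1},\ldots,x_{v+r})$; the uniqueness clause of Proposition \ref{2.6}, together with the injectivity of $Q\mapsto I(Q)$, then shows that two such modules with the same annihilator must coincide.

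The main obstacle I expect is the identification $L_{v+1}^{x_{v+1}}\cdots L_{v+r}^{x_{v+r}}=Q(V_Z)$ for the appropriate Young diagram $Z$: Cartan products of basic c.l.s.\ need not a priori coincide with the c.l.s.\ generated by the single highest-weight module $V_Z$, and verifying the equality requires comparing the branching of $V_Z$ under $\frak g(2m)\subset\frak g(2n)$ with the inductive definition of products of c.l.s. This is essentially a bookkeeping computation using the Gelfand--Tsetlin/Littlewood--Richardson rules for $\mathfrak o$ and $\mathfrak{sp}$ applied to Young-diagram shapes, but it is where all the combinatorial content of the translation is concentrated.
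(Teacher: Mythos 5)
This proposition is quoted in the thesis directly from \cite{PP3} and is not proved there, so there is no in-paper argument to compare against; your plan reconstructs the standard argument of the cited source (restrict the bijection $I\leftrightarrow Q(I)$ to prime integrable ideals and irreducible c.l.s., apply Zhilinskii's normal form from Proposition \ref{2.6}, then realize each basic factor by a module) and is sound. Two small remarks: the residual finite block should be read, after factoring out $(L^\infty_1)^{\otimes v}$ as in Subsection \ref{sub:tensor}, as $L_{1}^{x_{v+1}}\cdots L_{r}^{x_{v+r}}$ rather than $L_{v+1}^{x_{v+1}}\cdots L_{v+r}^{x_{v+r}}$ — your displayed $\lambda$ has an off-by-$v$ index shift, although your final formula $l_k=\sum_{i\ge k}x_{v+i}$ for the rows of $Z$ is the correct one; and the equality of the Cartan-product c.l.s. with $Q\bigl(V_Z\otimes(\Lambda^\bullet V)^{\otimes m}\bigr)$ (not just of the $L^\infty$ part) is indeed the only substantive verification, which you have correctly isolated as the crux.
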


\begin{defn}
		Let $S_1, S_2 \subset J_n$ and $s_1,s_2\subset R_n$ be respective sets of isomorphism classes of $\mathfrak{g}(2n)$-modules. Put

		$$ S_1 \otimes S_2 := \{z \in J_n\mid  z=\operatorname{Ann} r   \textup{ for some } r\in  R_n    \textup{ such that } $$
$$      \operatorname{Hom}_{\mathfrak{g}(2n)}(r, r_1 \otimes r_2 ) \neq 0 \:
		\textup{ for  some }  r_1 \in s_1 \: \textup{and} \: r_2 \in s_2 \}$$
		
		Let $\mathbb I'$ be a  c.l.s.b. of the form
		$$(L^\infty_v L_{v+1}^{x_{v+1}} L_{v+2}^{x_{v+2}} \dots L_{v+r}^{x_{v+r}}) E^m$$
and $\mathbb I''$ be a c.l.s.b. of the form
		$$(L^\infty_v L_{v+1}^{x_{v+1}} L_{v+2}^{x_{v+2}} \dots L_{v+r}^{x_{v+r}}) E^mR \textup{ or }(L^\infty_v L_{v+1}^{x_{v+1}} L_{v+2}^{x_{v+2}} \dots L_{v+r}^{x_{v+r}}) E^m$$
Then the \textup{tensor product} $\mathbb{I}'\otimes \mathbb{I}''$ of this two c.l.s.b.  is the collection of sets defined by
		$(\mathbb I'\otimes \mathbb I'')_i = \mathbb I'_i \otimes \mathbb I''_i $.
	\end{defn}


\begin{lemma}\label{lem: class}
Let $Q$ be a c.l.s.b of Lie algebra $\mathfrak{sp}(\infty)$ which can be expressed as $$Q=(L^\infty_1)^{\otimes v} \otimes (L_{1}^{x_{v+1}} L_{2}^{x_{v+2}} \dots
	L_{r}^{x_{v+r}}) E^m .$$ 
Then tensor product of c.l.s.b.
$Q\otimes R$ is a c.l.s.b.
\end{lemma}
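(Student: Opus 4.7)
The plan is to verify the single nontrivial condition, namely coherence $(Q\otimes R)_m=\langle (Q\otimes R)_n\rangle_m$ for all $n>m\geq 2$; the other conditions (each $(Q\otimes R)_n$ sitting inside $J_n$) are immediate from the definition. The key tools are the explicit decomposition from Lemma \ref{BL}, the Shale--Weil restriction property established in the Corollary following Lemma \ref{BL}, and the branching correspondence of Proposition \ref{big} that translates $\mathfrak{sp}(2n-2)\subset\mathfrak{sp}(2n)$ branching of bounded half-integral modules to the $\mathfrak{o}(2n-2)\subset\mathfrak{o}(2n)$ branching of integral-dominant-with-half-integral-entries modules via the shift by $\varepsilon(2n)$.

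First I would describe $(Q\otimes R)_n$ explicitly. By Lemma \ref{BL} applied to $\nu_0=-\frac12\varpi_n$, one has
$$L(\lambda)\otimes SW^+(2n)\simeq\bigoplus_{\kappa\in T^0_\lambda}L(\nu_0+\lambda+\kappa)$$
for every integral dominant $\mathfrak{sp}(2n)$-weight $\lambda$, so
$$(Q\otimes R)_n=\{\operatorname{Ann}(L(\nu_0+\lambda+\kappa))\mid\operatorname{Ann}(L(\lambda))\in Q_n,\ \kappa\in T^0_\lambda\}.$$
Each weight $\nu_0+\lambda+\kappa$ has half-integer entries and satisfies Lemma \ref{prop:sp.bounded}, so $(Q\otimes R)_n$ really lies in $J_n$.

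For the inclusion $\langle (Q\otimes R)_n\rangle_m\subseteq(Q\otimes R)_m$, take $z\in\langle (Q\otimes R)_n\rangle_m$, represented by a simple $\mathfrak{sp}(2m)$-constituent $L(\mu')$ of some $L(\nu_0+\lambda+\kappa)$ with $\operatorname{Ann}(L(\lambda))\in Q_n$. By Proposition \ref{big}, the $\mathfrak{sp}(2m)$-branching of the bounded half-integral module $L(\nu_0+\lambda+\kappa)$ corresponds bijectively, via the shift $\mu'\mapsto\mu'+\varepsilon(2m)$, to the $\mathfrak{o}(2m)$-branching of the integral module $L(\nu_0+\lambda+\kappa+\varepsilon(2n))=L(\lambda+(\nu_0+\varepsilon(2n))+\kappa)$. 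Rewriting $\nu_0+\varepsilon(2n)=\tfrac12\sum_{i=1}^n\varepsilon_i$ and applying coherence of $Q$ (which on the $\mathfrak{o}$-side provides some $\lambda'$ with $\operatorname{Ann}(L(\lambda'))\in Q_m$ subsumed by the branching of $\lambda$), one then reads off $\mu'=\nu_0+\lambda'+\kappa''$ for some $\kappa''\in T^0_{\lambda'}$, whence $z\in (Q\otimes R)_m$. Conversely, for $\supseteq$, start from $z=\operatorname{Ann}(L(\nu_0+\mu+\kappa'))\in (Q\otimes R)_m$ with $\operatorname{Ann}(L(\mu))\in Q_m$. Coherence of $Q$ produces $\operatorname{Ann}(L(\lambda))\in Q_n$ with $L(\mu)\hookrightarrow L(\lambda)|_{\mathfrak{sp}(2m)}$; the Corollary following Lemma \ref{BL} gives that $SW^+(2n)|_{\mathfrak{sp}(2m)}$ contains (countably many copies of) $SW^+(2m)$; hence $L(\mu)\otimes SW^+(2m)$ is a direct summand of $L(\lambda)\otimes SW^+(2n)$ viewed over $\mathfrak{sp}(2m)$. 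Since the latter equals $\bigoplus_{\kappa\in T^0_\lambda}L(\nu_0+\lambda+\kappa)$, the module $L(\nu_0+\mu+\kappa')$, which sits inside $L(\mu)\otimes SW^+(2m)$, must also sit inside the restriction of some $L(\nu_0+\lambda+\kappa)$, placing $z$ in $\langle (Q\otimes R)_n\rangle_m$.

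The main obstacle will be the bookkeeping in the $\supseteq$ direction: one must check that the embedding $L(\mu)\otimes SW^+(2m)\hookrightarrow L(\lambda)\otimes SW^+(2n)|_{\mathfrak{sp}(2m)}$ really respects the decomposition into isotypic components, so that every simple summand on the left appears as an $\mathfrak{sp}(2m)$-constituent of a specific summand $L(\nu_0+\lambda+\kappa)$ on the right, not only abstractly as a subquotient. This is guaranteed by complete reducibility of both tensor products under $\mathfrak{sp}(2m)$ (itself a consequence of Lemma \ref{BL} and the Corollary following it), so the argument is clean once these reductions are in place. As a sanity check, the resulting c.l.s.b. matches, via the bijection of Proposition \ref{prop: c.l.s.b. class} with c.l.s.~for $\mathfrak{o}(\infty)$, the basic irreducible c.l.s.~of the form $(L^\infty_v L_{v+1}^{x_{v+1}}\dots L_{v+r}^{x_{v+r}})E^m R$ listed in Proposition \ref{2.6}, confirming that no coherence is lost in taking the componentwise tensor product.
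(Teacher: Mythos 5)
Your argument is correct, and it is considerably more substantive than what the paper offers: the paper's entire proof of this lemma is the single sentence that the statement ``follows from Lemma \ref{BL}'', so your write-up is essentially a realization of that hint rather than a divergence from a written argument. Your treatment of the inclusion $(Q\otimes R)_m\subseteq\langle(Q\otimes R)_n\rangle_m$ is the right one: coherence of $Q$ plus the fact that $SW^+(2n)$ restricts to a direct sum of Shale--Weil modules (the corollary after Lemma \ref{BL}) embeds $L(\mu)\otimes SW^+(2m)$ into the restriction of $L(\lambda)\otimes SW^+(2n)$, and complete reducibility over $\mathfrak{sp}(2m)$ lets you locate each summand inside a specific $L(\nu_0+\kappa)$. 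The one place you could tighten is the reverse inclusion: the detour through Proposition \ref{big} and the $\mathfrak{o}$-side leaves the step where you ``read off $\mu'=\nu_0+\lambda'+\kappa''$'' under-justified, since it implicitly uses the $\mathfrak{o}(\infty)$-analogue of the spinor tensor decomposition and Zhilinskii's coherence of the corresponding c.l.s. A more direct route avoids this entirely: every $\mathfrak{sp}(2m)$-constituent of $L(\nu_0+\kappa)$ is a constituent of $\bigl(L(\lambda)\otimes SW^+(2n)\bigr)|_{\mathfrak{sp}(2m)}=\bigl(L(\lambda)|_{\mathfrak{sp}(2m)}\bigr)\otimes\bigl(SW^+(2n)|_{\mathfrak{sp}(2m)}\bigr)$, and decomposing both factors (coherence of $Q$ for the first, the corollary for the second) and re-applying Lemma \ref{BL} exhibits it as a constituent of some $L(\mu_i)\otimes SW^{\pm}(2m)$ with $\operatorname{Ann}L(\mu_i)\in Q_m$, hence as an element of $(Q\otimes R)_m$. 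With that adjustment the proof is clean and complete.
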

\begin{proof}
One can show that the statement follows from Lemma \ref{BL}.
\end{proof}

One can easily deduce that

$$(L^\infty_v L_{v+1}^{x_{v+1}} L_{v+2}^{x_v+2} \dots L_{v+r}^{x_{v+r}}) E^m =
	(L^\infty_1)^{\otimes v} \otimes (L_{1}^{x_{v+1}} L_{2}^{x_{v+2}} \dots
	L_{r}^{x_{v+r}}) E^m,$$  
	$$(L^\infty_v L_{v+1}^{x_{v+1}} L_{v+2}^{x_{v+2}} \dots L_{v+r}^{x_{v+r}}) E^m  R =
	(L^\infty_1)^{\otimes v} \otimes (L_{1}^{x_{v+1}} L_{2}^{x_{v+2}} \dots
	L_{r}^{x_{v+r}}) E^m \otimes R.$$

For every c.l.s.b.  $Q=\{Q_n\}_{n \in \mathbb{Z}_{\geq 1}}$ we can define the ideal  
	$$I(Q):=\bigcup_m(\bigcap_{z\in Q_m}
	z)\subset U(\mathfrak{g(\infty)}).$$
	  We say that $I(Q)$ is the \emph{globalization} of $Q$.

Let $Z$ be a Young diagram with row lengths 
$$l_1\geq l_2 \geq \dots \geq l_s >0.$$ 
For each positive number $x$ we denote by $\{x\}$ the fractional part of $x$.
Let  $V(x,y,Z)(2n)$, for $x\in\mathbb{Z}_{\geq0}$, $y\in \frac{1}{2}\mathbb{Z}_{\geq 0}$, denote the simple $\mathfrak{g}(2n)$-module
with highest weight $$\sum^x_{i=1}(n+\{y\})\varepsilon_i + \sum^{x+s}_{i=x+1}(l_{i-x}+y)\varepsilon_i + \sum^{n}_{i=x+s+1}y\varepsilon_i $$
for $\mathfrak{g}(2n)=\mathfrak{o}(2n),$
$$\sum^x_{i=1}(n-\{y\})\varepsilon_i + \sum^{x+s}_{i=x+1}(l_{i-x}+y-2\{y\})\varepsilon_i + \sum^{n}_{i=x+s+1}(y-2\{y\})\varepsilon_i $$
for $\mathfrak{g}(2n)=\mathfrak{sp}(2n),$
where  $n\geq x+s$. It is clear that we can embed $V(x,y,Z)(2n)\hookrightarrow V(x,y,Z)(2n+2)$ as $\mathfrak{g}(2n)$ submodule. Now, we can  define $\mathfrak{g}(\infty)$-module $V(x,y,Z)$ as the direct limit $\varinjlim V(x,y,Z)(2n)$. Let $Q$ be a c.l.s.b of the form

\begin{itemize}
\item$(L^\infty_x L_{x+1}^{l_1} L_{x+2}^{l_2} \dots L_{x+s}^{l_s}) E^y $,
then $I(Q)=\operatorname{Ann}(V(x,y,Z))\subset U(\mathfrak{g}(\infty))$ for $y\in\mathbb{Z}_{\geq0}$,
\item $(L^\infty_x L_{x+1}^{l_1} L_{x+2}^{l_2} \dots L_{x+s}^{l_s}) E^{y-\frac{1}{2}}R$,
then $I(Q)=\operatorname{Ann}(V(x,y,Z))\subset U(\mathfrak{g}(\infty))$    for $y\in\mathbb{Z}_{\geq0}+\frac{1}{2}$ .
\end{itemize}

The following is the main result of this section.
\begin{theorem} \label{tyu}
\begin{enumerate}
\item[\textup{a)}]Any nonzero primitive ideal $I\subsetneq \operatorname{U}(\mathfrak{g}(\infty))$ is the annihilator $I(x,y,Z)$ of an  $\frak{sp}(\infty)$-module of the form

$$(\operatorname{ S}^\bullet(V))^{\otimes x} \otimes({ \Lambda}^\bullet(V))^{\otimes y}\otimes V_{Z} \textup{ for }y\in\mathbb{Z}_{\geq0},$$
$$(\operatorname{ S}^\bullet(V))^{\otimes x} \otimes({ \Lambda}^\bullet(V))^{\otimes y}\otimes V_{Z}\otimes R  \textup{ for }y\in\mathbb{Z}_{\geq0}+\frac{1}{2},$$
where $x\in\mathbb Z_{\ge0},$ and $Z$ is an arbitrary Young diagram. Moreover, $I(x_1,y_1,Z_1)=I(x_2,y_2,Z_2)$ if and only  if $x_1=x_2$, $y_1=y_2$ and $Z_1=Z_2$.
\item[\textup{b)}] The ideal $I(x,y,Z)$ is integrable if and only if $y\in \mathbb{Z}_{\geq0}$.

\end{enumerate}
\end{theorem}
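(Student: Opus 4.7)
The plan is to deduce this theorem from the classification of irreducible c.l.s.b.\ (Proposition~\ref{prop: c.l.s.b. class}) combined with the result at the end of Section~\ref{5} that the p.l.s.b.\ $Q(I)$ associated to a primitive weakly bounded ideal $I$ is equivalent to a c.l.s.b. My first step is to show that primitivity of $I$ forces the equivalent c.l.s.b.\ to be \emph{irreducible}. Indeed, if the equivalent c.l.s.b.\ decomposed as $\mathbb{I}' \cup \mathbb{I}''$ with $\mathbb{I}' \not\subset \mathbb{I}''$ and $\mathbb{I}' \not\supset \mathbb{I}''$, then by applying the globalization $Q \mapsto I(Q)$ (defined in Subsection~\ref{sub:p.l.s.b.}) I would obtain $I = I(\mathbb{I}') \cap I(\mathbb{I}'')$ with neither factor equal to $I$. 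Using the argument from \cite{PP1} that integrable prime equals integrable primitive (and the fact that a bounded ideal behaves analogously with respect to intersection of annihilators of modules in different c.l.s.b.\ components), this contradicts primitivity. Once $Q(I)$ is equivalent to an irreducible c.l.s.b., Proposition~\ref{prop: c.l.s.b. class} together with Lemma~\ref{lem: class} gives the explicit product form, which I would rewrite (using tensor-product expressions from Subsection~\ref{sub:tensor}) as
\[
Q_{x,y,Z} = (L^\infty_1)^{\otimes x}\otimes(L_1^{l_1}L_2^{l_2}\cdots L_s^{l_s})\,E^{\lfloor y\rfloor}\otimes R^{\,2\{y\}},
\]
where $Z$ has row lengths $l_1\ge \cdots \ge l_s > 0$, and $R^0$ is understood as trivial.

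Second, I would verify the identification $I(Q_{x,y,Z}) = \operatorname{Ann}_{U(\mathfrak{g}(\infty))} M_{x,y,Z}$, where $M_{x,y,Z}$ is the module $(S^\bullet V)^{\otimes x} \otimes (\Lambda^\bullet V)^{\otimes y} \otimes V_Z$ (if $y \in \mathbb{Z}_{\ge 0}$) or $(S^\bullet V)^{\otimes x} \otimes (\Lambda^\bullet V)^{\otimes y-1/2} \otimes V_Z \otimes R$ (if $y \in \tfrac12 + \mathbb{Z}_{\ge 0}$). For the direction $\supseteq$, a direct branching computation (Pieri-type rule at each level $\mathfrak{g}(2n)$, supplemented by Lemma~\ref{BL} for the Shale--Weil factor) shows that the set of $\mathfrak{g}(2n)$-constituents of $M_{x,y,Z}$ agrees with $(Q_{x,y,Z})_n$ for $n$ large enough, so $Q(M_{x,y,Z})$ is equivalent to $Q_{x,y,Z}$. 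Since $M_{x,y,Z}$ is a locally simple module (or, in the half-integer case, a tensor product of a locally simple integrable module with $R$, whose branching behavior is explicit by Example~\ref{SW}), the analog of Proposition~\ref{prop:annih} for c.l.s.b.\ gives $\operatorname{Ann} M_{x,y,Z} = I(Q(M_{x,y,Z})) = I(Q_{x,y,Z})$. The reverse inclusion is immediate by definition of globalization.

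Third, for uniqueness in part~(a), the unique-factorization statement in Proposition~\ref{prop: c.l.s.b. class} lets me recover the triple $(x,y,Z)$ from $Q_{x,y,Z}$: the integer $x$ is the maximal $v$ with $L_v^\infty$ appearing, the Young diagram $Z$ is read off from the exponents $x_{v+i}$, the integer part of $y$ is the exponent of $E$, and the fractional part of $y$ records whether the factor $R$ is present. Hence distinct triples give distinct $Q_{x,y,Z}$, hence distinct annihilators. Part~(b) then follows at once: when $y \in \mathbb{Z}_{\ge 0}$, the module $M_{x,y,Z}$ is a tensor product of integrable modules, so $I(x,y,Z)$ is integrable; when $y \in \tfrac12 + \mathbb{Z}_{\ge 0}$, the c.l.s.b.\ $Q_{x,y,Z}$ contains the basic factor $R$, whose members are simple bounded infinite-dimensional $\mathfrak{sp}(2n)$-modules, and therefore $Q_{x,y,Z}$ is not of the form $Q(M)$ for any integrable $M$, which in view of the bijection between integrable ideals and c.l.s.\ (the integral case of the theorem in Subsection~\ref{sub:tensor}) shows $I(x,y,Z)$ is non-integrable.

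The main obstacle I expect is the matching step in the second paragraph: establishing $Q(M_{x,y,Z}) \sim Q_{x,y,Z}$ in the half-integer case, because one needs Lemma~\ref{BL} and Proposition~\ref{big} to control both the $R$-factor branching and its interaction with the $V_Z$ and $(\Lambda^\bullet V)^{\otimes(y-1/2)}$ factors at every finite level $\mathfrak{sp}(2n) \hookrightarrow \mathfrak{sp}(2n+2)$. A delicate point is that for $\mathfrak{sp}(\infty)$ the annihilator of an infinite-dimensional bounded simple module does not determine the module (two simple Shale--Weil-type modules share the same annihilator), so care is needed to show that the \emph{annihilator} $I(x,y,Z)$ is well defined even when $M_{x,y,Z}$ is not determined uniquely up to isomorphism — this is handled by Corollary~\ref{5.3} and the observation that both candidate simple tensor factors $SW^+$ and $SW^-$ give the same annihilator in the limit.
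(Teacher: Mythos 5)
Your existence argument (paragraphs one and two) follows essentially the paper's route to Proposition \ref{dsa}: reduce a primitive ideal to its p.l.s.b.\ $Q(I)$, pass to an equivalent c.l.s.b.\ via Proposition \ref{theor:plsb clsb}, and invoke the classification of irreducible c.l.s.b.\ (Proposition \ref{prop: c.l.s.b. class} and Lemma \ref{lem: class}) to write $I=I(Q_{x,y,Z})$. That half is fine in outline.

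The genuine gap is in your third paragraph, in the inference ``distinct triples give distinct $Q_{x,y,Z}$, hence distinct annihilators.'' Unique factorization of c.l.s.b.\ only tells you that the map $(x,y,Z)\mapsto Q_{x,y,Z}$ is injective; it says nothing about injectivity of the globalization $Q\mapsto I(Q)$. One always has $Q_n\subseteq Q(I(Q))_n$, but the reverse inclusion is not automatic: a bounded ideal $z\notin Q_n$ may still contain $I(Q)\cap U(\mathfrak{sp}(2n))$, so two distinct irreducible c.l.s.b.\ could a priori have the same globalization. In the integrable case this injectivity is exactly the Penkov--Petukhov/Zhilinskii bijection between c.l.s.\ and integrable ideals, but no such theorem is available for half-integral c.l.s.b., and this is precisely where the paper does real work: Proposition \ref{TheBigOne} proves $I(x_1,y_1,Z_1)=I(x_2,y_2,Z_2)\Rightarrow(x_1,y_1,Z_1)=(x_2,y_2,Z_2)$ by intersecting the ideals with the center $Z(U(\mathfrak{sp}(2n)))$, computing explicit Casimir eigenvalues \cite{IMR}, and comparing the Zariski closures of the resulting sets of central characters (using the Combinatorial Nullstellensatz \cite{A}), with separate treatment of the integral/half-integral and $x_1=x_2$ versus $x_1>x_2$ cases. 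Your proposal contains no substitute for this step. The omission also undermines your part b): to show that $I(x,y,Z)$ with $y\in\mathbb{Z}+\tfrac12$ is not integrable you must rule out that it coincides with $I(x',y',Z')$ for some integral $y'$ (equivalently, with $I(Q')$ for some genuine c.l.s.\ $Q'$), and ``$Q_{x,y,Z}$ contains the factor $R$'' does not do that --- it is again the injectivity of $Q\mapsto I(Q)$ across the integral/half-integral divide that is needed, which is how the paper deduces Corollary \ref{TheEnd} from Proposition \ref{TheBigOne} together with Zhilinskii's classification of integrable ideals.
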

Theorem \ref{tyu} follows form Proposition \ref{dsa}, Propositon \ref{TheBigOne} and  Corollary \ref{TheEnd} which  we prove below.
\begin{prop}\label{dsa} Every  primitive ideal $I\subset U(\mathfrak{sp}(\infty))$ is of the form $I(x,y,Z)$ for some $x\in \mathbb{Z}_{\geq0}$, some $y\in \mathbb{Z}_{\geq0}/2$, and some   Young diagram $Z$.
\end{prop}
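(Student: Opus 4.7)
The plan is to attach a c.l.s.b.\ to the primitive ideal $I$ and then read off the triple $(x,y,Z)$ from the Zhilinskii-type classification of irreducible c.l.s.b.\ obtained in Section \ref{5}. First I would invoke Theorem \ref{asd}: $I$ is weakly bounded, so the collection
$$Q(I)_n := \{z\in J_n \mid I \cap U(\mathfrak{sp}(2n)) \subset z\}$$
is a well-defined p.l.s.b., and weak boundedness forces $I\cap U_{2n}=\bigcap_{z\in Q(I)_n} z$ for all $n$, so that $I=I(Q(I))$. Proposition \ref{theor:plsb clsb} then produces a c.l.s.b.\ $\mathbb I$ equivalent to $Q(I)$; since $I(Q)$ depends only on the equivalence class of $Q$, we still have $I=I(\mathbb I)$.

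Second, I would use that $I$ is prime (as it is primitive) to show that $\mathbb I$ is irreducible. If $\mathbb I=\mathbb I'\cup\mathbb I''$ with $\mathbb I',\mathbb I''$ c.l.s.b.\ neither contained in the other, then $I=I(\mathbb I')\cap I(\mathbb I'')$ with both factors strictly containing $I$, contradicting primeness.

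Third, I would invoke Proposition \ref{prop: c.l.s.b. class}, which classifies irreducible c.l.s.b.\ for $\mathfrak{sp}(\infty)$ as
$$(L^\infty_x L_{x+1}^{l_1}\cdots L_{x+s}^{l_s})E^m \quad\text{or}\quad (L^\infty_x L_{x+1}^{l_1}\cdots L_{x+s}^{l_s})E^m R,$$
with $x,m\in\mathbb Z_{\ge 0}$ and $l_1\ge\cdots\ge l_s>0$. Setting $y=m$ in the first case and $y=m+\tfrac{1}{2}$ in the second, and taking $Z$ to be the Young diagram with row lengths $l_1,\ldots,l_s$, the explicit description of $I(Q)$ given just before Theorem \ref{tyu} identifies $I(\mathbb I)$ with $\operatorname{Ann}\bigl(V(x,y,Z)\bigr)$ (respectively $\operatorname{Ann}\bigl(V(x,y,Z)\otimes R\bigr)$), which is by definition $I(x,y,Z)$. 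Combined with $I=I(\mathbb I)$, this yields the required form.

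The delicate step I expect to be the main obstacle is the equality $I=I(Q(I))$ (equivalently, the control of the pass from the p.l.s.b.\ of $I$ to its equivalent c.l.s.b.). The subtlety is that a single element of $J_n$ can correspond to two nonisomorphic simple bounded infinite-dimensional $\mathfrak{sp}(2n)$-modules with identical annihilator, so one must verify that the intersection $\bigcap_{z\in Q(I)_n}z$ genuinely recovers $I\cap U_{2n}$ and not merely a larger ideal. This is ultimately guaranteed by the weakly bounded hypothesis supplied by Theorem \ref{asd}, together with the fact, established via Lemma \ref{lem: class} and the Kazhdan--Lusztig comparison of Subsection \ref{sub:c.l.s.b.}, that the basic c.l.s.b.\ $R$ encoding the Shale--Weil modules genuinely arises from a primitive ideal and not from a proper intersection of larger primitive ideals.
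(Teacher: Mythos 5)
Your proposal follows essentially the same route as the paper: establish $I=I(Q(I))$ from weak boundedness (Theorem \ref{asd}), pass to an equivalent c.l.s.b.\ via Proposition \ref{theor:plsb clsb}, and read off $(x,y,Z)$ from the classification of irreducible c.l.s.b.\ in Proposition \ref{prop: c.l.s.b. class} together with Lemma \ref{lem: class}. The only difference is that you make explicit, via primeness of $I$, the reduction to an \emph{irreducible} c.l.s.b., a step the paper's proof leaves implicit.
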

\begin{proof} We claim that  $I=I(Q(I))$. Indeed, the p.l.s.b. $Q(I)$ consists of all bounded ideals $z\supset I\cap U(\mathfrak{sp}(2n))$. Since $I$ is weakly bounded, i.e., is such that every intersection $I\cap U(\mathfrak{sp}(2n))$ is an intersection of  bounded ideals, we have $\bigcap_{z\in Q(I)_n} z= I\cap U(\mathfrak{sp}(2n))$. Thus, $$I(Q(I))=\bigcup_n I\cap U(\mathfrak{sp}(2n))=I.$$ 

Recall, that for every p.l.s.b. $Q(I)$ we can find an equivalent c.l.s.b. $Q(I)'$. Therefore, there exists a bijection $\phi$ between the set of primitive ideals $I\subset U(\mathfrak{sp}(2n))$ and the set of  c.l.s.b. $Q$, such that if $\phi(I)=Q$ then $I(Q)=I$. Lemma \ref{lem: class} and the above classification of  c.l.s.b. in Proposition \ref{prop: c.l.s.b. class} complete the proof.

\end{proof}

For our next proposition we need some preliminary considerations.

Let  $Z(U(\mathfrak{sp}(2n)))$ be the center of $U(\mathfrak{sp}(2n))$, i.e., the set of all elements $g\in U(\mathfrak{sp}(2n))$ such that $gu=ug$ for each $u\in U(\mathfrak{sp}(2n))$.

\begin{defn} Let $L(\lambda)$ be a simple $\mathfrak{sp}(2n)$-module with highest weight $\lambda$,  a wnd let $v^+$ be a highest weight vector of $L(\lambda)$. For $z\in Z(U(\mathfrak{sp}(2n)))$ we have $z\cdot v^+=\chi(\lambda)(z)\cdot v^+$ for $\chi(\lambda)(z)\in \mathbb{C}$. Since $z$ is a central element, it acts as $\chi(\lambda)(z)$ on $L(\lambda)$. The map
$$\chi(\lambda)\colon Z(U(\mathfrak{sp}(2n)))\mapsto \mathbb{C}$$
is called the central character of weight $\lambda$.
\end{defn}


For a Young diagram $Z$,  we denote the length of the $i$th row of $Z$ by $Z_i$. Let $I(x,y,Z)$ be a primitive ideal of $U(\mathfrak{sp}(\infty))$.  One can check that the work of Zhiliskii \cite{Zh3}, together with Lemma \ref{prop:sp.bounded} and Proposition \ref{big} imply the following fact:      $Q(I(x,y,Z))_n$ consists of all weights $\lambda=\sum^n_{i=1}\lambda_i\varepsilon_i$ such that  $\lambda_i-\lambda_j\in \mathbb{Z}_{\geq_0}$  for $ i>j$, $\lambda_{n-1}+\lambda_{n}\geq -2$ for $\lambda_{n}\in \mathbb{Z}+\frac{1}{2}$, $\lambda_n\in \mathbb{Z}_{\geq 0}$ for $\lambda_{n}\in \mathbb{Z}$, and $y+Z_i- \lambda_{x+i}\in\mathbb{Z}_{\geq0}$ for all $i\in \mathbb{Z}_{\geq 1}$. We call the c.l.s.b. $Q(I)$ \emph{integral} whenever the entries of any $\lambda\in Q(I)$ are integers. If  the entries of any $\lambda\in Q(I)$ are half-integers, then  $Q(I)$ is \emph{half-integral}.

\begin{prop}\label{TheBigOne}Two primitive ideals $I_1=I(x_1,y_1,Z_1), I_2=I(x_2,y_2,Z_2)$ of $U(\mathfrak{sp}(\infty))$ are equal if and only if $x_1=x_2$, $y_1=y_2$ and $Z_1=Z_2$. 
\end{prop}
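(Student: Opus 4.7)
The ``if'' direction is trivial, so I focus on the converse. The plan is to reduce the equality $I(x_1,y_1,Z_1)=I(x_2,y_2,Z_2)$ to the equality of two irreducible c.l.s.b., and then to invoke the uniqueness of the product decomposition established in Proposition~\ref{prop: c.l.s.b. class}.

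First I would unpack, as in the discussion preceding Theorem~\ref{tyu}, the identity $I(x,y,Z)=I(Q_{x,y,Z})$, where $Q_{x,y,Z}$ is the irreducible c.l.s.b.
$$
Q_{x,y,Z}=\begin{cases}
(L^\infty_x\,L_{x+1}^{Z_1}\,L_{x+2}^{Z_2}\cdots L_{x+s}^{Z_s})\,E^y, & y\in\mathbb Z_{\ge 0},\\[3pt]
(L^\infty_x\,L_{x+1}^{Z_1}\,L_{x+2}^{Z_2}\cdots L_{x+s}^{Z_s})\,E^{y-\frac12}\,R, & y\in\mathbb Z_{\ge 0}+\tfrac12,
\end{cases}
$$
with $Z_1,\dots,Z_s$ the row lengths of $Z$. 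Combined with the argument of Proposition~\ref{dsa}---which shows that $I=I(Q(I))$ for every primitive ideal $I$, and hence that the assignment $Q\mapsto I(Q)$ is injective on c.l.s.b.---this gives $Q_{x_1,y_1,Z_1}=Q_{x_2,y_2,Z_2}$ as soon as the corresponding ideals coincide. As an independent consistency check, the explicit description of $Q(I(x,y,Z))_n$ recalled just before the proposition can be matched term by term against $Q_{x,y,Z}$.

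Next I would invoke Proposition~\ref{prop: c.l.s.b. class}, which states that every irreducible c.l.s.b.\ admits a unique presentation as a product of basic c.l.s.b. In view of the explicit form of $Q_{x,y,Z}$, this presentation records: the presence or absence of the factor $R$ (distinguishing $y\in\tfrac12+\mathbb Z_{\ge 0}$ from $y\in\mathbb Z_{\ge 0}$); the subscript of $L^\infty_\bullet$, which is exactly $x$; the exponent of $E$, shifted by $\tfrac12$ when $R$ appears, which is exactly $y$; and the exponents of the $L_{x+i}$'s, which are precisely the row lengths of $Z$. The triple $(x,y,Z)$ is therefore uniquely recoverable from $Q_{x,y,Z}$, completing the proof.

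The delicate step is the injectivity of $Q\mapsto I(Q)$ on c.l.s.b.: this is not merely formal from $I(Q_{x,y,Z})=I(x,y,Z)$ but follows from the bijective correspondence in Proposition~\ref{dsa}, which in turn rests on the passage from a p.l.s.b.\ to an equivalent c.l.s.b.\ proved in Proposition~\ref{theor:plsb clsb}. Once this injectivity is in place, the remainder of the argument is the purely combinatorial uniqueness of the Zhilinskii-type product decomposition.
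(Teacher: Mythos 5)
Your reduction of the problem to the injectivity of $Q\mapsto I(Q)$ on c.l.s.b.\ is correct as a reformulation, but the proposal then assumes exactly the point that Proposition~\ref{TheBigOne} is there to prove. Proposition~\ref{dsa} establishes only that $I(Q(I))=I$ for every primitive ideal $I$, i.e.\ that every primitive ideal arises as $I(Q)$ for some c.l.s.b.\ $Q$; this gives a left inverse to $I\mapsto Q(I)$, hence injectivity of that map, but it does \emph{not} give injectivity of $Q\mapsto I(Q)$. If $I(Q_1)=I(Q_2)=I$ with $Q_1\neq Q_2$, one only gets $Q_1,Q_2\subseteq Q(I)$, and nothing forces $Q(I(Q))=Q$. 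The word ``bijection'' in the proof of Proposition~\ref{dsa} cannot be invoked here without circularity: its injectivity half \emph{is} Proposition~\ref{TheBigOne}. Likewise, the uniqueness in Proposition~\ref{prop: c.l.s.b. class} is a purely combinatorial statement about recovering $(x,y,Z)$ from the c.l.s.b.\ $Q_{x,y,Z}$, not from the ideal $I(Q_{x,y,Z})$.

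The genuinely new content, which your argument skips, is the case where at least one of $Q(I_1),Q(I_2)$ is half-integral (i.e.\ some $y_i\in\mathbb Z_{\ge0}+\tfrac12$, so the local modules are infinite-dimensional bounded $\mathfrak{sp}(2n)$-modules). For integral c.l.s.b.\ the injectivity is Zhilinskii's result on c.l.s.; for the half-integral case the paper has to separate the ideals $I_1\cap U(\mathfrak{sp}(2n))$ and $I_2\cap U(\mathfrak{sp}(2n))$ by intersecting with the center $Z(U(\mathfrak{sp}(2n)))$, using the Harish--Chandra isomorphism and explicit Casimir eigenvalues $g_s(\lambda)$ to show that the Zariski closures of the sets of central characters $\{\chi(\lambda)\mid\lambda\in Q(I_i)_n\}$ differ (a case analysis on integral versus half-integral and $x_1>x_2$ versus $x_1=x_2$). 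Some argument of this analytic/algebro-geometric kind is unavoidable: without it there is no mechanism to rule out that two distinct c.l.s.b.\ annihilate to the same ideal. As written, your proof has a genuine gap at precisely this step.
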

\begin{proof} Assume that the triplet $(x_1,y_1,Z_1)$ does not equal  $(x_2,y_2, Z_2)$. We will consider $Q(I)$ as a set of weights as in Subsections \ref{sub:c.l.s.b.} and \ref{sub:p.l.s.b.}. Then there exists such $n$ that $Q(I_1)_n\neq Q(I_2)_n$.  This is equivalent to the following. We may assume  without loss of generality that $x_1\geq x_2$ with $n>x_2$, and that there exist a weight $\lambda'\in Q_n(I_1)$ and an integer $x_2<k\leq n$, such that $\lambda'_i\geq\lambda_i$ for $x_2<i<k$ and $\lambda'_k>\lambda_k$ for any $\lambda\in Q(I_2)_n$.  Since the choice of $\lambda'$ is not unique, we choose one with minimal possible $k$. Also, without loss of generality, we assume that $k=n$  because of the fact that a $\mathfrak{sp}(2m)$-module $L(\beta)$ always has an $\mathfrak{sp}(2l)$-submodule isomorphic to $L(\gamma)$, for $m>l$ and $\gamma=\sum^l_{i=1}\beta_i\varepsilon_i$.

 Suppose that $I_1=I_2$.  Obviously, then $I_1\cap U(\mathfrak{sp}(2n))= I_2\cap U(\mathfrak{sp}(2n))$. This implies  
\begin{equation}\label{for:ann}
\bigcap_{\lambda\in Q(I_1)_n}\operatorname{Ann} L(\lambda)=\bigcap_{\lambda\in Q(I_2)_n}\operatorname{Ann} L(\lambda).
\end{equation}

Next, we consider the intersections $I_1\cap Z(U(\mathfrak{sp}(2n)))$ and $I_2\cap Z(U(\mathfrak{sp}(2n)))$. These intersections are equal because $I_1=I_2$.
By Harish-Chandra's Theorem, $Z(U(\mathfrak{sp}(2n)))$ is isomorphic to the polynomial algebra $\mathbb{C}[a_1,a_2,\ldots,a_n]$ in $n$ variables. A maximal ideal $F$ of $Z(U(\mathfrak{sp}(2n)))$ has the form 
$$
F=F(a)=\{f\in Z(U(\mathfrak{sp}(2n)))\mid f(a)=0\},
$$
for some $a\in \mathbb{C}^n$. Moreover, it is well known that  the intersection of the annihilator of a simple highest weight $\mathfrak{sp}(2n)$-module $L(\lambda)$ with
$Z(U(\mathfrak{sp}(2n)))$ equals to the maximal ideal $F(\chi(\lambda))$ of $Z(U(\mathfrak{sp}(2n)))$. This fact and formula (\ref{for:ann}) imply
$$\bigcap_{\lambda\in Q(I_1)_n}F(\chi(\lambda))=\bigcap_{\lambda\in Q(I_2)_n}F(\chi(\lambda)).$$ The latter  holds if and only if the Zariski closures of the sets $\{\chi(\lambda)\mid\lambda\in Q(I_1)_n\}$ and $\{\chi(\lambda)\mid \lambda\in Q(I_2)_n\}$ coincide. 

 We now  choose   the variables       $a_1, a_2, \ldots, a_n$  to equal the independent Casimir elements $G_s$ for $1\leq s\leq n$, which act on the module $L(\lambda)$ by the constants
$$g_s(\lambda)=(-1)^s\sum_{1\leq i_1 \leq i_2 \leq \ldots \leq i_s \leq n}\Pi^s_{j=1}((\lambda_{i_j}+n-i_j+1)^2-(i_j-j+1)^2).$$
The expressions $g_s(\lambda)$ are  symmetric polynomials in the variables\break$(\lambda_{i_j}+n-i_j+1)^2$, see \cite[Theorem 3.8]{IMR}. Therefore, $$\chi(\lambda)=(g_1(\lambda),g_2(\lambda),\ldots,g_n(\lambda)).$$

Next, we define the following equivalence relation on the set $Q(I_2)_n$:
$$ \lambda\sim_{x_2}\mu\Longleftrightarrow \lambda_i=\mu_i \textup{ for } i>x_2,$$ where $\lambda,\mu\in Q(I_2)_n$. In this way, we obtain finitely many equivalence classes. The class of  $\lambda$ is denoted  by $K[\lambda_{x_2+1},\lambda_{x_2+2},\ldots, \lambda_{n}].$

  Given $u,m,j \in\mathbb{Z}_{\geq 0}$, $m>x_2$,                   we set

\begin{equation}\label{form}d_{m,j}=((\lambda_{m}+n-m+1)^2-(m-j+1)^2),\end{equation}
 $$b_{u,j}=(u-j+1)^2.$$
 Furthermore, we 
consider the subset $S[\lambda_{x_2+1},\lambda_{x_2+2},\ldots, \lambda_{n}]$ of $\mathbb{C}^n$ defined as the set of all points $(a_1,a_2,\ldots, a_n)$ such that
$$a_s=f_s=(-1)^s\sum_{1\leq i_1 \leq i_2 \leq \ldots \leq i_s \leq n}(\prod_{i_j\leq x_2}(y_{i_j}-b_{i_j,j})\prod_{i_j>x_2}d_{i_j,j})$$
for all $1\leq s\leq n$, and for all $(y_1,y_2,\ldots,y_{x_2})\in \mathbb{C}^{x_2}$. It is clear that if  $\lambda\in K[\lambda_{x_2+1},\lambda_{x_2+2},\ldots, \lambda_{n}]$, then $\chi(\lambda)$ belongs to $S[\lambda_{x_2+1},\lambda_{x_2+2},\ldots, \lambda_{n}]$. 

Now, we will show that $S[\lambda_{x_2+1},\lambda_{x_2+2},\ldots, \lambda_{n}]$ is an affine subspace of $\mathbb{C}^n$. The fact that the polynomials $g_s(\lambda)$ are symmetric implies  that the polynomials $f_s$ are symmetric in the variables $y_1,y_2,\ldots, y_{x_2}$. Note that the  degree of  $f_s$  for $s\leq x_2$ is equal to $s$. In addition, the polynomial $f_s$ is linear in each $y_j$,  $1\leq j \leq x_2$. Thus, $f_s$ for $1\leq s\leq x_2$ are linearly independent.

 Hence, the polynomials $f_s$ for $1\leq s \leq n$ are linear combinations of the polynomials $f_t$ for $1\leq t\leq x_2$. Let  $s$ satisfy $x_2<s\leq n$. Denote by $l_s(a_1,a_2\ldots, a_{x_2})$  the affine-linear function such that $f_s=l_s(f_1,f_2\ldots, f_{x_2})$.
Then  $S[\lambda_{x_2+1},\lambda_{x_2+2},\ldots, \lambda_{n}]$ coincides with the affine subspace of $\mathbb{C}^n$ defined by the system of equations
\begin{equation}\label{27}\left\{ \begin{array}{l}
 a_{x_2+1}=l_{x_2+1}(a_{1},a_{2},\ldots,a_{x_2}), \\
 a_{x_2+2}=l_{x_2+2}(a_{1},a_{2},\ldots,a_{x_2}) ,\\
 \ldots,\\
a_{n}=l_{n}(a_{1},a_{2},\ldots,a_{x_2}).
  \end{array} \right.\end{equation}

Next, we will show that $S[\lambda_{x_2+1},\lambda_{x_2+2},\ldots, \lambda_{n}]$ is the Zariski closure of  $K[\lambda_{x_2+1},\lambda_{x_2+2},\ldots, \lambda_{n}]$. Indeed, assume that there is an equation\break $f(a_1,a_2,\dots, a_n)=0$ such that $f(\chi_{\lambda})=0$ for every $\lambda\in K[\lambda_{x_2+1},\lambda_{x_2+2},\ldots, \lambda_{n}]$. Then we consider the function $$\bar f(a_1,a_2\ldots, a_{x_2}) = (a_1, \dots,a_{x_2}, l_{x_2+1}(a_1, \ldots, a_{x_2}), \ldots,  l_{n}(a_1, \ldots, a_{x_2})),$$ and note that $\bar f(\chi(\lambda)_1, \chi(\lambda)_2,\ldots,\chi(\lambda)_{x_2} )=0$ for all $\lambda\in K[\lambda_{x_2+1},\lambda_{x_2+2},\ldots, \lambda_{n}]$. However,the Combinatorial Nullstellensatz \cite{A} claims that such polynomial is equal to $0$. Thus $S[\lambda_{x_2+1},\lambda_{x_2+2},\ldots, \lambda_{n}]$ is the Zariski closure of\break $K[\lambda_{x_2+1},\lambda_{x_2+2},\ldots, \lambda_{n}]$.

Now, we consider the functions
$$
u_t=\sum_{1\leq i_1 \leq i_2 \leq \ldots \leq i_s \leq x_2}(\prod^s_{j=1}(y_{i_j}-b_{i_j,j})
$$
for $1\leq t\leq x_2$.
Each $u_t$ can be expressed as linear combination of the functions $f_1, f_2,\ldots f_t$ with a nonzero coefficient of $f_t$. Hence, the functions $u_t$ for $1\leq t\leq x_2$ are linearly independent. Note that $d_{i_j,j}\neq 0$  for $\lambda_i\notin\mathbb{Z}$. This and formula (\ref{27}) implies that the functions $f_{t}$ for $n-x_2<t\leq n$ are linearly independent whenever $Q(I_2)$ is half-integral.

Consider the point $h^0$ of  $S[\lambda_{x_2+1},\lambda_{x_2+2},\ldots, \lambda_{n}]$ defined by the parameters $y_i=b_{i,1}$. One can check that $h^0=(\sum_{i>x_2}d_{i,1},  c_2, c_3,\ldots, c_{n-x_2},0,\ldots,0 )$ for some $c_1,c_2,\ldots,c_{n-x_2}\in \mathbb{C}$. The fact, that    $f_{s}$ for $n-x_2<s\leq n$ are linearly independent  whenever $Q(I_2)$ is half-integral,  implies that a point $h^1\in S[\lambda_{x_2+1},\lambda_{x_2+2},\ldots, \lambda_{n}]$ with $h^1_{i}=0$ for $i>n-x_2$ must be equal to $h^0$ whenever  $Q(I_2)$ is half-integral.  Denote ${d_{i,1}}$ by $d_i(\lambda)$ and $h^0$ by $h^0(\lambda)$. Clearly, $d_i(\lambda^1)>d_i(\lambda^2 )$ if $\lambda^1_i>\lambda^2_i$.

Recall that the proposition we are proving has already been proved by Zhilinskii in the  work  \cite{Zh3} under the assumption that the c.l.s.b. $Q(I_1)$ and $Q(I_2)$ are integral, i.e. $Q(I_1)$ and $Q(I_2)$ are c.l.s. Hence it remains to  prove our proposition for $I_1$ and $I_2$  such that at least one of the c.l.s.b. $Q(I_1)$ and $Q(I_2)$ is half-integral.

Denote by $Z(S)$ the Zariski closure   of  $S$ in $\mathbb{C}^n$ . Let $V$ be a set of $\mathfrak{sp}(2n)$-weights. We denote the respective set of central characters $\chi_{\lambda}$ for $\lambda\in V$ by $H(V)$.

In what follows we denote by  $\sim_{x_2}$  the  equivalence relation  on the set $Q(I_1)_n$, constructed exactly as   the  equivalence relation on $Q(I_2)_n$ denoted above by $\sim_{x_2}$. We abbreviate $$K[\lambda]:=K[\lambda_{n-x_2+1}, \lambda_{n-x_2+2},\ldots, \lambda_{n}]$$ and $$S[\lambda]:=S[\lambda_{n-x_2+1}, \lambda_{n-x_2+2},\ldots, \lambda_{n}].$$ 
Finally, we prove that $I_1\neq I_2$ by considering the following cases:
\begin{itemize}
\item $Q(I_1)$  is integral, $Q(I_2)$ is half-integral, and $x_1>x_2$. It is clear that $n=x_2+1$. This implies that $Z(H(Q(I_1)_n))= 0$, while the Zariski closure  $Z(H(Q(I_2)_n))$ is finite union of proper affine subspaces, and hence $I_1\neq I_2$. 

\item  $Q(I_1)$  is half-integral, $Q(I_2)$ is integral or half-integral, and $x_1>x_2$. We obtain that $n=x_2+1$ and that $Z(H(Q(I_1)_n))= 0$, $Z(H(Q(I_2)_n))\neq 0$ similarly to the case when $Q(I_1)$  is integral, $Q(I_2)$ is half-integral, and $x_1>x_2$.

\item  $Q(I_1)$  is half-integral, $Q(I_2)$ is integral or half-integral, and $x_1=x_2$. Then $Z(H(Q(I_1)_n))=\bigcap^y_{i=1}S(\lambda^i)$ for some $\lambda^i\in Q(I_1)_n$, and  denote $S_i:=S(\lambda^i)$. Also we obtain $Z(H(Q(I_2)_n))=\bigcap^u_{j=1}S(\lambda^j)$ for some $\lambda^j\in Q(I_2)_n$, and put $S'_j=S(\lambda^j)$. Therefore, $Z(H(Q(I_1)_n))=Z(H(Q(I_2)_n))$ if and only if the sets $\{S_1,S_2,\ldots, S_y \}$ and $\{S'_1,S'_2,\ldots, S'_u\}$ are equal. We show that  $S(\lambda')\neq S'_j$ for any $1\leq j\leq u$. There are two possibilities for $S'_j$: first, the coordinates with indices $n-x_2+1, n-x_2+2,\ldots, n$ of points in $S'$ are not linearly independent; second, the coordinates with indices $n-x_2+1, n-x_2+2,\ldots, n$ of points in $S'$ are  linearly independent. In the first case, we have $S(\lambda')\neq S_j$ because the last $x_2$ coordinates of points in $S(\lambda')$ are linearly independent. In the second case,  we note that $S(\lambda')$  contains the point $h^0(\lambda')=(\sum_{i>x_2}d_i(\lambda'),c'_2,c'_3,\ldots,c'_{n-x_2},0,\ldots,0)$. Since $\lambda'_{i}\geq\lambda_{i}$ for  $x_2 \leq i\leq n$, we have $\sum_{i>x_2}d_i(\lambda')>\sum_{i>x_2}d_i(\lambda)$ where $\lambda\in Q(I_2)_n$. Therefore  $h_0(\lambda^j)\neq h_0(\lambda')$ for $\lambda^j\in Q(I_2)_n$. As $h_0(\lambda^j)$ is the unique point with $\lambda^j_s=0$ for $n-x_2+1\leq s\leq n$, we conclude that $S(\lambda)$ does not coincide with $S'_j$ for all $j$, and hence $I_1\neq I_2$.

\item $Q(I_1)$  is integral, $Q(I_2)$ is half-integral, and $x_1=x_2$. Then it is  clear that $k=n=x_2+1$.   Note that,     if $d_{n,n}(\lambda')=0$ then $a_n=0$ for each $a\in S(\lambda')$.
On the other hand, formula (\ref{form}) implies $d_{n,n}(\lambda')$ is equal to zero if and only if $\lambda'_n=0$. This implies  $Q(I_1)=S(\lambda')$. Thus the Zariski closures   $Z(H(Q(I_2)_n))$ and $Z(H(Q(I_1)_n))$ do not coincide because there exists $a\in Z(H(Q(I_2)))$ such that $a_n\neq0$. For $d_{n,n}(\lambda')\neq0$ proof is the same as for the case when $Q(I_1)$  is half-integral, $Q(I_2)$ is integral or half-integral, and $x_1=x_2$.

 \end{itemize}
\end{proof}

\begin{corollary}\label{TheEnd}
 Every primitive ideal $I=I(x,y,Z)\subset U(\mathfrak{sp}(\infty))$ with $y\in\mathbb{Z}+\frac{1}{2}$,  is nonintegrable.
\end{corollary}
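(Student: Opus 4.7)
The plan is to deduce this as an immediate consequence of the uniqueness result just proved in Proposition~\ref{TheBigOne}, combined with the Penkov--Petukhov classification of integrable prime ideals of $U(\mathfrak{sp}(\infty))$ recalled at the beginning of Section~\ref{6}. The key observation is that, in the classification of integrable ideals, the exponent $y$ is forced to be a nonnegative \emph{integer} (it counts how many copies of $\Lambda^\bullet(V)$ appear, with no twist by the Shale--Weil factor $R$), so half-integer $y$ simply cannot occur.

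I would argue by contradiction. Suppose that $I(x,y,Z)$ is integrable for some $x\in\mathbb{Z}_{\geq 0}$, some Young diagram $Z$, and some $y\in\mathbb{Z}_{\geq 0}+\tfrac{1}{2}$. Since every primitive ideal is prime, the Penkov--Petukhov theorem (quoted at the start of Section~\ref{6}) asserts that $I(x,y,Z)$ equals the annihilator of a unique module of the form $(\operatorname{S}^\bullet(V))^{\otimes x'}\otimes (\Lambda^\bullet(V))^{\otimes y'}\otimes V_{Z'}$ with $x',y'\in\mathbb{Z}_{\geq 0}$ and $Z'$ an arbitrary Young diagram. In our notation, this means $I(x,y,Z)=I(x',y',Z')$ for some integer $y'$. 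Proposition~\ref{TheBigOne} then forces $(x,y,Z)=(x',y',Z')$, contradicting $y\notin\mathbb{Z}$.

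Alternatively, a self-contained argument can be given in terms of c.l.s.b.\ without appealing to the Penkov--Petukhov classification. If $I=I(x,y,Z)$ were integrable, then $I=\operatorname{Ann}_{U(\mathfrak{sp}(\infty))}(M)$ for some integrable $\mathfrak{sp}(\infty)$-module $M$, whose restriction to each $\mathfrak{sp}(2n)$ is a sum of finite-dimensional $\mathfrak{sp}(2n)$-modules. Consequently, $I\cap U(\mathfrak{sp}(2n))$ is an intersection of annihilators of finite-dimensional $\mathfrak{sp}(2n)$-modules, so every $\lambda\in Q(I)_n$ has all entries in $\mathbb{Z}$. On the other hand, the explicit description of $Q(I(x,y,Z))_n$ recorded immediately before Proposition~\ref{TheBigOne} requires $y+Z_i-\lambda_{x+i}\in\mathbb{Z}_{\geq 0}$ for every $i$, which forces $\lambda_{x+i}\in\mathbb{Z}+\tfrac{1}{2}$ as soon as $y$ is a half-integer. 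This contradicts the previous sentence.

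There is essentially no serious obstacle here: all the work has already been done in Proposition~\ref{TheBigOne} (which supplies the uniqueness of the triple) and in Theorem~\ref{tyu}(a) together with the classical classification of integrable ideals. The corollary is a one-line combinatorial consequence; the only thing to be careful about is to note that primitive ideals are prime, so that the Penkov--Petukhov classification of integrable \emph{prime} ideals applies directly.
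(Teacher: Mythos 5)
Your first argument is exactly the paper's proof: the paper deduces the corollary from Proposition~\ref{TheBigOne} together with Zhilinskii's classification of integrable ideals, which is precisely the Penkov--Petukhov statement you quote from the start of Section~\ref{6}; for $\mathfrak{sp}(\infty)$ that classification only yields ideals $I(x',y',Z')$ with $y'\in\mathbb{Z}_{\geq 0}$, so uniqueness of the triple gives the contradiction.

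One caution about your ``self-contained'' alternative: the step ``$I\cap U(\mathfrak{sp}(2n))$ is an intersection of annihilators of finite-dimensional modules, so \emph{every} $\lambda\in Q(I)_n$ has integral entries'' is not valid. By definition $Q(I)_n$ consists of \emph{all} bounded ideals containing $I\cap U(\mathfrak{sp}(2n))$, and a half-integral bounded ideal can perfectly well contain an intersection of annihilators of finite-dimensional modules (for instance, the annihilator of a Shale--Weil module contains the zero ideal, which is such an intersection). What integrability actually gives you is the opposite: if $I=\operatorname{Ann}(M)$ with $M$ integrable and nonzero, then some finite-dimensional constituent $L(\mu)$ of $M$ restricted to $\mathfrak{sp}(2n)$ satisfies $\operatorname{Ann}L(\mu)\in Q(I)_n$ with $\mu$ integral, and \emph{that} contradicts the explicit description of $Q(I(x,y,Z))_n$, all of whose weights are half-integral when $y\in\mathbb{Z}+\frac{1}{2}$. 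So the alternative route is salvageable, but not as written.
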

\begin{proof} Follows from the  Proposition \ref{TheBigOne} and  the classification of integrable ideals given by Zhilinskii in \cite{Zh1}, \cite{Zh2} and \cite{Zh3}.
\end{proof}

We conclude this thesis by the remark that we have now established that the primitive ideals of $U(\mathfrak{o}(\infty))$ and $U(\mathfrak{sp}(\infty))$ are described by the same triplets $(x,y,Z)$. This follows from a direct comparison of Proposition $4.8$ in \cite{PP3} and  Theorem \ref{tyu} above. The only difference between the two cases that the primitive ideals $I(x,y,Z)$ with $y\in \mathbb{Z}_{\geq0}+\frac{1}{2}$ are integrable in the case  of  $U(\mathfrak{o}(\infty))$, and nonintegrable in the case of $U(\mathfrak{sp}(\infty))$. This remark is a strong hint for the conjecture that the isomorphism of the lattices of ideals in $U(\mathfrak{o}(\infty))$ and $U(\mathfrak{sp}(\infty))$ constructed in \cite{PP3} preserves primitivity.

\newpage


\end{document}